\DeclareMathOperator{\Coeff}{Coeff}
\DeclareMathOperator*{\Res}{Res}
\newcommand\note[1]{\mbox{}\marginpar{ \scriptsize\raggedright
\hspace{1pt}\color{red} #1}}
\numberwithin{equation}{section}
\numberwithin{equation}{subsection}
\theoremstyle{plain}
\newtheorem{theorem}[equation]{Theorem}
\newtheorem{lemma}[equation]{Lemma}
\newtheorem{proposition}[equation]{Proposition}
\newtheorem{corollary}[equation]{Corollary}
\newtheorem{conjecture}[equation]{Conjecture}
\newtheorem{thm}[equation]{Theorem}
\theoremstyle{definition}
\newtheorem{example}[equation]{Example}
\newtheorem{remark}[equation]{Remark}
\newtheorem{definition}[equation]{Definition}
\newcommand{\ce}[1]{\left\lceil #1 \right\rceil}
\def\C{\mathbb C}
\def\Q{\mathbb Q}
\def\Z{\mathbb Z}
\def\im{{\rm im}}
\newcommand{\calw}{{\mathcal W}}
\newcommand{\calv}{{\mathcal V}}
\newcommand{\calm}{{\mathcal M}}
\newcommand{\cali}{{\mathcal I}}
\newcommand{\calj}{{\mathcal J}}
\newcommand{\calF}{{\mathcal F}}
\newcommand{\calO}{{\mathcal O}}\newcommand{\calU}{{\mathcal U}}
\newcommand{\calS}{{\mathcal S}}
\newcommand{\calP}{\mathcal{P}}\newcommand{\calL}{\mathcal{L}}
\newcommand{\s}{r}
\newcommand{\tX}{\widetilde{X}}
\newcommand{\reg}{{\rm reg}}
\newcommand{\cX}{{\mathcal X}}
\newcommand{\cO}{{\mathcal O}}
\newcommand{\bP}{{\mathbb P}}
\newcommand*{\linebundle}{\mathcal{L}}
\newcommand{\bC}{{\mathbb C}}
\newcommand{\cF}{{\mathcal F}}
\newcommand{\eca}{{\rm ECa}}
\newcommand{\pic}{{\rm Pic}}
\newcommand{\m}{\mathfrak{m}}\newcommand{\fr}{\mathfrak{r}}
\newcommand{\mfl}{\mathfrak{L}}
\newcommand{\bt}{{\mathbf t}}
\newcommand{\frsw}{\mathfrak{sw}}
\newcommand{\bZ}{{\mathbb{Z}}}
\newcommand{\bQ}{{\mathbb{Q}}}
\author{J\'anos Nagy}
\address{Central European University, Dept. of Mathematics,  Budapest, Hungary}
\email{nagy\textunderscore janos@phd.ceu.edu}
\author{Andr\'as N\'emethi}
\address{Alfr\'ed R\'enyi Institute of Mathematics,
Hungarian Academy of Sciences,
Re\'altanoda utca 13-15, H-1053, Budapest, Hungary \newline
 \hspace*{4mm} ELTE - University of Budapest, Dept. of Geometry, Budapest, Hungary \newline \hspace*{4mm}
BCAM - Basque Center for Applied Math.,
Mazarredo, 14 E48009 Bilbao, Basque Country – Spain}
\email{nemethi.andras@renyi.mta.hu }
\title{The Abel map for surface singularities \\
I. Generalities and  examples}
\begin{document}

\keywords{normal surface singularity,
resolution graph, rational homology sphere, natural line bundle, Poincar\'e series,
Abel map, effective Cartier divisor, Picard group, Brill--Noether theory,
Laufer duality, surgery formulae, splice quotient singularity,
superisolated singularity, weighted homogeneous singularity}

\subjclass[2010]{Primary. 32S05, 32S25, 32S50, 57M27
Secondary. 14Bxx, 14J80}

\begin{abstract}
Let  $(X,o)$ be a complex normal surface singularity. We
fix one of its  good resolutions  $\tX\to X$,   an effective cycle
$Z$ supported on the reduced
exceptional curve,  and any possible (first Chern) class $l'\in H^2(\tX,\Z)$.
With these data we define   the variety  $\eca^{l'}(Z)$
of those effective Cartier
divisors $D$ supported on $Z$ which determine a line bundles $\calO_Z(D)$ with first Chern class $l'$.
Furthermore, we consider the affine space $\pic^{l'}(Z)\subset H^1(\calO_Z^*)$ of isomorphism classes of holomorphic line bundles with Chern class $l'$ and the Abel map  $c^{l'}(Z):\eca^{l'}(Z)\to \pic^{l'}(Z)$.
The present manuscript develops the major properties of this map, and links them with the determination of
the cohomology groups $H^1(Z,\calL)$,
 where we might vary the analytic structure $(X,o)$ (supported on a fixed topological type/resolution graph) and we also vary the possible line bundles $\calL\in \pic^{l'}(Z)$.
The case of generic line bundles of $\pic^{l'}(Z)$ and generic line bundles of the image of the Abel map
will have priority roles. Rewriting the Abel map via Laufer duality based on integration of forms on
divisors, we can make explicit the Abel map and its tangent map. The case of
superisolated and weighted homogeneous singularities are exemplified with several details.

The theory has similar goals (but rather different techniques)
as the theory of Abel map or  Brill--Noether theory of reduced smooth projective curves.
\end{abstract}

\maketitle

\linespread{1.2}

%\date{}

\pagestyle{myheadings} \markboth{{\normalsize  J. Nagy, A. N\'emethi}} {{\normalsize Abel maps }}

%\setcounter{tocdepth}{1}
%\tableofcontents

\section{Introduction}\label{s:intr}
In this introduction we plan to provide the major ideas and some of the major results
without  technical details. The presentation will
 automatically provide the structure of the article as well.

The study of the Abel map of projective irreducible smooth curves was a crucial tool in the classical
algebraic geometry  and it remained so in the modern theory as well. Though in this work we will
use very little  from  this theory, in this introduction (and some places later)
we will discuss some comparisons between the curve case and the theory of the present article
established  for  normal surface singularities,
mostly to emphasize the major conceptual differences and additional difficulties in the later case.
(For the Abel map of curves one can consult \cite{ACGH} and the references therein.)

The present manuscript is the first one in a series of articles planed (and partly already written) by the authors.
It contains the foundation, the presentation of the basic constructions and of the basic
properties. They are also supported by several examples. The forthcoming manuscripts of the series
treat the theory applied for several important families of singularities, e.g. for
singularities with generic analytic type, or elliptic or splice quotient singularities. E.g., in the second article
\cite{NNII},
 based on the results of the present one, we treat properties of the  generic  analytic structure supported by  a fixed resolution  graph (topological type). More precisely,
 we are able to determine topologically several discrete analytic invariants of
 such singularities
like multivariable  Hilbert series associated with the divisorial filtration, or cohomology of cycles and line bundles
supported on the fixed   resolution (in particular,  the geometric genus as well).

We wish to emphasize from the start that we are not generalizing the Abel construction from the curve
case  to
the --- smooth or singular ---  (quasi)projective surfaces: our goal is to develop its analogue
valid in the context of a resolution of
a complex normal surface singularity germ.
 %(and in this work we can use very little from the classical theory).
This means that if $(X,o)$ is such a singularity with
a fixed good resolution  $\tX\to X$,  then for any effective cycle
$Z$ supported on the reduced
exceptional curve  $E$ and for any (possible) Chern class $l'\in H^2(\tX,\Z)$ we construct the space $\eca^{l'}(Z)$
of effective Cartier
divisors $D$ supported on $Z$, whose associated line bundles $\calO_Z(D)$ have first Chern class $l'$.
Furthermore, we consider the space $\pic^{l'}(Z)\subset H^1(\calO_Z^*)$ of isomorphism classes of holomorphic line bundles with Chern class $l'$ and the Abel map  $c^{l'}(Z):\eca^{l'}(Z)\to \pic^{l'}(Z)$, $D\mapsto \calO_Z(D)$.
In this way, our Abel map is associate with non--reduced projective curves supported by the exceptional
set of a good resolution of a normal surface singularity. In particular, the combinatorial background is the
combinatorics of the dual resolution graph $\Gamma$ (or the intersection from $(\,,\,)$
of the irreducible exceptional curves),
that is, equivalently,  the 3--dimensional link of the singularity. In fact, in order to run properly
the theory (e.g. to be able to define the `natural' line bundles, cf. \ref{ss:analinv}),
we will even assume that the
 link of the singularity is a rational homology sphere. This happens exactly when the resolution graph
 $\Gamma$ represents  a tree of rational curves. In this way, in all the discussions regarding the
  analytic  types and properties we move the difficulties  from the
 moduli space of each irreducible exceptional curve $E_v$ (which is trivial in this case)
 to the analytic properties of their
  infinitesimal tubular  neighbourhoods and their gluings (analytic plumbing).

Therefore, the Abel map $c^{l'}$ behaves rather differently than the (projective)
Abel map of reduced smooth curves,
it shares more  the properties of non--proper
affine maps rather than the projective ones.
This will also be clear from the next preliminary  presentation of its source and  target.

In fact, the space $\eca^{l'}(Z)$ is already constructed in the literature. Note that by
a theorem of Artin \cite[3.8]{Artin69}, there exists an affine algebraic variety $Y$ and a point $y\in Y$ such that $(Y,y)$ and $(X,o)$ have isomorphic formal completions. Then, according to Hironaka \cite{Hironaka65},
$(Y,y)$ and $(X,o)$ are analytically isomorphic. In particular, we can regard $Z$
as a projective  algebraic scheme, in which situation $\eca^{l'}(Z)$
 was constructed by Grothendieck \cite{Groth62}, see also the article of
Kleiman \cite{Kleiman2013} and
the book of Mumford for curves on algebraic surfaces \cite{MumfordCurves}.
In particular, $\eca^{l'}(Z)$ is a quasiprojective variety. Though the existence of the space
$\eca^{l'}(Z)$ in this way is established,
we will provide several key properties valid in our particular situation,
including the local charts.
E.g., we will characterize topologically when the space $\eca^{l'}(Z)$ is nonempty ($\eca^{l'}(Z)\not=\empty$
if and only if $-l'$ belongs to the Lipman cone, cf. (\ref{eq:empty})), and in these cases
we show that it is smooth of dimension $(l',Z)$, cf. Theorem \ref{th:smooth}.
Furthermore, there exists a natural projection to
$\eca^{l'}(E)$, whose fibers are affine spaces. They can be considered as certain jet spaces
in the local infinitesimal neighbourhoods of the of the local equations of the effective Cartier divisors.
This fiber structure  makes the space rather special, with non--proper/non--compact behaviors. In fact, by
fixing the Chern class even $\eca^{l'}(E)$ becomes non--projective too; e.g. for $l'=-E^*_v$
(the dual of $E_v$, representing `cuts' which intersects $E_v$ but not the other curves) we get
$\eca^{l'}(E)=E_v\setminus \cup_{u\not=v}E_u$.

Note also that the base space $\pic^{l'}(Z)$ is also noncompact, it is
an affine space, it has dimension $h^1(\calO_Z)$.
(Here the assumption that the link  is a rational homology sphere plays a role; otherwise
$\pic^{l'}(Z)=H^1(\calO_Z^*)/H^1(\tX,\Z)$ would have a complex torus component as well).
This affine structure will be exploited deeply in the body of the paper.
Finally we also mention that the Abel map itself is algebraic, and in fact its (rather non--trivial)
expression   in local charts
can be done explicitly via Laufer duality (integrating forms along divisors in $\tX$), for details see section 7.

Since the Abel map is not proper,  its image usually is not closed,
and it can be a rather complicated constructible
set (it can be singular as well, cf. Example \ref{ex:whSING}).
In this note we give several examples and also we characterize  the dimension of this image.
It  is not topological,
usually it depends in a subtle way on the analytic structure of the singularity. In order to
show the presence of possible anomalies we list
several examples based on the theory  of elliptic and splice quotient singularities
(certain familiarity with them might help essentially the reading).

We also show that all the fibers of $c^{l'}$ are smooth (irreducible, quasiprojective),
however, their dimensions might jump. The dimension of $c^{-1}(\calL)$ ($\calL\in \pic^{l'}(Z)$)
is $h^0(Z,\calL)-h^0(\calO_Z)=(l',Z)+h^1(Z,\calL)-h^1(\calO_Z)$. Any fiber appears as quotient by the
algebraic free proper action of $H^1(\calO_Z^*)$, which,  as algebraic variety, has dimension $h^1(\calO_Z)$.
(This also shows a major difference with the curve cases, where the space of effective divisors
associated with a bundle has the form $H^0(\calL)\setminus \{0\}$, and the action is the projectivization action of
$\C^*$. In particular, the fibers are projective spaces.)
The above relation makes the connection with another
 major problem/task of the theory, namely determination of
possible values of $h^1(Z,\calL)$.

This `$h^1$'--problem can be formulated even independently of the Abel map, and in fact, it was our
most important motivation.
Let us fix a topological type (say, the resolution graph $\Gamma$), and we consider an arbitrary analytic
type of singularity and its resolution supported by $\Gamma$. Then for fixed Chern class $l'$ and cycle $Z$
% (which are again  a topological data)
 we can also consider all the possible line bundles $\calL\in \pic^{l'}(Z)$. The challenge is
to determine all the possible values of $h^1(Z, \calL)$, and understand/organize  them is a
conceptual way. This can be split in two major steps: in the first case one varies all the analytic structures,
in the second case one fixes an analytic structure $(X,o)$
(and one of its resolutions $\tX$)
and one moves $\calL\in \pic^{l'}(Z)$. E.g., in this second case, one can ask for the stratification $\cup_kW_{l',k}$ of $\pic^{l'}(Z)\simeq H^1(\calO_Z)$ by $W_{l',k}=\{\calL\,:\,
h^1(\calL)=k\}$. (These are the analogues  of the Brill--Noether strata. For the Brill--Noether
theory see \cite{ACGH,Flamini}.) Or, one can search for  the possible values
$k$ when $W_{l',k}\not=\emptyset$. In the body of the article we will provides several bounds and partial results
(with sharp lower bounds provided by generic structures).
Though the older previous results in normal surface singularities focus mostly
on particular analytic structures (rational, elliptic, weighted homogeneous, splice quotient, etc),
and to special line bundles (e.g. of type $\calO_Z(l)$),
in the present note we aim to create a theory which helps to attack the general case, e.g. to
treat the case of generic analytic structure or the generic line bundles as well.

Part of the results  are  reduced to the case of Abel maps which are dominant. This case is completely characterized
and solved  in
section 4; we show  in Theorem \ref{th:dominant} that the fact that $c^{l'}(Z)$ is dominant depend only on
combinatorial properties of the pair $Z$ and $l'$, and furthermore,
in such a case, $h^1(Z,\calL_{gen})=0$ for
$\calL_{gen}$ generic in $\pic^{l'}(Z)$. For fixed and large $Z$ (in which case $\pic^{l'}(Z)=\pic^{l'}(\tX)$)
we introduce $\calS'_{dom}$ as the set of those Chern classes $l'$
for which $c^{-l'}$ is dominant, and we list several
properties of it. It is a semigroup of the topological Lipman semigroup/cone $\calS'$, and it has several properties
of the analytic semigroups. The study of dominant maps emphasizes again
 the importance of the study of generic
line bundles.
In section 5
we will list several cohomological properties for the generic line bundle $\calL_{gen}$ of
$\pic^{l'}$  (e.g. we determine its $h^1$ topologically, and we show that this value
 is a sharp lower bound for any $h^1(\calL)$). Similarly, the generic line bundle
 {\it of the image of the Abel map}
 $c^{l'}$ is also
 studied (its $h^1$ is the codimension of $\im (c^{l'})$ and it is also the sharp lower bound for any
 $h^1(\calL)$ with $\calL\in \im(c^{l'})$). Upper bounds for $h^1(Z,\calL)$ are also established.

The Abel map is compatible with additive structure of the divisors and multiplicative structure of the line bundles.
The point is that if we iterate a Chern class sufficiently many times (that is, we replace
$l'$ with $nl'$ where $n\gg 0$), then the image of $c^{nl'}$ becomes an affine space, whose associated vector spaces
 stabilizes, and which depends only on the `dual-base-support' of $l'$ (see Theorem \ref{th:mult}).
This collection of
 stabilized linear subspaces (as a linear subspace arrangement) and their
dimensions become the source of  important new analytic invariants, see section 6.
E.g., the dimensions serve as
 correction terms in our new analytic surgery formulae (see e.g. Theorem \ref{th:mult}).
If the analytic structure of $(X,0)$ is `nice' (e.g. splice quotient), then these correction
 invariants can be
connected with known analytic invariants computable from the Poincar\'e series of the divisorial filtrations), and in such cases classical formulae can be recovered or improved (see section 9). It is worth to emphasize  that
the classical surgery formulae (see e.g. \cite{Ok}, or \cite{BN}) are valid for the special
`natural' line bundles  and under special analytic conditions, and it was not  clear at all if any extension to the general case might exists and/or how to define the correction terms in such  general situations. In the present note
 this is solved via the above  stabilized dimensions of the images of Abel maps
(without any required restriction). Furthermore,
 under the special analytic conditions of the old surgery formulae,  they are identified with
the classical correction terms.

Starting from section 7 we develop the `duality picture' between divisors and
differential forms. This not only describes the Abel map and its tangent map, but it gives a computational tool
in concrete examples as well.
The invariants of stable case in language of differential forms are described in section 8.
The general non--stable case is analyzed in section 10.

When a concrete basis of
$H^0(\tX\setminus E, \Omega^2_{\tX})/H^0(\tX,\Omega^2_{\tX})$ (dual to $H^1(\calO_{\tX})$) can be
explicitly determined, the Abel map also becomes more transparent, and several of the above listed
problems have precise (sometimes even combinatorial) solutions. This is exemplified in the case of
superisolated (section 11) and weighted homogeneous (section   12) singularities.
Some additional properties in the Gorenstein situation are also listed.

\vspace{2mm}

In the sequel $\#A$ denotes the cardinality of the finite set $A$.

\section{Preliminaries}\label{s:prel}

%\subsection{}
In this section we review some basic facts
about topological and analytical invariants of surface singularities,
and we introduce the  needed notations as well.

\subsection{The resolution}\label{ss:notation}
Let $(X,o)$ be the germ of a complex analytic normal surface singularity,
 and let us fix  a good resolution  $\phi:\widetilde{X}\to X$ of $(X,o)$.
We denote the exceptional curve $\phi^{-1}(0)$ by $E$, and let $\cup_{v\in\calv}E_v$ be
its irreducible components. Set also $E_I:=\sum_{v\in I}E_v$ for any subset $I\subset \calv$.
The support of a cycle $l=\sum n_vE_v$ is defined as  $|l|=\cup_{n_v\not=0}E_v$.
For more details see \cite{Lauferbook,trieste,NCL,Nfive,LPhd}.
\subsection{Topological invariants}\label{ss:topol}
Let $\Gamma$ be the dual resolution graph
associated with $\phi$;  it  is a connected graph.
Then $M:=\partial \widetilde{X}$ can be identified with the link of $(X,o)$, it is also
an oriented  plumbed 3--manifold associated with $\Gamma$.
%Then $\widetilde{X}$, as a smooth manifold,
%serves as the plumbing  4--manifold associated with $\Gamma$,
%and  is the plumbed
%3--manifold (and also the `link' of $(X,o)$).
%A resolution is minimal if there is no rational $E_v$ with $E_v^2=-1$.
It is known that $(X,o)$ locally is homeomorphic with the real cone over $M$,
and $M$ contains the same information as $\Gamma$.
We will assume that  {\it $M$ is a rational homology sphere},
or, equivalently,  $\Gamma$ is a tree and all genus
decorations of $\Gamma$ are zero. We use the same
notation $\mathcal{V}$ for the set of vertices, and $\delta_v$ for the valency of a vertex $v$.
%, and $\mathcal{N}$ for the set of nodes, i.e. vertices with $\delta_v\geq 3$.
%Let $\widetilde{X}$ be the plumbed 4--manifold associated with
%$\mathcal{T}$, hence $\partial \widetilde{X} = M$.

$L:=H_2(\widetilde{X},\mathbb{Z})$, endowed
with a negative definite intersection form  $(\,,\,)$, is a lattice. It is
freely generated by the classes of 2--spheres $\{E_v\}_{v\in\mathcal{V}}$.
 The dual lattice $L':=H^2(\widetilde{X},\mathbb{Z})$ is generated
by the (anti)dual classes $\{E^*_v\}_{v\in\mathcal{V}}$ defined
by $(E^{*}_{v},E_{w})=-\delta_{vw}$ (where $\delta_{vw}$ stays for the  Kronecker symbol).
The intersection form embeds $L$ into $L'$. Then $H_1(M,\mathbb{Z})\simeq L'/L$, and it is
abridged by $H$.
Usually one  identifies $L'$ with those rational cycles $l'\in L\otimes \Q$ for which
$(l',L)\in\Z$, or, $L'={\rm Hom}_\Z(L,\Z)$.

There is a natural (partial) ordering of $L'$ and $L$: we write $l_1'\geq l_2'$ if
$l_1'-l_2'=\sum _v r_vE_v$ with all $r_v\geq 0$. We set $L_{\geq 0}=\{l\in L\,:\, l\geq 0\}$ and
$L_{>0}=L_{\geq 0}\setminus \{0\}$.

Each class $h\in H=L'/L$ has a unique representative $r_h=\sum_vr_vE_v\in L'$ in the semi-open cube
(i.e. each $r_v\in \bQ\cap [0,1)$), such that its class  $[r_h]$ is $h$.
 %and denote the class of $x\in L'$ in $H$ by $[x]$.

All the $E_v$--coordinates of any $E^*_u$ are strict positive.
We define the Lipman cone as $\calS':=\{l'\in L'\,:\, (l', E_v)\leq 0 \ \mbox{for all $v$}\}$.
As a monoid it is generated over $\bZ_{\geq 0}$ by $\{E^*_v\}_v$.
%We also write $\calS:=\calS'\cap L$.

The  \emph{multivariable topological Poincar\'e series} is the
Taylor expansion $Z(\mathbf{t})=\sum_{l'} z(l')\mathbf{t}^{l'}$ at the  origin of the rational function
\begin{equation}\label{eq:ratfunc}
{\mathcal Z}(\mathbf{t})=\prod_{v\in \mathcal{V}} (1-\mathbf{t}^{E^*_v})^{\delta_v-2},
\end{equation}
where
$\mathbf{t}^{l'}:=\prod_{v\in \mathcal{V}}t_v^{l'_v}$  for any $l'=\sum _{v\in \mathcal{V}}l'_vE_v\in L'$.
By definition, $Z(\bt)$  is supported on $\calS'$.
It has a natural decomposition $Z(\mathbf{t})=\sum_{h\in H}Z_h(\mathbf{t})$, where $Z_h(\mathbf{t})=\sum_{[l']=h}z(l')\mathbf{t}^{l'}$.
(Though the exponents of $\bt^{l'}$ might be rational, that is,
$Z(\bt)\in \Z[[t_1^{1/d},\ldots , t_{|\calv|}^{1/d}]]$, where $d=\det(\Gamma)$,  the right hand side of
(\ref{eq:ratfunc}) still will be called `rational function', and $\sum_{l'}z(l')\bt^{l'}$ a `series'.)
%Sometimes it is more convenient to introduce the \emph{equivariant} function
%$$Z_{H}(\mathbf{t})=
%\sum_{h\in H} Z_h(\mathbf{t})\cdot h:={\mbox{Taylor expansion}}\Big(
%\prod_{v\in \mathcal{V}} (1-[E^*_v]\mathbf{t}^{E^*_v})^{\delta_v-2}
%\Big)\in \bZ[[{\bf t}]][H].$$
% with
%coefficients in the group ring $\mathbb{Z}[H]$. One decomposes into $f_{H}(\mathbf{t})=
%\sum_{h\in H}f_h(\mathbf{t})\cdot h$ and the Taylor expansion of $f_h(\mathbf{t})$ is
%$Z_h(\mathbf{t})$.

\subsection{Analytic invariants}\label{ss:analinv}
In this manuscript we focus mainly on the structure of the Picard group  and the holomorphic line bundles
of $\widetilde{X}$.
 The group ${\rm Pic}(\widetilde{X}):=
 H^1(\widetilde{X}, \calO_{\widetilde{X}}^*)$
of  isomorphism classes of {\it holomorphic } line bundles on $\widetilde{X}$ appears in the exact sequence
\begin{equation}\label{eq:PIC}
0\to {\rm Pic}^0(\widetilde{X})\to {\rm Pic}(\widetilde{X})\stackrel{c_1}
{\longrightarrow} L'\to 0, \end{equation}
where  $c_1$ denotes the first Chern class. Here
$ {\rm Pic}^0(\widetilde{X})=H^1(\widetilde{X},\calO_{\widetilde{X}})\simeq
\C^{p_g}$, where $p_g$ is the {\it geometric genus} of
$(X,o)$. $(X,o)$ is called {\it rational} if $p_g(X,o)=0$.
 Artin in \cite{Artin62,Artin66} characterized rationality topologically
via the graphs; such graphs are called `rational'. By this criterion, $\Gamma$
is rational if and only if $\chi(l)\geq 1$ for any effective non--zero cycle $l\in L_{>0}$.
Here $\chi(l)=-(l,l-Z_K)/2$, where $Z_K\in L'$ is the (anti)canonical cycle
identified by adjunction formulae
$(-Z_K+E_v,E_v)+2=0$ for all $v$.

The epimorphism
$c_1$ admits a unique group homomorphism section $l'\mapsto s(l')\in {\rm Pic}(\widetilde{X})$,
 which extends the natural
section $l\mapsto \calO_{\widetilde{X}}(l)$ valid for integral cycles $l\in L$, and
such that $c_1(s(l'))=l'$  \cite{trieste,OkumaRat}.
% We write  $\calO_{\widetilde{X}}(l')$ for $s(l')$, and
We call $s(l')$ the  {\it natural line bundle} on $\widetilde{X}$ with Chern class $l'$.
By  its definition, $\calL$ is natural if and only if some power $\calL^{\otimes n}$
of it has the form $\calO_{\tX}(l)$ for some $l\in L$.

Natural line bundles appear in the presence of coverings as well. Indeed, let $\pi:(X_{ab},o)\to (X,o)$  be
the {\it universal abelian covering} of $(X,o)$ (associated with the homomorphism
$\pi_1(M)\to H_1(M)=H$) and let $\widetilde{\pi}:\widetilde{X_{ab}}\to \widetilde{X}$ be the
(normalized) pullback of $\pi$ by the resolution $\phi:\widetilde{X}\to X$. Then the Galois group $H$ acts on
$\widetilde{\pi}_*(\calO_{X_{ab}})$, whose eigensheaves are
%certain natural line bundles \cite{trieste}:
$\widetilde{\pi}_*(\calO_{X_{ab}})=\oplus_{h\in H}s(-r_h)$ \cite{trieste}.
Hence, in this way, one recovers all the natural  line bundles with Chern classes in the open--closed cube.
Those with arbitrary Chern clasess satisfy $s(-l-r_h)=\calO_{\widetilde{X}}(-l)\otimes s(-r_h)$ for  certain $l\in L$.

In the sequel we write uniformly $\calO_{\widetilde{X}}(l')$ for $s(l')$.

Since $\calO_{\widetilde{X_{ab}}}$ might have  only cyclic quotient singularities,
  $p_g(X_{ab},o)=h^1(\calO_{\widetilde{X_{ab}}})$ and
$H^1(\calO_{\widetilde{X_{ab}}})=\oplus_h H^1(\widetilde{X},\calO_{\widetilde{X}}(-r_h))$.
The dimensions $p_g(X_{ab},o)_h:=h^1(\widetilde{X},\calO_{\widetilde{X}}(-r_h)) $ ($h\in H$)
are called the {\it equivariant geometric genera} of $(X,o)$. Clearly,
$\sum_h p_g(X_{ab},o)_h=p_g(X_{ab},o)$ and $p_g(X_{ab},o)_0=p_g(X,o)$.

\bekezdes
Similarly, if $Z\in L_{>0}$ is an effective non--zero integral cycle supported by $E$, and $\calO_Z^*$ denotes
the sheaf of units of $\calO_Z$, then ${\rm Pic}(Z)=H^1(Z,\calO_Z^*)$ is  the group of isomorphism classes
of invertible sheaves on $Z$. It appears in the exact sequence
  \begin{equation}\label{eq:PICZ}
0\to {\rm Pic}^0(Z)\to {\rm Pic}(Z)\stackrel{c_1}
{\longrightarrow} L'(|Z|)\to 0, \end{equation}
where ${\rm Pic}^0(Z)=H^1(Z,\calO_Z)$.
Here and in the sequel, $L(|Z|)$ denotes the sublattice of $L$ generated by
the  base element $E_v\subset |Z|$, and $L'(|Z|)$ is its dual lattice.

If $Z_2\geq Z_1$ then there are natural restriction maps (for simplicity we denote all of them by
the same symbol $r$), ${\rm Pic}(\widetilde{X})\to {\rm Pic}(Z_2)\to {\rm Pic}(Z_1)$. Similar restrictions are defined at  ${\rm Pic}^0$ level too.
These restrictions are homomorphisms of the exact sequences  (\ref{eq:PIC}) and (\ref{eq:PICZ}):

 \begin{equation}\label{eq:diagr2} \
\end{equation}

\vspace*{-2cm}

\begin{picture}(400,80)(-100,-10)
\put(0,40){\makebox(0,0)[l]{$
0\to {\rm Pic}^0(\widetilde{X})\to {\rm Pic}(\widetilde{X})\stackrel{c_1}
{\longrightarrow} \ \ L'\ \ \ \to 0
$}}
 \put(0,5){\makebox(0,0)[l]{$
 0\to {\rm Pic}^0(Z)\to {\rm Pic}(Z)\stackrel{c_1}
{\longrightarrow} L'(|Z|)\to 0
$}}

\put(35,22){\makebox(0,0){$\downarrow$}}\put(35,20){\makebox(0,0){$\downarrow$}}
\put(85,22){\makebox(0,0){$\downarrow$}}\put(85,20){\makebox(0,0){$\downarrow$}}
\put(145,22){\makebox(0,0){$R$}}
\put(93,22){\makebox(0,0){$r$}}
\put(135,22){\makebox(0,0){$\downarrow$}}\put(135,20){\makebox(0,0){$\downarrow$}}
%\put(185,22){\makebox(0,0){$\downarrow$}}
%\put(195,22){\makebox(0,0){$\simeq$}}
\end{picture}

Furthermore, for any $l'\in L'$ we define a line bundle in $\pic(Z)$ by
$r(s(l'))={\mathcal O}_{\widetilde{X}}(l')|_{Z}$,
%we denote them by ${\mathcal O}_{Z}(l')$,
and we call them  {\it restricted natural line bundles } on $Z$.
They satisfies $c_1( r(s(l')))=R(l')$.

We also use the notations ${\rm Pic}^{l'}(\widetilde{X}):=c_1^{-1}(l')
\subset {\rm Pic}(\widetilde{X})$ and
${\rm Pic}^{R(l')}(Z):=c_1^{-1}(R(l'))\subset{\rm Pic}(Z)$
respectively. Multiplications by $\calO_{\widetilde{X}}(-l')$ and  by
$\calO_{\tX}(-l')|_Z$ provide natural (affine  space) isomorphisms
${\rm Pic}^{l'}(\widetilde{X})\to {\rm Pic}^0(\widetilde{X})$ and
${\rm Pic}^{R(l')}(Z)\to {\rm Pic}^0(Z)$.

Here an important warning is appropriate. If $\tX'$ is a small connected neighbourhood
of some exceptional curves $\cup_{v\in\calv'}E_v$, $\calv'\subset \calv$,
then similarly as for $\tX$, but now starting with the invariants of $\tX'$, one can define the natural line bundles $\calO_{\tX'}(l')$ for any $l'\in L'(\calv')$. However, for $l'\in L'$,
in general, if $\calv'\not=\calv$ then   $\calO_{\tX}(l')|_{\tX'}\not=\calO_{\tX'}(R(l'))$, though both line bundles have the same Chern class (here $R$ is the restriction).
That is, $\calO_{\tX}(l')|_{\tX'}$ in general is not the intrinsic  natural line bundle of $\tX'$.

Similarly, for any cycle $Z$ one can define the (intrinsic) natural line bundles of $Z$ by group
section of (\ref{eq:PICZ}) by similar properties as the natural line bundles of $\tX$ are defined.
If $|Z|=E$ then they agree with the restrictions  $\calO_{\tX}(l')|_Z$.
However, if $|Z|\not=E$ then it can happen that $\calO_{\tX}(l')|_Z$ is not natural on $Z$.
This explains the  use of the terminology `restricted natural
line bundle' for $\calO_{\tX}(l')|_Z$: they are always restriction from the $\tX$--level.
In order to simplify the notations we will also write $\calO_Z(l'):=\calO_{\tX}(l')|_Z$, $l'\in L'$.

For any line bundle $\calL\in\pic(\tX)$ we also write $\calL(l'):=\calL\otimes \calO_{\tX}(l')$.

\bekezdes One of our  main interest is to understand the stratification $\{ \calL\in {\rm Pic}(\widetilde{X})
\, :\, h^1(\calL)=k\}_{k\in \Z_{\geq 0}}$
of ${\rm Pic}(\widetilde{X})$. In the literature about $h^1(\calL)$ --- for arbitrary $\calL$ ---
very little is known. However, about the natural line bundles (of some special analytic structures $(X,o)$)
recently several results were proved, see e.g.
 \cite{CDGPs,CDGEq,NPS,NJEMS,NCL}. Since some of these facts
are used in  several examples and play key role in the general presentation
 we review  them in the next subsection.

\bekezdes\label{bek:AMS}
{\bf The analytic multivariable Poincar\'e series} is defined as follows \cite{NCL}, see also \cite{CDGPs,CDGEq}.
%For any $\calL\in \pic(Z)$ one writes $\calL(-l'):= \calL\otimes \calO_Z(-l')$ and
%defines
%\begin{equation}\label{eq:pZL}
%p_{\widetilde{X},\calL}:= \sum_{I\subset \calv} \, (-1)^{|I|+1}\,
%\dim\ \frac{H^0(\widetilde{X},\calL)}{H^0(\widetilde{X},\calL(-E_I))},
%\end{equation}
%and the multivariable series $P_{\widetilde{X},\calL}(\bt):= \sum_{l'\in L'} \, p_{\widetilde{X},\calL(-l')} \bt^{l'}$.
For every  $\calL\in \pic(\widetilde{X})$  (respectively, for $Z\geq E$ and $\calL\in \pic(Z)$)
%one writes $\calL(-l'):= \calL\otimes \calO(-l')$ and
one defines
\begin{equation*}\label{eq:pZL}
p_{\calL}:= \sum_{I\subset \calv} \, (-1)^{|I|+1}\,\dim\ \frac{H^0(\widetilde{X},\calL)}{H^0(\widetilde{X},\calL(-E_I))}
\ \mbox{and} \
p_{Z,\calL}:= \sum_{I\subset \calv} \, (-1)^{|I|+1}\,\dim\ \frac{H^0(Z,\calL)}{H^0(Z-E_I,\calL(-E_I))}.
\end{equation*}
For $Z\gg 0$ and $\calL\in \pic(\widetilde{X})$ one has $p_{\calL}=p_{Z,\calL|_Z}$.
If $(c_1(\calL),E_v)<0$ for some $v\in\calv$, then
$H^0 (\widetilde{X},\calL(-E_{I\cup v}))\to H^0 (\widetilde{X},\calL(-E_{I}))$ is an isomorphism
 for any $I\not\ni v$ (and similar isomorphism holds  for any $Z\geq E$), hence
 \begin{equation}\label{eq:cegyL}
 p_{\calL}= p_{Z,\calL}=0 \ \ \mbox{whenever} \ \  c_1(\calL)\not\in -\calS'.
 \end{equation}
At the level of $\widetilde{X}$ one defines a
multivariable series as $P_{\calL}(\bt):= \sum_{l'\in L'} \, p_{\calL(-l')} \bt^{l'}$.
It also has an $H$--decomposition $\sum_h P_{\calL,h}$,
 $P_{\calL,h}=\sum_{[l']=h} p_{\calL(-l')}\bt^{l'}$,
 according to the classes $[l']\in H$ of the exponents of $\bt^{l'}$.
By (\ref{eq:cegyL}) it is supported on $c_1(\calL)+\calS'$. We write $P(\bt):=P_{\calO_{\tX}}(\bt)=\sum_{l'}p_{\calO_{\tX}(-l')}\bt^{l'}$.

The first cohomology of the natural line bundles and the series $P(\bt)$ are linked by the following identity
proved in  \cite{NCL}:
 \begin{equation}\label{eq:HPol2}
   h^1 (\widetilde{X}, \calO(-r_h-l)) = -
\sum_{a \in L,\, a \ngeq 0}
    p_{\calO(-r_h-l-a)} +p_g(X_{ab},o)_h + \chi(l)-(l,r_h).
\end{equation}

\bekezdes
Recently there is an intense activity in the comparison of the analytic invariant $P(\bt)$
 and the topological $Z(\bt)$ (their coincidence imply e.g. the so-called Seiberg--Witten Invariant Conjecture
 \cite{NJEMS,NCL}).
For the equality of $P(\bt)$ and $Z(\bt)$ for certain families singularities
(rational, weighted homogeneous, splice quotient) see e.g.
 \cite{CDGPs,CDGEq,NPS,NCL} and the references therein.

%There are parallel definitions for $Z$ instead of $\widetilde{X}$.
%Under the convention $H^0(Z,\calL)=0$ and $p_{Z,\calL}=0$ whenever $Z\not>0$, we set
%\begin{equation}\label{eq:PZL}
%P_{Z,\calL}({\bf t}):=\sum _{l\in L}p_{Z-l,\calL(-l)}{\bf t}^{l}.
%\end{equation}
%Note that this is, in fact, is a finite sum. For its properties see ???????????????
%\marginpar{{\bf sect of series}}

%\subsection{Cohomology of line bundles and the analytic series $P_{\calL}(\bt)$.}\label{ss:CLP}
%(\ref{eq:08}) combined for natural line bundles $\calO(-r_h-l)$ and $\calO(-r_h)$ ($l\in L$)

%Using again Grauert--Riemenschneider vanishing for the left hand side (for the appropriate Chern classes) we get
%that the quasipolynomial associated with $\calS'$ for
%$\sum_{a \in L,\, a \ngeq 0}   p_{\calO(-r_h-l-a)} $ is
%$\chi(l)-(l,r_h)+h^1 (\widetilde{X}, \calO(-r_h))$. Therefore
%\begin{equation}\label{eq:SUMQP3}
%\mathrm{pc}^{S'_{\mathbb{R}}}(\sum_{l\in L} p_{\calO(-r_h-l)}\bt^{l}) =h^1(\widetilde{X},\calO(-r_h))=
%p_g(X_{ab},o)_h.
%\end{equation}
%For $h=0$ this reads as $\mathrm{pc}^{S'_{\mathbb{R}}}(P_{\calO,0}) =p_g$.

We emphasize that in the previous   results in the literature the main goal mostly
was to characterize for special
(`nice') analytic structures the sheaf--theoretical invariants $h^1(\calL)$ topologically,  and
those methods were applicable only for natural line bundles $\calL$.
In the present note our goal is to treat $h^1(\calL)$ for any line bundle and for
any analytic structure.

\subsection{Notations.}\label{not:min}
In the body of the article we will present several examples. In them we will
use the following standard notations.
 We will write $Z_{min}\in L$ for the  {\it minimal} (or fundamental) cycle of Artin, which is
the minimal non--zero cycle of $\calS'\cap L$ \cite{Artin62,Artin66}. Yau's {\it maximal ideal cycle}
$Z_{max}\in L$ is the  divisorial part of the pullback of the maximal ideal $\m_{X,o}\subset \calO_{X,o}$, i.e.
 $\phi^*{\m_{X,o}}\cdot \calO_{\widetilde{X}}=\calO_{\widetilde{X}}(-Z_{max})\cdot \cali$,
where $\cali$ is an ideal sheaf with 0--dimensional support \cite{Yau1}. In general $Z_{min}\leq Z_{max}$.
$Z_{min}$ can be found by {\it Laufer's algorithm} \cite{Laufer72}. This algorithm also shows that
$h^0(\calO_{Z_{min}})=1$, hence  $h^1(\calO_{Z_{min}})=1-\chi(Z_{min})$ is topological.

\section{Effective Cartier divisors} \label{s:efcart}
\subsection{} \label{ss:efcart}
For any $Z\in L_{>0}$ let
$\eca(Z)$  be the space of (analytic) effective Cartier divisors on %$\widetilde{X}$ and
$Z$. Their supports are zero--dimensional in $E$.
Taking the class of a Cartier divisor provides  the {\it Abel map}
%$c:\eca(\widetilde{X})\to \pic(\widetilde{X})$ and
$c:\eca(Z)\to \pic(Z)$.
Let
% $\eca^{l'}(\widetilde{X})$ and
$\eca^{l'}(Z)$ be the set of effective Cartier divisors with
Chern class $l'\in L'(|Z|)$, that is,
$\eca^{l'}(Z):=c^{-1}(\pic^{l'}(Z))$.
Sometimes we denote the restriction of $c$ by  $c^{l'}:\eca^{l'}(Z)
\to \pic^{l'}(Z)$, $l'\in L'(|Z|)$. It is also convenient to use the simplified notation
$\eca^{l'}(Z):=\eca^{R(l')}(Z)$ and $\pic^{l'}(Z):= \pic^{R(l')}(Z)$ for any $l'\in L'$.

For any $Z_2\geq Z_1>0$ (and $l'\in L'$) one has the commutative diagram
\begin{equation}\label{eq:diagr}
\begin{picture}(200,45)(0,0)
\put(50,37){\makebox(0,0)[l]{$
\eca^{l'}(Z_2)\,\longrightarrow \, \pic^{l'}(Z_2)$}}
\put(50,8){\makebox(0,0)[l]{$
\eca^{l'}(Z_1)\,\longrightarrow \, \pic^{l'}(Z_1)$}}
\put(70,22){\makebox(0,0){$\downarrow$}}
\put(135,22){\makebox(0,0){$\downarrow$}}
\end{picture}
\end{equation}

Regarding the existence of $\eca(Z)$ and the Abel map we note the following.
First, by a theorem of Artin \cite[3.8]{Artin69}, there exists an affine algebraic variety $Y$ and a point $y\in Y$ such that $(Y,y)$ and $(X,o)$ have isomorphic formal completions. Then, according to Hironaka \cite{Hironaka65},
$(Y,y)$ and $(X,o)$ are analytically isomorphic. In particular, we can regard $Z$
as a projective  algebraic scheme, in which case $\eca^{l'}(Z)$ together with the algebraic Abel map,
as part of the general theory,
%(together with its source and target)
was constructed by Grothendieck \cite{Groth62}, see e.g. the article of
Kleiman \cite{Kleiman2013} with several comments and citations and
the book of Mumford for curves on algebraic surfaces \cite{MumfordCurves}.
In particular,
$$c:\eca(Z)\to \pic(Z) \ \mbox{ is algebraic}.$$
(For concrete charts of $\eca^{l'}(Z)$ see e.g. the proof of theorem \ref{th:smooth} and
for the Abel map in concrete charts see section \ref{s:ADIFFFORMS}.)
Though these spaces are identified   by the general theory,
in the body of this  note we  verify directly
several properties of them
in order to illuminate the peculiarities of the present situation,  e.g.
we discuss the smoothness and the dimension
of $\eca^{l'}(Z)$ and the structure of the fibers of the
Abel map: the related numerical invariants will be crucial in the further
 discussions. Doing this we develop  several special
properties of the Abel map in the language of invariants of normal
surface singularities; these connections will be exploited deeply.

We write $\eca(\tX)$ for  the  {\it set}
 of effective Cartier divisors on $\tX$.

\bekezdes
Let us fix  $Z\in L$, $Z>0$.
As usual, we say that $\calL\in \pic^{l'}(Z)$ has no fixed components if
\begin{equation}\label{eq:H_0}
H^0(Z,\calL)_{\reg}:=H^0(Z,\calL)\setminus \bigcup_{E_v\subset |Z|} H^0(Z-E_v, \calL(-E_v))
\end{equation}
is non--empty. Note that $H^0(Z,\calL)$ is a module over the algebra
$H^0(\calO_Z)$, hence one has a natural action of $H^0(\calO_Z^*)$ on
$H^0(Z, \calL)_{\reg}$. For the next lemma see e.g. \cite[\S 3]{Kl}.

\begin{lemma}\label{lem:H_0} $\calL\in \pic^{l'}(Z)$  is in the image of
 $c^{l'}:\eca^{l'}(Z) \to \pic^{l'}(Z)$  if and only if
$H^0(Z,\calL)_{\reg}\not=\emptyset$. In this case, $c^{-1}(\calL)=H^0(Z,\calL)_{\reg}/H^0(\calO_Z^*)$.
\end{lemma}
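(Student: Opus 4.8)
The plan is to identify an effective Cartier divisor $D$ supported on $Z$ with a section of $\calL = \calO_Z(D)$ vanishing exactly on $D$, and to check that this correspondence is well-defined up to the action of $H^0(\calO_Z^*)$. First I would recall that an effective Cartier divisor $D$ on $Z$ is, locally, the zero scheme of a non--zero-divisor $f_i \in \calO_{Z,p_i}$ at each of its (finitely many, zero-dimensional) support points; the data of these local equations glues to give the line bundle $\calL = \calO_Z(D)$ together with a canonical global section $s_D \in H^0(Z,\calL)$ whose zero scheme is $D$. The key point is that $s_D$ has no fixed components: if $s_D$ vanished identically along some component $E_v \subset |Z|$, then the local equations $f_i$ would all be zero-divisors (being divisible by a local equation of $E_v$), contradicting that $D$ is Cartier. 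Hence $s_D \in H^0(Z,\calL)_{\reg}$, which shows that any $\calL$ in the image of $c^{l'}$ satisfies $H^0(Z,\calL)_{\reg} \neq \emptyset$.

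Conversely, given $\calL \in \pic^{l'}(Z)$ with a section $s \in H^0(Z,\calL)_{\reg}$, I would define $D$ to be the zero scheme $Z(s)$: locally trivializing $\calL$ near each point $p \in |Z|$, the section $s$ becomes a function $f_p \in \calO_{Z,p}$, and the condition that $s$ has no fixed component ensures that $f_p$ is not divisible by any local equation of a component $E_v$ through $p$, i.e.\ $f_p$ is a non-zero-divisor in $\calO_{Z,p}$ (here one uses that the associated primes of $\calO_Z$ are exactly the generic points of the $E_v \subset |Z|$, so a function avoiding all of them is regular). Thus $Z(s)$ is an effective Cartier divisor with $\calO_Z(Z(s)) = \calL$, and in particular $c_1(\calL) = l'$, so $D \in \eca^{l'}(Z)$. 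This gives the reverse inclusion.

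For the fiber description, I would show the two constructions $D \mapsto s_D$ and $s \mapsto Z(s)$ are mutually inverse up to rescaling: $Z(s_D) = D$ by construction, and two sections $s, s' \in H^0(Z,\calL)_{\reg}$ have the same zero scheme if and only if $s' = u \cdot s$ for some $u \in H^0(\calO_Z^*)$ — the ratio $s'/s$ is a priori a meromorphic section of $\calO_Z$ with neither zeros nor poles (both vanishing divisors agree and are Cartier), hence a global unit. Therefore $c^{-1}(\calL)$ is in natural bijection with $H^0(Z,\calL)_{\reg}/H^0(\calO_Z^*)$.

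The main obstacle I expect is the careful local commutative-algebra verification that ``no fixed component'' is exactly equivalent to ``local equation is a non-zero-divisor'' on the non-reduced scheme $Z$: one must correctly describe the associated primes / embedded structure of $\calO_{Z,p}$ (which are controlled by the reduced structure $E_{\rm red}$ since $Z$ is a curve with no zero-dimensional embedded components in the relevant charts) and confirm that divisibility by some $\calO_{\tX}(-E_v)$ is the only way a function can become a zero-divisor. Everything else — gluing the local equations into a bundle and a section, comparing zero schemes, and identifying the unit ambiguity — is a routine unwinding of definitions, and matches the treatment in \cite[\S 3]{Kl}.
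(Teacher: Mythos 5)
Your argument is correct, and it is the standard effective-Cartier-divisor/section correspondence; the paper itself gives no proof of this lemma, deferring entirely to the citation of \cite[\S 3]{Kl}, and what you have written is precisely the content of that reference. The one point you rightly flag as delicate — that ``no fixed component'' (i.e.\ $s\notin H^0(Z-E_v,\calL(-E_v))$ for every $v$) is equivalent to the local equation being a non-zero-divisor at every point — does go through, because $\calO_Z$ has no embedded primes (being a Cartier divisor in the smooth surface $\tX$) and each $E_v$ is irreducible, so local divisibility by the equation of $E_v$ at one point forces vanishing along all of $E_v$.
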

In the next discussion we assume $Z\geq E$ basically  imposed by the
easement of the presentation; everything can be adopted  for any
$Z>0$, see e.g.  \ref{rem:Zsupport} or \ref{ss:Lauferseq}.

Note that
$H^0(Z,\calL)_{\reg}\not=\emptyset \
 \Rightarrow \ H^0(\calL|_{E_v})\not=0 \ \forall \, v
\Rightarrow \ (l',E_v)\geq 0 \ \forall \, v\ \Rightarrow\  l'\in -\calS'$.
Conversely, if $l'=-\sum_vm_vE^*_v\in -\calS'$ (for certain $m_v\in\bZ_{\geq 0}$), and
$l'\not=0$, then one can construct
for each $E_v$ cuts in $\widetilde{X}$
intersecting $E_v$ in a generic point and having with  it intersection multiplicity $m_v$. Since $l'\not=0$ their collection is nonempty, and it
provides elements in $\eca^{l'}(\tX)$ and $\eca^{l'}(Z)$ respectively (the second one by restriction).
However, this collection is empty whenever $l'=0$, hence this special case needs slightly more attention. By definition we declare that $\eca^0(Z)$ is a space consisting of a point (what we can call the
`empty divisor'), $\eca^0(Z)=\{\emptyset\}$, and $c^0: \eca^0(Z)\to \pic^0(Z)$ is defined as
$c^0(\emptyset)=\calO_Z$. Since for $l'=0$
any section from $H^0(Z,\calL)_{\reg}$ trivializes $\calL$, one has:
$$H^0(Z,\calL)_{\reg}\not=\emptyset \ \Leftrightarrow \ \calL=\calO_Z \ \Leftrightarrow \
\calL\in \im (c^0) \ \ \ \ \ \ (l'=0).$$
%whose classes are $\cO_{\tX}(D)\in\pic^{l'}(\tX)$ and
% $\calO_Z(D)\in\pic^{l'}(Z)$ respectively.
Therefore, the above discussions combined provide
\begin{equation}\label{eq:empty}
\eca^{l'}(Z)\not =\emptyset \ \ \Leftrightarrow \ \ l'\in -\calS'.
\end{equation}
The action of  $H^0(\mathcal{O}^{*}_{Z})$ can be analysed quite explicitly.
 Note that from the exact sequence
 \begin{equation}
 0 \to H^0(\mathcal{O}_{Z-E}(-E)) \to H^0(\mathcal{O}_{Z}) \stackrel{r_E}{\longrightarrow}
 H^0(\mathcal{O}_{E})=\bC\to 0
\end{equation}
one gets that
  $ H^0(\mathcal{O}^{*}_{Z}) = r_E^{-1}(\bC^*)=
  H^0(\mathcal{O}_{Z}) \setminus H^0(\mathcal{O}_{Z-E}(-E))$.
 In particular, $H^0(\calO^*_Z)$,  as algebraic variety, %is isomorphic with the vector space
has the dimension of the vector space $H^0(\cO_Z)$,
$\bP H^0(\calO^*_Z)$ as algebraic variety is isomorphic with $H^0(\cO_{Z-E}(-E))$,
 and $H^0(Z,\calL)_{\reg}/H^0(\calO^*_Z)=
 \bP H^0(Z,\calL)_{\reg}/\bP H^0(\calO^*_Z)$.

 \begin{lemma}\label{lem:free} Assume that
 $H^0 (Z,\calL)_{\reg}\not=\emptyset$. Then

 (a) the action of $H^0(\calO_Z^*)$ on $H^0(Z,\calL)_{\reg}$ is algebraic, free and proper;

 (b) %$ H^0(Z,\calL)_{\reg}$ over $H^0(Z,\calL)_{\reg}/H^0(\calO_Z^*)$
  %--- or, equivalently,
 $\bP H^0(Z,\calL)_{\reg}$ over $\bP H^0(Z,\calL)_{\reg}/\bP H^0(\calO_Z^*)$
 is a principal affine  bundle.

 \noindent
 Hence, the fiber $c^{-1}(\calL)$, $\calL\in \im (c^{l'})$,
 is an irreducible quasiprojective variety of  dimension
 \begin{equation}\label{eq:dimfiber}
 h^0(Z,\calL)-h^0(\calO_Z)=%\chi(Z,\calL)-\chi(Z)+h^1(Z,\calL)-h^1(\calO_Z)=
 (l',Z)+h^1(Z,\calL)-h^1(\calO_Z).
 \end{equation}
 \end{lemma}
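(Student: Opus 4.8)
The plan is to establish the two properties by direct analysis of the $H^0(\calO_Z^*)$-action and then read off the dimension formula. First I would prove (a). The action of $H^0(\calO_Z^*)$ on $H^0(Z,\calL)_{\reg}$ is the restriction of the linear action of $H^0(\calO_Z^*)\subset H^0(\calO_Z)$ on $H^0(Z,\calL)$ by scalar multiplication of sections; since $H^0(\calO_Z)$ is a finite-dimensional commutative $\bC$-algebra and $H^0(\calO_Z^*)$ is the open subvariety $H^0(\calO_Z)\setminus H^0(\calO_{Z-E}(-E))$ (the preimage of $\bC^*$ under $r_E$, as recalled just above), this action is visibly algebraic. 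For \emph{freeness}: if $s\in H^0(Z,\calL)_{\reg}$ and $u\cdot s=s$ with $u\in H^0(\calO_Z^*)$, then $(u-1)s=0$; because $s$ has no fixed components, multiplication by $s$ is injective on $H^0(\calO_Z)$ (this is the content of $s\notin H^0(Z-E_v,\calL(-E_v))$ for all $v$, spelled out stalk-wise: $s$ is a nonzerodivisor in the Artinian local rings along $E$), hence $u=1$. For \emph{properness}: since everything is quasi-affine, I would use the valuative criterion, or more concretely observe that the graph map $H^0(\calO_Z^*)\times H^0(Z,\calL)_{\reg}\to H^0(Z,\calL)_{\reg}\times H^0(Z,\calL)_{\reg}$, $(u,s)\mapsto (us,s)$, is a locally closed immersion (it is injective by freeness, and one recovers $u$ algebraically from the pair $(us,s)$ wherever $s$ is a nonzerodivisor, which is everywhere on $H^0(Z,\calL)_{\reg}$). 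The orbit map being a locally closed immersion gives properness of the action.

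Next I would prove (b). Passing to projectivizations is harmless because the $\bC^*\subset H^0(\calO_Z^*)$ of constants acts on $H^0(Z,\calL)_{\reg}$ by the standard scaling, so $H^0(Z,\calL)_{\reg}/H^0(\calO_Z^*)=\bP H^0(Z,\calL)_{\reg}/\bP H^0(\calO_Z^*)$ as already noted. Now $\bP H^0(\calO_Z^*)\cong H^0(\calO_{Z-E}(-E))$ is an affine space, and the quotient $\bP H^0(Z,\calL)_{\reg}/\bP H^0(\calO_Z^*)$ is precisely $c^{-1}(\calL)$ via Lemma \ref{lem:H_0}. I would show the map $\bP H^0(Z,\calL)_{\reg}\to c^{-1}(\calL)$ is a torsor under the additive group $H^0(\calO_{Z-E}(-E))$ and is locally trivial in the Zariski topology: over the locus where a fixed section $s_0$ of $H^0(Z,\calL)_{\reg}$ is a common nonzerodivisor denominator one trivializes by $s\mapsto s/s_0\in 1+H^0(\calO_{Z-E}(-E))$. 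These trivializations are affine-linear on overlaps, so the bundle is a principal affine bundle (an $H^0(\calO_{Z-E}(-E))$-torsor), which in particular is an affine-space bundle; combined with (a) this also shows $c^{-1}(\calL)$ is smooth and irreducible of dimension $\dim H^0(Z,\calL)_{\reg}-\dim H^0(\calO_Z^*)=h^0(Z,\calL)-h^0(\calO_Z)$.

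Finally, the second equality in (\ref{eq:dimfiber}) is pure Riemann--Roch bookkeeping. By the definition of $\chi$ and additivity, $\chi(\calL)-\chi(\calO_Z)=(c_1(\calL),Z)/2+\cdots$; more directly, tensoring $\calL$ against $\calO_Z$ and comparing with $\calO_Z$ gives $h^0(Z,\calL)-h^1(Z,\calL)-\big(h^0(\calO_Z)-h^1(\calO_Z)\big)=\chi(Z,\calL)-\chi(Z,\calO_Z)=(l',Z)$, where the last equality is the standard computation of the difference of Euler characteristics of $\calL$ and $\calO_Z$ on the (non-reduced) curve $Z$ in terms of the Chern class $l'=c_1(\calL)$. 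Rearranging yields $h^0(Z,\calL)-h^0(\calO_Z)=(l',Z)+h^1(Z,\calL)-h^1(\calO_Z)$.

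I expect the main obstacle to be the clean verification that the $H^0(\calO_Z^*)$-action is \emph{proper} and that (b) is genuinely a \emph{principal affine} bundle rather than merely a fibration with affine fibers: both hinge on the observation that regularity of a section (no fixed components) forces it to be a nonzerodivisor in the Artinian stalks along all of $E$, and hence that division of two regular sections is a well-defined algebraic (unit-valued) operation giving Zariski-local trivializations with affine-linear transition maps. Once that nonzerodivisor property is pinned down, freeness, properness, and the affine-bundle structure all follow, and the dimension count is immediate.
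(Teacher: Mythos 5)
Your freeness argument and the Riemann--Roch computation of (\ref{eq:dimfiber}) match the paper's proof. The genuine gap is in properness. You assert that the graph map $(u,s)\mapsto (us,s)$ is a locally closed immersion and that ``the orbit map being a locally closed immersion gives properness of the action.'' This implication is false: a locally closed immersion is proper only if its image is closed, and closedness of the image (equivalently, closedness of the orbit equivalence relation inside $H^0(Z,\calL)_{\reg}\times H^0(Z,\calL)_{\reg}$) is precisely the nontrivial content of properness here. The standard counterexample is $\C^*$ acting on $\C^2\setminus\{0\}$ by $t\cdot(x,y)=(tx,t^{-1}y)$: the action is free and the graph map is injective with algebraic inverse on its image, yet the action is not proper and the quotient is non-Hausdorff. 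What must be ruled out is a sequence $(u_n s_n,s_n)\to (s',s)$ with both limits regular but $u_n$ escaping to infinity in $H^0(\calO_Z^*)$. The paper does exactly this: writing $H^0(\calO_Z)=H^0(\calO_{Z-E}(-E))\oplus\C\langle h^\perp\rangle$ and $B=\cap_v H^0(Z-E_v,\calL(-E_v))$, it shows via a metric estimate that when the $H^0(\calO_{Z-E}(-E))$-component of $u$ goes to infinity, the ratio of the $B$-component to the $B^\perp$-component of $u\cdot k$ blows up, so the orbit escapes toward $B$ and cannot converge inside $H^0(Z,\calL)\setminus B$. Some such quantitative argument (exploiting $H^0(\calO_{Z-E}(-E))\cdot H^0(Z,\calL)\subset B$) is unavoidable; your proposal omits it.

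A secondary point: in (b) your proposed trivialization $s\mapsto s/s_0$ only identifies the single fiber through $s_0$ with $1+H^0(\calO_{Z-E}(-E))$, since for $s$ and $s_0$ in different orbits the quotient $s/s_0$ is not an element of $H^0(\calO_Z^*)$. To get local triviality over an open subset of the base you need local sections of $\bP H^0(Z,\calL)_{\reg}\to c^{-1}(\calL)$, i.e.\ local slices for the action; these come out of the properness and freeness established in (a) (the route the paper takes, using that $\bP H^0(\calO_Z^*)\simeq H^0(\calO_{Z-E}(-E))$ is an affine space), so this part is repairable once properness is in place.
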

\begin{proof} For $s\in H^0(Z,\calL)_{\reg}$ the multiplication by $s$,
$\calO_Z\stackrel{\cdot s}{\longrightarrow} \calL$, is injective, hence induces injections $H^0(\calO_Z)\stackrel{\cdot s}{\longrightarrow} H^0(\calL)$ and
 $H^0(\calO_Z^*)\stackrel{\cdot s}{\longrightarrow} H^0(\calL)_{\reg}$.
 Hence the action is free.
Next we prove that the action of  $\bP H^0(\calO_Z^*)$ on $\bP H^0(Z,\calL)_{\reg}$ is proper.

Introduce hermitian metrics in both $H^0(\calO_Z)$ and $H^0(Z,\calL)$.
Write $H^0:=H^0(\calO_{Z-E}(-E))$ in $H^0(\calO_Z)$ and choose  $h^\perp$ with
 $H^0(\calO_Z)=H^0\oplus \C\langle h^\perp\rangle$.
Set also $B:=\cap_v H^0(Z-E_v,\calL(-E_v))\subset H^0(Z,\calL)$ and let $B^\perp$ be its unitary complement in $H^0(Z,\calL)$. Note that $H^0(Z,\calL)\setminus B$ is also stable
with respect to the action of $H^0(\calO^*_Z)=B\oplus \C\langle h^\perp\rangle\setminus B\oplus 0$.
Since $H^0(Z,\calL)_{\reg}$ is open in $H^0(Z,\calL)\setminus B$, it is enough to show that
 $H^0(\calO^*_Z)$ acts properly on $H^0(Z,\calL)\setminus B$. Fix $K$ compact in
 $H^0(Z,\calL)\setminus B$ and let $K'$ be its lift to the unit sphere of $H^0(Z,\calL)$.
 We need to show that  if $h=h^0+h^\perp \in H^0\oplus \C\langle h^\perp \rangle$  and $|h^0|\to \infty$, and $k\in K'$,
 then the components $(hk)_1+(hk)_2\in B\perp B^\perp $ of $hk$ satisfy
 $|(hk)_1|/|(hk)_2|\to \infty$.
 For this note the following facts.

 First, $H^0\cdot H^0(Z,\calL)\subset B$, hence $(h^0k)_2=0$.
 Next, since $K'$ is compact, $|(h^\perp k)_1|$ and $|(h^\perp k)_2|$ are bounded from above. Finally,
 since $h^0k\not=0$, for any $h^0$ in the unit sphere, the set $\{|h^0k|\}_k$ is bounded from  below by a positive number. Hence, whenever $|h^0|\to \infty$ one also has
 $$|(hk)_1|/|(hk)_2|=| (h^\perp k)_1+|h^0|\cdot (\frac{h^0}{|h^0|}\cdot k)|/
  |(h^\perp k)_2|\to \infty\ .$$
 (a) implies (b) (since $\bP H^0(\calO_Z^*)\simeq H^0$ is an affine space) and
the equality in (\ref{eq:dimfiber}) follows from Riemann--Roch formula.
\end{proof}

 \begin{example}\label{ex:rateca}
 Assume that $(X,o)$ is rational, and $l'\in-\calS'$. Then $\pic^{l'}(Z)=0$, hence if
 $c_1(\calL)=l'$ then $\calL=\calO(l')$. Furthermore,
 $\calL$ is basepoint free \cite[Th. 12.1]{Lipman}. Thus  $\eca^{l'}(Z)= H^0(Z,\calL)_{\reg}/H^0(\calO^*_Z)$
% \bP H^0(Z,\calL)_0/H^0(\calO_{Z-E}(-E))$, and  $\eca^{l'}(Z)$ is a principle affine bundle over
% $\bP H^0(Z,\calL)_0$
and since the action of $H^0(\cO_Z^*)$ is  free (cf. \ref{lem:free}),
%\marginpar{?? jol van ez igy??}
$\eca^{l'}(Z)$  is smooth. Since $h^1(Z,\calL)=h^1(\calO_Z)=0$ (cf. \cite{Lipman,Nfive}),
 its dimension is $(l',Z)$ (use (\ref{eq:dimfiber})). Furthermore, its topological Euler characteristic is
 $\chi_{top}(\eca^{l'}(Z))=\chi_{top}(\bP H^0(Z,\calL)_{\reg})$, which is the coefficient $z(-l')$ of
 the multivariable series  $Z({\bf t})$ by \cite{CDGEq,NPS,NCL}.
 \end{example}
These facts  generalize as follows.
 \begin{theorem}\label{th:smooth} If $l'\in-\calS'$ then the following facts hold.

  (1)  $\eca^{l'}(Z)$ is a smooth complex (irreducible) variety of dimension $(l',Z)$.

  (2) The topological Euler characteristic of $\eca^{l'}(Z)$ is $z(-l')$.
 In fact, the natural restriction  $r:\eca^{l'}(Z)\to \eca^{l'}(E)$ is a
  locally trivial  fiber bundle with fiber isomorphic to an affine space. Hence,
 the homotopy type of $\eca^{l'}(Z)$ is independent of the choice of $Z$ and
 it depends only on the topology of $(X,o)$.

 (3) $r:\eca^{l'}(Z_2)\to \eca^{l'}(Z_1)$ is surjective for any $Z_2\geq Z_1$.
 \end{theorem}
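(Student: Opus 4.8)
The plan is to construct explicit local charts for $\eca^{l'}(Z)$ in which the smoothness and dimension count become transparent, and to organize these charts so that the restriction map to $\eca^{l'}(E)$ is visibly a locally trivial affine bundle. First I would reduce to the case $l' = -\sum_v m_v E_v^*$ with all $m_v \geq 0$ (and $l' \neq 0$, the case $l' = 0$ being trivial since $\eca^0(Z)$ is a point). The starting point is the observation already made before (\ref{eq:empty}): for each $v$ one can choose a smooth cut $C_{v,j}$ in $\tX$ transverse to $E_v$ at a generic point, and a divisor $D \in \eca^{l'}(\tX)$ decomposes, near each intersection point with $E$, into such transverse germs. Working on $Z$ rather than on $\tX$, I would describe an effective Cartier divisor $D$ on $Z$ locally at a point $p \in E_v \setminus \mathrm{Sing}(E)$ by a local equation $f \in \calO_{Z,p}$ which is not a zero-divisor; choosing local coordinates $(x,y)$ with $E_v = \{x = 0\}$ and $Z = \{x^{z_v} = 0\}$ locally, such an $f$ can be normalized (up to a unit of $\calO_{Z,p}$, which is exactly the $H^0(\calO_Z^*)$-ambiguity quotiented out in Lemma \ref{lem:H_0}) to a monic polynomial in $x$ of degree equal to the local intersection multiplicity. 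Spreading the $m_v$ intersection points of $D$ with $E_v$ over the $\delta_v$-punctured curve $E_v \setminus \cup_{u \neq v} E_u$, and recording for each such point the finitely many jet coefficients of the local equation that are not yet determined by the section on $E$, produces an atlas whose charts are Zariski-open subsets of affine spaces. This simultaneously proves (1): the chart dimension is the sum, over $v$, of $m_v$ (choice of points on $E_v$, giving $\sum_v m_v$ parameters from $\eca^{l'}(E)$) plus the jet-coefficient parameters, and a Riemann–Roch / length count identifies the total with $(l', Z) = -\sum_v m_v(E_v^*, Z) = \sum_v m_v z_v$ where $z_v$ is the $E_v$-coefficient of $Z$; irreducibility follows since the configuration space of points on the $E_v$'s is irreducible and the jet fibers are affine spaces.

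For (2), the same chart description exhibits the forgetful map $r : \eca^{l'}(Z) \to \eca^{l'}(E)$: a divisor on $Z$ supported at prescribed points of $E$ is, in each chart, the datum of a divisor on $E$ (the leading jets) together with a tuple of higher jet coefficients, the latter ranging freely over an affine space of dimension $(l',Z) - (l',E) = \sum_v m_v(z_v - 1)$. These affine fibers glue — the transition functions on overlaps are affine in the jet coordinates because passing from one normalization of a local equation to another multiplies by a unit of $\calO_{Z,p}$, and the resulting change of jet coordinates is polynomial and in fact affine after fixing the leading term — so $r$ is a locally trivial fiber bundle with affine-space fiber. An affine bundle is homotopy equivalent to its base, so $\eca^{l'}(Z) \simeq \eca^{l'}(E)$, giving the homotopy-invariance claim; and since Euler characteristic is a homotopy invariant, $\chi_{top}(\eca^{l'}(Z)) = \chi_{top}(\eca^{l'}(E))$. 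That this common value is the coefficient $z(-l')$ of $Z(\bt)$ is the content of Example \ref{ex:rateca} in the rational case and, in general, I would either cite \cite{CDGEq,NPS,NCL} directly for $\eca^{l'}(E)$ or deduce it from the fibration $\eca^{l'}(Z)\to\eca^{l'}(E)$ together with the known additivity of $\chi_{top}$ over the stratification of $\eca^{l'}(E)$ by the partition type of the support; the key topological input is that $\chi_{top}$ of a configuration-type space is computed by the same product formula (\ref{eq:ratfunc}) that defines $Z(\bt)$.

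For (3), surjectivity of $r : \eca^{l'}(Z_2) \to \eca^{l'}(Z_1)$ for $Z_2 \geq Z_1$: given $D_1 \in \eca^{l'}(Z_1)$, I would lift it point-by-point. In a local chart at $p \in E_v$ the divisor $D_1$ is given by a non-zero-divisor $f_1 \in \calO_{Z_1,p}$; since $Z_1 \leq Z_2$ the natural surjection $\calO_{Z_2,p} \twoheadrightarrow \calO_{Z_1,p}$ lets me pick any lift $f_2$, and I must check $f_2$ is a non-zero-divisor in $\calO_{Z_2,p}$ — but $f_1$ being monic of the right degree in the $E_v$-transverse variable $x$ means I can take $f_2$ literally the same monic polynomial in $x$, which is automatically a non-zero-divisor on $Z_2$ as well. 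Globally this is exactly the statement that the affine-bundle projection $r$ (after composing $Z_2 \to E$ through $Z_1 \to E$, or directly $Z_2 \to Z_1$) admits local sections, hence is surjective; alternatively, surjectivity of an affine bundle projection onto a nonempty base is immediate. The main obstacle I anticipate is the bookkeeping in step (1)–(2): setting up the charts carefully enough that (a) the jet-coordinate count provably matches $(l',Z)$ on the nose, and (b) the transition maps between overlapping charts are genuinely affine in the fiber directions (so that "affine bundle" is justified and not merely "fibration by affine spaces"). The affineness of transitions is the delicate point and rests on analyzing how a change of local trivialization of $\calO_Z$ — i.e. multiplication by a unit — acts on the truncated Weierstrass-type normal form of the local equation; everything else is a length computation or a citation.
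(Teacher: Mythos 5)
Your atlas only covers divisors whose support avoids the singular points of $E$: you place the intersection points of $D$ with $E_v$ on the $\delta_v$-punctured curve $E_v\setminus\cup_{u\neq v}E_u$ and normalize the local equation to a monic polynomial in the variable transverse to a \emph{single} branch. But $\eca^{l'}(Z)$ contains divisors supported at the nodes $E_u\cap E_v$ (local equation of the shape $x^n+y^m+xyg(x,y)$, contributing $-mE_v^*-nE_u^*$ to the Chern class), and these are not in any of your charts. This is a genuine gap, not bookkeeping: smoothness must be verified at such divisors, the dimension count there is the intersection multiplicity $(f,x^Ny^M)_p=mN+nM$ rather than a single "monic degree times thickness" term, irreducibility requires knowing that the node-supported locus lies in the closure of the locus you do parametrize, and surjectivity in (3) must be checked at these points as well (your lift "take the same monic polynomial in $x$" has no meaning for a local equation involving both branches). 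The paper devotes a separate chart construction to exactly this case, computing the $H^0(\calO^*)$-orbit of $f$ as $f+\mathrm{ideal}(f,x^Ny^M)$ and exhibiting a smooth transversal slice; without an analogue of this your proof of (1)--(3) is incomplete precisely on the most delicate stratum.

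A second, smaller issue concerns the value $z(-l')$ of the Euler characteristic in (2). The paper obtains it by first treating $Z=E$: since $\eca^{l'}(E)$ does not depend on the self-intersection numbers $E_v^2$, one may deform them to make the graph rational, in which case $\eca^{l'}(E)=\bP\bigl(H^0(E,\calO(l'))_{\reg}\bigr)$ and the Euler characteristic is the coefficient $z(-l')$ by Example \ref{ex:rateca} and the cited $P=Z$ results. Your proposal to "cite \cite{CDGEq,NPS,NCL} directly for $\eca^{l'}(E)$" presupposes this identification of $\eca^{l'}(E)$ with a projectivized space of sections, which you have not established (and which is exactly what the rational-deformation trick buys); your alternative configuration-space computation would again need the contribution of the node-supported strata that your charts omit. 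Once the base case $Z=E$ is handled as in the paper, your affine-fibration argument for passing to general $Z$ is sound -- and note that for the homotopy claim local triviality with contractible (affine-space) fibers suffices, so the affineness of the transition functions that you single out as the delicate point is not actually needed.
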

 \begin{proof}
 As we already said in \ref{ss:efcart},
 $\eca^{l'}(Z)$ is an algebraic variety, cf. \cite{Groth62,Kleiman2013}.
 We need to construct in the neighbourhood of each Cartier divisor  a smooth chart.

 First assume that $Z=E$. Then $\eca^{l'}(E)$ is independent of the self-intersections
 $E^2_v$, hence (keeping the analytic type of $E$, but) modifying the self--intersections into
 very negative integers, we can assume that the singularity is rational. In this modified case,
 $\eca^{l'}(E)=\bP( H^0(E,\calO(l'))_{\reg})$, see Example \ref{ex:rateca}. Note that $H^0(E,\calO(l'))_{\reg}$ is also independent of the
 self--intersection numbers, hence, in any case,   $\eca^{l'}(E)=\bP ( H^0(E,\calO(l'))_{\reg})$.
 In particular, $\eca^{l'}(E)$ is smooth, irreducible and with the required dimension and Euler characteristic, cf.
 Example \ref{ex:rateca}.

 Let us provide some local charts of $\eca^{l'}(E)$.
 Fix $D\in \eca^{l'}(E)$ with support $\{p_i\}_i\subset E$.

 If $p_i\in E_v$ is a smooth point of $E$, then there exists a local
 neighbourhood $U_i$ of $p_i$ in $\widetilde{X}$ with local coordinates
 $(x,y)$ such that $\{x=0\}=E\cap U_i$ and $D$ in $U_i$ is represented by the
 local Cartier equation $\{y^m\}$ for some $m\in\bZ_{>0}$. Then a local neighbourhood $\calU_i(E)$
 of the divisor $\{y^m\}$ in $\eca^{-mE_v^*}(E)$ is given by local
 Cartier divisors $\{y^m+f(y)\}$, where $f\in \calO(E\cap U_i)$
 is a small perturbation of the zero function,
 modulo the multiplicative action of $\calO^*(E\cap U_i)$.
 Multiplying $y^m$ by $1+a_ky^k$ we get that
 perturbation of type $y^m+\sum_{k\geq 0} a_{k}y^{k+m}$  constitute
 the orbit of $y^m$ (or, differently said, $\sum_{k\geq 0} a_{k}y^{k+m}$
 is the tangent space of the orbit). Therefore,
 the smooth transversal slice to this orbit
 $(a_i)_{0\leq i<m}\mapsto \{y^m+\sum_ia_iy^i\}$  $(|a_i|\ll 1)$
 provides a smooth chart   $\calU_i (E)$ of dimension  $m=(-mE_v^*,E)$.
Here, $ -mE^*_v$ is the local contribution in the Chern class $l'$.

 Similarly, if $p_i=E_u\cap E_v$, then there exists a neighbourhood $U_i$
 of $p_i$ in $\widetilde{X}$ with local coordinates $(x,y)$ such that
 $\{x=0\}=U_i\cap E_v$ and  $\{y=0\}=U_i\cap E_u$, and $D$ in $U_i$ is
 represented by $\{x^n+y^m\}$, $n,m\in\Z_{>0}$. Then,  a local neighbourhood
 $\calU_i(E)$ of $x^n+y^m$ in $\eca^{-mE_v^*-nE_u^*}(E)$ is given by
  $\{x^n+y^m+a_0+\sum_{i\geq 1}a_ix^i+\sum_{i\geq 1}b_iy^i\}$ modulo the action of
  $\calO^*(E\cap U_i)$. The orbit of this action %$(x^m+y^n)\cdot (\bC\{x,y\}/(xy))^*$
  at $ x^n+y^m$ is
   $\{x^n+y^m+\sum_{i>n }a_ix^i+\sum_{i>m }b_iy^i+\lambda(x^n+y^m)\}$, it is smooth.
   A possible smooth slice of it is
 $\{x^n+y^m+a_0+\sum_{i=1}^{n}a_ix^i+\sum_{i=1}^{m}b_iy^i\}/\{a_n+b_m=0\}$, which is
  of dimension $(-mE_v^*-nE_u^*,E)$ (the local contribution into $(l',E)$).

 Products of type $\calU(D)=\prod_i \calU_i(E)$ constitute a local neighbourhood of $D$ in
 $\eca^{l'}(E)$.

 Consider now an arbitrary $Z\geq E$ and the restriction $r:\eca^{l'}(Z)\to \eca^{l'}(E)$.
 We show that $\eca^{l'}(Z)$ can be covered by open sets of type $r^{-1}(\prod_i\calU_i(E))=
 \prod_ir^{-1}_i(\calU_i(E))$,
 where $r_i$ is either the restriction
 $\eca^{-mE^*_v}(Z)\to \eca^{-mE^*_v}(E)$ or $\eca^{-mE^*_v-nE^*_u}(Z)
 \to \eca^{-mE^*_v-nE^*_u}(E)$,
 and each $r^{-1}_i(\calU_i(E))$ is a product of $\calU_i(E)$ and an affine space.

 Indeed, assume first that
 $p_i$ is a smooth point of $E$ as above, $p_i\in E_v$, and let $N\geq 1$
 be the multiplicity of $Z$ along $E_v$. Then in $U_i$ the local equation of $Z$ is
 $x^N$ and let us fix a Cartier divisors in  $r^{-1}(\calU_i(E))$
  whose restriction is $y^m$,  represented by $f:=y^m+xg(x,y)$   for some
 $g\in \calO(U_i)/(x^{N-1})$,  modulo $\calO^*(U_i)/(x^N)$.
 Multiplication $f(1+a_iy^ix^{N-1})\equiv f+a_iy^{m+i}x^{N-1}$ shows that
$f+ y^{m}x^{N-1}\calO(U_i)$  (mod $(x^N)$) is in the orbit.
Using this fact, and  multiplication by
 $1+a_iy^ix^{N-2}$ one shows that $f+ y^{m}x^{N-2}\calO(U_i)$ (mod $(x^N)$) is
 also in the orbit. By induction, we get that the orbit is
 $f+ y^{m}\calO(U_i)$ (mod $(x^N)$), and it is smooth.
 A transversal smooth cut can be parametrized by the
 chart $\{y^m+\sum_{i<N,\, j<m} a_{ij}x^iy^j\}$, which has dimension $(-mE_v^*,Z)=mN$.
 For $i>0$ the variables $a_{ij}$ can be chosen as affine coordinates.

 More conceptually, in this case, multiplication of $f$ by $1+h$ gives
 $f+fh $ (mod $(x^N)$), hence the orbit is identified with $f+\mbox{ideal}(f,x^N)$, which
 has a smooth section whose dimension is the codimension of  $\mbox{ideal}(f,x^N)$, that is, the
  intersection multiplicity $(f,x^N)_{p_i}=mN$.

 Similar  chart can be found in the case of $p_i=E_u\cap E_v$ as well.
 Let us use the previous notations, let us fix a divisor $f=x^n+y^m +xyg(x,y)$ whose restriction to
 $E$ is $x^n+y^m$, and assume that in $Z$ the  multiplicities of $\{x=0\}$ and $\{y=0\}$ are $N$ and $M$. Then the orbit is identified with $f+\mbox{ideal}(f,x^Ny^M)$, which
 has a smooth transversal
 cut whose dimension is the  intersection multiplicity $(f,x^Ny^M)_{p_i}=mN+nM$.
 The $mN+nM$ coordinates of the cut cannot be  chosen canonically. We invite the reader to check
 that these coordinated can be chosen in such a way that first we choose the $m+n $
 (local) coordinated of the reduces part (as above in the case $Z=E$) then we can complete them with
 $m(N-1)+n(M-1)$   affine coordinates.

 Taking product  we obtain   charts of type
 $\prod_i\calU_i(Z):=r^{-1}(\prod_i\calU_i(E))=(\prod_i\calU_i(E))\times \bC^{(l',Z-E)}$.

(3) follows from the description of the above charts.
 \end{proof}

 \subsection{The tangent map of $c$. The smoothness of $c^{-1}(\calL)$.}\label{ss:GlSecA} \
Assume that $\calL\in \pic^{l'}(Z)$ has no fixed components. Fix any $D\in c^{-1}(\calL)\subset \eca^{l'}(Z)$,
and let $s\in H^0(Z,\calL)$ be the section whose divisor is $D$.
 Then multiplication by $s$ gives an exact sequence of sheaves
\begin{equation}\label{eq:ML}
0\to \calO_Z\stackrel{\cdot s}{\longrightarrow} \calL\to \calO_D\to 0.\end{equation}
Division by $s$ identifies $\calL$ by $\calO_Z(D)$, hence the above exact sequence
can be identified with the exacts sequence
$0\to \calO_Z\to \calO_Z(D)\to \calO_D(D)\to 0$
(this  is a generalization of the so-called {\it Mittag--Lefler sequence}, defined for effective divisors on curves).

Since $\calO_D$ is finitely
supported $H^0(\calO_D)=\calO_D$. Its dimension is  $(l',Z)$.

\begin{proposition}\label{lem:Mumford} The coboundary
homomorphism $\delta^1_D :H^0(\calO_D)\to H^1(\calO_Z)$ of the
cohomological long exacts sequence of  (\ref{eq:ML}) can be identified with the tangent map
$$T_D(c^{l'}):T_D(ECa^{l'}(Z))\to T_\calL(\pic^{l'}(Z))$$ of $c^{l'}$ at $D$.
Moreover, the Zariski tangent space  $T_D(c^{-1}(\calL))$ of $c^{-1}(\calL)$
at $D$
is identified with its kernel, hence (by the cohomological long exact sequence) by $H^0(Z,\calL)/H^0(\calO_Z)$.
This shows that  $\dim T_D(c^{-1}(\calL))=\dim c^{-1}(\calL)$ at any $D\in c^{-1}(\calL)$ (cf. (\ref{eq:dimfiber})),
hence $c^{-1}(\calL)$ is smoothly embedded into $\eca^{l'}(Z)$, and
 $c^{-1}(\calL)$, as a subscheme of $\eca^{l'}(Z)$,  can be identified with $H^0(Z,\calL)_{\reg}/H^0(\calO^*_Z)$.

This fact reformulated
shows that $\delta^1_D$ induced on  $N_D(c^{-1}(\calL)):=
T_D(ECa^{l'}(Z))/T_D(c^{-1}(\calL))$, the normal space
of $c^{-1}(\calL)\subset ECa^{l'}$ at $D$,  is injective.
\end{proposition}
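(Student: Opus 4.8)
The plan is to carry out the classical Mittag--Leffler/Mumford argument (known for smooth projective curves) in the present setting by an explicit \v Cech computation, using the local charts of $\eca^{l'}(Z)$ produced in the proof of Theorem \ref{th:smooth}. Fix a finite open cover $\mathfrak{U}=\{U_i\}$ of $|Z|$ in $\tX$, fine enough that $\calL$ is given over the pieces $U_i\cap Z$ by a transition cocycle $\{g_{ij}\}$, and write the section $s$ whose divisor is $D$ as a family of local equations $f_i\in\calO_Z(U_i)$ with $f_i=g_{ij}f_j$ on overlaps and $D\cap U_i=\{f_i=0\}$. By the chart description in Theorem \ref{th:smooth}, a tangent vector of $\eca^{l'}(Z)$ at $D$ is recorded by a first--order family of local equations $\{f_i+\epsilon\dot f_i\}$, $\dot f_i\in\calO_Z(U_i)$, defining a flat deformation of the locally principal subscheme $D$; compatibility on $U_i\cap U_j$ forces $\dot f_i-g_{ij}\dot f_j=\dot g_{ij}\,f_j$ for a suitable $1$--cochain $\{\dot g_{ij}\}$ (the first--order deformation of the transition cocycle, whose \v Cech cocycle condition is inherited from the gluing of the $f_i+\epsilon\dot f_i$). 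Reducing $\dot f_i$ modulo $(f_i)$ and comparing trivializations, the collection $\{\dot f_i\bmod(f_i)\}$ glues to a global section of $\calL/s\calO_Z=\calO_D$; this is the identification $T_D(\eca^{l'}(Z))\cong H^0(\calO_D)$, of dimension $(l',Z)=\deg D$, consistent with Theorem \ref{th:smooth}(1). On the target, $\pic^{l'}(Z)$ is an affine space whose translation vector space is $\pic^0(Z)=H^1(\calO_Z)$, and a tangent vector there is a first--order deformation of $\{g_{ij}\}$.

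The key step is to match the two maps out of $H^0(\calO_D)$. On the one hand, the Abel map sends the deformation $D_\epsilon$ to the line bundle with cocycle $\{g_{ij}+\epsilon\dot g_{ij}\}=\{g_{ij}(1+\epsilon\,\dot g_{ij}/g_{ij})\}$, so $T_D(c^{l'})$ sends the tangent vector above to the class in $H^1(\calO_Z)$ of the \v Cech $1$--cocycle $\{\dot g_{ij}/g_{ij}\}$. On the other hand, the coboundary $\delta^1_D$ of (\ref{eq:ML}) is computed by lifting a section $t=\{t_i\}$ of $\calO_D$ to local sections $\widetilde t_i$ of $\calL$ and forming $\{(\widetilde t_i-g_{ij}\widetilde t_j)/f_i\}$; applied to $t_i=\dot f_i\bmod(f_i)$ with the lift $\widetilde t_i=\dot f_i$, the relation $\dot f_i-g_{ij}\dot f_j=\dot g_{ij}f_j$ together with $f_j/f_i=g_{ij}^{-1}$ produces precisely $\{\dot g_{ij}/g_{ij}\}$. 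Hence $\delta^1_D$ and $T_D(c^{l'})$ agree under the identifications above. This bookkeeping --- keeping the trivialization of $\calL$ over each $U_i$ consistent on both sides of the comparison --- is the one genuinely delicate point; everything else is formal.

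Finally I would read off the remaining assertions from the cohomology long exact sequence of (\ref{eq:ML}),
$$0\to H^0(\calO_Z)\xrightarrow{\cdot s}H^0(Z,\calL)\longrightarrow H^0(\calO_D)\xrightarrow{\ \delta^1_D\ }H^1(\calO_Z)\to\cdots.$$
The Zariski tangent space at $D$ of the scheme--theoretic fiber $c^{-1}(\calL)$ equals $\ker(T_D c^{l'})=\ker\delta^1_D=H^0(Z,\calL)/s\,H^0(\calO_Z)$, of dimension $h^0(Z,\calL)-h^0(\calO_Z)$; by Lemma \ref{lem:free} (equation (\ref{eq:dimfiber})) this is exactly $\dim c^{-1}(\calL)$. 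Since $c^{-1}(\calL)$ is irreducible of this pure dimension (Lemma \ref{lem:free}(b)), equality of the Zariski tangent dimension with the dimension at every point forces $c^{-1}(\calL)$ to be smooth; the injection $T_D(c^{-1}(\calL))=\ker\delta^1_D\hookrightarrow T_D(\eca^{l'}(Z))$ then shows that the (closed) fiber is smoothly embedded in $\eca^{l'}(Z)$, and being reduced it coincides as a subscheme with $H^0(Z,\calL)_{\reg}/H^0(\calO^*_Z)$ from Lemmas \ref{lem:H_0}--\ref{lem:free}.

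The last statement is immediate: $N_D(c^{-1}(\calL))=T_D(\eca^{l'}(Z))/\ker\delta^1_D$, so $\delta^1_D$ descends to an injective map on $N_D(c^{-1}(\calL))$ by the first isomorphism theorem.
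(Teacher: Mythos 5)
Your argument is correct. Note, however, that the paper does not actually prove this proposition: its ``proof'' consists of citations to Mumford's \emph{Lectures on curves on an algebraic surface} (p.~164) and to Kleiman's articles on the Picard scheme. What you have written is, in effect, a self-contained rendering of the classical \v{C}ech computation that those references contain, transplanted to the non-reduced curve $Z$. The transplant is legitimate precisely because the local charts from the proof of Theorem \ref{th:smooth} identify $T_D(\eca^{l'}(Z))$ locally with $\calO_{U_i}/(f_i, x^N)$, which is exactly $\calO_D$ near each support point, so your identification $T_D(\eca^{l'}(Z))\cong H^0(\calO_D)$ and the cocycle bookkeeping $(\dot f_i-g_{ij}\dot f_j)/f_i=\dot g_{ij}/g_{ij}$ go through verbatim; the only hygiene point worth making explicit is that $\dot g_{ij}$ is regular on $U_i\cap U_j$ because the gluing condition for the deformed divisor forces $\dot f_i-g_{ij}\dot f_j\in(f_i)$ there (or one simply chooses the cover so that the finite set ${\rm supp}(D)$ avoids all overlaps). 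Your deduction of smoothness is also slightly more careful than what a citation conveys: you correctly observe that the \emph{scheme-theoretic} fiber has Zariski tangent dimension equal to the dimension of its reduction at every point, which forces the local rings to be regular, hence the fiber reduced and smoothly embedded. What your approach buys is independence from the general Grothendieck--Kleiman machinery at the cost of redoing a standard computation; what the paper's citation buys is brevity. Both are sound.
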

\begin{proof} See \cite[p. 164]{MumfordCurves}, or \cite[Remark 5.18]{Kl},
or \cite[\S 5]{Kleiman2013}.
\end{proof}
\begin{corollary}\label{cor:smooth}
If $\dim(\eca^{l'}(Z))=1$ and $c^{l'}$ is not constant then $\im(c^{l'})$
is smooth.
\end{corollary}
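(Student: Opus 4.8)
The plan is to show that, under these hypotheses, $c^{l'}$ is an injective immersion whose source is a smooth curve, and then that the image of such a map is smooth. By Theorem \ref{th:smooth}(1), $\eca^{l'}(Z)$ is smooth and irreducible of dimension $(l',Z)=1$, i.e.\ a smooth irreducible curve. If $\calL\in\im(c^{l'})$, then $c^{-1}(\calL)$ is a closed subvariety of $\eca^{l'}(Z)$; were it positive-dimensional it would exhaust the irreducible curve $\eca^{l'}(Z)$, forcing $c^{l'}$ to be constant, contrary to hypothesis. So every fiber is $0$-dimensional; by Lemma \ref{lem:free}, together with Proposition \ref{lem:Mumford} (which identifies the scheme $c^{-1}(\calL)$ with the smooth variety $H^0(Z,\calL)_{\reg}/H^0(\calO_Z^*)$), each fiber is irreducible and reduced, hence a single reduced point, so $c^{l'}$ is injective. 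By Proposition \ref{lem:Mumford} again, the tangent map $T_D(c^{l'})$, identified with $\delta^1_D$, has kernel $T_D(c^{-1}(\calL))=0$ at every $D$; hence $c^{l'}$ is everywhere unramified, i.e.\ an immersion.

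Next I would analyse $\im(c^{l'})$ through the normalization. Being algebraic (cf.\ \ref{ss:efcart}), injective and with one-dimensional source, $c^{l'}$ is birational onto $Y:=\overline{\im(c^{l'})}$ (an injective dominant morphism between curves is birational in characteristic zero). Since $\eca^{l'}(Z)$ is smooth, hence normal, $c^{l'}$ factors through the normalization $\nu\colon\widetilde Y\to Y$ as $\eca^{l'}(Z)\xrightarrow{\,g\,}\widetilde Y\xrightarrow{\,\nu\,}Y$, and $g$, being a birational morphism between smooth curves, is an open immersion. Identifying $\eca^{l'}(Z)$ with an open $V\subseteq\widetilde Y$, we get $\im(c^{l'})=\nu(V)$, and it remains to verify that $\nu(V)$ is smooth. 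Over the smooth locus of $Y$, where $\nu$ is an isomorphism, $\nu(V)$ is an open subset of a smooth curve. Near a point $q\in Y_{\mathrm{sing}}$ write $\nu^{-1}(q)=\{\widetilde q_1,\dots,\widetilde q_k\}$. Since $\widetilde Y\setminus V$ is finite, for each $\widetilde q_i\notin V$ the set $V$ contains a punctured neighbourhood of $\widetilde q_i$, so $\nu(V)$ contains the $i$-th branch of $Y$ at $q$ with $q$ deleted; each such deleted branch is smooth, and the finitely many branches at $q$ are pairwise disjoint away from $q$. Hence, if no $\widetilde q_i$ belongs to $V$, then $q\notin\nu(V)$ and near $q$ the set $\nu(V)$ is a disjoint union of smooth punctured branches — smooth.

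The remaining possibility, that some $\widetilde q_i\in V$, is the main obstacle. Then $d\nu_{\widetilde q_i}$ is injective (since $c^{l'}$ is an immersion), which rules out the $i$-th branch being singular; but $q$ is singular, so there is a further branch at $q$, and then $\nu(V)$ would contain all of $Y$ near $q$, hence be singular there. So one must prove that $c^{l'}$ takes no value at which $Y=\overline{\im(c^{l'})}$ acquires two or more branches, i.e.\ that $V$ misses every fiber of $\nu$ over a multibranch point. This is where the explicit structure must enter: from $(l',Z)=1$ one gets $(l',E)=1$, hence $l'=-E^*_v$ for a single $v$, and $\eca^{l'}(Z)\cong\eca^{l'}(E)=E_v\setminus\bigcup_{u\ne v}E_u$ with $c^{l'}(D)=\calO_Z(D)$; one then studies the (automatic) extension of $c^{l'}$ over the finitely many points $E_v\cap E_u$ — equivalently the limit of $\calO_Z(D)$ as $D$ degenerates onto such a node — and shows that these limiting line bundles either escape to infinity in $\pic^{l'}(Z)$ or lie outside $\im(c^{l'})$, so that no crossing of $Y$ is hit. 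Granting this last point, $\nu(V)$ is smooth everywhere and the proof is complete.
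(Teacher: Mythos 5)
Your argument is sound and essentially complete up to its last paragraph: the source $\eca^{l'}(Z)$ is a smooth irreducible curve by Theorem \ref{th:smooth}, every non-empty fibre is a single reduced point (Lemma \ref{lem:free} together with Proposition \ref{lem:Mumford}), the tangent map is everywhere injective because the normal space of a point-fibre is the whole tangent line, and the reduction through the normalization $\nu\colon\widetilde Y\to Y=\overline{\im(c^{l'})}$ is correctly set up. You have also correctly identified the one place where ``injective immersion'' is not enough: a value $q\in\im(c^{l'})$ at which $Y$ carries a second branch, swept out by divisors escaping towards the nodes $E_v\cap E_u$. But your proof stops exactly there. The sentence ``Granting this last point, \dots the proof is complete'' concedes that the decisive claim --- that the holomorphic extension of $c^{l'}$ over a node never lands on a value already attained on $E_v^{reg}$ --- is asserted, not proved. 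That is a genuine gap, not a routine verification: nothing established earlier in your argument excludes this configuration, and it is precisely the phenomenon (images failing to be closed, limits at the punctures) that the paper's Example \ref{ex:notclosed} shows really occurs for these Abel maps.

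The missing step can be carried out along the lines you sketch, but it needs an argument. Since $c^{l'}$ is algebraic and $\pic^{l'}(Z)$ is an affine space, at each node the map either diverges or extends holomorphically; in the latter case the limit bundle $\calL_\infty$ is the limit of $\calO_Z(D_t)$ along a flat family whose limit divisor acquires a fixed component along the adjacent curve $E_u$. If $\calL_\infty$ belonged to $\im(c^{l'})$, then either $h^0(Z,\calL_\infty)>h^0(\calO_Z)$, in which case its fibre is positive dimensional by (\ref{eq:dimfiber}) and $c^{l'}$ would be constant, or $h^0(Z,\calL_\infty)=h^0(\calO_Z)$, in which case one must argue that every section inherits the fixed component, contradicting $H^0(Z,\calL_\infty)_{\reg}\neq\emptyset$; neither alternative is addressed in your text. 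For comparison, the paper does not perform this analysis at all: Corollary \ref{cor:smooth} is read off directly from Proposition \ref{lem:Mumford}, i.e.\ from the local statement that near each (connected, point-like) fibre the image is an embedded disc --- the same local argument that reappears in the proof of (ii)$\Rightarrow$(iii) in Lemma \ref{lem:REGVAL}. So your proposal is more scrupulous about the non-properness of $c^{l'}$ than the paper's one-line derivation, but in its present form it does not prove the corollary.
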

%\begin{remark}
%$r:\eca^{l'}(Z)\to \eca^{l'}(E)$ is a principal (affine) fibration.
%\marginpar{{\bf COMPLETE}}
%\end{remark}

\subsection{The special fibers of $c^{l'}$
%$T$--atypical and critical line bundles
}\label{ss:REGVAL}
 Though all the fibers of $c^{l'}$ are smooth, still we wish to distinguish  certain fibers  of $c^{l'}$ with pathological behaviour.
 There are several types we can consider.

 \begin{definition}\label{def:regular}\
 (a) $D\in \eca^{l'}(Z)$ is called a critical divisor (point)
 if  ${\rm rang}(T_Dc) < {\rm rang}(T_{D_{gen}}c)$, where $D_{gen}\in \eca^{l'}(Z) $ is a generic  divisor. If  $(c^{l'})^{-1}(\calL)$ contains a
 critical divisor (point)  then
 $\calL$ is a called a critical bundle (value).

(b) We say that  $\calL\in \im(c^{l'})$ is
$T$--typical (`tangent--map--typical')
if the linear subspace
 $\im (T_D(c^{l'}))\subset T_{\calL}\pic^{l'}(Z)$ is independent of the choice of $D\in c^{-1}(\calL)$. Otherwise $\calL$ is $T$--atypical.
 \end{definition}
The prototype of a map with a $T$--atypical value is the blowing up
$c:B\to \C^2$  at the origin $0\in \C^2$: then 0 is a
 $T$--atypical  value.
 For such an example realized by a concrete $c^{l'}$ see   \ref{ex:notequidim}.

 \begin{lemma}\label{lem:REGVAL}
 For fixed $l'$ and $\calL\in \im (c^{l'})$ consider the following properties:

 (i) $\calL$ is a $T$--atypical value of $c^{l'}$,

(ii) $\calL$ is a singular point of the closure
$\overline{\im(c^{l'})}$ of the image of $c^{l'}$
(where $\overline{\im(c^{l'})}$ is taken with the reduced structure),

 (iii) $\dim ((c^{l'})^{-1}(\calL))$ is strict larger the the dimension of the generic fiber of $c^{l'}$,

 (iv)  $\calL$ is critical bundle,

 (v) \underline{any} $D\in (c^{l'})^{-1}(\calL)$ is a critical divisor.

\vspace{1mm}

 %one has ${\rm rang}(T_Dc) <
 %{\rm rang}(T_{D_{gen}}c)$, where $D_{gen}\in \eca^{l'}(Z) $ is a generic  element
 %(that is, any  $D\in (c^{l'})^{-1}(\calL)$ is a critical point and $\calL$ is a
 %critical value).
\noindent
 Then (iii) $\Leftrightarrow$ (iv) $\Leftrightarrow$ (v),
 (i) $\Rightarrow$ (iii) and
 (ii) $\Rightarrow$ (iii).
 \end{lemma}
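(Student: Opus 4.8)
The plan is to analyze the five conditions by comparing, for a fixed $\calL\in\im(c^{l'})$, the fiber $F:=(c^{l'})^{-1}(\calL)$ with the generic fiber, using the identifications from Lemma \ref{lem:free} and Proposition \ref{lem:Mumford}. Recall from Proposition \ref{lem:Mumford} that at each $D\in F$ the tangent map $T_D(c^{l'})$ is the coboundary $\delta^1_D\colon H^0(\calO_D)\to H^1(\calO_Z)$, with kernel $H^0(Z,\calL)/H^0(\calO_Z)$ and image of rank $(l',Z)-\bigl(h^0(Z,\calL)-h^0(\calO_Z)\bigr)=h^1(\calO_Z)-h^1(Z,\calL)$. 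Crucially, $\rank T_D(c^{l'})$ depends only on $h^1(Z,\calL)$, hence only on $\calL$, not on the choice of $D\in F$; the same holds for the generic fiber of $c^{l'}$, where one uses $h^1(Z,\calL_{gen})$.

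First I would settle the triple equivalence (iii) $\Leftrightarrow$ (iv) $\Leftrightarrow$ (v). By the dimension formula (\ref{eq:dimfiber}), $\dim F = (l',Z)+h^1(Z,\calL)-h^1(\calO_Z)$, and $\dim F$ exceeds the generic fiber dimension exactly when $h^1(Z,\calL)>h^1(Z,\calL_{gen})$, i.e. when $\rank T_D(c^{l'})<\rank T_{D_{gen}}(c^{l'})$. Since the latter rank is constant along $F$ (by the remark above), every $D\in F$ is critical iff some $D\in F$ is critical iff $h^1(Z,\calL)>h^1(Z,\calL_{gen})$; this yields (iii) $\Leftrightarrow$ (iv) $\Leftrightarrow$ (v) simultaneously. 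I expect this to be the routine part.

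Next, for (i) $\Rightarrow$ (iii): suppose $\calL$ is $T$-atypical, so there exist $D_1,D_2\in F$ with $\im(T_{D_1}c^{l'})\neq\im(T_{D_2}c^{l'})$ as subspaces of $T_\calL\pic^{l'}(Z)$. But these two subspaces have the same dimension (constancy of rank again), so neither contains the other; in particular neither equals the full $\im(T_{D_{gen}}c^{l'})$ attained at a generic divisor. Hence $\rank T_D(c^{l'})<\rank T_{D_{gen}}(c^{l'})$, which is (iv), equivalently (iii). For (ii) $\Rightarrow$ (iii): this is the standard fact that on the smooth locus of the reduced variety $\overline{\im(c^{l'})}$ the map $c^{l'}$, being dominant onto its image, has generic fiber dimension $\dim\eca^{l'}(Z)-\dim\overline{\im(c^{l'})}$, while generic smoothness (we are in characteristic zero) forces the tangent map at a generic $D\in (c^{l'})^{-1}(\mathrm{smooth\ locus})$ to be surjective onto the tangent space of the image; if $\calL$ were a smooth point with generic fiber dimension, upper semicontinuity of fiber dimension plus the generic rank computation would give the reverse inequality, contradicting $\calL$ being singular. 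So the contrapositive: if (iii) fails, $\dim F$ equals the generic fiber dimension, and then a dimension count at $D\in F$ via the identification $T_D F = \ker\delta^1_D$ shows $T_\calL\overline{\im(c^{l'})}$ has the expected dimension, so $\calL$ is a smooth point, i.e. (ii) fails.

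The main obstacle will be the implication (ii) $\Rightarrow$ (iii): one must be careful that $c^{l'}$ need not be proper, so the image is only constructible and $\overline{\im(c^{l'})}$ must be handled with its reduced structure; the cleanest route is to restrict $c^{l'}$ over the open dense smooth locus of $\overline{\im(c^{l'})}$, invoke generic flatness and generic smoothness there, and then use that $\dim(c^{l'})^{-1}(\calL)\ge\dim F_{gen}$ always (upper semicontinuity), with equality forcing $\calL$ into the locus where the image is smooth. The other implications reduce, as above, to the single bookkeeping identity that $\rank T_D(c^{l'})=h^1(\calO_Z)-h^1(Z,\calL)$ is independent of $D\in(c^{l'})^{-1}(\calL)$, which is immediate from Proposition \ref{lem:Mumford}.
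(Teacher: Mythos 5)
Your treatment of (iii) $\Leftrightarrow$ (iv) $\Leftrightarrow$ (v) is correct and is essentially the paper's: by Proposition \ref{lem:Mumford} the rank of $T_D(c^{l'})$ equals $h^1(\calO_Z)-h^1(Z,\calL)$, hence depends only on $\calL$ and not on $D$ in the fiber, and comparison with the generic value via (\ref{eq:dimfiber}) yields all three equivalences simultaneously.

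The implication (i) $\Rightarrow$ (iii), however, has a genuine gap. From $\im(T_{D_1}c^{l'})\neq\im(T_{D_2}c^{l'})$ together with equality of their dimensions you infer that ``neither equals $\im(T_{D_{gen}}c^{l'})$'' and hence that the rank drops. Neither step is valid: at most one of the two subspaces can coincide with a given reference subspace, but nothing prevents two \emph{distinct} subspaces of the \emph{same, generic} dimension from occurring at two points of the fiber --- and ruling this out is exactly the content of the implication, not a consequence of constancy of rank. The missing ingredient is the constant rank theorem: if (iii) fails, the rank of $T_{D}c^{l'}$ at every $D$ in the fiber equals the generic rank, hence (rank being lower semicontinuous and bounded above by the generic rank) it is constant on a neighbourhood of each such $D$; the constant rank theorem then makes $c^{l'}$ a local fibration there, so $\im(T_{D}c^{l'})$ is \emph{locally} constant along $c^{-1}(\calL)$, and irreducibility (hence connectedness) of the fiber, from Lemma \ref{lem:free}, makes it globally constant. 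Without this your argument is circular.

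The same omission undermines (ii) $\Rightarrow$ (iii): the claim that ``a dimension count shows $T_\calL\overline{\im(c^{l'})}$ has the expected dimension'' is not substantiated, since the Zariski tangent space of the closure of the image sees every branch of $\im(c^{l'})$ accumulating at $\calL$ and is a priori strictly larger than any single $\im(T_Dc^{l'})$. The paper's proof again routes through the local fibration structure: each normal space $N_D(c^{-1}(\calL))$ is mapped by $T_Dc^{l'}$ injectively onto one and the same subspace $V$ (by the argument above plus Proposition \ref{lem:Mumford}), so the image is immersed at $\calL$, and connectedness of the fiber upgrades immersed to embedded, hence smooth at $\calL$. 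A minor further point: the upper semicontinuity of fiber dimension you invoke is not automatic here because $c^{l'}$ is not proper; in this paper it is supplied by Lemma \ref{lem:semicont} via semicontinuity of $h^0$.
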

 \begin{proof}
 The equivalences  {\it (iii)} $\Leftrightarrow$ {\it (iv) }
 $\Leftrightarrow$ {\it (v) }
 follow from Proposition \ref{lem:Mumford}. For {\it (i)} $\Rightarrow$ {\it (iii)}
 first notice that
 $c^{-1}(\calL)$ is smooth and irreducible, hence it is enough to verify the statement
 locally at a generic point of $c^{-1}(\calL)$. On the other hand,
 if  {\it (iii)} is not true, that is,
 if (locally)
 ${\rm rang}(T_Dc) = {\rm rang}(T_{D_{gen}}c)$, then $c$ in that neighbourhood
 is a fibration, hence (locally) the normal bundle of $c^{-1}(\calL)$ is a
 pullback of a vector space $V$, hence (using also  Proposition \ref{lem:Mumford})
  $\im (T_D(c))$ is constant $V$.

 {\it (ii)} $\Rightarrow$ {\it (iii)}. Assume that
  {\it (iii)} is not true, hence, as in the previous case,
    ${\rm rang}(T_Dc) = {\rm rang}(T_{D_{gen}}c)$ for any
     $D\in c^{-1}(\calL)$, and  $c$ in that neighbourhood
 is a fibration. $\im(c)$ is the image of the quotient
 space obtained from the total space by collapsing each
  fibers into a point. But for any  $D\in c^{-1}(\calL)$
  the space $N_D(c^{-1}(\calL))$ is mapped by $T_Dc$ injectively
  onto $\im (T_Dc)$, and this image is independent of
  the choice of $D$  (by the proof {\it (i)} $\Rightarrow$ {\it (iii)}).
  This shows that, in fact, $\im (c)$ is immersed at $\calL$. Since the fiber  $c^{-1}(\calL)$   is  connected, $\im(c)$ is in fact embedded. Hence,
  $\im (c)$ is smooth at $\calL$.
 \end{proof}

\begin{remark} (a) In principle, by general properties of algebraic maps,
neither {\it (iii)}  nor {\it (ii)}   imply {\it (i)} by general nonsense.
 Indeed,
 set $Y:=\{(x,y,a)\in\bC^3\,:\, y=ax^2\}$ and consider the projection
$p:Y\to \bC^2$ induced by $(x,y,a)\mapsto (x,y)$. Then the fiber $p^{-1}(0,0)$
satisfy {\it (iii)}  but not {\it (i)}.

Take also  $Y:=\C^*\times \C$ and consider the map $Y\to  \C^3$ given by
$(x,y)\mapsto (yx,y^2x,y^3x)$. Then the closure of the
image is singular at the origin. This value satisfies
 {\it (ii)} and {\it (iii)} but not {\it (i)}.

The implication   {\it (iii)} $\Rightarrow$ {\it (ii)} also fails, in general.
E.g., if $c^{l'}$ is dominant, and $\calL\in \im(c^{l'})$, then $\calL$ is not a distinguished  point of the {\it closure}  of the image even if it is critical,
see e.g. \ref{ex:notequidim}.
However, examples suggest   the following conjectural property:
if $c^{l'}$ is dominant and some $\calL\in \pic^{l'}(Z)$ satisfies
{\it (iii)} then $\calL$ is not an interior point of $\im (c)$.

(b)
We wish to emphasize again that $c^{l'}$ is not proper. In particular,
 above a small (relative) neighbourhood
  in $\im(c^{l'})$ of   a regular value, the global map  $c^{l'}$ is not necessarily $C^{\infty}$ locally trivial fibration
 (see e.g. Example \ref{ex:nonfibration}).
\end{remark}

\subsection{Examples} Next we exemplify  some typical anomalies  of the map $c$.
%(Though later we will treat large families of examples, it is still appropriate even at this stage
%to  present some key examples  on which we can refer in the subsequent remarks and discussion.)

\begin{example}\label{ex:dimim} Fix a topological type of singularities (e.g. a resolution graph)
and consider different analytic structures realizing it. Then
not only the dimension of the target of $c:\eca^{l'}(Z)\to \pic^{l'}(Z)$ (that is, $h^1(\calO_Z)$)
but also the {\bf dimension of the image of $c^{l'}$ might depend on the analytic structure of $(X,o)$}.
Indeed, let us fix the following graph
(picture from the left):

\begin{picture}(200,50)(0,0)
%\put(10,30){\circle*{4}}
%\put(10,10){\makebox(0,0){\small{$[1]$}}}
\put(70,40){\makebox(0,0){\small{$-2$}}}
\put(90,40){\makebox(0,0){\small{$-1$}}}
%\put(30,30){\circle*{4}}
%\put(50,30){\circle*{4}}
\put(70,30){\circle*{4}}
\put(110,40){\makebox(0,0){\small{$-7$}}}\put(130,40){\makebox(0,0){\small{$-2$}}}
%\put(112,22){\makebox(0,0){\small{$E_1$}}}
\put(132,20){\makebox(0,0){\small{$E_v$}}}
\put(80,10){\makebox(0,0){\small{$-3$}}}
\put(90,30){\circle*{4}}
\put(110,30){\circle*{4}}
\put(130,30){\circle*{4}}
%\put(150,30){\circle*{4}}
\put(90,10){\circle*{4}}
\put(70,30){\line(1,0){60}}\put(90,10){\line(0,1){20}}

%\put(10,30){\circle*{4}}
%\put(10,10){\makebox(0,0){\small{$[1]$}}}
\put(170,40){\makebox(0,0){\small{$3$}}}
\put(190,40){\makebox(0,0){\small{$6$}}}
%\put(30,30){\circle*{4}}
%\put(50,30){\circle*{4}}
\put(170,30){\circle*{4}}
\put(210,40){\makebox(0,0){\small{$1$}}}
\put(230,40){\makebox(0,0){\small{$1$}}}
%\put(112,22){\makebox(0,0){\small{$E_1$}}}
%\put(132,22){\makebox(0,0){\small{$E_2$}}}
\put(180,10){\makebox(0,0){\small{$2$}}}
\put(190,30){\circle*{4}}
\put(210,30){\circle*{4}}
\put(230,30){\circle*{4}}
%\put(150,30){\circle*{4}}
\put(190,10){\circle*{4}}
\put(170,30){\line(1,0){60}}
\put(190,10){\line(0,1){20}}

%\put(10,30){\circle*{4}}
%\put(10,10){\makebox(0,0){\small{$[1]$}}}
\put(270,40){\makebox(0,0){\small{$4$}}}
\put(290,40){\makebox(0,0){\small{$8$}}}
%\put(30,30){\circle*{4}}
%\put(50,30){\circle*{4}}
\put(270,30){\circle*{4}}
\put(310,40){\makebox(0,0){\small{$2$}}}
\put(330,40){\makebox(0,0){\small{$1$}}}
%\put(112,22){\makebox(0,0){\small{$E_1$}}}
%\put(132,22){\makebox(0,0){\small{$E_2$}}}
\put(280,10){\makebox(0,0){\small{$3$}}}
\put(290,30){\circle*{4}}
\put(310,30){\circle*{4}}
\put(330,30){\circle*{4}}
%\put(150,30){\circle*{4}}
\put(290,10){\circle*{4}}
\put(270,30){\line(1,0){60}}\put(290,10){\line(0,1){20}}
\end{picture}

Then $(X,o)$ is a numerically Gorenstein elliptic singularity with $1\leq p_g\leq 2$;
for details regarding elliptic singularities
 see \cite{weakly,Nfive}. Set $-l':=Z_{min}$ (the minimal  cycle, which equals $E_v^*$,
the cycle shown in the middle diagram), and $Z=Z_K $ (the last diagram),
hence $(Z,l')=1$.
Then $\eca^{l'}(Z)=\bC$, and $\pic^{l'}(Z)=\bC^{p_g}$.
Write $\calL=\calO_{Z_K}(-Z_{min})$.

If $p_g=2$ (hence $(X,o)$ is Gorenstein) then $\calL$ has no fixed
components \cite[5.4]{weakly}, and $h^1(Z,\calL)=1$ \cite[2.20(d)]{weakly}. Hence $\calL\in \im(c)$
and $\dim c^{-1}(\calL)=0$ (use (\ref{eq:dimfiber})). Therefore, $\dim\im(c)=1$.

On the other hand, if $p_g=1$, then
 $Z_{max}>Z_{min}$, see e.g. \cite[2.20(f)]{weakly}.
 Hence $\calL$  has fixed components and  $\calL\not\in \im(c)$.
Since the fibers of $c$ are connected (cf. \ref{lem:free}), $c:\bC\to \bC$ (with  $\calL\not\in \im(c)$)
cannot be quasi--finite, hence  $c$ is constant and  $\dim\im(c)=0$.
(This last statement  can  be deduced from  Theorem \ref{th:dominant} too,
or from \ref{ex:dimimzero}  {\it (i)} $\Leftrightarrow$ {\it (v)}, where we characterize completely the cases
$\dim (\im (c^{l'}))=0$.)
\end{example}
\begin{example}\label{ex:notclosed} {\bf The image of $c$ usually is not closed.}
We construct such an example in two steps. First,
assume that $(X,o)$ is a singularity with topological type given by the  graph  $\Gamma_1$ from the left

\begin{picture}(300,45)(80,0)
\put(125,25){\circle*{4}}
\put(150,25){\circle*{4}}
\put(175,25){\circle*{4}}
\put(200,25){\circle*{4}}
\put(225,25){\circle*{4}}
\put(150,5){\circle*{4}}
\put(200,5){\circle*{4}}
\put(125,25){\line(1,0){100}}
\put(150,25){\line(0,-1){20}}
\put(200,25){\line(0,-1){20}}
\put(125,35){\makebox(0,0){\small{$-3$}}}
\put(150,35){\makebox(0,0){\small{$-1$}}}
\put(175,35){\makebox(0,0){\small{$-13$}}}
\put(200,35){\makebox(0,0){\small{$-1$}}}
\put(225,35){\makebox(0,0){\small{$-3$}}}
\put(160,5){\makebox(0,0){\small{$-2$}}}
\put(210,5){\makebox(0,0){\small{$-2$}}}
\put(175,15){\makebox(0,0){\small{$E_v$}}}
\put(100,15){\makebox(0,0){\small{$\Gamma_1:$}}}

\put(300,15){\makebox(0,0){\small{$\Gamma_2:$}}}
\put(325,25){\circle*{4}}
\put(350,25){\circle*{4}}
\put(375,25){\circle*{4}}
\put(400,25){\circle*{4}}
\put(425,25){\circle*{4}}
\put(350,5){\circle*{4}}
\put(400,5){\circle*{4}}\put(375,5){\circle*{4}}
\put(325,25){\line(1,0){100}}
\put(350,25){\line(0,-1){20}}\put(375,25){\line(0,-1){20}}
\put(400,25){\line(0,-1){20}}
\put(325,35){\makebox(0,0){\small{$-3$}}}
\put(350,35){\makebox(0,0){\small{$-1$}}}
\put(375,35){\makebox(0,0){\small{$-13$}}}
\put(400,35){\makebox(0,0){\small{$-1$}}}
\put(425,35){\makebox(0,0){\small{$-3$}}}
\put(360,5){\makebox(0,0){\small{$-2$}}}\put(385,5){\makebox(0,0){\small{$-2$}}}
\put(410,5){\makebox(0,0){\small{$-2$}}}
%\put(375,15){\makebox(0,0){\small{$E_v$}}}
\end{picture}

\noindent
Furthermore, assume that the minimal cycle $Z_{min}$ equals the maximal ideal cycle $Z_{max}$.
In particular, $\calO(-Z_{min})$ has no fixed components. For a detailed study of this singularity
(and any analytic type with the above graph) see \cite{NO17}.
Set $-l'=Z=Z_{min}=E_v^*$, and $\calL:=\calO_{Z}(-Z)$. Since $(E_v^*,E^*_v)=-1$ and the $Z(\bt)$--coefficient $z(E^*_v)=0$,
one has
$\eca^{l'}(Z)=\bC^*$. In fact, the corresponding effective divisors correspond to the points of
$E_v^{reg}:=E_v\setminus {\rm Sing}(E)$. Using (\ref{eq:dimfiber}) and  \cite[\S 4]{NO17}
(which shows $h^1(\calL)=1$)
one obtains that $\dim c^{-1}(\calL)=0$.
Furthermore, $\pic^{l'}(Z)=\bC^2$ (cf. \ref{not:min}), hence we get an injection $c:\bC^*\hookrightarrow \bC^2$.
For any $q\in E_v^{reg}=\bC^*$ we write $\calL_q:=c^{l'}(q)\in \pic^{l'}(Z)$.

In fact, $\im (c^{l'})$ can be determined explicitly. Let $\Gamma_l$ and $\Gamma_r$ be the subgraphs
consisting of the left/right cusp together with $v$. They determine minimally elliptic singularities with
$p_g=1$, and the corresponding restrictions provide the two coordinates in $\pic^{l'}(Z)$.
Applying
 \cite[6.11.4]{hartshorne} for these two coordinates  we get that $\im (c^{l'})$ in some affine coordinates
$(z_1,z_2)$ has the form $z_1z_2=1$.

Furthermore, this situation can be used to
analyze another singularity $(X',o)$, whose $\im (c')$ equals $\im (c)\setminus \{\mbox{1 point}\}$.
Fix an arbitrary point $p\in E^{reg}_v$, and glue to the resolution of $(X,o)$
(associated with $\Gamma_1$)
another irreducible $(-2)$--exceptional curve $E'_{p}$
transversally to $E_v$ at $p$.
%Let $E'=\{E_v'\}_v\cup\{E'_p\}$  be the new exceptional curve (with natural notations).
In this way we create the resolution of a new singularity $(X',o)$  with exceptional curve
$E'=\{E_v'\}_v\cup\{E'_p\}$ (with natural notations).
The new graph is on the right hand side above.

In the new situation we take $-l'=E_v'^*$ and $Z':= Z_{min}'=E_p'^*$.
Then  $\eca^{l'}(Z')$ can be identified with $(E_v')^{reg}=E_v^{reg}\setminus \{p\}=\bC^*\setminus \{p\}$,
and $c':\bC^*\setminus \{p\}\to \bC^2$ with the restriction of $c$ to $\bC^*\setminus \{p\}$.
(More precisely, for $q\in \bC^*\setminus \{p\}$ one has $c'(q)|_Z=\calL_q\otimes \calO_Z(p)$.)
Since $c$ is injective, the image of $c'$ cannot be closed. Via similar construction
we can eliminate from the image of $c$ any point.
\end{example}
\begin{example}\label{ex:notequidim} The map $c$ usually is not a locally trivial fibration
over its image, in fact,  {\bf the fibers of $c$ usually are not even equidimensional}.

Consider the graph $\Gamma_1$ from Example \ref{ex:notclosed}. It can be realized also by a
complete intersection (splice quotient) singularity with $p_g=3$, cf. \cite{NWCasson,NO17}.
Set $-l'=2Z_{min}=2E_v^*$ and $Z=Z_{min}$. Then $ \eca^{l'}(Z)$ is the double symmetric product of
$E_v^{reg}$, namely $\bC^*\times \bC^*/\bZ_2\simeq \bC^*\times \bC$.
On the other hand, $\pic^{l'}(Z)=\bC^2$. (For numerical cohomological  invariants see again \cite{NO17}.)
It turns out that $c$ is dominant (use e.g.  Theorem \ref{th:dominant}(3)), hence $c$ is birational,
with all fibers connected.
Since $Z_{max}=2Z_{min}$, $\calL=\calO_Z(-2Z_{min})$ has no fixed components, hence $\calL\in\im(c)$.
Furthermore, $h^1(\calL)=1$ (see e.g. \cite[(5.4)]{NO17}), hence  $\dim c^{-1}(\calL)=1$ by (\ref{eq:dimfiber})
(since  $h^1(\calO_Z)=2$ and $(l',Z)=2$). This can be seen in the following way as well.
By Riemann--Roch $h^0(\calL)=2$ and $H^0(\calO_Z)^*=\bC^*$, hence by \ref{lem:free} $c^{-1}(\calL)$ is
1--dimensional.
In particular,  the fibers of $c$ are not equidimensional.
(Furthermore,  one can show that
$\im(c)$ is homeomorphic to $(\bC^*)^2\cup \{(0,0)\}$, where $(0,0)$ corresponds to $\calL$.)
\end{example}

\begin{example}\label{ex:nonfibration}
{\bf Even if all the fibers have the same dimension} (and by Theorem \ref{lem:Mumford}  they are smooth)
{\bf the topology of some fibers might jump}. Take for example the graph

\begin{picture}(200,50)(0,0)
%\put(10,30){\circle*{4}}
%\put(10,10){\makebox(0,0){\small{$[1]$}}}
\put(70,40){\makebox(0,0){\small{$-2$}}}
\put(90,40){\makebox(0,0){\small{$-1$}}}
%\put(30,30){\circle*{4}}
%\put(50,30){\circle*{4}}
\put(70,30){\circle*{4}}
\put(110,40){\makebox(0,0){\small{$-8$}}}\put(130,40){\makebox(0,0){\small{$-2$}}}
\put(112,22){\makebox(0,0){\small{$E_1$}}}
\put(132,22){\makebox(0,0){\small{$E_2$}}}
\put(80,10){\makebox(0,0){\small{$-3$}}}
\put(90,30){\circle*{4}}
\put(110,30){\circle*{4}}
\put(130,30){\circle*{4}}
%\put(150,30){\circle*{4}}
\put(90,10){\circle*{4}}
\put(70,30){\line(1,0){60}}\put(90,10){\line(0,1){20}}
\end{picture}

It supports a non--numerically Gorenstein elliptic singularity. Recall that
if $C$ denotes the elliptic cycle
(here it is supported on the union  of all irreducible exceptional curves except $E_2$), and
 $(C,Z_{min})<0$, then  the  length of the elliptic sequence is one,
cf. \cite{Yau5,Yau1}.
%\marginpar{{\bf REFERENCES}}
Hence, for any analytic realization,  $p_g=1$. Take $-l'=Z=Z_{min}=E_1^*+E_2^*$.
A computation shows that $\eca^{l'}(Z)=\bC^2\setminus \{0\}$.
%We can assume that $Z_{max}=Z_{min}$,
%hence $\calL=\calO_Z(-Z)\in \im (c)$.
%\marginpar{is this always true??}
Then $c:\bC^2\setminus \{0\}\to \bC$ can be identified with the restriction to $\bC^2\setminus \{0\}$
of the linear projection $\bC^2\to \bC$. Hence the generic fiber is $\bC$ while there is a special fiber
$\simeq \bC^*$. By this correspondence $\pic^{l'}(Z)=\bC$ is identified by $E_1\setminus E_{node}$.
The generic fibers correspond to the divisors $\{p,q\}$, where $p\in E_1^{reg}\simeq \bC^*$,
and $q\in E_2^{reg}\simeq \bC$; they are sent by $c$ to $p\in E_1^{reg}\subset  E_1\setminus E_{node}
\simeq\pic^{l'}(Z)$. Since $q$ can be any
 point on $E_2^{reg}$, the fibers are $\bC$.
On the other hand, any divisor given by a smooth cut at $E_1\cap E_2$, transversal to both
$E_1$ and $E_2$, (parametrized by the slope $\bC^*$)
is sent by $c$ to  $E_1\cap E_2$, whose fiber is exactly this parameter space $\bC^*$.
%(For detailed verifications one might use the charts from the proof of Theorem \ref{th:smooth} and
%the concrete realization of Laufer's  Duality   from section \ref{s:DIFFFORMS}.)
\end{example}
\begin{example}\label{ex:singularimage}
For an example when the image of $c$ is singular see
Example \ref{ex:whSING}.
\end{example}
\subsection{The topology of the fibers of $c$ and the coefficients of the Poincar\'e series}\label{ss:PequalsZ} \

Let us analyse again the fibers of $c:\eca^{l'}(Z)\to \pic^{l'}(Z)$, $Z\geq  E$. Assume that
$\calL\in \im (c)$. Then $\{H_v:=H^0(Z-E_v,\calL(-E_v))\}_{v\in\calv}$
is a proper linear subspace arrangement in $H^0:=H^0(Z,\calL)$. For any subset
$\emptyset \not=I\subset \calv$ write $H_I:=\cap_{v\in I}H_v$, and introduce also
$H_\emptyset :=H^0$. Note that the topological Euler characteristic satisfies
$\chi_{top}(\bP H_I)=\dim H_I$, hence by the inclusion--exclusion principle one obtains
\begin{equation}\label{eq:Arrang}
\chi_{{\rm top}} ( \bP (H^0\setminus\cup _v H_v))=\sum_{I\subset \calv} \, (-1)^{|I|}
\dim H_I=
\sum_I(-1)^{|I|+1}\mathrm{codim}(H_I\subset H^0).
\end{equation}
In particular  the analytic invariant $p_{Z,\calL}$ (cf. \ref{bek:AMS}) equals
the topological Euler characteristic of the corresponding linear subspace arrangement complement,
$p_{Z,\calL}=\chi_{top}(\bP ( H^0(Z,\calL)_{\reg}))$.
Using Lemmas \ref{lem:H_0} and \ref{lem:free} this reads as
$$p_{Z,\calL}=\chi_{top}( c^{-1}(\calL)).$$
This fact links the coefficients
of the topological series $Z(\bt)$ and the numerical analytical
invariants $p_{Z, \calL}$: the Euler characteristic of the total space $\eca^{l'}$
is $z(-l')$, while the Euler characteristic of each fiber $c^{-1}(\calL)$ ($\calL\in \im (c)$) is
$p_{Z,\calL}$.
\begin{example}\label{eq:RATZ=P}
Assume that $(X,o)$ is rational. Then $\pic^{l'}(Z)$ is a point: if $c_1(\calL)=l'$ then
 $\calL=\calO(l')$. Hence $\eca^{l'}$ is the unique fiber
 $c^{-1}(\calO(l'))$. Therefore, $z(-l')=p_{Z,\calO(l')}$ ($l'\in -\calS'$), or
$Z(\bt)=P_{Z,\calO}(\bt)$. This generalizes the similar  identity proved in
\cite{CDGPs,CDGEq,NPS,NCL} valid for $Z\gg 0$ (or, for $\widetilde{X}$).
\end{example}
This identity $Z(\bt)=P_{\calO_{\widetilde{X}}}(\bt)$ is valid for a
 more general family of   singularities, namely for splice quotient singularities \cite{NCL,NPS}.
(This family was introduced  by Neumann and Wahl in  \cite{NWsqI,NWsq}).
This identity
reinterpreted in our present language says  that for any $-l'\in\calS'$ and $Z\gg 0$
the Euler characteristic
of the total space $\eca^{l'}(Z)$ and the Euler characteristic of the
 very special fiber $c^{-1}(\calO(l'))$ (over the unique natural line bundle) coincide.

\begin{conjecture}
For a splice quotient singularity and  $-l'\in\calS'$ the fiber  % , $Z\gg 0$ (or for any $Z$)
$c^{-1}(\calO(l'))$ is a topological deformation retract of $\eca^{l'}(Z)$.
\end{conjecture}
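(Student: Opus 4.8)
The statement would follow from an explicit strong deformation retraction of $\eca^{l'}(Z)$ onto the fibre $c^{-1}(\calO_Z(l'))$, and the plan is to build one by lifting through the Abel map the canonical contraction of the target. Since $\pic^{l'}(Z)$ is an affine space, identifying it with the vector space $H^1(\calO_Z)$ by $\calL\mapsto\calL\otimes\calO_Z(-l')$ sends the natural bundle $\calO_Z(l')$ to the origin, so the scalar homotopy $\rho_t(\calL)=t\cdot\calL$ ($t\in[0,1]$) retracts $\pic^{l'}(Z)$ onto $\{\calO_Z(l')\}$, and it retracts the constructible set $\im(c^{l'})$ onto the same point provided $\im(c^{l'})$ is stable under the $\rho_t$. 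For splice quotients one expects $\im(c^{l'})$ to be star-shaped with centre the natural bundle --- this is what happens in Example~\ref{ex:notequidim}, where $\im(c)$ is homeomorphic to $(\C^*)^2\cup\{(0,0)\}$ with $(0,0)$ the natural bundle --- and the real task is to lift $\rho_t$ to a homotopy $\Phi_t$ of $\eca^{l'}(Z)$ with $c^{l'}\circ\Phi_t=\rho_t\circ c^{l'}$, $\Phi_0(\eca^{l'}(Z))\subset c^{-1}(\calO_Z(l'))$, and $\Phi_t$ the identity on that fibre.

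A convenient reformulation uses Theorem~\ref{th:smooth}(2): the restriction $r\colon\eca^{l'}(Z)\to\eca^{l'}(E)$ is a locally trivial bundle with affine-space fibre, so $\eca^{l'}(Z)$ already deformation retracts onto $\eca^{l'}(E)$, and it would suffice to show that $c^{-1}(\calO_Z(l'))$ sits inside $\eca^{l'}(Z)$ as a sub-affine-bundle of $r$ --- or at least, over each $D_E\in\eca^{l'}(E)$, as a non-empty affine subspace of the fibre $r^{-1}(D_E)\cong\C^{(l',Z-E)}$ varying continuously in $D_E$ --- since a fibrewise linear retraction onto such a subbundle then furnishes $\Phi_t$. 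In the local charts produced in the proof of Theorem~\ref{th:smooth} the fibre of $r$ is spanned by the ``higher jet'' coordinates, and the condition $c^{l'}(D)=\calO_Z(l')$ can be read off from the explicit expression of the Abel map via Laufer duality of Section~\ref{s:ADIFFFORMS}; the plan is to exploit the graded/filtered structure of a splice quotient --- concretely, the multivariable divisorial filtration, which for splice quotients is as degenerate as possible (the mechanism behind $Z(\bt)=P(\bt)$, cf.\ \cite{NCL,NPS}) and which can be pulled through the finite map $\widetilde{X_{ab}}\to\tX$ from the universal abelian cover --- to show that this condition is indeed affine-linear in the jet coordinates and never over-determined, so that the affine subspaces in question fill out a genuine subbundle (in particular $c^{-1}(\calO_Z(l'))$ is non-empty and surjects onto $\eca^{l'}(E)$).

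The main obstacle is the non-properness of $c^{l'}$: there is no Thom--Mather isotopy argument available, and the fibre dimension of $c^{l'}$ genuinely jumps (Example~\ref{ex:notequidim}), so a true retraction must \emph{absorb} the generic locus into the special fibre $c^{-1}(\calO_Z(l'))$ rather than trivialise the family over a contractible base. Equivalently, one must prove that for a splice quotient every divisor on $E$ extends to a divisor on $Z$ with natural class and that this extension locus has constant dimension over $\eca^{l'}(E)$ --- both can fail for general analytic types, which is precisely where the hypothesis ``splice quotient'' enters essentially, exactly as in the proof of $Z(\bt)=P(\bt)$. As a consistency check, the conjecture forces $z(-l')=p_{Z,\calO_Z(l')}$, i.e.\ $Z(\bt)=P(\bt)$ for $Z\gg0$, which is the known result for splice quotients, so the conjecture is the topological enhancement one expects.
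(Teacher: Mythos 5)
First, a point of comparison: the statement you are proving is stated in the paper as a \emph{conjecture}, with its study explicitly deferred to a later part of the series, so there is no proof in the paper to measure your argument against; it must stand on its own. As written, it is a plan rather than a proof, and both of its concrete mechanisms break on examples contained in the paper itself. The ``plan B'' --- realizing $c^{-1}(\calO_Z(l'))$ as a sub-affine-bundle of $r\colon \eca^{l'}(Z)\to\eca^{l'}(E)$ surjecting onto $\eca^{l'}(E)$ --- is dimensionally impossible in Example \ref{ex:notequidim}: there the singularity is a splice quotient, $l'=-2E_v^*$, the special fiber $c^{-1}(\calO_Z(-2Z_{min}))$ has dimension $1$ by (\ref{eq:dimfiber}), while $\dim\eca^{l'}(E)=(l',E)=2$, so the special fiber cannot map onto $\eca^{l'}(E)$, let alone be a subbundle over it. Worse, Remark \ref{rem:stratif} exhibits a splice quotient (on $\Gamma_1$) with $-l'=Z_{min}\in\calS'$ for which $\calO_Z(l')\notin\im(c^{l'})$, i.e.\ $c^{-1}(\calO_Z(l'))=\emptyset$ while $\eca^{l'}(Z)\simeq\bC^*\neq\emptyset$; your intermediate claim that ``every divisor on $E$ extends to a divisor on $Z$ with natural class'' is therefore false for splice quotients in general, and any proof must first isolate the non-emptiness hypothesis $\calO_Z(l')\in\im(c^{l'})$ (equivalently $-l'\in\calS'_{im}$), which the ECC guarantees only for the generators $E_v^*$ at end vertices.

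The primary route fares no better. That $\im(\widetilde{c}^{l'})$ is stable under the scalar contraction $\rho_t$ of $H^1(\calO_Z)$ centred at the origin is asserted from one example, but the homeomorphism $\im(c)\cong(\bC^*)^2\cup\{(0,0)\}$ in Example \ref{ex:notequidim} is not claimed to be linear, so it gives no information about invariance under the \emph{linear} flow $\rho_t$; nothing in the paper's structure theory (which controls only the stable images $\im(\widetilde{c}^{nl'})$ for $n\gg1$, cf.\ Theorem \ref{th:mult}) implies star-shapedness of $\im(\widetilde{c}^{l'})$ itself. And even granting it, the lifting of $\rho_t$ to $\Phi_t$ on $\eca^{l'}(Z)$ is exactly the step you cannot do by general topology: $c^{l'}$ is non-proper and its fibers jump in dimension precisely over the natural bundle (again Example \ref{ex:notequidim}), so there is no Ehresmann/Thom--Mather trivialization near the centre of the contraction, and the ``absorption'' of the generic locus into the special fiber is the entire content of the conjecture. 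You have correctly identified where the difficulty lies and produced the right consistency check ($z(-l')=p_{Z,\calO(l')}$, i.e.\ $P=Z$), but the argument supplies no mechanism for crossing it; what is missing is a genuinely new idea --- presumably the explicit description of the Abel map via the divisorial filtration/Laufer duality for splice quotients that the authors postpone to the sequel.
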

A detailed study of the Abel map in the case of splice quotient singularities
will appear in one of the parts of  the present series of articles.

In the present work we wish to focus
(instead/besides  of the `$P_{\calO}=Z$ identity')
on the more complex package of invariants provided
by (all the fibers of) $c$. In particular, we analyse other, less specific fibers as well,
e.g. the generic fibers over $\im (c)$.

\section{Characterization of $c^{l'}$ dominant}\label{s:dominant}

 \subsection{} In order to determine properties
of line bundles $\calL\in \pic(Z)$  with  given Chern class we need first to understand the situations
when $c^{l'}$ is dominant.
%The main result of this section is  part (3) of the next theorem.

\begin{theorem}\label{th:dominant}
 Fix $l'\in -\calS'$,
$Z\geq E$ as above, and consider $c^{l'}:\eca^{l'}(Z) \to \pic^{l'}(Z)$.

(1) $c^{l'}$ is dominant if and only if $H^0(Z,\calL)_{\reg}
\not=\emptyset $ for generic $\calL\in\pic^{l'}(Z)$.

(2) If $c^{l'}$ is dominant then $h^1(Z,\calL)=0$ for  generic $\calL\in\pic^{l'}(Z)$.

(3) $c^{l'}$ is dominant  if and only if $\chi(-l')<\chi(-l'+l)$ for all $0<l\leq Z$, $l\in L$.
In particular, the fact that $c^{l'}$ is dominant is independent of the analytic structure supported by $\Gamma$ and it can be characterized topologically (and explicitly).
\end{theorem}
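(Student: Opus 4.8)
For parts (1) and (2) I would argue formally, using only the structural facts already at hand. By Theorem~\ref{th:smooth} the source $\eca^{l'}(Z)$ is irreducible (even smooth), and $\pic^{l'}(Z)$ is an affine space, hence irreducible; $c^{l'}$ is a morphism of algebraic varieties, so its image is constructible. A constructible subset of an irreducible variety is Zariski dense if and only if it contains a nonempty open set, i.e.\ if and only if a generic point of $\pic^{l'}(Z)$ lies in it; by Lemma~\ref{lem:H_0} this is exactly statement (1). For (2): if $c^{l'}$ is dominant, then its generic fibre has dimension $\dim\eca^{l'}(Z)-\dim\pic^{l'}(Z)=(l',Z)-h^1(\calO_Z)$, while a generic $\calL$ is then in the image, so by Lemma~\ref{lem:free} its fibre has dimension $(l',Z)+h^1(Z,\calL)-h^1(\calO_Z)$; comparing the two forces $h^1(Z,\calL)=0$.

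The real content is (3). The first move is to re-express dominance via the tangent map. By Proposition~\ref{lem:Mumford} the differential of $c^{l'}$ at $D$ is the coboundary $\delta^1_D\colon H^0(\calO_D)\to H^1(\calO_Z)$ of (\ref{eq:ML}); chasing that cohomology sequence together with Riemann--Roch gives $\rank\delta^1_D=h^1(\calO_Z)-h^1(Z,\calO_Z(D))$. Since $\eca^{l'}(Z)$ is irreducible and $h^1(Z,\calO_Z(D))$ is upper semicontinuous in $D$, the map $c^{l'}$ is dominant if and only if $h^1(Z,\calO_Z(D_{gen}))=0$ for generic $D_{gen}\in\eca^{l'}(Z)$. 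The second move is a bookkeeping identity: expanding $\chi(y)=-(y,y-Z_K)/2$ one gets, for any integral cycle $0<l\le Z$, that $\chi(-l'+l)-\chi(-l')=(l',l)+\chi(l)=\chi(\calO_Z(D)|_l)$, the Euler characteristic of the restriction of a line bundle of class $l'$ to the thickened cycle $l$, which depends only on $l'$ and $l$. So the inequality in (3) becomes the condition $(\ast)$: $\chi(\calO_Z(D)|_l)>0$ for every integral cycle $0<l\le Z$.

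With this, the ``only if'' direction is immediate and needs no genericity. If $(\ast)$ fails for some $0<l\le Z$, take any $D\in\eca^{l'}(Z)$ with its cutting section $s$; since $D$ is $0$--dimensional, $s|_l\ne0$, hence $h^0(l,\calO_Z(D)|_l)\ge1$, and $\chi(\calO_Z(D)|_l)\le0$ then gives $h^1(l,\calO_Z(D)|_l)\ge1$; as $Z-l$ is a curve, $H^1(Z,\calO_Z(D))\twoheadrightarrow H^1(l,\calO_Z(D)|_l)$, so $h^1(Z,\calO_Z(D))>0$ for \emph{every} $D$, in particular for $D_{gen}$, so $c^{l'}$ is not dominant. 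For the ``if'' direction I would induct on the degree $\sum_v(l',E_v)$. The case $l'=0$ is separate: $(\ast)$ then reads $\chi(l)\ge1$ for all $0<l\le Z$, which via the surjections $H^1(\calO_Z)\twoheadrightarrow H^1(\calO_l)$ forces $h^1(\calO_Z)=0$, so $\pic^0(Z)$ is a point and $c^0$ is trivially dominant. If $l'\ne0$, fix $v$ with $(l',E_v)>0$ and write a generic divisor as $D_{gen}=D'_{gen}+p$ with $p\in E_v^{\reg}$ generic, $D'_{gen}\in\eca^{l'+E_v^*}(Z)$ generic and $p\notin\supp D'_{gen}$; note $\eca^{l'+E_v^*}(Z)\ne\emptyset$ by (\ref{eq:empty}). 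Since the supports are disjoint, $\delta^1_{D'+p}$ restricts to $\delta^1_{D'}$ on the summand $H^0(\calO_{D'})$ and carries the generator of $H^0(\calO_p)$ to the class $\beta_v(p):=\delta_{\calO_Z(p)}(1_p)\in H^1(\calO_Z)$ (the coboundary of $0\to\calO_Z\to\calO_Z(p)\to\calO_p\to0$), which depends only on $p$. Hence $h^1(Z,\calO_Z(D'+p))=h^1(Z,\calO_Z(D'))$ unless $\beta_v(p)\notin\im(\delta^1_{D'})$, in which case it drops by exactly one.

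Thus the theorem reduces to the assertion I expect to be the main obstacle: if $c^{l'+E_v^*}$ is not dominant --- equivalently $\im(\delta^1_{D'_{gen}})$ is a proper subspace $W\subsetneq H^1(\calO_Z)$ --- then for generic $p\in E_v^{\reg}$ one has $\beta_v(p)\notin W$, and moreover the $\sum_v(l',E_v)$ points can be added in an order so that, granting $(\ast)$ for the final $l'$, the rank of the tangent map strictly increases at each step until it saturates at $h^1(\calO_Z)$. The plan is to negate: if $\beta_v(p)\in W$ for generic $p$, then the linear span of $\{\beta_v(q)\}_{q\in E_v^{\reg}}$ lies in $W$, so that the affine closures $\overline{\im(c^{l'})}$ and $\overline{\im(c^{l'+E_v^*})}$ (the former being the closure of the translate of the latter by the one--parameter family $\{\calO_Z(q)\}_q$) have the same linear span; one then feeds the exact sequences $0\to\calO_Z(D')(-l)|_{Z-l}\to\calO_Z(D')|_Z\to\calO_Z(D')|_l\to0$, for cycles $l\le Z$ supported along $E_v$ and its neighbours, into the comparison of the coboundary maps, concluding that this stalling forces $\chi(\calO_Z(D)|_l)\le0$ for some such $0<l\le Z$, contradicting $(\ast)$. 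The routine verifications I would delegate to the ambient theory of Sections~2--3: that $D'_{gen}+p$ is generic in the irreducible $\eca^{l'}(Z)$; the naturality $\delta^1_{D'+p}|_{H^0(\calO_{D'})}=\delta^1_{D'}$ and the factorisation of the $\calO_p$--component through $\delta_{\calO_Z(p)}$; and the semicontinuity of $h^1$ and of $\rank\delta^1$.
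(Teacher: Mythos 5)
Parts (1) and (2) of your argument are fine and coincide with the paper's (constructibility plus Lemma \ref{lem:H_0} for (1), fiber‑dimension count via (\ref{eq:dimfiber}) for (2)). Your ``only if'' half of (3) is also correct and is a nice variant: restricting $s$ to $l$, using $\chi(\calO_Z(D)|_l)=\chi(-l'+l)-\chi(-l')$ and the surjection $H^1(Z,\calO_Z(D))\twoheadrightarrow H^1(l,\calO_Z(D)|_l)$ shows $h^1(Z,\calO_Z(D))>0$ for \emph{every} $D$, which is cleaner than the paper's genericity argument for that direction.

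The problem is the ``if'' half of (3), which is where the content of the theorem lives, and your text does not prove it. You reduce it to the assertion that if the rank of $\delta^1$ stalls when points are added on $E_v$, then $\chi(\calO_Z(D)|_l)\le 0$ for some $0<l\le Z$ ``supported along $E_v$ and its neighbours'' — and you explicitly label this the main obstacle and offer only a plan for it. That assertion \emph{is} the theorem; nothing in the sketch (feeding the sequences $0\to\calO_Z(D')(-l)|_{Z-l}\to\calO_Z(D')\to\calO_Z(D')|_l\to 0$ into a ``comparison of coboundary maps'') indicates how the failure of $\beta_v(p)\notin W$ for generic $p$ produces a specific cycle $l$ violating $(\ast)$. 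Two subsidiary steps are also broken. First, your base case $l'=0$ is the generalized Artin criterion ($\chi(l)\ge 1$ for all $0<l\le Z$ implies $h^1(\calO_Z)=0$); the surjections $H^1(\calO_Z)\twoheadrightarrow H^1(\calO_l)$ give only the reverse implication, so the base case is asserted, not proved (the paper in fact \emph{derives} this criterion as a corollary of the theorem, cf.\ Example \ref{ex:RATZ}). Second, a transversal cut $p$ at $E_v$, reduced to $Z$, has $\dim H^0(\calO_p)=(-E_v^*,Z)=m_v(Z)$, not $1$, so for non‑reduced $Z$ there is no single class $\beta_v(p)$ and the one‑dimension‑per‑point bookkeeping fails. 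The paper's route avoids all of this: take $\calL$ generic in $\pic^{l'}(Z)$; if $H^0(Z,\calL)_{\reg}=\emptyset$ strip fixed components $E_{v}$ one at a time to reach $0<l\le Z$ with $H^0(Z-l,\calL(-l))=H^0(Z,\calL)$ and $H^0(Z-l,\calL(-l))_{\reg}\ne\emptyset$; observe that the restriction of a generic bundle is generic, so parts (1)--(2) applied on $Z-l$ give $h^1(Z-l,\calL(-l))=0$; then $\chi(Z,\calL)\le h^0(Z,\calL)=h^0(Z-l,\calL(-l))=\chi(Z-l,\calL(-l))$ contradicts $\chi(-l')<\chi(-l'+l)$ via (\ref{eq:chieq}). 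I would either adopt that argument or supply a complete proof of your stalling claim.
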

\begin{proof}
For {\it (1)} use Lemma \ref{lem:H_0}. For {\it (2)}
note that for $c$  dominant
 the dimension of $\eca^{l'}(Z)$ is the sum of the dimensions of the generic fiber and of the base
 (which equals $h^1(\calO_Z)$). Hence, by (\ref{eq:dimfiber}) and \ref{th:smooth}{\it (1)},
  $h^0(Z,\calL)=\dim c^{-1}(\calL)+h^0(Z)=(l',Z)-h^1(Z)+h^0(Z)=(l',Z)+\chi(Z)=\chi(Z,\calL)$.

  {\it (3)} First note that for any cycle $l\in L$, $0<l\leq Z$, and any $\calL\in \pic^{l'}(Z)$,
  one has
  \begin{equation}\label{eq:chieq}
 \chi(-l')\geq \chi(-l'+l) \ \Leftrightarrow \
 \chi(Z,\calL)\leq \chi(Z-l,\calL(-l)),
  \end{equation}
  where, by convention, $\chi(Z-l,\calL(-l))$ is zero whenever $l=Z$.

 Assume that $c$ is dominant and the equivalent conditions from
 (\ref{eq:chieq}) are satisfied for some $l$, where $0<l\leq Z$.
  Take a generic $\calL\in \pic^{l'}(Z)$. Hence $H^0(Z,\calL)_{\reg}\not=
 \emptyset$ (cf. part {\it (1)}) and $\chi(Z,\calL)=h^0(Z,\calL)$ by part
 {\it (2)}. Hence
 $h^0(Z,\calL)=\chi(Z,\calL)\leq \chi(Z-l,\calL(-l))\leq h^0(Z-l,\calL(-l))$. Therefore,
 by the cohomological exact
  sequence of $0\to \calL(-l)|_{Z-l}\to \calL$, we necessarily have equality
 $H^0(Z-l, \calL(-l))=H^0(Z,\calL)$. Then for any $E_v$ in the support of $l$  we also have
  equality $H^0(Z-E_v, \calL(-E_v))=H^0(Z,\calL)$, hence $H^0(Z,\calL)_{\reg}=\emptyset$,
 which leads to a contradiction.

 Assume that $\chi(-l')<\chi(-l'+l)$ for any $0<l\leq Z$. This, for $l=Z$, implies $\chi(Z,\calL)>0$,
 hence necessarily $h^0(Z,\calL)>0$ for any $\calL\in \pic^{l'}(Z)$.
 If for a generic $\calL$ one has  $H^0(Z,\calL)_{\reg}=\emptyset$, then
  there exists $E_v$ such that
 $H^0(Z,\calL)=H^0(Z-E_v,\calL(-E_v))$.
  If $H^0(Z-E_v,\calL(-E_v))_{\reg}=\emptyset$ again,
 then we continue the procedure. In this way we obtain
 a cycle $0<l\leq Z$ such that  $H^0(Z-l, \calL(-l))=H^0(Z,\calL)$
 and $H^0(Z-l, \calL(-l))_{\reg}\not=\emptyset$. Note that for $\calL$ generic $\calL(-l)|_{Z-l}
 \in\pic^{l'-l}(Z-l) $
% \marginpar{{\bf EXPLAIN BETTER}}
 is generic as well. Hence $c^{l'-l}$ is dominant and
 by {\it (1)--(2)} $h^1(Z-l,\calL(-l))=0$.
 Therefore,
 $\chi(Z,\calL)\leq h^0(Z,\calL)=h^0(Z-l,\calL(-l))=\chi(Z-l,\calL(-l))$, which by
 (\ref{eq:chieq})
 reads as $\chi(-l')\geq \chi(-l'+l)$, a contradiction.
\end{proof}

\begin{example}\label{ex:RATZ} The statement of Theorem \ref{th:dominant}{\it (3)}
is non--empty even for $l'=0$. In this case, since $\eca^0$ is a point,
$c^0$ is dominant if and only if $\pic^0(Z)$ is a point, that is,
$h^1(\calO_Z)=0$. Hence part {\it (3)} reads as the following
topological characterization of the vanishing of $h^1(\calO_Z)$:
For any normal surface singularity and any cycle $Z>0$,
$h^1(\calO_Z)=0$ if and only if $\chi(l)>0$ for any $0<l\leq Z$.
(This is a generalization of the rationality criterion of Artin \cite{Artin62,Artin66}, which corresponds
to $Z\gg 0$.)
\end{example}
\begin{remark}\label{rem:Zsupport}
Above, we assumed that $Z\geq E$. This is not really necessary: if the support $|Z|$ of $Z$
is smaller then one can restrict all the objects to $|Z|$, and the above statements (and also
the next  Theorem \ref{th:hegy})
remain valid. (Along the restriction, $\widetilde{X}$ will be replaced by a small convenient neighbourhood of
$\cup_{E_v\subset|Z|}E_v$, and $L$ by $\bZ\langle E_v\rangle_{E_v\subset |Z|}$.)
\end{remark}
\bekezdes\label{bek:DOMChern}
 {\bf The semigroup of dominant Chern classes ($Z\gg 0$).} \
Theorem \ref{th:dominant}{\it (3)} motivates the introduction of the following combinatorial
set
$$\calS'_{dom}:=\{-l'\ |\ \chi(-l')<\chi(-l'+l)\ \mbox{for all $l\in L_{>0}$}\}.$$
By definition, $-l'\in \calS'_{dom}$ if and only if
 $c^{l'}$ is dominant for $Z\gg 0$.

Sometimes it is more convenient to use the next equivalent form (note the sign modification):
\begin{equation}\label{eq:DOMIN}
\calS'_{dom}=\{l'\ |\ \chi(l)>(l',l)\ \mbox{for all $l\in L_{>0}$}\}.\end{equation}

\begin{lemma}\label{lem:DOMIN}
$\calS'_{dom}$ has the following properties:

(i) $\calS'_{dom}\subset \calS'$.

(ii) $0\in\calS'_{dom}$ iff $L$ is rational.
More generally, for $I\subset \calv$ and $n_v>0$ for all $v\in I$,
if $\sum_{v\in I}n_vE^*_v\in \calS'_{dom}$ then the components
of $ \cup_{v\not \in I} E_v$ are rational. Hence, in general,
  $\calS'\setminus \calS'_{dom}$ is infinite.

(iii)   $\calS'\cap (Z_K/2+\calS'_{\Q})\subset \calS'_{dom}$, where $\calS'_{\Q}:=\{ l' \in L\otimes \Q\,:\,
(l',E_v)\leq 0 \ \mbox{for all} \ v\}$.

(iv) $\calS'_{dom}$ is a semigroup (not necessarily with
identity element).

(v) $\calS'_{dom}$ is an $\calS'$--module, that is, if $l_1'\in \calS'_{dom}$,
$l_2'\in \calS'$ then $l_1'+l_2'\in \calS'_{dom}$.

(vi) $\calS'_{dom}$ is $\min$--stable, like $\calS'$, that is,
if \,$l_1', l_2'\in \calS'_{dom}$ then $m:=\min\{l_1',l_2'\}\in \calS'_{dom}$.

\end{lemma}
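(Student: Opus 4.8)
\emph{Overall strategy.} Everything reduces to the fact that $\chi$ is a quadratic function, i.e. $\chi(a+b)=\chi(a)+\chi(b)-(a,b)$, together with the two elementary inputs that $(E_v,E_w)\ge 0$ for $v\ne w$ and that $\chi(E_v)=1$ for every $v$ (the latter from the genus--$0$ adjunction relation $(Z_K,E_v)=(E_v,E_v)+2$). From the same identity one obtains, for $l'\in L'$ and $l\in L_{>0}$, the translation formula $\chi(-l'+l)-\chi(-l')=\chi(l)+(l',l)$; in particular the two descriptions of $\calS'_{dom}$ in the statement agree, and below I switch freely between ``$\chi(l)>(l',l)$ for all $l\in L_{>0}$'' and ``$\chi(l')<\chi(l'+l)$ for all $l\in L_{>0}$''. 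I would prove the parts in the order (i), (iii), (v), (iv), (ii), (vi).

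\emph{Parts (i), (iii), (v), (iv).} For (i), test the defining inequality on $l=E_v$: since $\chi(E_v)=1$ one gets $1>(l',E_v)$, hence $(l',E_v)\le 0$ because $(l',E_v)\in\Z$, i.e. $l'\in\calS'$. For (iii), write $l'=Z_K/2+q$ with $q\in\calS'_{\Q}$; then for $l\in L_{>0}$ the canonical terms cancel and $\chi(l)-(l',l)=-(l,l)/2-(q,l)$, which is $>0$ since the intersection form is negative definite and $(q,l)=\sum_v(l)_v(q,E_v)\le 0$. For (v): if $l_1'\in\calS'_{dom}$ and $l_2'\in\calS'$, then for $l\in L_{>0}$ one has $(l_2',l)\le 0$, hence $(l_1'+l_2',l)\le(l_1',l)<\chi(l)$, so $l_1'+l_2'\in\calS'_{dom}$. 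Part (iv) is then the special case of (v) obtained by observing, via (i), that a second element of $\calS'_{dom}$ also lies in $\calS'$.

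\emph{Part (ii).} The equivalence ``$0\in\calS'_{dom}$ iff $\Gamma$ rational'' is exactly Artin's criterion ($\chi(l)\ge 1$ for all $l\in L_{>0}$) recalled in \S\ref{ss:analinv}, using integrality of $\chi$. For the general form, let $l'=\sum_{v\in I}n_vE_v^*$ with all $n_v>0$; for any $l\in L_{>0}$ supported on $\calv\setminus I$ one has $(l',l)=0$, while $\chi(l)$ coincides with the Euler characteristic computed on the full subgraph on $\calv\setminus I$ (because, by adjunction, $(E_v,Z_K)$ depends only on $E_v^2$, so for $v\notin I$ it agrees with the analogous quantity of the subgraph). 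Hence $l'\in\calS'_{dom}$ forces that subgraph to satisfy Artin's criterion, i.e. all its components are rational. In particular, whenever $\Gamma$ has a non-rational proper full subgraph on some $\calv\setminus I$ — the typical situation for non-rational $\Gamma$ — none of the infinitely many $\sum_{v\in I}n_vE_v^*$ $(n_v>0)$ belongs to $\calS'_{dom}$, so $\calS'\setminus\calS'_{dom}$ is infinite.

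\emph{Part (vi): the main obstacle.} Here the extra ingredient is the convexity of $\chi$: for cycles $a,b$, with $m=\min\{a,b\}$ and $M=\max\{a,b\}$, the effective cycles $a-m$ and $b-m$ have disjoint supports and $M=m+(a-m)+(b-m)$, so the identity above gives
\[
\chi(a)+\chi(b)=\chi(m)+\chi(M)+\big(a-m,\ b-m\big),
\]
and $(a-m,b-m)\ge 0$ since only off-diagonal (hence nonnegative) intersections occur; thus $\chi(a)+\chi(b)\ge\chi(\min\{a,b\})+\chi(\max\{a,b\})$. Now put $m:=\min\{l_1',l_2'\}$ (it lies in $\calS'$ by (i) and the min-stability of $\calS'$) and suppose $m\notin\calS'_{dom}$; among all $l_0\in L_{>0}$ with $\chi(m)\ge\chi(m+l_0)$ choose one of minimal degree $\sum_v(l_0)_v$. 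Partition $\calv=I\sqcup J$ so that $m$ agrees with $l_1'$ on $I$ and with $l_2'$ on $J$; a direct coordinatewise computation gives $\min\{l_1',m+l_0\}=m+l_1$ and $\max\{l_1',m+l_0\}=l_1'+(l_0-l_1)$, where $l_1\ge 0$ is supported on $J$, $l_1\le l_0$, and $l_1$ equals $\min(l_1'-m,\,l_0)$ on $J$. Applying the convexity inequality to $a=l_1'$, $b=m+l_0$ and using $l_1'\in\calS'_{dom}$ (so $\chi(l_1'+(l_0-l_1))>\chi(l_1')$ when $l_0-l_1>0$) yields $\chi(m+l_0)>\chi(m+l_1)$. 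If $l_0$ is supported on $J$ then $l_1=0$ and $l_0-l_1=l_0>0$, so this says $\chi(m+l_0)>\chi(m)$, contradicting the choice of $l_0$; the symmetric argument with $l_2'$ in place of $l_1'$ disposes of the case $l_0$ supported on $I$; and if $l_0$ meets both $I$ and $J$, then $0<l_1<l_0$ and $l_1$ is a counterexample of strictly smaller degree, again a contradiction. Hence $m\in\calS'_{dom}$. The genuinely delicate points of (vi) — where I expect the real work — are the coordinatewise bookkeeping of the $\min/\max$ cycles relative to the partition $I\sqcup J$, making the induction on the degree of a minimal counterexample airtight, and ensuring the cycles $m$, $l_1$, $l_0-l_1$ stay in $L'$ (automatic, e.g., when $l_1'-l_2'\in L$, and otherwise to be treated through the monoid generated by the $E_v^*$ exactly as for the min-stability of $\calS'$).
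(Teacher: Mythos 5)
Parts (i)--(v) of your proposal are correct and essentially coincide with the paper's (very terse) argument: test the defining inequality on $l=E_v$ for (i), invoke Artin's criterion for (ii), compute $\chi(l)-(Z_K/2+q,l)=-(l,l)/2-(q,l)>0$ for (iii), and add the inequalities $\chi(l)>(l_1',l)$ and $0\geq (l_2',l)$ for (iv)--(v). The genuine divergence is in (vi). The paper argues directly: writing $x_i=l_i'-m$, it splits an arbitrary test cycle $l>0$ into two effective pieces (either $l=x_1+z$ when $l\geq x_1$, or $l=(l+u_1-x_1)+(x_1-u_1)$ for a minimal correction $u_1$ otherwise), applies the defining inequality of $l_2'$ to one piece and of $l_1'$ to the other, and adds, the cross-terms being nonnegative because the relevant supports are disjoint. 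You instead isolate the submodularity $\chi(a)+\chi(b)\geq\chi(\min\{a,b\})+\chi(\max\{a,b\})$ and run a descent on a minimal counterexample $l_0$. Both proofs rest on exactly the same two inputs ($\chi$ quadratic, off-diagonal intersections nonnegative); the paper's is a single direct verification with no induction, while yours packages the key positivity into a reusable lemma at the cost of a minimality argument.

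Two points in your (vi) need repair, though neither is fatal. First, $I$ and $J$ are interchanged in your case analysis: since $x_1=l_1'-m$ is supported on $J$, the cycle $l_1=\min\{x_1,l_0\}$ vanishes when $l_0$ is supported on $I$ (not $J$), and the symmetric argument with $l_2'$ is the one that handles $l_0$ supported on $J$. Second, in the mixed case the claim $l_1>0$ need not hold ($l_0$ may meet $J$ without meeting the support of $x_1$); this is harmless because $l_1=0$ simply lands you in the direct-contradiction branch, but the trichotomy should be organized as ``$l_1=0$ versus $0<l_1<l_0$'' rather than by where $l_0$ is supported. Finally, the integrality caveat you flag at the end (the cycles $x_i$, $l_1$, $l_0-l_1$ lie in $L'$ rather than $L$, while the defining inequality of $\calS'_{dom}$ quantifies over $L_{>0}$) is real, but the paper's own proof applies the defining inequality to the possibly non-integral cycles $x_1$, $z$, $x_1-u_1$ in exactly the same way, so this is a shared subtlety of both arguments rather than a defect specific to yours.
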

\begin{proof} For $(i)$ use (\ref{eq:empty}) or (\ref{eq:DOMIN}). $(ii)$  follow from Artin's criterion.
$(iii)$ is clear.
For $(iv)-(v)$ use (\ref{eq:DOMIN}):
if $\chi(l)>(l'_1,l)$ and $0>(l'_2,l)$ (cf. $(i)$), then $\chi(l)> (l'_1+l'_2,l)$.
 Next we prove $(vi)$.

We wish to show that $\chi(l)>(m,l)$ for any $l>0$. Set $x_i=l_i'-m$ ($i=1,2$). Assume first that
$l\geq x_1$, and write $l=x_1+z$. Then from the assumptions $\chi(x_1)\geq (m+x_2,x_1)$  (equality only
if $x_1=0$) and
$\chi(z)\geq (m+x_1,z)$ (equality only if $z=0$). These added provide $\chi(l)> (m,l)+(x_1,x_2)\geq (m,l)$.

Next assume that $l\not\geq x_1$, and choose $u_1>0$ minimal,   supported by the support of
$x_1$,  such that $l+u_1\geq x_1$.  Then the hypothesis applied for $l_1'=m+x_1$ gives
$\chi(l+u_1-x_1)\geq (m+x_1,l+u_1-x_1)$ (equality only if $l+u_1-x_1=0$)
and applied for $l_2'=m+x_2$ gives $\chi(x_1-u_1)\geq (m+x_2,x_1-u_1)$ (equality only if $x_1-u_1=0$).
These added gives
$\chi(l)-(m,l)> (u_1,l+u_1-x_1)+(x_2,x_1-u_1)\geq 0$.
\end{proof}
\begin{corollary}\label{cor:DOMIN}
(i) \ For any $-l'\in L'$ there exists a unique minimal $l_{dom}\in L_{\geq 0}$ with
$-l'+l_{dom}\in \calS'_{dom}$.

(ii) \ $l_{dom}$ can be found by the following
algorithm (see the analogy with \cite{Laufer72}). We construct a computation sequence $\{z_i\}_{i=0}^t$,
(where $z_{i+1}=z_i+E_{v(i)}$ for some $v(i)\in \calv$) as follows. Fix a generic line bundle
$\calL\in \pic^{l'}(\tX)$.  Start with  $z_0=0$. Assume that
$z_i$ is already constructed and consider $\calL(-z_i)$. If $H^0(\calL(-z_i))$
has no fixed components then stop and $z_i$ is the last term $z_t$.
If $H^0(\calL(-z_i))$ has a fixed component, choose one of them, say
$E_{v(i)}$, and  write $z_{i+}:=z_i+E_{v(i)}$ and repeat the algorithm.
Then this procedure stops after finitely many steps and $z_t=l_{dom}$.
\end{corollary}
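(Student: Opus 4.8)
The plan is to deduce both parts from the $\min$--stability of $\calS'_{dom}$ established in Lemma \ref{lem:DOMIN}(vi), together with the semigroup and $\calS'$--module structures from (iv)--(v), and then to match the resulting minimal element with the output of the algorithm by relating the fixed--component condition to the dominance criterion of Theorem \ref{th:dominant}(3). For part (i), first I would observe that the set $A_{l'}:=\{\,l\in L_{\geq 0}\ :\ -l'+l\in\calS'_{dom}\,\}$ is nonempty: by Lemma \ref{lem:DOMIN}(iii), or simply because $\chi(-l'+l)$ grows like a positive definite quadratic form in $l$ while $(-l'+l,l)$ and the linear corrections stay comparatively small, every sufficiently large $l$ lies in $A_{l'}$. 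The key point is then that $A_{l'}$ is closed under $\min$: if $l_1,l_2\in A_{l'}$ then $-l'+\min\{l_1,l_2\}=\min\{-l'+l_1,-l'+l_2\}\in\calS'_{dom}$ by Lemma \ref{lem:DOMIN}(vi), so $\min\{l_1,l_2\}\in A_{l'}$. A nonempty subset of $L_{\geq 0}$ that is closed under coordinatewise $\min$ has a unique minimal element (take the coordinatewise infimum of all its members, which is achieved by finitely many $\min$ operations since the coordinates are bounded below by $0$); call it $l_{dom}$. This gives (i).

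For part (ii), the plan is to show that the computation sequence $\{z_i\}$ produced by the algorithm is monotone, terminates, and that its final term $z_t$ both lies in $A_{l'}$ and is $\leq l_{dom}$, forcing $z_t=l_{dom}$. Termination follows because each step increases $z_i$ by some $E_{v(i)}$, while $z_i\leq l_{dom}$ is maintained throughout (an invariant to be verified): indeed if $z_i\leq l_{dom}$ and $E_{v(i)}$ is a fixed component of $H^0(\calL(-z_i))$ for a generic $\calL\in\pic^{l'}(\tX)$, one must argue that $z_i+E_{v(i)}\leq l_{dom}$ still holds; otherwise $z_i+E_{v(i)}\not\leq l_{dom}$ would be incompatible with genericity and with minimality of $l_{dom}$. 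Concretely, since $-l'+z_i\in\calS'_{dom}$ would fail (else there is no fixed component, by Theorem \ref{th:dominant}(1)--(2): for $Z\gg0$ dominance of $c^{-(-l'+z_i)}$ means $H^0(\calL(-z_i))_{\reg}\neq\emptyset$ for generic $\calL$, i.e. no fixed component), we have $z_i\notin A_{l'}$, hence $z_i\lneq l_{dom}$, and the choice of a fixed component is exactly the choice of a direction in which $l_{dom}-z_i>0$. When the algorithm stops at $z_t$, $H^0(\calL(-z_t))$ has no fixed component for generic $\calL$, so by Theorem \ref{th:dominant}(1) the map $c^{-(-l'+z_t)}$ is dominant for $Z\gg0$, i.e. $-l'+z_t\in\calS'_{dom}$, so $z_t\in A_{l'}$ and therefore $z_t\geq l_{dom}$; combined with the invariant $z_t\leq l_{dom}$ this yields $z_t=l_{dom}$.

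The main obstacle I expect is the bookkeeping in the invariant $z_i\leq l_{dom}$ under the algorithm, specifically justifying that \emph{any} fixed component $E_{v(i)}$ chosen at step $i$ satisfies $(l_{dom})_{v(i)}>(z_i)_{v(i)}$ rather than merely \emph{some} fixed component doing so. The clean way around this is to phrase it in terms of the $\min$--stable poset: if $E_v$ is a fixed component of $H^0(\calL(-z_i))$ with $\calL$ generic, then $-l'+z_i\notin\calS'_{dom}$, and more is true --- $H^0(\calL(-z_i))=H^0(\calL(-z_i-E_v))$ by definition of fixed component, which pushes the cohomological obstruction one step, so that $-l'+z_i+E_v$ still sits below $l_{dom}$ by the same minimality argument applied with $z_i+E_v$ in place of $z_i$; formally one runs an induction showing $z_i\in A_{l'}\Rightarrow z_i\geq l_{dom}$ fails at every intermediate stage, so each $z_i\lneq l_{dom}$ until termination. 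This is exactly the surface--singularity analogue of the well--definedness of Laufer's algorithm, and the cross--reference to \cite{Laufer72} signals that the same combinatorial mechanism (a monotone sequence trapped below a minimal element of a $\min$--stable set) is at work.
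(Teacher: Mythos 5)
Part (i) of your proposal is correct and coincides with the paper's argument: nonemptiness of $A_{l'}$ from Lemma \ref{lem:DOMIN}(iii) plus $\min$--stability from Lemma \ref{lem:DOMIN}(vi) yield a unique minimal element. The overall skeleton of part (ii) (the invariant $z_i\leq l_{dom}$, termination exactly when $-l'+z_i\in\calS'_{dom}$ via Theorem \ref{th:dominant}, hence $z_t=l_{dom}$) is also the paper's. But there is a genuine gap at precisely the step you yourself flag as ``the main obstacle'': showing that \emph{every} fixed component $E_{v(i)}$ of $H^0(\calL(-z_i))$ lies in the support of $l_{dom}-z_i$, so that $z_{i+1}=z_i+E_{v(i)}\leq l_{dom}$. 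Your proposed resolution is circular. The minimality of $l_{dom}$ only tells you that elements of $A_{l'}$ are $\geq l_{dom}$, equivalently that $z\not\geq l_{dom}$ implies $z\notin A_{l'}$; neither statement, nor the identity $H^0(\calL(-z_i))=H^0(\calL(-z_i-E_{v(i)}))$, says anything about whether $z_i+E_{v(i)}\leq l_{dom}$. In particular, if $(z_i)_{v(i)}=(l_{dom})_{v(i)}$ the invariant would break, and nothing in your argument rules this out. The phrase ``still sits below $l_{dom}$ by the same minimality argument'' asserts the conclusion rather than proving it.

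The missing ingredient is a comparison of section spaces across the two levels $z_i$ and $l_{dom}$. Since $-l'+l_{dom}\in\calS'_{dom}$, Theorem \ref{th:dominant}(1) gives that the generic section $s\in H^0(\calL(-l_{dom}))$ has no fixed components. Viewing $s$ through the inclusion $H^0(\calL(-l_{dom}))\hookrightarrow H^0(\calL(-z_i))$ (valid because $z_i\leq l_{dom}$), its divisor as a section of $\calL(-z_i)$ is $\mathrm{div}(s)+(l_{dom}-z_i)$, whose only exceptional components are those of $l_{dom}-z_i$. Hence any $E_v$ that is a fixed component of $H^0(\calL(-z_i))$ must appear in $l_{dom}-z_i$, i.e.\ $(l_{dom})_v>(z_i)_v$, and the invariant $z_{i+1}\leq l_{dom}$ holds for any admissible choice of $E_{v(i)}$. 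This is exactly how the paper closes the argument; without it the induction does not go through.
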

\begin{proof} $(i)$ Set ${\mathcal D}:=(-l'+L_{\geq 0})\cap \calS'_{dom}$. Then
 ${\mathcal D}\not=\emptyset$ by \ref{lem:DOMIN}$(iii)$ and it has a unique minimal element by \ref{lem:DOMIN}$(vi)$.

$(ii)$ We show inductively that $z_i\leq l_{dom}$ and the construction stops exactly when
$z_i=l_{dom}$. Note that $z_0=0\leq l_{dom}$. If $z_i=l_{dom}$ then $-l'+z_i\in \calS'_{dom}$,
hence by Theorem \ref{th:dominant}(1) $H^0(\calL(-z_i))$ has no fixed components, hence we have to stop.

If, by induction $z_i<l_{dom}$, we have to show that the algorithm does not stop and
$z_{i+1}\leq l_{dom}$ as well. Indeed, if $-l'+z_i<-l'+l_{dom}$ then $-l'+z_i\not\in
\calS'_{dom}$ by the minimality of $l_{dom}$, hence by Theorem \ref{th:dominant} $H^0(\calL(-z_i))$
has fixed components. Hence the procedure continues. Note also that the generic section of
$H^0(\calL(-l_{dom}))$ has no fixed components, hence the fixed components of $H^0(\calL(-z_i))$
should be supported on $l_{dom}-z_i$. Hence $z_i+E_{v(i)}\leq l_{dom}$.
\end{proof}

\begin{remark}
Though $\calS'_{dom}$  is defined above combinatorially/topologically,
it shares (see e.g. {\it (iv)} and {\it (vi)}) several properties of an {\it analytic
semigroup } associated with an analytic structure supported on $\Gamma$. This `coincidence' will be
clarified completely in the forthcoming part \cite{NNII}, where we prove that
the analytic semigroup associated with the {\it generic analytic structure } is exactly
$\calS'_{dom}\cup \{0\}$.
\end{remark}

 \section{Cohomology of line bundles and $\dim \im (c^{l'})$}\label{s:generic}

\subsection{Line bundles with $c_1(\calL)\not\in-\calS'$.} \label{ss:Lauferseq}

Recall that by (\ref{eq:empty}) $\eca^{l'}(Z)\not=\emptyset$ iff $l'\in -\calS'$.
Hence any result based on  the Abel map uses  $l'\in -\calS'$. E.g.,
in this section
we establish a sharp lower bound for $h^1(Z, \calL)$ whenever $c_1(\calL)=l'\in -\calS'$. Before we provide that statement we wish
to emphasise that this extends automatically to the case of all bundles $\calL$, even if $c_1(\calL)\not\in -\calS'$.

 Indeed, it is known that for any $x\in L'$ there exist $s(x)=x+l\in L'$ with the following properties:
(a) $s(x)\in \calS'$, (b) $l\in L_{\geq 0}$, (c) $s(x)$ is minimal with properties (a)-(b).
Furthermore, the cycle $l$ can be determined explicitly using a generalized Laufer sequence \cite[Prop. 4.3.3]{trieste}.
One constructs a computation sequence $\{z_i\}_{i=0}^t$, $z_0=0$, $z_{i+1}=z_i+E_{v(i)}$ for some $v(i)\in\calv$
inductively as follows. If $x+z_i\in\calS'$ then one stops, and automatically $i=t$ and $z_i=l$.
If there exists $E_v$ with $(x+z_i,E_v)>0$ then choose $E_{v(i)}$ as  such an $E_v$, and one defines $z_{i+1}=z_i+E_{v(i)}$.
Along the computation sequence $i\mapsto \chi(x+z_i)$ is decreasing. Furthermore,
if $Z>l$, then the sequence applied for $x= -l'=-c_1(\calL)$, we get that
 $h^0(Z-z_i,\calL(-z_i))$ is constant, and
 \begin{equation}\label{eq:H1Gen}
 h^1(Z,\calL)=h^1(Z-l,\calL(-l))-\chi(\calL|_l) \ \ \ \mbox{and} \ \ \
 c_1(\calL(-l))\in-\calS'.
 \end{equation}
Here, clearly, $\chi(\calL|_l)=(l',l)+\chi(l)=\chi(-l'+l)-\chi(-l')$. If $l\not\leq Z$, then one constructs
a computation sequence inductively as follows: if $-l'+z_i\in \calS'(|Z-z_i|)$ (the Lipman cone associated with the
support $|Z-z_i|$) then one stops, otherwise there exists $E_{v(i)}$ (identified as above)
supported on $Z-z_i$, which provides
$z_{i+1}=z_i+E_{v(i)}$. In particular, for any $\calL\in \pic(Z)$, there exists $l\in L_{\geq 0}$ such that
$-c_1(\calL(-l))\in \calS'(|Z-l|)$, and (\ref{eq:H1Gen}) holds.

Summarized, the computation of any $h^1(Z,\calL)$,  up to the topology of the graph,
can be  reduced to the case $-c_1(\calL)\in \calS'$ (maybe supported on a smaller set).

\subsection{Semicontinuity.} We emphasise another specific fact as well: since
 $c^{l'}$ is not proper, the semicontinuity
of the dimension of the
fiber (with respect to the points of the target) does not follow automatically from the general theory.
%Hence, we will provide even the semicontinuity type results by direct proofs.
Nevertheless, we have the following result.

\begin{lemma}\label{lem:semicont}
$h^0(Z,\calL)$ and $h^1(Z,\calL)$  are semicontinuous with respect to $\calL\in \pic^{l'}(Z)$.
In particular, via (\ref{eq:dimfiber}), $\dim c^{-1}(\calL)$ is also semicontinuous
with respect to $\calL\in \pic^{l'}(Z)$.
\end{lemma}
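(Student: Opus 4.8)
The plan is to establish semicontinuity of $h^1(Z,\calL)$ directly by a standard cohomology-and-base-change type argument adapted to our concrete setting, and then to deduce the statement for $h^0$ and for $\dim c^{-1}(\calL)$ from the Riemann--Roch identity $h^0(Z,\calL)=(l',Z)+\chi(\calO_Z)+h^1(Z,\calL)$ (so that $h^0$ and $h^1$ differ by a constant on $\pic^{l'}(Z)$, hence are simultaneously semicontinuous), together with (\ref{eq:dimfiber}) which expresses $\dim c^{-1}(\calL)$ as $h^1(Z,\calL)$ plus a constant. So the whole statement reduces to: the function $\calL\mapsto h^1(Z,\calL)$ is upper semicontinuous on $\pic^{l'}(Z)$.

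To prove that, the approach is to build a universal/Poincar\'e line bundle over $Z\times \pic^{l'}(Z)$. Concretely, since $\pic^{l'}(Z)\cong \pic^0(Z)=H^1(Z,\calO_Z)$ is an affine space and fine moduli considerations are available (or, more elementarily, one can use the group structure: write $\calL = \calO_Z(l')\otimes \calM$ with $\calM\in\pic^0(Z)$, and realise $\pic^0(Z)$ via \v Cech cocycles $\exp$ of sections of $\calO_Z$), there is a line bundle $\mathcal{P}$ on $Z\times T$, $T:=\pic^{l'}(Z)$, flat over $T$, whose restriction to $Z\times\{t\}$ is the line bundle parametrised by $t$. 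Then I would apply the semicontinuity theorem (e.g. Hartshorne III.12.8, or Grauert's version in the analytic category) to the proper morphism $\pi\colon Z\times T\to T$ and the $T$-flat coherent sheaf $\mathcal{P}$: this gives that $t\mapsto h^i(Z_t,\mathcal{P}_t)=h^i(Z,\calL_t)$ is upper semicontinuous on $T$ for every $i$. Note that $\pi$ here \emph{is} proper (it is a product projection with proper fibre $Z$), so the general theory applies cleanly — the non-properness of the Abel map $c^{l'}$ itself is irrelevant once we phrase things on $Z\times T$ rather than on $\eca^{l'}(Z)$; this is exactly the point flagged in the paragraph preceding the lemma.

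An alternative, more self-contained route avoids invoking the full machinery: fix a resolution of $\calP$ by a two-term complex of vector bundles pulled back from $T$ (possible since $\pi$ is projective and $\mathcal{P}$ is $\pi$-flat), say $0\to \mathcal{E}^0\to\mathcal{E}^1\to 0$ computing $R\pi_*\mathcal{P}$; then $h^1(Z,\calL_t)=\dim\coker\big(\mathcal{E}^0_t\to\mathcal{E}^1_t\big)$, and the rank of a map of vector bundles is lower semicontinuous, so the dimension of its cokernel is upper semicontinuous in $t$. Either way, the key steps in order are: (1) reduce everything to semicontinuity of $h^1(Z,\cdot)$ via Riemann--Roch and (\ref{eq:dimfiber}); (2) construct the Poincar\'e bundle $\mathcal{P}$ on $Z\times \pic^{l'}(Z)$, flat over $\pic^{l'}(Z)$; (3) apply the semicontinuity theorem (or the two-term-complex argument) to the proper projection $Z\times\pic^{l'}(Z)\to\pic^{l'}(Z)$.

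The main obstacle — or rather the only genuinely nontrivial point — is step (2): producing the family $\mathcal{P}$, i.e. a single line bundle on $Z\times\pic^{l'}(Z)$ restricting to each $\calL$ on the corresponding slice. In the algebraic/scheme-theoretic picture this is just the existence of a Poincar\'e sheaf on the Picard scheme (which here is fine since, the link being a rational homology sphere, $\pic^{l'}(Z)$ has no torus part and obstruction issues are mild), and one may simply cite Grothendieck's construction as already invoked in \S\ref{ss:efcart} for $\eca^{l'}(Z)$. I would spell it out just enough to make clear that $\mathcal{P}$ is $\pic^{l'}(Z)$-flat (automatic, being a line bundle on a product with a smooth reduced base, fibrewise of constant Euler characteristic), and then quote the semicontinuity theorem; the remaining deductions are immediate from the formulas already recorded in the paper.
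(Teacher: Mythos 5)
Your argument is correct, but it is routed through heavier machinery than the paper uses. The paper's proof is a direct \v{C}ech computation: fix once and for all a covering $\{U_\alpha\}$ of a neighbourhood of $E$ by small balls on which every $\calL\in\pic^{l'}(Z)$ is trivial; then $H^0(Z,\calL)$ is the kernel of a linear map $\delta_\calL$ between the \emph{fixed} vector spaces $\oplus_\alpha H^0(\calO_Z|_{U_\alpha})$ and $\oplus_{\alpha\neq\beta}H^0(\calO_Z|_{U_\alpha\cap U_\beta})$, with all the $\calL$--dependence concentrated in $\delta_\calL$ (which varies holomorphically with $\calL\in\pic^{l'}(Z)\simeq H^1(\calO_Z)$ via the exponential of cocycles). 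Upper semicontinuity of the corank of a holomorphically varying linear map then gives semicontinuity of $h^0$, and $h^1$ follows by Riemann--Roch (you run the Riemann--Roch reduction in the opposite direction, which is equivalent). Your route instead packages the same parameter dependence into a Poincar\'e bundle $\mathcal{P}$ on $Z\times\pic^{l'}(Z)$ and quotes Grauert/Hartshorne semicontinuity for the proper projection; your ``two-term complex'' variant is essentially the paper's argument in disguise, since the proof of the general semicontinuity theorem is exactly the reduction to a complex of bundles whose differential varies with the base point. What your approach buys is a clean citation and a precise justification that $\delta_\calL$ really does vary holomorphically (this is exactly what the existence of $\mathcal{P}$ encodes, and it is the one point the paper leaves implicit); what the paper's approach buys is brevity and independence from the existence of a universal family. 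One cosmetic remark: flatness of $\mathcal{P}$ over the base is automatic because a line bundle on a product is locally free and the projection is flat -- the parenthetical appeal to constant Euler characteristic argues the implication in the wrong direction, though you do also state the correct reason.
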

\begin{proof}
Consider a  covering by small balls  $\{U_\alpha\}_\alpha$ of $\tX$. Since $\calL|_{U_\alpha}$ is trivial for any
$\alpha$ and $\calL$, $H^0(Z,\calL)=\ker(\delta_\calL:\oplus_\alpha H^0(\calO_Z|_{U_\alpha})\to
\oplus_{\alpha\not=\beta} H^0(\calO_Z|_{U_\alpha\cap U_{\beta}}))$, where the
$\calL$--dependence is codified in $\delta_{\calL}$.
But the corank
of the linear map (hence, consequently $h^0(Z,\calL )$ too)
is semicontinuous. The  semicontinuity of
$h^1(Z,\calL )$ follows by Riemann--Roch.
\end{proof}

\subsection{} We prove the following  sharp semicontinuity inequality.

\begin{theorem}\label{th:hegy} (1)
Fix an arbitrary $l'\in L'$. Then for any $\calL\in \pic^{l'}(Z)$ one has
\begin{equation}\label{eq:genericL}
\begin{array}{ll}h^1(Z,\calL)\geq \chi(-l')-\min_{0\leq l\leq Z,\, l\in L} \chi(-l'+l), \ \ \mbox{or, equivalently}\\
h^0(Z,\calL)\geq \max_{0\leq l\leq Z,\, l\in L}\chi(Z-l,\calL(-l))=\max_{0\leq l\leq Z,\, l\in L}
\{\, \chi(Z-l)+(Z-l,l'-l)\,\}.\end{array}\end{equation}
Furthermore, if $\calL$ is generic in $\pic^{l'}(Z)$ then in both inequalities we have equality.

%(2) More generally, for any $l'\in L'$ if $\calL$ is generic in $\pic^{l'}(Z)$ then $h^*(Z,\calL)$
%can be computed via (\ref{eq:genericL}) with equality.
%, that is:
%$$h^1(Z,\calL)=\chi(-l')-\min_{0\leq l\leq Z,\, l\in L} \chi(-l'+l).$$
In particular,   $h^*(Z,\calL)$ is topological and explicitly computable from $L$,
whenever $\calL$ is generic.

(2) Assume that $l'\in -\calS'$ and $c^{l'}$ is not dominant.   Then the inequalities
in (\ref{eq:genericL}) are strict for any $\calL\in \im ( c^{l'})$.
\end{theorem}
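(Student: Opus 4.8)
I would prove Theorem~\ref{th:hegy} as follows; throughout put $m:=\min_{0\le l\le Z,\,l\in L}\chi(-l'+l)$, so the asserted generic values are $\chi(-l')-m$. First, the inequality in~(\ref{eq:genericL}) is the cheap half and uses neither genericity nor $l'\in-\calS'$: for every $l\in L$ with $0\le l\le Z$ the multiplication sequence
\[
0\longrightarrow \calL(-l)|_{Z-l}\longrightarrow \calL\longrightarrow \calL|_{l}\longrightarrow 0
\]
gives an injection $H^0(Z-l,\calL(-l))\hookrightarrow H^0(Z,\calL)$, hence $h^0(Z,\calL)\ge h^0(Z-l,\calL(-l))\ge \chi(Z-l,\calL(-l))=\chi(Z-l)+(Z-l,l'-l)$; maximizing over $l$ gives the $h^0$--form, and the $h^1$--form is equivalent to it by Riemann--Roch (using $\chi(Z,\calL)=\chi(Z)+(l',Z)$ and $\chi(-l'+l)-\chi(-l')=(l',l)+\chi(l)$, which also yield $\chi(Z-l,\calL(-l))=\chi(Z,\calL)-(\chi(-l'+l)-\chi(-l'))$).

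For the equality for generic $\calL$ the plan is to reduce, by subtracting the fixed cycle, to a \emph{dominant} Abel map and then quote Theorem~\ref{th:dominant}(2). Since orders of vanishing of sections are coordinatewise, for a fixed $\calL$ the set of $l\in[0,Z]\cap L$ with $H^0(Z,\calL)=H^0(Z-l,\calL(-l))$ is closed under $\vee$; let $l_0=l_0(\calL)$ be its largest element. By semicontinuity of the $h^0(Z-l,\calL(-l))$ (Lemma~\ref{lem:semicont}), on a dense Zariski open of $\pic^{l'}(Z)$ this $l_0$ is constant and $\calL(-l_0)|_{Z-l_0}$ has no fixed components, so by Lemma~\ref{lem:H_0} it lies in $\im\big(c^{\,l'-l_0}(Z-l_0)\big)$; as the restriction $\pic^{l'}(Z)\to\pic^{\,l'-l_0}(Z-l_0)$ is a surjective affine map (already onto on $\pic^0$, because $H^1(\calO_Z)\to H^1(\calO_{Z-l_0})$ is onto), that image is dense, hence $c^{\,l'-l_0}(Z-l_0)$ is dominant and $h^1(Z-l_0,\calL(-l_0))=0$ for $\calL$ in a dense open, by Theorem~\ref{th:dominant}(2) (here I would invoke Remark~\ref{rem:Zsupport} to allow $|Z-l_0|\subsetneq E$). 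Then for generic $\calL$
\[
h^0(Z,\calL)=h^0(Z-l_0,\calL(-l_0))=\chi(Z-l_0,\calL(-l_0))=\chi(Z,\calL)+\chi(-l')-\chi(-l'+l_0),
\]
so $h^1(Z,\calL)=\chi(-l')-\chi(-l'+l_0)$; comparing with the first paragraph and with $\chi(-l'+l_0)\ge m$ forces $\chi(-l'+l_0)=m$. By Lemma~\ref{lem:semicont}, $\chi(-l')-m$ is then the minimum of $h^1(Z,\cdot)$, attained exactly on a dense open — which is what "generic" means — and hence $h^{*}(Z,\calL)$ equals the asserted topological quantity there.

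For part~(2) I would use the identity $\chi(a+b)=\chi(a)+\chi(b)-(a,b)$ to get the submodular inequality $\chi(a)+\chi(b)\ge\chi(a\wedge b)+\chi(a\vee b)$ for $a,b\in L$ (the discrepancy $(a-a\wedge b,\,b-a\wedge b)$ being $\ge 0$, a product of effective cycles with disjoint supports); hence the minimizers of $l\mapsto\chi(-l'+l)$ on $[0,Z]\cap L$ form a sublattice and have a largest element $l^{*}$. If $c^{l'}$ is not dominant, Theorem~\ref{th:dominant}(3) provides some $0<l\le Z$ with $\chi(-l'+l)\le\chi(-l')$: if $m<\chi(-l')$ then $0$ is not a minimizer, and if $m=\chi(-l')$ then this $l$ is a minimizer; in both cases $l^{*}>0$. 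Now for any $\calL\in\im(c^{l'})$ pick $s\in H^0(Z,\calL)_{\reg}$ (Lemma~\ref{lem:H_0}); $s$ vanishes identically on no $E_v$, hence not along $l^{*}>0$, so $s\notin H^0(Z-l^{*},\calL(-l^{*}))$ and the inclusion $H^0(Z-l^{*},\calL(-l^{*}))\subset H^0(Z,\calL)$ is proper. Therefore $h^0(Z,\calL)>h^0(Z-l^{*},\calL(-l^{*}))\ge \chi(Z-l^{*},\calL(-l^{*}))=\chi(Z,\calL)+\chi(-l')-m$, i.e. $h^1(Z,\calL)>\chi(-l')-m$ — the strict inequality — and the $h^0$--version is the same statement.

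The routine steps ($\vee$--closedness giving a well-defined fixed cycle, surjectivity of the Picard restriction, bookkeeping through Remark~\ref{rem:Zsupport}) are straightforward, so the one place where real care is needed is the middle step of part~(1): showing that the fixed cycle of a generic line bundle is constant on a dense open and that restricting such a bundle to $Z-l_0$ lands at a generic point of $\pic^{\,l'-l_0}(Z-l_0)$, so that Theorem~\ref{th:dominant}(2) genuinely applies. Once that reduction to the dominant case is secured, the rest is Riemann--Roch bookkeeping, and part~(2) is the same computation with the maximal minimizing cycle $l^{*}$ in the role of $l_0$.
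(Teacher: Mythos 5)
Your proof is correct and, for part (1), follows the same route as the paper: the easy inequality from the inclusion $H^0(Z-l,\calL(-l))\subset H^0(Z,\calL)$ plus Riemann--Roch, and for the generic equality the reduction to a dominant Abel map by stripping off the fixed cycle $l_0$ and invoking Theorem \ref{th:dominant}(1)--(2) on $\pic^{l'-l_0}(Z-l_0)$ (the paper compresses your semicontinuity/surjectivity discussion into ``for $\calL$ generic, $\calL(-l)|_{Z-l}$ is generic as well'', inside the proof of Theorem \ref{th:dominant}(3); your elaboration is sound).

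For part (2) you run the same two ingredients in a slightly different order. The paper argues by contradiction with a dichotomy: if equality held, the maximum of $l\mapsto\chi(Z-l,\calL(-l))$ is attained either at some $l_0>0$ (forcing $h^0(Z,\calL)=h^0(Z-l_0,\calL(-l_0))$, hence fixed components, hence $\calL\notin\im(c^{l'})$) or only at $l=0$ (forcing $c^{l'}$ dominant by Theorem \ref{th:dominant}(3)). You instead produce a positive minimizer of $l\mapsto\chi(-l'+l)$ and show directly that a section without fixed components gives a strict drop $h^0(Z,\calL)>h^0(Z-l^{*},\calL(-l^{*}))$. This is logically equivalent and correct, but the submodularity of $\chi$ and the lattice structure of the set of minimizers is unnecessary overhead: you never need the \emph{largest} minimizer, only \emph{some} minimizer $l^{*}>0$, and that exists immediately from Theorem \ref{th:dominant}(3) exactly as in your two-case check (if $m<\chi(-l')$ every minimizer is positive; if $m=\chi(-l')$ the cycle witnessing non-dominance is itself a minimizer). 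The submodularity inequality is a nice observation (and true), but you can delete that sentence without loss.
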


\begin{proof} {\it (1)}
The two inequalities (and the corresponding equalities) are equivalent by Riemann--Roch. We will prove
the statement for $h^0$. For any $l$ and $\calL$ (by a cohomological exact sequence) one has
\begin{equation}\label{eq:ineqH}
h^0(Z,\calL)\geq h^0(Z-l,\calL(-l))\geq \chi(Z-l,\calL(-l)),
\end{equation} hence the inequality follows.
We need to show the opposite inequality for $\calL$ generic.
Clearly, if $h^0(Z,\calL)=0$, then the opposite inequality follows (take e.g. $l=Z$).
Hence, assume $h^0(Z,\calL)\not=0$. Then, as in the proof of Theorem \ref{th:dominant},
there exists $0\leq l<Z$ such that $h^0(Z,\calL)=h^0(Z-l,\calL(-l))$ and
$H^0(Z-l,\calL(-l))_{\reg}\not=\emptyset$. In this case $l'-l\in -\calS'$ by (\ref{eq:empty})
and (by Theorem \ref{th:dominant})
$h^1(Z-l,\calL(-l))=0$ as well.
Hence $h^0(Z,\calL)=\chi(Z-l,\calL(-l))\leq \max_{0\leq l\leq Z}\chi(Z-l,\calL(-l))$.

{\it (2)} Assume that $h^0(Z,\calL)= \max_{0\leq l\leq Z}
\chi(Z-l,\calL(-l))$. If the $\max$ at the right hand side can be  realized by
a certain $l_0>0$ then using (\ref{eq:ineqH}) for $l_0$  we get that $h^0(Z,\calL)= h^0(Z-l_0,\calL(-l_0))$,
hence $\calL$ has fixed components, that is,  $\calL\not\in \im (c^{l'})$. On the other hand, if the
$\max$ is realized only by $l=0$, then $c^{l'}$ is dominant by Theorem \ref{th:dominant}{\it (3)}.
\end{proof}

Since  $H^1(\widetilde{X},\calL)=\lim_{\leftarrow,Z}H^1(Z,\calL)$,
cf. \cite[Th. 11.1]{hartshorne},
we obtain the following.
\begin{corollary}\label{cor:generic}   For   $l'\in L'$  and
any $\calL\in\pic^{l'}(\widetilde{X})$
one has $h^1(\widetilde{X},\calL)\geq \chi(-l')-\min _{l\in L_{\geq 0}}
\,\chi(-l'+l)$. Equality holds whenever $\calL$ is generic in $\pic^{l'}(\tX)$.
Furthermore, if $l'\in-\calS'$ and  $c^{l'}$ is not dominant, then  $h^1(\widetilde{X},\calL)>  \chi(-l')-\min _{l\in L_{\geq 0}}
\,\chi(-l'+l)$ whenever $\calL\in \im(c^{l'})$.
\end{corollary}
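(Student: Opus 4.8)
The statement to prove is Corollary~\ref{cor:generic}, which transfers the finite-level bounds of Theorem~\ref{th:hegy} to the level of $\tX$ by a limit argument. Here is how I would proceed.

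\textbf{Plan.} The plan is to deduce everything from Theorem~\ref{th:hegy} by taking the inverse limit over $Z$, using $H^1(\tX,\calL)=\varprojlim_Z H^1(Z,\calL)$ as cited from \cite[Th. 11.1]{hartshorne}. First I would fix $l'\in L'$ and a line bundle $\calL\in\pic^{l'}(\tX)$. For each $Z\geq E$ one has $\calL|_Z\in\pic^{l'}(Z)$, and Theorem~\ref{th:hegy}(1) gives
\begin{equation*}
h^1(Z,\calL|_Z)\geq \chi(-l')-\min_{0\leq l\leq Z,\, l\in L}\chi(-l'+l).
\end{equation*}
Since the intersection form is negative definite, $\chi(-l'+l)=\chi(-l')+(l',l)+\chi(l)\to\infty$ as the coefficients of $l$ grow (the quadratic term $-(l,l)/2$ dominates), so the minimum over all $l\in L_{\geq 0}$ is attained on a finite set and equals $\min_{0\leq l\leq Z,\,l\in L}\chi(-l'+l)$ once $Z$ is large enough. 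The left-hand side $h^1(Z,\calL|_Z)$ is nondecreasing in $Z$ (restriction maps $H^1(Z_2,\calL)\to H^1(Z_1,\calL)$ are surjective for $Z_2\geq Z_1$ since the kernel sheaf on $Z_2-Z_1$ has $1$-dimensional support), and it stabilizes to $h^1(\tX,\calL)$ for $Z\gg0$. Passing to the limit in $Z$ therefore yields $h^1(\tX,\calL)\geq \chi(-l')-\min_{l\in L_{\geq 0}}\chi(-l'+l)$, which is the asserted inequality.

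\textbf{The equality for generic $\calL$.} For the second assertion I would again pick $Z$ large enough that both (i) $h^1(\tX,\calL)=h^1(Z,\calL|_Z)$ for every $\calL$, and (ii) the finite minimum $\min_{0\leq l\leq Z}\chi(-l'+l)$ equals the full minimum $\min_{l\in L_{\geq 0}}\chi(-l'+l)$; for such $Z$ one has $\pic^{l'}(Z)=\pic^{l'}(\tX)$, so the notion of a generic line bundle agrees at both levels. Theorem~\ref{th:hegy}(1) says equality holds in the $Z$-level bound for $\calL$ generic in $\pic^{l'}(Z)$, hence equality holds for $\calL$ generic in $\pic^{l'}(\tX)$. (One should note that ``generic'' is used in the Zariski sense of an open dense subset; the fact that the relevant loci are constructible follows from the semicontinuity of Lemma~\ref{lem:semicont}, so the generic value is well-defined and is the one attaining the bound.) The strictness statement in the non-dominant case is handled the same way: for $l'\in-\calS'$ with $c^{l'}$ not dominant, Theorem~\ref{th:hegy}(2) gives strict inequality at the $Z$-level for every $\calL\in\im(c^{l'})$, and since $\calL\in\im(c^{l'}(\tX))$ restricts to an element of $\im(c^{l'}(Z))$ (a global section of $\calL$ without fixed components restricts to one on $Z$), the strict inequality persists in the limit.

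\textbf{Main obstacle.} The only genuinely nonroutine point is checking that the stabilization of $h^1(Z,\calL|_Z)$ to $h^1(\tX,\calL)$ and the stabilization of the combinatorial minimum happen simultaneously and uniformly enough in $\calL$ that the generic statement survives passage to the limit; concretely, that there is a single $Z$ working for all $\calL\in\pic^{l'}(\tX)$ (for $h^1$-stabilization one can use that $h^1(Z,\calL|_Z)$ is bounded above by $h^1(\tX,\calL)\leq p_g+\text{(something topological)}$ uniformly, and monotone, hence eventually constant; combined with $\pic^{l'}(Z)=\pic^{l'}(\tX)$ for $Z$ large this is automatic). Everything else is a direct invocation of Theorem~\ref{th:hegy} together with the quoted fact $H^1(\tX,\calL)=\varprojlim_Z H^1(Z,\calL)$.
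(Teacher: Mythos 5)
Your proof is correct and follows essentially the same route as the paper, which simply invokes $H^1(\widetilde{X},\calL)=\varprojlim_Z H^1(Z,\calL)$ and deduces the corollary from Theorem~\ref{th:hegy}; you have merely spelled out the stabilization details (monotonicity of $h^1(Z,\calL|_Z)$, finiteness of the set where $\chi(-l'+l)$ attains its minimum, and the identification $\pic^{l'}(Z)=\pic^{l'}(\tX)$ for $Z\gg0$) that the paper leaves implicit.
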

%\begin{remark} The inequalities from Theorem \ref{th:hegy} and Corollary \ref{cor:generic}
%provide $h^1(Z,\calL)\geq
%\end{remark}

\begin{example}
Assume that $l'=0$ and $h^1(\calO_Z)\not=0$. Then $c^0$ is not dominant, hence
$h^1(Z,\calL)\geq -\min _{0\leq l\leq Z} \chi(l)$
for any $\calL$, and $h^1(\calO_Z)\geq 1-\min _{0\leq l\leq Z} \chi(l)$.

Moreover, for generic $\calL\in \pic^0(Z)$ one has $h^1(Z,\calL)= -\min_{0\leq l\leq Z} \chi(l)$.
 This for $Z\gg 0$ and $\Gamma$  elliptic reads
as $h^1(\widetilde{X}, \calL)=0$;
this fact for minimally elliptic $\Gamma$  was proved by Laufer in
\cite{Laufer77}, and for arbitrary elliptic  case in  \cite{weakly}.
\end{example}

\begin{example}\label{ex:lvan} Consider the situation of Corollary \ref{cor:generic}.
For certain topological types one can find for any $l'$ explicitly a cycle $l_{min}\in L_{\geq 0}$
which realizes $\min_{l\in L_{\geq 0}}\chi( -l'+l)=\chi(-l'+l_{min})$.
Indeed, consider the construction $x\mapsto x+l=s(x)$ described in \ref{ss:Lauferseq}.
%it is known that for any $x\in L'$ there exist $s(x)\in L'$ with the following properties:
%(a) $s(x)\in \calS'$, (b) $s(x)-x\in L_{\geq 0}$, (c) $s(x)$ is minimal with properties (a)-(b).
%The cycle $s(x)$ can be determined explicitly using a generalized Laufer sequence \cite[Prop. 4.3.3]{trieste}.
Since  $\chi$ is decreasing along the sequence,
$(*)$ $\chi(s(x))\leq \chi(x)$. Next, assume e.g. that the lattice has the property that
$\chi(l)\geq 0$ for all $l\in L_{\geq 0}$ (hence the graph is either rational or elliptic).
Then for any $s\in\calS'$ one has $(**)$ $\chi(s)\leq \chi(s+l)$ for all $l\in L_{\geq 0}$.

We claim that  for rational and elliptic singularities
  $\min_{l\in L_{\geq 0}}\chi( -l'+l)=\chi(s(-l'))$.

Indeed, by $(*)$ one has $\chi(-l'+l_{min})\geq \chi(s(-l'+l_{min}))$, and by the universal property of
the operator $s$ one also has $s(-l'+l_{min})\geq s(-l')$, hence by $(**)$ $\chi(s(-l'+l_{min}))
\geq \chi(s(-l'))$.

In particular, for rational and elliptic germs $h^1(\tX,\calL)=\chi(-l')-\chi(s(-l'))$ whenever
$\calL$ is generic.% E.g.,
%This discussion also shows that  for rational and elliptic germs
%$h^1(\widetilde{X},\calL)=0$ whenever $-l'\in \calS'$ and $\calL$ is generic.

See also Corollary \ref{cor:cohcyc},  where we prove for any $(X,o)$
the existence of a unique minimal cycle with the property of $l_{min}$.
\end{example}

\subsection{}In parallel to $\calS'_{dom}$ (see \ref{bek:DOMChern}),
Corollary \ref{cor:generic} indicates another subset of $L'$:
\begin{equation}\label{eq:VANDEF}
Van':=\{-l'\ |\ \chi(-l')\leq \chi(-l'+l)\ \mbox{for all $l\in L_{\geq 0}$}\}.\end{equation}
This indexes those  cycles $-l'$ for which
$h^1(\widetilde{X},\calL)=0$ for generic  $\calL\in \pic^{l'}(\widetilde{X})$.

For arbitrary line bundles $\calL\in\pic^{l'}(\widetilde{X})$ the existent vanishing theorems formulate
sufficient (but usually not necessary) criterions. E.g., $h^1(\widetilde{X},\calL)=0$
 for any $(X,o)$ whenever
$-l'\in Z_K+\calS'$ (this is the so-called Grauert-Riemenschneider vanishing)
\cite{GrRie,Laufer72,Ram}, or, for rational $(X,o)$ whenever $-l'\in \calS'$ (Lipman's Criterion)
\cite{Lipman}. Even so, Corollary \ref{cor:generic} provides a {\it necessary and sufficient} vanishing condition
for {\it generic line bundles},
which, surprisingly, is independent of the analytic structure of $(X,o)$.
$Van'$ lists precisely the corresponding Chern classes.

 For rational singularities (since $h^1(\widetilde{X},\calL)$ depends only on $c_1(\calL)$,
 cf. \cite[4.3.3]{trieste}), $h^1(\widetilde{X},\calL)=0$ for {\it any} line bundle with fixed $c_1(\calL)$ exactly
 when $-c_1(\calL)\in Van'$. This is not valid for more general singularities:
 $-l'\in Van'$ does not guarantee the
 vanishing $h^1(\widetilde{X}, \calL)=0$
  for non--generic (hence for arbitrary) bundles. E.g., in the elliptic case, $0\in Van'$, however $h^1(\widetilde{X},\calO_{\widetilde{X}})
 =p_g>0$.

Though  most of the statements of the next lemma will not be needed in this first part of the series
of articles,  for completeness and further references we list
some properties of $Van'$ (which can be compared e.g. with those from Lemma \ref{lem:DOMIN}).
Note that a  semigroup module structure of type {\it (iv)} usually is not
studied/observed in vanishing theorems.

\begin{lemma}\label{rem:min}
$Van'$ satisfies the following properties:

(i) $Van'\subset \{l'\,|\, (l', E_v)\leq 1 \ \mbox{for all $v$}\}$;
in general $Van'\not\subset \calS'$ (e.g. for rational singularities
each $E_v\in Van'$), furthermore $\calS'_{dom}\subset Van'$,

(ii) $0\in Van'$ iff $L$ is rational or elliptic,

(iii) $Van'$ is not necessarily a semigroup  ($2E_v\not \in Van'$ if $|\calv|>1$, cf. $(i)$),

(iv) $Van'$ is closed to the  $\calS'$--action,

(v) $Van'$ is $\min$--stable,

(vi) $Van'\setminus \calS'$ might have infinitely many elements
(e.g. if $E_v\in Van'$ then
$E_v+\calS'\subset Van'$ too),

(vii)  %if the resolution is not minimal then
$Van'$ is not necessarily in the first quadrant, however
  $Van'\cap L$ is in the first quadrant for a minimal resolution
  (hence for $\calL$ generic and with  $c_1(\calL)\in L$, the vanishing
 $h^1(\widetilde{X},\calL)$ implies $c_1(\calL)\leq 0$).

\end{lemma}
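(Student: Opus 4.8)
The plan is to work throughout with the ``shifted'' description
$$Van'=\{l'\ \mid\ \chi(l)\geq (l',l)\ \text{for all}\ l\in L_{\geq 0}\},$$
obtained exactly as for $\calS'_{dom}$ in (\ref{eq:DOMIN}) from the bilinearity identity $\chi(x+y)=\chi(x)+\chi(y)-(x,y)$, which gives $\chi(-l'+l)-\chi(-l')=\chi(l)+(l',l)$. In this form every assertion becomes a statement about the function $\chi$ and the intersection form, and two elementary facts carry most of the weight: $\chi(E_v)=1$ for all $v$ (from the adjunction equality $(Z_K,E_v)=(E_v,E_v)+2$), and the translation rule $\chi(l)-(y,l)=\chi(l+y)-\chi(y)$.

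The items (i)--(iv) and (vi) are then short. For (i), evaluating the defining inequality at $l=E_v$ gives $(l',E_v)\leq\chi(E_v)=1$; for rational $(X,o)$ the translation rule rewrites ``$E_v\in Van'$'' as ``$\chi(l+E_v)\geq 1$ for all $l\geq 0$'', which is Artin's rationality criterion applied to $l+E_v>0$, while $E_v\notin\calS'$ as soon as $v$ has a neighbour (so $Van'\not\subset\calS'$); the inclusion $\calS'_{dom}\subset Van'$ is immediate by comparing the strict and non--strict descriptions. For (ii), at $l'=0$ the condition reads $\chi(l)\geq 0$ for all $l\geq 0$, i.e.\ $\min_{l>0}\chi(l)\geq 0$, which is precisely ``rational or elliptic''. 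For (iii), by (i) any member satisfies $(l',E_w)\leq 1$, whereas $(2E_v,E_w)=2$ for a neighbour $w$ of $v$, so $2E_v\notin Van'$ although $E_v\in Van'$ in the rational case. For (iv), $l_2'\in\calS'$ gives $(l_2',l)\leq 0$ for all $l\geq 0$, hence $(l_1'+l_2',l)\leq(l_1',l)\leq\chi(l)$. For (vi), combining (iv) with the rational case of (i) yields $E_v+\calS'\subset Van'$, and $\{E_v+nE_v^*:n\geq 0\}$ is then an infinite subfamily of $Van'\setminus\calS'$ when $v$ has a neighbour, since $(E_v+nE_v^*,E_w)=(E_v,E_w)=1>0$ there.

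The substantive parts are (v) and (vii). For (v) I would reproduce the argument for Lemma \ref{lem:DOMIN}(vi) line by line, weakening every strict inequality to $\geq$: with $m=\min(l_1',l_2')$ and $x_i=l_i'-m\geq 0$ (so $x_1,x_2$ have disjoint supports and $(x_1,x_2)\geq 0$), split into the cases $l\geq x_1$ and $l\not\geq x_1$; in the latter take the minimal $u_1\geq 0$ with support inside that of $x_1$ and $l+u_1\geq x_1$, apply the $Van'$--inequality for $m+x_1$ at $l+u_1-x_1$ and for $m+x_2$ at $x_1-u_1$, add, and combine with $\chi(a+b)=\chi(a)+\chi(b)-(a,b)$, with $(l+u_1-x_1,u_1)\geq 0$ (because $l+u_1-x_1$ vanishes on the support of $u_1$, so only non--negative off--diagonal terms remain), and with $(x_2,x_1-u_1)\geq 0$ (disjoint supports), to conclude $\chi(l)\geq(m,l)$. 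For the first clause of (vii), a non--minimal resolution --- e.g.\ blowing up a smooth point of the exceptional set of a rational germ --- has a $(-1)$--curve $E_0$ with $h^1(\widetilde{X},\calO(E_0))=0$ (using $p_g=0$ and the structure sequence of $E_0$), hence, by the interpretation recorded before the lemma, $-E_0\in Van'$, and $-E_0$ is not effective. For the second clause, assume the resolution is minimal (so $E_v^2\leq -2$, equivalently $(Z_K,E_v)\leq 0$, for all $v$) and suppose $l\in Van'\cap L$ had a non--zero negative part $l_-$; evaluating the defining inequality at $l=l_-$ and rearranging through $\chi$ gives $\tfrac12(l_-,l_-+Z_K)\geq(l_+,l_-)\geq 0$, contradicting $(l_-,l_-)<0$ and $(l_-,Z_K)\leq 0$. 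The displayed consequence $c_1(\calL)\leq 0$ then follows from Corollary \ref{cor:generic}.

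The main obstacle will be the bookkeeping in (v): keeping the support condition on $u_1$ under control so that the two cross terms are visibly non--negative, and checking that the degenerate subcases ($u_1=0$, or $x_1-u_1=0$, or $l+u_1-x_1=0$) cause no harm. Since only non--strict inequalities are needed this is strictly easier than the $\calS'_{dom}$ computation already carried out, so no real difficulty is expected. A minor point is to fix in (vii) the meaning of ``minimal resolution'': I will take it to mean ``no $(-1)$--curves'', i.e.\ $E_v^2\leq -2$ for all $v$.
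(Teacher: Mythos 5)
Your proposal is correct, and for items (i)--(vi) it is essentially the paper's own argument: the shifted description of $Van'$, evaluation at $l=E_v$, and a verbatim repetition of the proof of Lemma \ref{lem:DOMIN} with strict inequalities relaxed to non--strict ones (your bookkeeping of the cross terms $(u_1,l+u_1-x_1)\geq 0$ and $(x_2,x_1-u_1)\geq 0$ is exactly what is needed, and the degenerate subcases are indeed harmless). Your verification that $E_v\in Van'$ for rational germs via Artin's criterion $\chi(l+E_v)\geq 1$ is the combinatorial twin of the paper's check $h^1(\calO(-E_v))=0$; either is fine. The only genuine divergence is in (vii). For the first clause the paper exhibits $-E$ (resp. $-E/2$) for the one--vertex $(-1)$ (resp. $(-2)$) graph, while you blow up a rational resolution; both work, though the paper's $(-2)$--example has the added virtue of showing that $Van'$ itself (as opposed to $Van'\cap L$) can leave the first quadrant even on a minimal resolution. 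For the second clause the paper reduces to ``$\chi(-x)\leq 0$, $x\geq 0$, minimal graph $\Rightarrow x=0$'' and proves this by a Laufer--type descending sequence, subtracting curves $E_v$ with $(E_v,x)<0$ until a single $E_w$ with $\chi(-E_w)\leq 0$ forces a $(-1)$--curve; you instead observe directly that $\chi(-l_-)=-\tfrac12\bigl[(l_-,l_-)+(l_-,Z_K)\bigr]>0$ by negative definiteness together with $(Z_K,E_v)=E_v^2+2\leq 0$. Your one--line computation is shorter and perfectly valid; the paper's computation--sequence argument is marginally more robust in that it isolates the auxiliary statement about $\chi(-x)$ in a form reusable elsewhere. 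No gaps.
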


\begin{proof} For $(i)$ take $l=E_v$ in (\ref{eq:VANDEF}), and check $h^1(\calO(-E_v))=0$
for rational germs. For $(iv)-(v)$ repeat the arguments from the proof of  \ref{lem:DOMIN}.
For $(vii)$ note that if the graph consists of a $(-1)$ (resp. $(-2)$) vertex then $-E$
(resp. $-E/2$) is in $ Van'$. On the other hand,
if $-l'=x_1-x_2$,
where $x_1,x_2\in L_{\geq 0}$ have no common $E_v$ in their supports, then $\chi(-l')\leq \chi(-l'+x_2)$
 implies $\chi(-x_2)\leq 0$.
 But, in a  minimal graph if
 $\chi(-x)\leq 0$ and $x\geq 0$ then  $x= 0$. Indeed, take $E_v\subset |x|$ such that $(E_v,x)<0$. Then
 $\chi(-x+E_v)\leq \chi(-x)\leq 0$. If we continue the procedure, in the last step we get
 $\chi(-E_w)\leq 0$ for some $w$, a fact which can happen only if $E_w$ is a $(-1)$--curve.
 \end{proof}

\begin{remark}\label{rem:stratif}
In Theorem \ref{th:hegy} (see also Corollary \ref{cor:generic} too) the set of `generic' line bundles
$\calL\in \pic^{l'}(Z)$ which satisfy (\ref{eq:genericL}) with equality  is not explicit.
 There exists an  open Zariski set
for which (\ref{eq:genericL}) holds with equality,
but this  usually is not the complement of $\im (c^{l'})$.
In other words, the complement of $\im (c^{l'})$ might have a non--trivial
stratification according to the values of
$h^1(Z,\calL)$, and the Zariski open strata corresponds to the `generic' bundles of Theorem \ref{th:hegy}.

Indeed, take the graph $\Gamma_1$ from Example \ref{ex:notclosed}, and consider the splice quotient analytic structure on it (for details see e.g. \cite{NO17}). In particular, $p_g=3$.
Set $Z\gg 0$ (e.g. $Z=Z_K$), and $\calL:=\calO_{Z}(-Z_{min})$. Since
$h^1(\calO_{Z_{min}})=2$ and
$h^1(\widetilde{X},\calO(-Z_{min}))=1 $, one also has $h^1(Z,\calL)=1$. Note also that the maximal ideal
cycle $Z_{max}$ is $2Z_{min}$, hence $\calL\not\in \im (c^{-Z_{min}})$. On the other hand,
$\min\chi=\chi(Z_{min})=-1$, hence $h^1(Z,\calL_{gen})=0$
for generic bundles $\calL_{gen}\in \pic^{-Z_{min}}(Z)$.
Hence, the complement of $\im (c^{-Z_{min}})$ has a non--trivial
$h^1$--stratification.
\end{remark}

\subsection{The cohomology cycle of line bundles}\label{ss:cohcycle}
If $(X,o)$ is a singularity with $p_g>0$, then its cohomology cycle (associated
with a fixed resolution $\phi$)  is the unique minimal
cycle $Z_{coh}\in L_{>0}$ such that $p_g=h^1(Z_{coh},\calO_{\widetilde{X}})$.
We extend this definition as follows.

\begin{proposition}\label{prop:cohcyc} (a) Fix a line bundle $\calL\in \pic(\widetilde{X})$
with $h^1(\widetilde{X},\calL)>0$.  The set
$L_{\calL}:=\{l\in L_{>0}\, : \, h^1(l,\calL)=h^1(\widetilde{X},\calL)\}$
has a unique minimal element, denoted by $Z_{coh}(\calL)$, called
the cohomological cycle of $\calL$ (and of $\phi$). It has the property that
$  h^1(l,\calL)<h^1(\widetilde{X},\calL)$
for any $l\not\geq Z_{coh}(\calL)$ ($l>0$).

(b) Fix $Z>0$ and $\calL\in \pic(Z)$
with $h^1(Z,\calL)>0$.  The set
$L_{Z,\calL}:=\{l\in L, \ 0<l\leq Z\, : \, h^1(l,\calL)=h^1(Z,\calL)\}$
has a unique minimal element, denoted by $Z_{coh}(Z,\calL)$, called
the cohomological cycle of $(Z,\calL)$. It has the property that
$  h^1(l,\calL)<h^1(Z,\calL)$
for any $l\not\geq Z_{coh}(Z,\calL)$ ($0<l\leq Z$).
\end{proposition}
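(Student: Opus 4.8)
The plan is to prove that the cycle set under consideration --- $L_\calL$ in part (a), $L_{Z,\calL}$ in part (b) --- is nonempty and closed under the coordinatewise minimum operation on $L$; since $L_{\ge 0}$ contains no infinite strictly decreasing chain, any such set has a unique minimal element, which we name $Z_{coh}(\calL)$ (resp.\ $Z_{coh}(Z,\calL)$). The strict-drop assertion then follows from monotonicity of $h^1$ in the cycle. I will carry out (a); part (b) is the same argument with $Z$ playing the role of $\widetilde X$ and all sheaf sequences formed intrinsically on $Z$, nonemptiness of $L_{Z,\calL}$ being trivial since $Z\in L_{Z,\calL}$.

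Two ingredients are needed. First, \emph{monotonicity}: for $0<A\le B$ the kernel of $\calL|_B\to\calL|_A$ is $\calL(-A)|_{B-A}$, which is supported on a curve and hence has vanishing $H^2$; so $H^1(B,\calL)\to H^1(A,\calL)$ is onto and $h^1(A,\calL)\le h^1(B,\calL)$. The same with $\widetilde X$ in place of $B$ (kernel $\calL(-A)$, with $H^2(\widetilde X,\calL(-A))=0$ since $X$ is Stein of dimension two and $R^{q}\phi_*$ vanishes for $q\ge 2$) gives $h^1(A,\calL)\le h^1(\widetilde X,\calL)$ for every $A>0$, with equality once $A\gg 0$, cf.\ \cite[Th.~11.1]{hartshorne}; in particular $L_\calL\ne\emptyset$. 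Second, \emph{superadditivity}: for effective cycles $A,B$, with $M=\max(A,B)$ and $m=\min(A,B)$, the ideal-sheaf identities $\calO_{\widetilde X}(-A)\cap\calO_{\widetilde X}(-B)=\calO_{\widetilde X}(-M)$ and $\calO_{\widetilde X}(-A)+\calO_{\widetilde X}(-B)=\calO_{\widetilde X}(-m)$ yield a short exact sequence $0\to\calO_M\to\calO_A\oplus\calO_B\to\calO_m\to 0$; tensoring by $\calL$ and reading off the cohomology long exact sequence gives
$$h^1(M,\calL)+h^1(m,\calL)\ \ge\ h^1(A,\calL)+h^1(B,\calL).$$

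Combining these: if $A,B\in L_\calL$, set $\mu:=h^1(\widetilde X,\calL)>0$, so $h^1(A,\calL)=h^1(B,\calL)=\mu$. From $A\le M$, monotonicity gives $h^1(M,\calL)\ge\mu$, while also $h^1(M,\calL)\le\mu$; hence $h^1(M,\calL)=\mu$. Superadditivity then yields $h^1(m,\calL)\ge 2\mu-\mu=\mu$, and $h^1(m,\calL)\le h^1(A,\calL)=\mu$ by monotonicity, so $h^1(m,\calL)=\mu$, i.e.\ $\min(A,B)\in L_\calL$. Now fix any $l_0\in L_\calL$: the set $\{l\in L_\calL:\ l\le l_0\}$ is finite, nonempty and stable under $\min$, hence has a least element $Z_{coh}(\calL)$; for any $l\in L_\calL$ one has $\min(l,Z_{coh}(\calL))\in L_\calL$ and $\le l_0$, so $\min(l,Z_{coh}(\calL))\ge Z_{coh}(\calL)$, forcing $l\ge Z_{coh}(\calL)$. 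Thus $Z_{coh}(\calL)$ is the unique minimal element of $L_\calL$. Finally, for $l>0$ with $l\not\ge Z_{coh}(\calL)$ we have $l\notin L_\calL$, while $h^1(l,\calL)\le\mu$ in all cases, so $h^1(l,\calL)<h^1(\widetilde X,\calL)$, as asserted.

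I do not expect a genuine obstacle here; the steps that need care rather than ingenuity are the two local ideal-sheaf identities for $\calO_{\widetilde X}(-A)\cap\calO_{\widetilde X}(-B)$ and $\calO_{\widetilde X}(-A)+\calO_{\widetilde X}(-B)$ (immediate for locally principal ideals on a smooth surface), the vanishing $H^2(\widetilde X,-)=0$ on coherent sheaves used for monotonicity up to $\widetilde X$, and, for part (b), the bookkeeping that every sheaf sequence and bound (notably $h^1(\max(A,B),\calL)\le h^1(Z,\calL)$, which uses $\max(A,B)\le Z$) stays within $Z$, so that only $\calL\in\pic(Z)$ is invoked. With these in place the remainder is formal manipulation of long exact sequences together with well-foundedness of $L_{\ge 0}$.
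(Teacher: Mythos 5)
Your overall strategy --- nonemptiness of the cycle set plus closure under $\min$, obtained from a Mayer--Vietoris sequence for the pair $(\max,\min)$ together with surjectivity of the restriction maps on $H^1$ --- is exactly the argument of \cite[4.8]{MR} that the paper's one-line proof invokes, so in spirit you have reproduced the intended proof. However, one of the two ``ideal-sheaf identities'' that you yourself flag as the step needing care is false: while $\calO_{\widetilde X}(-A)\cap\calO_{\widetilde X}(-B)=\calO_{\widetilde X}(-\max(A,B))$ does hold, the sum $\calO_{\widetilde X}(-A)+\calO_{\widetilde X}(-B)$ is in general \emph{strictly smaller} than $\calO_{\widetilde X}(-\min(A,B))$. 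Locally at a node $p\in E_v\cap E_w$ with $E_v=\{x=0\}$, $E_w=\{y=0\}$, the two ideals are $(x^{a_v}y^{a_w})$ and $(x^{b_v}y^{b_w})$, and their sum is principal only when one monomial divides the other; e.g.\ for $A=E_v$, $B=E_w$ one gets $(x)+(y)=(x,y)\neq (1)=I_{\min(A,B)}$. (The sum of two principal ideals in a two-dimensional regular local ring need not be principal.) Consequently your sequence $0\to\calO_M\to\calO_A\oplus\calO_B\to\calO_m\to 0$ is not exact in the middle: the kernel of the difference map to $\calO_m=\calO/I_m$ strictly contains the image of $\calO_M$ whenever $I_A+I_B\subsetneq I_m$.

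The damage is local and repairable. The genuinely exact sequence is $0\to\calO_M\to\calO_A\oplus\calO_B\to\calO/(I_A+I_B)\to 0$, and $\calO/(I_A+I_B)$ surjects onto $\calO_m$ with kernel $I_m/(I_A+I_B)$, a skyscraper sheaf supported at finitely many nodes of $E$; since a skyscraper has vanishing $H^1$ and $H^2$, one still gets $h^1(\calL\otimes\calO/(I_A+I_B))=h^1(m,\calL)$, and your superadditivity inequality $h^1(M,\calL)+h^1(m,\calL)\ge h^1(A,\calL)+h^1(B,\calL)$ survives. With that patch the rest of your argument --- monotonicity via surjectivity of $H^1(B,\calL)\to H^1(A,\calL)$, the sandwich forcing $h^1(\max)=h^1(\min)=\mu>0$ (hence in particular $\min(A,B)\neq 0$, so the minimum really lies in $L_\calL$), and the well-foundedness step --- is correct, in both the absolute case (a) and the relative case (b).
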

\begin{proof} The proof of  \cite[4.8]{MR},  valid for $\calO_{\widetilde{X}}$,
can be adopted to this situation as well.
\end{proof}
If $h^1(\widetilde{X},\calL)=0$, then by convention $Z_{coh}(\calL)=0$.

\begin{corollary}\label{cor:cohcyc}
(a) For any $l'\in L'$ consider the set
$$L_{l'}:=\{l_{min}\in L_{\geq 0}\ |\ \chi(-l'+l_{min})=
 \min_{l\in L_{\geq 0}}\chi( -l'+l)\}.$$
 Then $L_{l'}$ has a unique minimal element $Z_{coh}(l')$, which coincides with the
 cohomological cycle of any generic $\calL\in\pic^{l'}(\widetilde{X})$.

 (b) For any $Z>0$ and $l'\in L'$ consider the set
$$L_{Z,l'}:=\{l_{min}\in L, \ 0\leq l_{min}\leq Z, \ |\ \chi(-l'+l_{min})=
 \min_{0\leq l\leq Z,\, l\in L}\chi( -l'+l)\}.$$
 Then $L_{Z,l'}$ has a unique minimal element $Z_{coh}(Z,l')$, which coincides with the
 cohomological cycle of any generic $\calL\in\pic^{l'}(Z)$.
\end{corollary}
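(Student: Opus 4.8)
The plan is to reduce everything to Proposition \ref{prop:cohcyc} combined with Theorem \ref{th:hegy} and its Corollary \ref{cor:generic}. For part (b), fix $Z>0$ and $l'\in L'$, and let $\calL\in\pic^{l'}(Z)$ be generic. By Theorem \ref{th:hegy}(1) we have $h^1(Z,\calL)=\chi(-l')-\min_{0\le l\le Z}\chi(-l'+l)$, and this is the minimal value of $h^1$ over $\pic^{l'}(Z)$ by the semicontinuity Lemma \ref{lem:semicont}. If this value is $0$ the claim is the convention $Z_{coh}(Z,\calL)=0$, so assume $h^1(Z,\calL)>0$; then Proposition \ref{prop:cohcyc}(b) gives the cohomological cycle $Z_{coh}(Z,\calL)$ as the unique minimal element of $\{0<l\le Z : h^1(l,\calL)=h^1(Z,\calL)\}$. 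The heart of the argument is to identify this set with $L_{Z,l'}$. For one inclusion, I would apply Theorem \ref{th:hegy}(1) to the cycle $l$ in place of $Z$: for $\calL$ generic in $\pic^{l'}(Z)$, its restriction $\calL|_l$ is still generic in $\pic^{l'}(l)$ (this genericity-restricts-to-genericity point is the step I would want to state carefully), so $h^1(l,\calL)=\chi(-l')-\min_{0\le l''\le l}\chi(-l'+l'')$. Hence $h^1(l,\calL)=h^1(Z,\calL)$ holds exactly when $\min_{0\le l''\le l}\chi(-l'+l'')=\min_{0\le l''\le Z}\chi(-l'+l'')$, i.e. precisely when some $l_{min}\le l$ realizes the global minimum over $[0,Z]$.

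This last equivalence shows that $\{0<l\le Z:h^1(l,\calL)=h^1(Z,\calL)\}$ consists of those $l$ that dominate at least one element of $L_{Z,l'}$ — in particular $L_{Z,l'}\subseteq\{l:h^1(l,\calL)=h^1(Z,\calL)\}$. Both sets are therefore upward-closed in the interval $[0,Z]$ and have the same minimal elements; in particular the unique minimal element $Z_{coh}(Z,\calL)$ supplied by Proposition \ref{prop:cohcyc}(b) is automatically the unique minimal element of $L_{Z,l'}$. I would call it $Z_{coh}(Z,l')$ and record that, by construction, it is independent of the chosen generic $\calL$ (any two generic bundles give the same $h^1(l,\cdot)$ for all $l\le Z$, hence the same set, hence the same minimum). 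This proves (b).

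Part (a) is the limit $Z\gg 0$ of part (b): using $H^1(\widetilde X,\calL)=\varprojlim_Z H^1(Z,\calL)$ as in Corollary \ref{cor:generic}, for $Z$ large enough $h^1(\widetilde X,\calL)=h^1(Z,\calL)$ and $\pic^{l'}(Z)=\pic^{l'}(\widetilde X)$, so a generic $\calL\in\pic^{l'}(\widetilde X)$ is generic on each such $Z$. Then $Z_{coh}(\calL)=Z_{coh}(Z,\calL)=Z_{coh}(Z,l')$ for all $Z\gg 0$, which stabilizes (the cohomological cycle of $\calL$ from Proposition \ref{prop:cohcyc}(a) is a fixed cycle), and by the same equivalence as above this cycle is the unique minimal element of $L_{l'}=\{l_{min}\in L_{\ge0}:\chi(-l'+l_{min})=\min_{l\in L_{\ge0}}\chi(-l'+l)\}$. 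The only genuine obstacle I anticipate is the genericity-descends-to-subcycles claim: one must argue that restricting a generic line bundle on $Z$ (or on $\widetilde X$) to a subcycle $l\le Z$ lands in a Zariski-dense subset of $\pic^{l'}(l)$ on which Theorem \ref{th:hegy}'s equality holds — this follows because the restriction map $\pic^{l'}(Z)\to\pic^{l'}(l)$ is a surjection of affine spaces (an affine-bundle projection after the usual twist, cf. the diagram in \ref{eq:diagr2}), so the preimage of the relevant open dense set is open dense, and one intersects finitely many such sets over all $l\le Z$. Everything else is bookkeeping with the monotonicity of $l\mapsto\min_{0\le l''\le l}\chi(-l'+l'')$ already implicit in \ref{ss:Lauferseq}.
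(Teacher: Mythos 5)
Your argument is correct and is essentially the paper's proof: the paper's own justification is the single line ``combine Theorem \ref{th:hegy} and Proposition \ref{prop:cohcyc}'', and your write-up supplies exactly the details that combination requires (genericity descending to all subcycles $l\leq Z$ simultaneously, via surjectivity of the finitely many restrictions $\pic^{l'}(Z)\to\pic^{l'}(l)$, and the identification of minimal elements). One small imprecision: $L_{Z,l'}$ itself is \emph{not} upward-closed (a cycle dominating a minimizer of $l\mapsto\chi(-l'+l)$ need not itself be a minimizer), but this is harmless, since the two facts you actually establish --- $L_{Z,l'}\subseteq L_{Z,\calL}$ and every element of $L_{Z,\calL}$ dominates some element of $L_{Z,l'}$ --- already force the two sets to have the same minimal elements.
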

\begin{proof}
Combine Theorem \ref{th:hegy} and Proposition \ref{prop:cohcyc}.
\end{proof}
\begin{corollary}\label{cor:cohcyc2}
\begin{enumerate}
\item Elements of type $-l'+Z_{coh}(l')$ ($l'\in L'$) belong to $Van'$.
\item If $-l'\leq -l''$ then $-l'+Z_{coh}(l')\leq -l''+Z_{coh}(l'')$ as well.
Furthermore, if $-l'\leq -l''\leq -l'+Z_{coh}(l')$ then  $-l'+Z_{coh}(l')= -l''+Z_{coh}(l'')$.
\end{enumerate}
\end{corollary}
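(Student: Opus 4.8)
The plan is to read off both parts directly from the definition (\ref{eq:VANDEF}) of $Van'$, from the description of $Z_{coh}(l')$ as the least element of $L_{l'}=\{l\in L_{\geq 0}\,:\,\chi(-l'+l)=\min_{l''\in L_{\geq 0}}\chi(-l'+l'')\}$ (Corollary \ref{cor:cohcyc}, cf. Proposition \ref{prop:cohcyc}), and from the $\min$--stability of $Van'$ (Lemma \ref{rem:min}(v)). Throughout I would abbreviate $x(l'):=-l'+Z_{coh}(l')\in L'$, so that by construction $\chi(x(l'))=\min_{l\in L_{\geq 0}}\chi(-l'+l)$ and $x(l')\geq -l'$ because $Z_{coh}(l')\geq 0$.

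For part (1): given any $l\in L_{\geq 0}$ one has $Z_{coh}(l')+l\in L_{\geq 0}$, hence $\chi\bigl(x(l')+l\bigr)=\chi\bigl(-l'+(Z_{coh}(l')+l)\bigr)\geq\min_{l''\in L_{\geq 0}}\chi(-l'+l'')=\chi(x(l'))$. Since $l\in L_{\geq 0}$ was arbitrary, (\ref{eq:VANDEF}) gives $x(l')\in Van'$.

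For the first assertion of part (2): assume $-l'\leq -l''$. By part (1) both $x(l')$ and $x(l'')$ lie in $Van'$, hence so does $m:=\min\{x(l'),x(l'')\}$ by Lemma \ref{rem:min}(v). Here $-l'$ is a common lower bound, since $-l'\leq x(l')$ and $-l'\leq -l''\leq x(l'')$; thus $-l'\leq m\leq x(l')$, and I write $m=-l'+b$ with $0\leq b\leq Z_{coh}(l')$. Applying $m\in Van'$ to the cycle $Z_{coh}(l')-b\in L_{\geq 0}$ yields $\chi(m)\leq\chi(x(l'))$, while $\chi(x(l'))\leq\chi(-l'+b)=\chi(m)$ is automatic from minimality of $\chi(x(l'))$. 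Hence $b\in L_{l'}$, so $b\geq Z_{coh}(l')$, forcing $b=Z_{coh}(l')$ and $m=x(l')$; that is, $x(l')\leq x(l'')$.

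For the last assertion of part (2): assume in addition $-l''\leq x(l')$, and write $x(l')=-l''+c$ with $c\in L_{\geq 0}$. Since $x(l')\in Van'$ and $x(l'')-x(l')\in L_{\geq 0}$ by the step just proved, (\ref{eq:VANDEF}) gives $\chi(x(l'))\leq\chi(x(l''))$, and again $\chi(x(l''))\leq\chi(-l''+c)=\chi(x(l'))$ by minimality. Thus $c\in L_{l''}$, so $c\geq Z_{coh}(l'')$, whence $x(l')=-l''+c\geq x(l'')$; together with $x(l')\leq x(l'')$ this gives $x(l')=x(l'')$. The only point requiring genuine care is the implication ``$b\in L_{l'}\Rightarrow b\geq Z_{coh}(l')$'': it uses that $Z_{coh}(l')$ is the \emph{least}, not merely a minimal, element of $L_{l'}$. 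I would justify this either through Proposition \ref{prop:cohcyc} via the identification of $Z_{coh}(l')$ with the cohomological cycle of a generic bundle (Corollary \ref{cor:cohcyc}), or directly by noting that $L_{l'}$ is $\min$--stable: if $p,q\in L_{\geq 0}$ have disjoint support then $(p,q)\geq 0$, so $\chi(-l'+\min\{b_1,b_2\})+\chi(-l'+\max\{b_1,b_2\})\leq\chi(-l'+b_1)+\chi(-l'+b_2)$, which forces $\min\{b_1,b_2\}\in L_{l'}$ whenever $b_1,b_2\in L_{l'}$.
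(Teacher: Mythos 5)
The paper states this corollary with no proof at all (it passes directly to Example \ref{ex:cogcyc}), so there is no authorial argument to compare against; your write-up supplies the missing details along what is surely the intended route, namely combining Corollary \ref{cor:cohcyc}, the definition (\ref{eq:VANDEF}) of $Van'$, and the fact that $Z_{coh}(l')$ is the \emph{least} element of $L_{l'}$. Part (1) is correct as written. In part (2) you correctly isolate the one genuinely delicate point — that $\chi(-l'+b)=\min_{l\in L_{\geq 0}}\chi(-l'+l)$ must force $b\geq Z_{coh}(l')$, i.e.\ that the unique minimal element of $L_{l'}$ is a least element — and your second justification closes it cleanly: writing $b_1=m+p$, $b_2=m+q$ with $m=\min\{b_1,b_2\}$ and $p,q\geq 0$ of disjoint supports, one gets $\chi(-l'+b_1)+\chi(-l'+b_2)-\chi(-l'+m)-\chi(-l'+\max\{b_1,b_2\})=(p,q)\geq 0$, so $L_{l'}$ is $\min$--stable and its minimal element is least. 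The remaining manipulations (testing $m\in Van'$ against $Z_{coh}(l')-b$, and the squeeze $\chi(x(l'))=\chi(x(l''))$ in the ``furthermore'' step) are correct.

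One caveat, which you inherit from the paper rather than introduce. The order $-l'\leq -l''$ on $L'$ only requires $l'-l''$ to be an effective \emph{rational} cycle. If $l'-l''\notin L$, then $b=\min\{-l'+Z_{coh}(l'),-l''+Z_{coh}(l'')\}+l'$ and $c=-l'+Z_{coh}(l')+l''$ are not integral, so the assertions ``$Z_{coh}(l')-b\in L_{\geq 0}$'' and ``$b\in L_{l'}$'' do not typecheck ($Van'$ and $L_{l'}$ are tested only against integral cycles), and Lemma \ref{rem:min}(v) is itself being invoked outside the integral setting in which its proof (via the argument of Lemma \ref{lem:DOMIN}(vi)) operates. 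This is arguably a defect of the statement rather than of your proof: since $Z_{coh}(l'),Z_{coh}(l'')\in L$, the conclusion $-l'+Z_{coh}(l')=-l''+Z_{coh}(l'')$ of the ``furthermore'' clause already forces $l'\equiv l''\pmod L$, so the natural non-vacuous reading of part (2) is with $l'-l''\in L$ — and under that reading your argument is complete and correct. If the first inequality of (2) is meant to hold for genuinely rational differences, both your proof and the paper's own $\min$--stability lemma would need an additional rounding argument.
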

\begin{example}\label{ex:cogcyc}
Assume that $L$ is numerically Gorenstein (that is, $Z_K\in L$). Then
by \cite[Lemma 6]{KN} (and $\chi(l)=\chi(Z_K-l)$) one gets $Z_{coh}(l'=0)\leq Z_K/2$.
\end{example}

\subsection{The dimension of  $\im (c)$}\label{ss:genfiber} For an arbitrary  element $\calL$
 of the image
$\im (c:\eca^{l'}(Z)\to \pic^{l'}(Z))$ one has $\dim \im(c)+\dim c^{-1}(\calL)\geq \dim \eca^{l'}(Z)=(l',Z)$,
with equality whenever  $\calL$ is a  generic element {\it of the image} $\im(c)$.
This combined with Lemma \ref{lem:free}(b) gives the following.
\begin{proposition}\label{lem:dimIm}
For any $\calL\in \im(c^{l'})\subset \pic^{l'}(Z)$ one has
\begin{equation}\label{eq:imC}
h^1(Z,\calL) \geq h^1(\calO_Z)- \dim (\im (c^{l'}))= {\rm codim}(\im  (c^{l'})).\end{equation}
In (\ref{eq:imC}) equality holds whenever $\calL$ is
generic in the image of $c$ (that is, generic with the property  $H^0(Z,\calL)_{\reg}\not=\emptyset$).
This fact and  Theorem \ref{th:hegy} applied for the generic element of $\im (c)$ imply
\begin{equation}\label{eq:ineqIM} {\rm codim}(\im (c^{l'}))\geq \chi(-l')-\min _{0\leq l\leq Z}\,\chi(-l'+l).
\end{equation}
Furthermore, if $c^{l'}$ is not  dominant then the inequality in (\ref{eq:ineqIM}) is strict.
\end{proposition}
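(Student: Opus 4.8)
The plan is to obtain everything from four inputs: the dimension count $\dim\eca^{l'}(Z)=(l',Z)$ of Theorem \ref{th:smooth}(1), the fibre-dimension theorem for dominant morphisms of irreducible varieties, the fibre dimension formula (\ref{eq:dimfiber}) of Lemma \ref{lem:free}, and Theorem \ref{th:hegy}. I may assume $\eca^{l'}(Z)\neq\emptyset$, so by (\ref{eq:empty}) $l'\in-\calS'$; otherwise the statement is vacuous.

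First I would factor $c^{l'}$ through the closure $\overline{\im(c^{l'})}\subset\pic^{l'}(Z)$ of its image. Since $\eca^{l'}(Z)$ is irreducible (Theorem \ref{th:smooth}(1)) and $\overline{\im(c^{l'})}$ is irreducible, the induced map is dominant, and the lower bound on fibre dimension for a dominant morphism of irreducible varieties gives, for \emph{every} $\calL\in\im(c^{l'})$,
\[
\dim c^{-1}(\calL)\ \geq\ \dim\eca^{l'}(Z)-\dim\overline{\im(c^{l'})}\ =\ (l',Z)-\dim\im(c^{l'}),
\]
with equality when $\calL$ is generic in $\im(c^{l'})$ (here $\dim\im(c^{l'})=\dim\overline{\im(c^{l'})}$, so the fact that $\im(c^{l'})$ need not be closed, seen in the examples, is harmless). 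Substituting $\dim c^{-1}(\calL)=(l',Z)+h^1(Z,\calL)-h^1(\calO_Z)$ from (\ref{eq:dimfiber})---legitimate since $\calL\in\im(c^{l'})$ forces $H^0(Z,\calL)_{\reg}\neq\emptyset$---the terms $(l',Z)$ cancel and one is left with $h^1(Z,\calL)\geq h^1(\calO_Z)-\dim\im(c^{l'})$. As $\pic^{l'}(Z)$ is an affine space of dimension $h^1(\calO_Z)$, the right-hand side equals ${\rm codim}(\im(c^{l'}))$, which is (\ref{eq:imC}); and equality holds exactly when the displayed fibre-dimension inequality is an equality, i.e.\ for $\calL$ generic in the image.

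For the last two claims I would fix a generic $\calL_0\in\im(c^{l'})$, so that by the first part $h^1(Z,\calL_0)={\rm codim}(\im(c^{l'}))$. Applying Theorem \ref{th:hegy}(1) to $\calL_0$ (it holds for any element of $\pic^{l'}(Z)$) gives $h^1(Z,\calL_0)\geq\chi(-l')-\min_{0\leq l\leq Z}\chi(-l'+l)$, and the two together yield (\ref{eq:ineqIM}). If moreover $c^{l'}$ is not dominant, then---using $l'\in-\calS'$---Theorem \ref{th:hegy}(2) makes this inequality \emph{strict} for every $\calL\in\im(c^{l'})$, in particular for $\calL_0$, hence ${\rm codim}(\im(c^{l'}))=h^1(Z,\calL_0)>\chi(-l')-\min_{0\leq l\leq Z}\chi(-l'+l)$.

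The only delicate point---the `main obstacle', modest as it is---is justifying the fibre-dimension bound at \emph{every} point of the image rather than just at a generic one: since $c^{l'}$ is far from proper and its image is typically not closed, one cannot appeal to semicontinuity statements for proper morphisms, and the remedy is precisely the factorization through $\overline{\im(c^{l'})}$ combined with the classical theorem that for a dominant morphism of irreducible varieties every fibre has dimension at least the difference of dimensions. The rest is bookkeeping with (\ref{eq:dimfiber}) and Theorem \ref{th:hegy}.
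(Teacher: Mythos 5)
Your proof is correct and follows essentially the same route as the paper: the paper likewise derives (\ref{eq:imC}) from the inequality $\dim\im(c)+\dim c^{-1}(\calL)\geq\dim\eca^{l'}(Z)=(l',Z)$ (with equality for $\calL$ generic in the image) combined with the fibre formula (\ref{eq:dimfiber}) of Lemma \ref{lem:free}, and then obtains (\ref{eq:ineqIM}) and its strictness from Theorem \ref{th:hegy} applied to a generic element of the image. Your extra care in factoring through $\overline{\im(c^{l'})}$ to justify the fibre-dimension bound at every point of the (non-closed) image is a reasonable elaboration of what the paper leaves implicit.
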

In general, the codimension of $\im(c)$ cannot be characterized topologically. Indeed, take e.g.
$l'=0$, then $\im(c)$ is a point with codimension $h^1(\calO_Z)$. Moreover, by Example \ref{ex:dimim},
the dimension of $\im(c)$ is not topological either.

\subsection{Upper bounds for $h^1(Z,\calL)$.}
Theorem \ref{th:hegy} and Corollary \ref{cor:generic} provide sharp lower bounds for
$h^1(Z,\calL)$ and $h^1(\tX,\calL)$. A possible  upper bound is given by the next proposition.

\begin{proposition}\label{prop:UPPERBOUND}
Fix $Z>0$ and an arbitrary  $\calL\in \pic(Z)$ with   $l'=c_1(\calL)\in -\calS'$.

(a) If $h^0(Z,\calL)=0$ then  $h^1(Z,\calL) \leq -\chi(Z)<  h^1(\calO_Z)$.

(b) If $H^0(Z,\calL)_{\reg}\not=\emptyset$ then $h^1(Z,\calL) \leq   h^1(\calO_Z)$.

(c) In general, if  $h^0(Z,\calL)\not =0$ then
\begin{equation}\label{eq:HegyL}
h^1(Z,\calL)\leq \max_{0\leq l\leq Z}\{\, h^1(\calO_{Z-l})+\chi(-l')-\chi(-l'+l)\,\}\leq
 h^1(\calO_Z)+\chi(-l')-\min_{0\leq l\leq Z}\,\chi(-l'+l).
 \end{equation}

In particular, by (\ref{eq:genericL}) and (\ref{eq:HegyL}), $h^1(Z,\calL)$ takes values in an interval of
length (at most) $h^1(\calO_Z)$.
\end{proposition}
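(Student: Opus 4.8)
The plan is to treat the three parts separately: (a) is a direct Riemann--Roch computation, (b) a dimension count on the fibre of the Abel map, and (c) a reduction of the general situation to (b) by stripping off fixed components.

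\emph{Part (a).} If $h^0(Z,\calL)=0$ then $h^1(Z,\calL)=-\chi(Z,\calL)=-\chi(Z)-(l',Z)$ by Riemann--Roch. Since $l'\in-\calS'$ forces $(l',E_v)\geq 0$ for every $v$ and $Z>0$, we get $(l',Z)\geq 0$, hence $h^1(Z,\calL)\leq-\chi(Z)$. Finally $h^1(\calO_Z)=h^0(\calO_Z)-\chi(Z)\geq 1-\chi(Z)>-\chi(Z)$, because $\calO_Z$ carries the constants; this yields the strict inequality $-\chi(Z)<h^1(\calO_Z)$.

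\emph{Part (b).} Here I would invoke the Abel map directly. If $H^0(Z,\calL)_{\reg}\neq\emptyset$ then $\calL\in\im(c^{l'})$ by Lemma \ref{lem:H_0}, and by Lemma \ref{lem:free} the fibre $c^{-1}(\calL)$ is an irreducible quasiprojective variety of dimension $(l',Z)+h^1(Z,\calL)-h^1(\calO_Z)$. But $c^{-1}(\calL)$ is a (locally closed) subvariety of $\eca^{l'}(Z)$, which by Theorem \ref{th:smooth}(1) is smooth irreducible of dimension $(l',Z)$; comparing dimensions yields $h^1(Z,\calL)\leq h^1(\calO_Z)$.

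\emph{Part (c).} Assume $h^0(Z,\calL)\neq 0$. I would run the fixed--component stripping already used in the proofs of Theorems \ref{th:dominant} and \ref{th:hegy}: either $H^0(Z,\calL)_{\reg}\neq\emptyset$ (the case $l=0$), or there is $E_v\subset|Z|$ with $H^0(Z,\calL)=H^0(Z-E_v,\calL(-E_v))$, in which case we replace $(Z,\calL)$ by $(Z-E_v,\calL(-E_v))$ and repeat; since $h^0$ is preserved along the way and cannot survive on the zero cycle, the procedure terminates at some $0\leq l<Z$ with $h^0(Z,\calL)=h^0(Z-l,\calL(-l))$ and $H^0(Z-l,\calL(-l))_{\reg}\neq\emptyset$, whence $l'-l\in-\calS'(|Z-l|)$. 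Tensoring $0\to\calO_{Z-l}(-l)\to\calO_Z\to\calO_l\to 0$ by $\calL$ gives $\chi(Z,\calL)=\chi(Z-l,\calL(-l))+\chi(\calL|_l)$ with $\chi(\calL|_l)=\chi(-l'+l)-\chi(-l')$ (Riemann--Roch, as recorded in \ref{ss:Lauferseq}), and subtracting Euler characteristics while using $h^0(Z,\calL)=h^0(Z-l,\calL(-l))$ yields
\[
h^1(Z,\calL)=h^1(Z-l,\calL(-l))+\chi(-l')-\chi(-l'+l).
\]
Applying part (b) on the support $|Z-l|$ (legitimate by Remark \ref{rem:Zsupport}, since $H^0(Z-l,\calL(-l))_{\reg}\neq\emptyset$) gives $h^1(Z-l,\calL(-l))\leq h^1(\calO_{Z-l})$, which is the first inequality in (\ref{eq:HegyL}). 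For the second inequality one uses $h^1(\calO_{Z-l})\leq h^1(\calO_Z)$ — which follows from the surjection $H^1(\calO_Z)\twoheadrightarrow H^1(\calO_{Z-l})$ coming from $0\to\calO_l(-(Z-l))\to\calO_Z\to\calO_{Z-l}\to 0$, the next term being $H^2$ of a sheaf on a curve — together with $\chi(-l'+l)\geq\min_{0\leq l\leq Z}\chi(-l'+l)$.

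\emph{The interval statement and the main point.} The lower bound $h^1(Z,\calL)\geq\chi(-l')-\min_{0\leq l\leq Z}\chi(-l'+l)$ from Theorem \ref{th:hegy}(1) holds for every $\calL\in\pic^{l'}(Z)$, while the right--hand bound of (\ref{eq:HegyL}) gives the matching upper bound $h^1(\calO_Z)+\chi(-l')-\min_{0\leq l\leq Z}\chi(-l'+l)$ when $h^0(Z,\calL)\neq 0$; when $h^0(Z,\calL)=0$ the same upper bound follows from (a) by taking $l=Z$ in the minimum. Hence $h^1(Z,\calL)$ varies in an interval of length at most $h^1(\calO_Z)$. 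I expect the one delicate spot to be the bookkeeping in (c): one must check that the stripping stops at a cycle strictly smaller than $Z$ carrying a section with no fixed components (this is exactly where the hypothesis $h^0(Z,\calL)\neq 0$ is used) and that the Chern class $l'-l$ lands in $-\calS'(|Z-l|)$ so that (b) can be re-applied on the smaller support; everything else is formal manipulation of Euler characteristics.
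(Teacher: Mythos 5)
Your proposal is correct. Parts (a) and (c) follow the paper's own argument essentially verbatim: (a) is the same Riemann--Roch computation, and (c) is the same reduction to (b) via the fixed divisor $l$ and the exact sequence $0\to \calL(-l)|_{Z-l}\to \calL\to \calL|_l\to 0$, giving $h^1(Z,\calL)=h^1(Z-l,\calL(-l))-\chi(\calL|_l)$ (the paper simply takes $l$ to be the full fixed divisor at once rather than stripping one $E_v$ at a time, but this is the same computation). The only genuine divergence is in (b). The paper multiplies by a section $s\in H^0(Z,\calL)_{\reg}$ to get $0\to\calO_Z\to\calL\to\calF\to 0$ with $\calF$ finitely supported, so that $H^1(\calO_Z)\to H^1(Z,\calL)$ is onto; you instead bound $\dim c^{-1}(\calL)=h^0(Z,\calL)-h^0(\calO_Z)$ by $\dim\eca^{l'}(Z)=(l',Z)$ and convert via (\ref{eq:dimfiber}). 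Your route is valid, and in fact the paper records it explicitly as the ``geometric interpretation'' of the inequality in Remark \ref{rem:OWY}; the trade-off is that it leans on Theorem \ref{th:smooth} and Lemma \ref{lem:free}, whereas the paper's exact-sequence argument is self-contained and also exhibits the surjection $H^1(\calO_Z)\twoheadrightarrow H^1(Z,\calL)$, which is slightly more information than the numerical inequality. Your supplementary checks --- the surjection $H^1(\calO_Z)\twoheadrightarrow H^1(\calO_{Z-l})$ for the second inequality of (\ref{eq:HegyL}), and the verification that the interval bound also covers the case $h^0(Z,\calL)=0$ via $l=Z$ --- are correct and fill in details the paper leaves implicit.
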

Note that
$ h^1(\calO_Z)\leq \max_{0\leq l\leq Z}\{\, h^1(\calO_{Z-l})+\chi(-l')-\chi(-l'+l)\,\}$ (take $l=0$).
Hence {\it (b)} gives a better bound than {\it (c)} whenever
$H^0(Z,\calL)_{\reg}\not=\emptyset$. (Examples with $h^1(Z,\calL)\not\leq h^1(\calO_Z)$ exist even for $l'=0$,
see e.g. Example 8.2.4 in  part II \cite{NNII}, when we will
treat the generic analytic structures).

Furthermore, {\it (c)} for $l'=0$ reads as
$h^1(Z,\calL)\leq \max_{0\leq l\leq Z}\{\, h^1(\calO_{Z-l})-\chi(l)\,\}$, which for   $Z=Z_K\in L$ transforms
into $h^1(Z_K,\calL)\leq \max_{0\leq l\leq Z_K}\{\, h^1(\calO_{l})-\chi(l)\,\}$ (use $\chi(Z_K-l)=\chi(l)$).
\begin{proof} {\it (a)}
$h^1(Z,\calL)=-\chi(Z,\calL)=-\chi(Z)-(Z,l')\leq -\chi(Z)=-h^0(\calO_Z)+h^1(\calO_Z)$.

{\it (b)}
 Multiplication by a generic  $s\in H^0(Z,\calL)$ gives  an exact sequence of
sheaves $0\to \calO_Z\to \calL\to \calF\to 0$, where $\calF$ is Stein. Hence
$H^1(\calO_Z)\to H^1(Z,\calL)$ is onto and $h^1(Z,\calL)\leq h^1(\calO_Z)$.

{\it (c)}  If $l$ is the fixed divisor of $\calL$ supported on $E$, then from the exact sequence
$0\to \calL(-l)|_{Z-l}\to \calL\to \calL|_l\to 0$ we get $h^1(\calL)=h^1(Z-l,\calL(-l))-\chi(\calL|_l)$, and
$\calL(-l)|_{Z-l}$ has no fixed components.
Hence   $h^1(Z-l,\calL(-l))\leq h^1(\calO_{Z-l})$ by {\it (b)}.
\end{proof}

\begin{remark}\label{rem:OWY} The inequality  $h^1(Z,\calL)\leq h^1(\calO_Z)$, valid for the case
when $\calL$ has no fixed components,  has the following geometric
interpretation, cf. (\ref{eq:dimfiber}):
 $h^1(\calO_Z)-h^1(Z,\calL)={\rm codim}(c^{-1}(\calL)\subset \eca^{l'})\geq 0$.
The inequality for  $\calL=\calO(-l)$, $l\in L_{>0}$, was already proved in
\cite[Th. 3.1]{OWY14}.
\end{remark}

\subsection{The $h^1$--stratification of $\pic^{l'}(Z)$.}
Fix $Z>0$,  $l'\in -\calS'$ and $k\in \Z$ with
$$\chi(-l')-\min_{0\leq l\leq Z}\,\chi(-l'+l)\leq\  k\ \leq
 h^1(\calO_Z)+\chi(-l')-\min_{0\leq l\leq Z}\,\chi(-l'+l).$$

 \begin{definition}\label{def:BrillNoether}
 For any $l'$ and $k$ as above we set
 \begin{equation}\label{eq:BN}
 W_{l',k}:=\{\calL\in \pic^{l'}(Z)\,:\, h^1(Z,\calL)=k\}.\end{equation}
 \end{definition}
 From the semicontinuity lemma \ref{lem:semicont} we automatically have for the closure
 $\overline { W_{l',k}}$
  \begin{equation}\label{eq:BN2}
 \overline {W_{l',k}}\subset \{\calL\in \pic^{l'}(Z)\,:\, h^1(Z,\calL)\geq k\}.\end{equation}

 These sets constitute  the analogs of the Brill--Noether strata defined for projective curves
 by the Brill--Noether theory, see
 \cite{ACGH,Flamini} and the references therein.
 %\marginpar{{\bf More COMMENTS}}

Lemmas \ref{lem:dimIm} and \ref{lem:REGVAL} have the following consequences.
\begin{corollary}\label{cor:ImW} Fix $l'\in -\calS'$. Then
$\im (c^{l'})\subset \overline{ W_{l',{\rm codim}\,\im(c^{l'})}}$.
Furthermore, the set of critical bundles of $c^{l'}$  are included in
$\overline{ W_{l',{\rm codim}\,\im(c^{l'})+1}}$.
\end{corollary}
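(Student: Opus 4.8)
The statement has two parts, and both follow by combining the numerical bounds already established. First I would handle the inclusion $\im(c^{l'})\subset \overline{W_{l',\,\mathrm{codim}\,\im(c^{l'})}}$. Set $k_0:=\mathrm{codim}(\im(c^{l'}))=h^1(\calO_Z)-\dim(\im(c^{l'}))$. By Proposition \ref{lem:dimIm}, every $\calL\in\im(c^{l'})$ satisfies $h^1(Z,\calL)\geq k_0$, with equality for $\calL$ generic in $\im(c^{l'})$. Thus the generic point of $\im(c^{l'})$ lies in $W_{l',k_0}$; since $W_{l',k_0}\subset\overline{W_{l',k_0}}$ and since $\im(c^{l'})$ is irreducible (being the image of the irreducible variety $\eca^{l'}(Z)$, cf. Theorem \ref{th:smooth}(1)), its closure equals the closure of any of its nonempty Zariski-open subsets. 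The locus where $h^1(Z,\calL)=k_0$ is Zariski-open and dense in $\im(c^{l'})$ by semicontinuity (Lemma \ref{lem:semicont}) together with the lower bound $h^1\geq k_0$ just recalled. Hence $\im(c^{l'})\subset\overline{\im(c^{l'})}=\overline{W_{l',k_0}\cap\im(c^{l'})}\subset\overline{W_{l',k_0}}$.

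For the second assertion, let $\calL_0\in\im(c^{l'})$ be a critical bundle, i.e. (by Definition \ref{def:regular}(a) and the equivalences (iii)$\Leftrightarrow$(iv)$\Leftrightarrow$(v) of Lemma \ref{lem:REGVAL}) $\dim c^{-1}(\calL_0)$ is strictly larger than the dimension of the generic fiber of $c^{l'}$. The generic fiber has dimension $(l',Z)-\dim\im(c^{l'})$, so via the fiber-dimension formula (\ref{eq:dimfiber}), $\dim c^{-1}(\calL_0)=(l',Z)+h^1(Z,\calL_0)-h^1(\calO_Z)$, and likewise for a generic fiber $h^1(Z,\calL_{gen})=k_0$. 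The strict inequality $\dim c^{-1}(\calL_0)>\dim c^{-1}(\calL_{gen})$ therefore translates directly into $h^1(Z,\calL_0)>k_0$, i.e. $h^1(Z,\calL_0)\geq k_0+1$. By the semicontinuity inclusion (\ref{eq:BN2}), $\{\calL:h^1(Z,\calL)\geq k_0+1\}$ contains $\overline{W_{l',k_0+1}}$, but what we actually need is the reverse: that $\calL_0$ lies in the \emph{closure} of $W_{l',k_0+1}$. Here the one subtle point is that $\calL_0\in W_{l',k}$ for some $k\geq k_0+1$, and we must argue it lies in $\overline{W_{l',k_0+1}}$.

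This is the step I expect to require the most care. The cleanest route is to work inside the irreducible variety $\im(c^{l'})$ and stratify it by $h^1$: the stratum $W_{l',k_0}\cap\im(c^{l'})$ is open dense, and its complement $Y:=\{\calL\in\im(c^{l'}):h^1(Z,\calL)\geq k_0+1\}$ is closed in $\im(c^{l'})$ and contains every critical bundle. I would then invoke the fact that on $\im(c^{l'})$ the function $\calL\mapsto\dim c^{-1}(\calL)=(l',Z)+h^1(Z,\calL)-h^1(\calO_Z)$ jumps by exactly one along the boundary of the open stratum — or, more robustly, appeal to the structure of the incidence variety: over each irreducible component of $Y$ the generic value of $h^1$ equals $k_0+1$, because if it were $\geq k_0+2$ on a whole component we could remove a basepoint and reduce $Z$, contradicting minimality as in the proofs of Theorems \ref{th:dominant} and \ref{th:hegy}. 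Concretely, given a critical $\calL_0$, pick an irreducible curve germ in $\im(c^{l'})$ through $\calL_0$ meeting the dense open stratum $W_{l',k_0}$; along it $h^1$ takes the value $k_0$ generically and $\geq k_0+1$ only at special points, and by semicontinuity the value $k_0+1$ must be attained on a set accumulating to any point where $h^1\geq k_0+1$ — in particular $\calL_0\in\overline{W_{l',k_0+1}}$. Assembling these observations yields $\calL_0\in\overline{W_{l',k_0+1}}$, which is the claim. The only genuine obstacle is ruling out the pathological possibility that the $h^1=k_0+1$ stratum is empty while $h^1\geq k_0+2$ strata are not; this is excluded precisely because basepoint-removal decreases $h^1$ by the controlled amount $\chi(\calL|_l)$, giving a chain of strata with consecutive $h^1$-values in whose closures $\calL_0$ sits.
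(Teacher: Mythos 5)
Your first inclusion is correct and is exactly the paper's (essentially one-line) argument: by Proposition \ref{lem:dimIm} every $\calL\in\im(c^{l'})$ satisfies $h^1(Z,\calL)\geq k_0:={\rm codim}\,\im(c^{l'})$, with equality on a nonempty subset that is open in $\im(c^{l'})$ by Lemma \ref{lem:semicont}, hence dense by irreducibility of $\im(c^{l'})$ (Theorem \ref{th:smooth}); therefore $\im(c^{l'})\subset\overline{W_{l',k_0}}$. Your translation of ``critical'' into the numerical condition $h^1(Z,\calL)\geq k_0+1$, via the equivalences (iii)$\Leftrightarrow$(iv)$\Leftrightarrow$(v) of Lemma \ref{lem:REGVAL} and the fiber-dimension formula (\ref{eq:dimfiber}), is also correct, and this is precisely what the two results the paper invokes actually deliver.

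The gap is in your final step, where you try to upgrade $h^1(Z,\calL_0)\geq k_0+1$ to $\calL_0\in\overline{W_{l',k_0+1}}$, i.e.\ to produce bundles with $h^1$ \emph{exactly} $k_0+1$ accumulating at $\calL_0$. Neither of your two arguments achieves this. Semicontinuity along a curve germ through $\calL_0$ meeting the dense stratum only says that the value at the special point is $\geq$ the generic value $k_0$; it does not force the intermediate value $k_0+1$ to be attained anywhere ($h^1$ can jump from $k_0$ directly to $k_0+2$ at an isolated special point -- compare $\mathrm{Bl}_0\bC^3\to\bC^3$, where the fiber dimension jumps from $0$ to $2$ with no fiber of dimension $1$). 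And the assertion that the generic value of $h^1$ on each irreducible component of the critical locus equals $k_0+1$ is not justified by ``basepoint removal'': the reduction $h^1(Z,\calL)=h^1(Z-l,\calL(-l))-\chi(\calL|_l)$ used in Theorems \ref{th:dominant} and \ref{th:hegy} controls the cohomology of a single bundle under twisting by a fixed cycle, not the $h^1$-stratification of the critical locus. What the cited lemmas give, and what (\ref{eq:BN2}) records, is only the inclusion of the critical bundles in the closed set $\{\calL:h^1(Z,\calL)\geq k_0+1\}$; membership in the closure of the equality stratum $W_{l',k_0+1}$ is a strictly stronger statement (unless one reads $\overline{W_{l',k}}$ as the determinantal locus $\{h^1\geq k\}$, as in classical Brill--Noether theory), and your proposal does not supply the missing argument.
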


\begin{example}\label{ex:BN}
If the fibers of $c^{l'}$ over $\im(c^{l'})$ are not equidimensional, then $\im(c^{l'})$ consists of more
strata of type $W_{l',k}$ (see e.g. Example \ref{ex:notequidim}). But, even if the fibers over
$\im(c^{l'})$ are equidimensional, hence $\im(c^{l'})$ consists of only one stratum, it can happen that
$c^{l'}$ is not a (topological) locally trivial fibration over $\im(c^{l'})$, see e.g. Example \ref{ex:nonfibration}.
In particular, $c^{l'}$ over a strata $W_{l',k}$ usually is not a (topological) locally trivial fibration.
\end{example}

% {\bf Question: HOW IS RELATED $\im(c)$ with this stratification?????}

\section{`Multiple' structures. The `stable' $\im (c^{l'})$.}\label{s:multiple}

\subsection{Monoid structure of divisors}\label{ss:addDiv}
In this section we will exploit the additional natural additive structure
$s^{l'_1,l'_2}(Z):\eca^{l'_1}(Z)\times \eca^{l'_2}(Z)\to \eca^{l'_1+l'_2}(Z)$ ($l'_1,l'_2\in-\calS'$)
provided by the sum of the divisors. (Sometimes we will abridge  $s^{l'_1,l'_2}(Z)$ as $s$.)

\begin{lemma}\label{lem:domin}
$s^{l'_1,l'_2}(Z)$ is dominant and quasi--finite.
\end{lemma}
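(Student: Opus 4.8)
The statement concerns the sum-of-divisors map $s = s^{l'_1,l'_2}(Z)\colon \eca^{l'_1}(Z)\times\eca^{l'_2}(Z)\to\eca^{l'_1+l'_2}(Z)$, and I want to show it is dominant and quasi-finite. The key tool is the smoothness and dimension count from Theorem \ref{th:smooth}: all three spaces are smooth irreducible varieties, of dimensions $(l'_1,Z)$, $(l'_2,Z)$ and $(l'_1+l'_2,Z)$ respectively. Since $(l'_1+l'_2,Z)=(l'_1,Z)+(l'_2,Z)$, the source and target of $s$ have the \emph{same} dimension. Hence, for a map between irreducible varieties of equal dimension, dominance is equivalent to quasi-finiteness of the generic fiber (equivalently, to the generic fiber being $0$-dimensional), and in fact once I show one of the two properties with the dimension count in hand, the other follows from generic fiber dimension $=\dim(\text{source})-\dim(\overline{\im})$. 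So it suffices to prove, say, quasi-finiteness of $s$ everywhere, or dominance.

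\textbf{Main step: quasi-finiteness.} I would argue directly on charts. Fix $D_1\in\eca^{l'_1}(Z)$ and $D_2\in\eca^{l'_2}(Z)$ with $s(D_1,D_2)=D_1+D_2=:D$. Working point by point on the finite support of $D$ in $E$, in the local charts of $\eca^{l'}(Z)$ constructed in the proof of Theorem \ref{th:smooth} (local Cartier equations modulo the multiplicative action of $\calO^*$), the sum of divisors corresponds to multiplication of local defining functions. So I need: if $f_1f_2 = g_1g_2$ as local Cartier data on $Z$ (i.e. up to units), with $f_i$ close to $g_i$ and $f_i$ having the same local intersection profile as $g_i$, then $(f_1,f_2)$ lies in a finite set. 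When $D$ is supported at a smooth point of $E$ this is essentially the statement that a polynomial of given degree in one variable has finitely many unordered factorizations into two factors of prescribed degrees — rigidity of factorization over the (complete) local ring; the unit ambiguity only rescales. At a node $E_u\cap E_v$ a similar but slightly more delicate factorization-rigidity argument is needed for the local equations on the non-reduced scheme $Z$. Globally, over a point $D$ of $\eca^{l'_1+l'_2}(Z)$, a preimage is a way of distributing the ``pieces'' of $D$ (and their jets in the infinitesimal directions) between $D_1$ and $D_2$ subject to the Chern class constraints $l'_1,l'_2$; the distribution of the reduced support among finitely many points is finite, and the jet-splitting at each point is governed by the same factorization rigidity, so the whole fiber is finite.

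\textbf{Dominance.} With quasi-finiteness established and the equality $\dim(\text{source})=\dim(\text{target})=(l'_1+l'_2,Z)$ from Theorem \ref{th:smooth}(1), the image $\overline{\im(s)}$ is an irreducible subvariety of $\eca^{l'_1+l'_2}(Z)$ of dimension $\dim(\text{source})-\dim(\text{generic fiber})=(l'_1+l'_2,Z)-0=(l'_1+l'_2,Z)$, hence equals all of $\eca^{l'_1+l'_2}(Z)$ (which is irreducible of that dimension). So $s$ is dominant. Alternatively, and perhaps more cleanly, one can prove dominance first: produce one divisor $D$ in the target whose fiber is finite and nonempty — e.g. take $D_1$, $D_2$ to be unions of generic transversal cuts realizing the respective Chern classes (as in the discussion around (\ref{eq:empty})), chosen with disjoint supports, so that $D=D_1+D_2$ and the fiber over $D$ is exactly the finite set of ways of partitioning the cuts — then a map of irreducible varieties of equal dimension with a nonempty finite fiber is dominant.

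\textbf{Expected obstacle.} The only genuinely non-formal point is the local factorization-rigidity at nodes: showing that on the thickened scheme $Z$ near $E_u\cap E_v$, a local Cartier equation (given modulo units of $\calO_Z|_{U}$) admits only finitely many factorizations with prescribed local Chern contributions. One must be careful that the unit group $\calO^*$ on the non-reduced local model is large, so ``finitely many'' has to be understood after quotienting both factors by their unit actions, exactly as the charts in Theorem \ref{th:smooth} are set up; I would handle this by passing to the complete local ring and using that the two irreducible branches $\{x=0\}$, $\{y=0\}$ (with multiplicities $N$, $M$ in $Z$) force each factor's restriction to each branch to be determined up to finitely many choices by its degree along that branch. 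Everything else is bookkeeping with the dimension formula $(l'_1+l'_2,Z)=(l'_1,Z)+(l'_2,Z)$ and Theorem \ref{th:smooth}.
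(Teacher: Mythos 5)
Your overall strategy coincides with the paper's (whose entire proof is two sentences: an effective divisor decomposes in finitely many ways, hence quasi-finiteness; source and target have equal dimension, hence dominance). The dominance half of your argument is sound, and your ``alternative'' route is in fact the better one: take $D$ to be a union of distinct transversal cuts with reduced support realizing $l_1'+l_2'$; its fiber under $s$ is nonempty and finite (a decomposition must partition the finitely many cuts, and a local equation with intersection degree one along $E$ admits only the trivial factorization up to units), and a morphism of irreducible varieties of equal dimension having one nonempty zero--dimensional fiber is dominant by the fiber--dimension theorem.

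The genuine gap is in the quasi-finiteness step, and it occurs exactly where you declare the problem easy — at smooth points of $E$ — rather than at the nodes. Your ``rigidity of factorization over the complete local ring'' fails as soon as $Z$ is non-reduced there: the local ring $\C\{x,y\}/(x^N)$ (with $N\geq 2$ and $E=\{x=0\}$) is not a domain, and factorizations of non-zerodivisors are far from unique. Concretely, $(y+ax^{N-1})(y-ax^{N-1})\equiv y^2 \pmod{x^N}$ for every $a\in\C$, while the divisors $\{y+ax^{N-1}\}$ are pairwise distinct points of $\eca^{-E_v^*}(Z)$ — they occupy one of the affine fiber coordinates of the chart $\{y+\sum_{i<N}a_{i0}x^i\}$ constructed in the proof of Theorem \ref{th:smooth}. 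Hence the fiber of $s^{-E_v^*,-E_v^*}(Z)$ over the non-reduced divisor $\{y^2=0\}$ contains a one-parameter family, and $s$ is not quasi-finite in the strict sense whenever $Z\geq 2E_v$. (In coordinates the local model of $s$ is $(a_{00},a_{10},b_{00},b_{10})\mapsto (a_{00}b_{00},\,a_{00}b_{10}+a_{10}b_{00},\,a_{00}+b_{00},\,a_{10}+b_{10})$ for $N=2$: dominant and generically $2{:}1$, but with the line $a_{00}=b_{00}=0$, $b_{10}=-a_{10}$ as fiber over the origin.) The same objection applies verbatim to the paper's one-line justification ``an effective divisor decomposes in finitely many ways''. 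What is true — and what suffices both for dominance and for the later uses of the lemma — is that $s$ is \emph{generically} finite, which is precisely what your reduced-support argument establishes; the strict quasi-finiteness claim should either be restricted to $Z=E$ or weakened to generic finiteness.
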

\begin{proof} An effective  divisor decomposes in finitely many ways, hence  the
quasi--finiteness follows. Since the dimensions of the source and the target
are equal, cf. Theorem \ref{th:smooth},
$s$ is dominant.
\end{proof}
In general, $s$ is not surjective. E.g.,
in Example \ref{ex:nonfibration}, the elements of $c^{-1}(E_1\cap E_2)=
\bC^*$ are not in the image of $s^{E_1^*,E_2^*}(Z)$.

There is a parallel multiplication $\pic^{l'_1}(Z)\times \pic^{l'_2}(Z)\to \pic^{l'_1+l'_2}(Z)$,
$(\calL_1,\calL_2)\mapsto \calL_1\otimes\calL_2$. Clearly, $c^{l_1'+l'_2}\circ s^{l'_1,l'_2}= c^{l'_1}
\otimes c^{l'_2}$ in $\pic^{l'_1+l'_2}$.
In the next discussions we  replace  $c^{l'}$ by the composition
$$\widetilde{c}^{l'}:\eca^{l'}(Z)\stackrel{c^{l'}}{\longrightarrow} \pic^{l'}(Z)\stackrel{\calO_Z(-l')}
{\longrightarrow} \pic^0(Z),$$
where the second map is the multiplication by the natural line bundle $\calO_Z(-l')$.
Since $\calO_Z(l'_1+l'_2)=\calO_Z(l'_1)\otimes \calO_Z(l'_2)$ we also have
 $\widetilde{c}^{l_1'+l'_2}\circ s^{l'_1,l'_2}= \widetilde{c}^{l'_1}
\otimes \widetilde{c}^{l'_2}$ in $\pic^{0}$.
After identification of $\pic^0$ with (the additive) $H^1(\calO_Z)$, this reads as
 $\widetilde{c}^{l_1'+l'_2}\circ s^{l'_1,l'_2}= \widetilde{c}^{l'_1}
+ \widetilde{c}^{l'_2}$ in $H^1(\calO_Z)$.
The advantage of this new map is that it collects all the images of the effective Cartier divisors in a single
vector space $H^1(\calO_Z)$. Lemma \ref{lem:domin} and
the construction imply
\begin{equation}\label{eq:addclose}
\im (\widetilde{c}^{l_1'})+ \im (\widetilde{c}^{l_2'})\subset
\im (\widetilde{c}^{l_1'+l_2'})\subset \overline{
\im (\widetilde{c}^{l_1'})+ \im (\widetilde{c}^{l_2'})}.
\end{equation}
\begin{definition}\label{def:VZ}
For any  $l'\in-\calS'$  let $A_Z(l')$ (if there is no confusion, $A(l')$)
be the smallest dimensional affine subspace of
$H^1(\calO_Z)$ which contains $\im (\widetilde{c}^{l'})$. Let $V_Z(l')$ be the parallel vector subspace
of $H^1(\calO_Z)$,
the translation of  $A_Z(l')$ to the origin. %Set $d_Z(l'):=\dim A_Z(l')$.
\end{definition}
\begin{remark}\label{rem:Vineq}
From this definition follows that $\dim V_Z(l')$ is greater than or equal to the dimension of
the Zariski tangent space at any $\calL\in \overline { \im (c^{l'}(Z))}$; in particular,
$\dim V_Z(l'))\geq \dim \im (c^{l'}(Z))$. Hence, by (\ref{eq:imC}) one also has
$\dim V_Z(l')\geq h^1(\calO_Z)-h^1(Z,\calL)$ for any $\calL\in \im (c^{l'}(Z))$.
\end{remark}
\begin{example} \label{ex:supportI}
In general, $\im(\widetilde{c}^{l'})\varsubsetneq A_Z(l')$; take e.g. the first case of Example \ref{ex:notclosed}, when
 $\dim \im (c^{l'})=1$ and $A_Z(l')=\C^2$.
 (The fact that $A_Z(l')=\C^2$ can be deduced in the following way as well.
 $c^{nl'}$ is dominant for
$n\gg 1$, hence  $A_Z(nl')=\bC^2$. But  $V_Z(l')=V_Z(nl')$, see e.g. the next Lemma.)
\end{example}
Using (\ref{eq:addclose}) one obtains the following properties of the spaces $\{A_Z(l')\}_{l'}$ of $H^1(\calO_Z)$:
\begin{lemma}\label{lem:propAZ} \

%\marginpar{proof of (a)??}
(a) \ $A_Z(l_1'+l_2')= A_Z(l_1')+A_Z(l_2'):= \{a_1+a_2\,:\, a_i\in A_Z(l'_i\}$;
in particular,
$V_Z(l_1')\subset V_Z(l_2')$ whenever $l_1'\leq l_2'$ and
  $V_Z(nl')=V_Z(l')$ for  any $n\geq 1$.

(b) \  For any $-l'=\sum_va_vE^*_v\in \calS'$ let the $E^*$--support of $l'$ be
$I(l'):=\{v\,:\, a_v\not=0\}$. Then  $V_Z(l')$ depends only on $I(l')$.

E.g., if $I(l')=\calv$, then $c^{nl'}$ is dominant for any $n\gg 1$ (use Theorem \ref{th:dominant}(3).)
Hence,  $V_Z(l')=V_Z(nl')=H^1(\calO_Z)$.
\end{lemma}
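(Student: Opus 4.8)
The plan is to prove the two assertions of Lemma \ref{lem:propAZ} in order, using the double inclusion (\ref{eq:addclose}) as the only substantive input, together with elementary facts about affine subspaces. For part (a), first observe that by (\ref{eq:addclose}) we have
$\im(\widetilde{c}^{l_1'})+\im(\widetilde{c}^{l_2'})\subset \im(\widetilde{c}^{l_1'+l_2'})\subset \overline{\im(\widetilde{c}^{l_1'})+\im(\widetilde{c}^{l_2'})}$,
so $A_Z(l_1'+l_2')$, being the smallest affine subspace containing the middle set, is squeezed between the smallest affine subspace containing $\im(\widetilde{c}^{l_1'})+\im(\widetilde{c}^{l_2'})$ and the smallest one containing its closure. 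But an affine subspace is closed, so these two outer affine hulls coincide; hence $A_Z(l_1'+l_2')$ equals the affine hull of $\im(\widetilde{c}^{l_1'})+\im(\widetilde{c}^{l_2'})$. It then remains to check that the affine hull of a Minkowski sum $S_1+S_2$ of two nonempty sets is $A(S_1)+A(S_2)$ where $A(S_i)$ is the affine hull of $S_i$: the inclusion $\subset$ holds since $S_1+S_2\subset A(S_1)+A(S_2)$ and the right side is an affine subspace; the inclusion $\supset$ follows by fixing base points $p_i\in S_i$, writing $A(S_i)=p_i+V_i$ with $V_i=\mathrm{span}(S_i-p_i)$, and noting $A(S_1)+A(S_2)=(p_1+p_2)+(V_1+V_2)$, while every vector of $V_1+V_2$ is a limit/combination of elements of $(S_1-p_1)+(S_2-p_2)\subset (S_1+S_2)-(p_1+p_2)$, whose affine span is inside $A(S_1+S_2)$. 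This gives $A_Z(l_1'+l_2')=A_Z(l_1')+A_Z(l_2')$, and on the level of parallel vector spaces $V_Z(l_1'+l_2')=V_Z(l_1')+V_Z(l_2')$.

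From this additivity the remaining claims of (a) are immediate. If $l_1'\le l_2'$ then $l_2'=l_1'+(l_2'-l_1')$ with $-(l_2'-l_1')\in\calS'$ (so $\eca^{l_2'-l_1'}(Z)\ne\emptyset$ by (\ref{eq:empty}) and the expression makes sense), hence $V_Z(l_2')=V_Z(l_1')+V_Z(l_2'-l_1')\supseteq V_Z(l_1')$; note we should also record $V_Z(0)=0$ since $\eca^0(Z)$ is a point mapping to $\calO_Z$, i.e.\ to the origin of $H^1(\calO_Z)$, so that the degenerate summand causes no trouble. Iterating $V_Z(nl')=V_Z((n-1)l')+V_Z(l')$ and using $V_Z((n-1)l')\subseteq V_Z(nl')$ together with the reverse inclusion $V_Z(nl')\subseteq V_Z((n-1)l')+V_Z(l')=\ldots$ forces $V_Z(nl')=V_Z(l')$ for all $n\ge 1$ by an easy induction (the chain $V_Z(l')\subseteq V_Z(2l')\subseteq\cdots$ is stationary because each term is a sum of copies of $V_Z(l')$ and the first step already contains all of them).

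For part (b): if $I(l_1')=I(l_2')$, then there is an integer $n\ge 1$ with $nl_1'\le l_2'$ and $nl_2'\le l_1'$ (choose $n$ larger than the ratio of the $E^*$-coefficients, which are finite in number and all positive on the common support). By the monotonicity just proved, $V_Z(nl_1')\subseteq V_Z(l_2')$ and $V_Z(nl_2')\subseteq V_Z(l_1')$; but $V_Z(nl_i')=V_Z(l_i')$, so $V_Z(l_1')\subseteq V_Z(l_2')\subseteq V_Z(l_1')$, giving equality. Thus $V_Z(l')$ depends only on $I(l')$. Finally, when $I(l')=\calv$, Theorem \ref{th:dominant}(3) shows that for $n\gg1$ the map $c^{nl'}$ is dominant (the condition $\chi(-nl')<\chi(-nl'+l)$ for $0<l\le Z$ becomes $n(l',l)<\chi(l)$, which holds for $n$ large since $(l',l)<0$ for such $l$ when $I(l')=\calv$); dominance means $\overline{\im(\widetilde{c}^{nl'})}=\pic^0(Z)=H^1(\calO_Z)$, so $A_Z(nl')=H^1(\calO_Z)$ and hence $V_Z(l')=V_Z(nl')=H^1(\calO_Z)$.

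I expect the only mildly delicate point to be the Minkowski-sum/affine-hull identity and the bookkeeping around the degenerate case $l'=0$ (where $\eca^0$ is a single point); everything else is a formal consequence of (\ref{eq:addclose}) and of Theorem \ref{th:dominant}. The main obstacle, such as it is, is making sure the chain of affine subspaces $V_Z(l')\subseteq V_Z(2l')\subseteq\cdots$ stabilizes immediately rather than merely eventually — but this is forced by $V_Z(2l')=V_Z(l')+V_Z(l')=V_Z(l')$ directly, so no genuine difficulty arises.
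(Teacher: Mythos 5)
Your proof is correct and follows essentially the same route as the paper: everything is derived from the double inclusion (\ref{eq:addclose}), with part (a) reduced to the elementary fact that the affine hull of a Minkowski sum is the sum of the affine hulls (a detail the paper leaves implicit), and part (b) obtained by sandwiching $l_1'$ and $l_2'$ between multiples of each other, which is a trivial repackaging of the paper's chain $V_Z(l')\subset V_Z(l'- nE^*_v)\subset V_Z(l')+V_Z(-E^*_v)\subset V_Z(l')$. One caveat on your monotonicity step: from the coordinate-wise inequality $l_1'\leq l_2'$ alone it does \emph{not} follow that $\pm(l_2'-l_1')\in\calS'$, so the additivity $V_Z(l_2')=V_Z(l_1')+V_Z(l_2'-l_1')$ is only justified when the difference of the two Chern classes is itself a nonnegative combination of the $E^*_v$ (equivalently, when the $E^*$-coefficients are ordered); this imprecision is already present in the lemma's ``in particular'' clause, and in every place the monotonicity is actually invoked --- including your own argument for (b), where you explicitly compare $E^*$-coefficients --- the difference does lie in $-\calS'$, so nothing breaks.
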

\begin{proof}
{\it (b)}
$V_Z(l')\subset V_Z(l'+nE^*_v)\subset V_Z(l')+V_Z(nE^*_v)\subset V_Z(l')+V_Z(E^*_v)\subset V_Z(l')$
for $v\in I(l')$.
\end{proof}
\begin{definition}\label{def:VI} (a)
\ref{lem:propAZ}{\it (b)}  motivates to use the notation $V_Z(I)$ for $V_Z(l')$ whenever $I=I(l')$.

Hence Lemma \ref{lem:propAZ}(a) reads as $V_Z(I_1\cup I_2)=V_Z(I_1)+V_Z(I_2)$.

(b) If $Z_2\geq Z_1$, then the restriction (cf. \ref{ss:efcart}) satisfies $r(V_{Z_2}(l'))=V_{Z_1}(l')$,
hence $\dim V_{Z_2}(l')\geq \dim V_{Z_1}(l')$ and the pair
$V_Z(l')\subset H^1(\calO_Z)$ stabilizes as $Z$ increases.
Set $ \big(V_{\widetilde{X}}(l')\subset H^1(\calO_{\widetilde{X}})
\big)$ for  $\lim_{\leftarrow} \big(V_Z(l')\subset H^1(\calO_Z)\big)$
 and
$\big(V_{\widetilde{X}}(I)\subset H^1(\calO_{\widetilde{X}})\big):=\lim_{\leftarrow} \big(V_Z(I)\subset H^1(\calO_Z)\big)$.
\end{definition}

\begin{remark}
$\widetilde{c}^{l'}:\eca^{l'}(Z)\to \pic^0(Z)=H^1(\calO_Z)$ has a very strong
 hidden  rigidity property as well. Assume e.g. that $Z\geq E$ and $\eca^{l'}(Z)$  is
 1--dimensional. Then $\eca^{l'}(Z)$ can be identified with some
 $E_v^{reg}:=E_v\setminus \cup_{w\not=v}E_w$. Therefore,
the symmetric product $\eca^{l'}(Z)^{\times n}/\mathfrak{S}_n$ (where
$\mathfrak{S}_n$ is the permutation group of $n$ letters) embeds as a Zariski open set into $\eca^{nl'}(Z)$.
Hence, by Lemma \ref{lem:free}, the generic fibers of the
restriction of $\widetilde{c}^{nl'}$ ($ \eca^{l'}(Z)^{\times n}/\mathfrak{S}_n \to H^1(\calO_Z)$,
$[D_1, \cdots, D_n]\mapsto \sum_i \widetilde{c}^{l'}(D_i)$) must be irreducible.
This fact imposes serious restrictions for a map to be equal to some  $\widetilde{c}^{l'}$.

E.g., $\bC\to \bC^2$, $t\mapsto (t,t^4)$ is not birational equivalent with
 a certain  $\widetilde{c}^{l'}$.
Indeed, its `double', $\bC^{\times 2}/\mathfrak{S}_2\to \bC^2$,
$(t,s)\mapsto (t+s, t^4+s^4)$, rewritten in terms of elementary symmetric functions
reads as $\bC^2\to \bC^2$, $(\sigma_1,\sigma_2)\mapsto (\sigma_1,
\sigma_1^4-4\sigma_2\sigma_1^2+2\sigma_2^2)$, which has non--irreducible
generic fibers.
\end{remark}

By the next theorem, $V_Z(l')=H^1(\calO_Z)$ if and only if $c^{nl'}$ is dominant for $n\gg 1$; and in
\ref{ex:dimimzero} we will characterize those cases when $V_Z(l')=0$. But besides these two limit situations
the construction provides a rather complex {\it linear subspace
arrangement} $\{V_Z(l')\}_{l'}$, which, in general, contains  deep analytic information about $(X,o)$.

\begin{theorem}\label{th:mult}
Fix $l'\in-\calS'$ and $Z>0$ as above.  Then for $n\gg 1$ the following facts hold.

(a) The image of  $\widetilde{c}^{nl'}$ is the %$d_Z(l')$--dimensional
affine subspace $A_Z(nl')$ of $H^1(\calO_Z)$ (a translated of $A_Z(l')$).

(b) All the (non--empty) fibers of $\widetilde{c}^{nl'}$  have the same dimension.

In particular, for any $\calL\in \pic^{nl'}(Z)$ without fixed components (and $n\gg 1$) one has
\begin{equation}\label{eq:HD}
h^1(Z,\calL)=h^1(\calO_Z)-\dim  V_Z(l')= {\rm codim} \big(V_Z(l')\subset H^1(\calO_Z)\big).
\end{equation}

(c) Let $I\subset \calv$ be the $E^*$--support of $l'$.
Decompose $Z$ as $Z|_I+Z|_{\calv\setminus I}$ according to the supports $I$ and $\calv\setminus I$.
Then for all  $\calL\in  \pic^{nl'}(Z)$ without fixed components (and $n\gg 1$) $h^1(Z,\calL)$
depends only on the $E^*$--support $I$ of\, $l'$:
\begin{equation}\label{eq:support}
h^1(Z,\calL)=h^1(\calO_{Z|_{\calv\setminus I}}).
\end{equation}
 Hence, by (\ref{eq:HD}),
\begin{equation}\label{eq:HD2}
\dim V_Z(I)=h^1(\calO_Z)-h^1(\calO_{Z|_{\calv\setminus I}}).
\end{equation}
In particular, if $(\widetilde{X}/E_{\calv\setminus I},o_{\calv\setminus I})$ denotes the multi--germ (the disjoint union of
singularities) obtained by
contracting the connected components of $E_{\calv\setminus I}$ in $\widetilde{X}$, then
for $Z\gg 0$ we obtain
\begin{equation}\label{eq:additivity}
\dim V_Z(I)=p_g(X,o)-p_g(\widetilde{X}/E_{\calv\setminus I},o_{\calv\setminus I}).
\end{equation}
Therefore,
 $V_Z(I)=H^1(\calO_Z)=\bC^{p_g(X,o)}$, if and only if
$\Gamma\setminus I$ is a disjoint union of rational graphs.

(d) With the notations of (c), $V_Z(I)={\rm ker} ( H^1(\calO_Z)\to H^1(\calO_{Z|_{\calv\setminus I}}))$.

(e) Any $\calL\in \pic^{nl'}(Z)$ without fixed components is generated by global sections.
\end{theorem}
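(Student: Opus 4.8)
The plan is to prove \emph{(a)} directly, and then deduce \emph{(b)}--\emph{(e)} from a single cohomological identity: for $n\gg 1$ and every $\calL\in\pic^{nl'}(Z)$ without fixed components,
$h^1(Z,\calL)=h^1(\calO_{Z|_{\calv\setminus I}})$, where $I$ is the $E^*$-support of $l'$. For \emph{(a)}, note first that by Lemma \ref{lem:domin} the iterated sum map $\eca^{l'}(Z)^{\times n}\to\eca^{nl'}(Z)$ is dominant, so iterating (\ref{eq:addclose}) gives $n\cdot\im(\widetilde{c}^{l'})\subseteq\im(\widetilde{c}^{nl'})\subseteq\overline{\,n\cdot\im(\widetilde{c}^{l'})\,}$; hence $\overline{\im(\widetilde{c}^{nl'})}$ equals the closure $\overline{X}_n$ of the $n$-fold Minkowski sum of the irreducible (Theorem \ref{th:smooth}) constructible set $\im(\widetilde{c}^{l'})$. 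The integers $\dim\overline{X}_n$ are non-decreasing and bounded by $\dim V_Z(l')=\dim A_Z(l')$, so they stabilize at some $n_0$; once $\dim\overline{X}_{n_0}=\dim\overline{X}_{n_0+1}$, the inclusions $\overline{X}_{n_0}+\{p\}\subseteq\overline{X}_{n_0+1}$ ($p\in\overline{\im(\widetilde{c}^{l'})}$) together with irreducibility force $\overline{X}_{n_0}$ to be invariant under every translation of $V_Z(l')$, so $\overline{X}_{n_0}=A_Z(n_0l')$. Finally, for $n\ge 2n_0$ the set $\im(\widetilde{c}^{nl'})$ is squeezed between a constructible set that is Zariski-dense in the affine space $A_Z(nl')$ and $A_Z(nl')$ itself, and since the Minkowski sum of two Zariski-dense Zariski-open subsets of a complex affine space is the whole space, $\im(\widetilde{c}^{nl'})=A_Z(nl')$, which is \emph{(a)}.

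Next the identity, which is the core. Write $Z=Z'+Z''$ with $Z'=Z|_{\calv\setminus I}$, $Z''=Z|_I$, and take $\calL\in\pic^{nl'}(Z)$ without fixed components, so $\calL=\calO_Z(D)$ for some $D\in\eca^{nl'}(Z)$ (Lemma \ref{lem:H_0}). Since $(nl',E_w)=0$ for every $w\notin I$, the pullback of $D$ to $E_w$ is an effective divisor of degree $0$, hence empty; thus $\mathrm{supp}(D)$ is disjoint from $\mathrm{supp}(Z')$ and $\calL|_{Z'}=\calO_{Z'}$. In the exact sequence $0\to\calL(-Z')|_{Z''}\to\calL\to\calO_{Z'}\to 0$ the sub-line-bundle $\calL(-Z')|_{Z''}$ has first Chern class $nl'-Z'$, whose relative degrees on the $E_v$ ($v\in I$) grow linearly in $n$; filtering $Z''$ one component at a time and using that $\Gamma$ is a tree of rational curves, all graded quotients have non-negative degree on $\bP^1$ once $n\gg 1$, so $h^1(Z'',\calL(-Z')|_{Z''})=0$. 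As $Z$ is a projective curve, $H^2$ vanishes, and the long exact sequence gives $h^1(Z,\calL)\cong h^1(Z',\calO_{Z'})$, i.e.\ (\ref{eq:support}). From here \emph{(b)}, (\ref{eq:HD}) and (\ref{eq:HD2}) follow: by \emph{(a)} and Lemma \ref{lem:free} the generic fibre of $\widetilde{c}^{nl'}$ over $A_Z(nl')$ has dimension $(nl',Z)-\dim V_Z(l')$, so the generic value of $h^1$ on $A_Z(nl')$ is $h^1(\calO_Z)-\dim V_Z(l')$; but (\ref{eq:support}) shows this value is attained by \emph{every} bundle without fixed components, yielding (\ref{eq:HD2}), then (\ref{eq:HD}), and via (\ref{eq:dimfiber}) the constancy of the fibre dimension. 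Equation (\ref{eq:additivity}) and the rationality statement are then immediate, using $h^1(\calO_Z)=p_g(X,o)$ for $Z\gg 0$, the identification of a large cycle on $E_{\calv\setminus I}$ with $p_g(\widetilde X/E_{\calv\setminus I},o_{\calv\setminus I})$, and Artin's criterion.

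For \emph{(d)}, the sequence $0\to\calO_{Z''}(-Z')\to\calO_Z\to\calO_{Z'}\to 0$, with $H^2=0$ on the curve $Z''$, shows the restriction $r\colon H^1(\calO_Z)\to H^1(\calO_{Z'})$ is surjective, so $\dim\ker r=h^1(\calO_Z)-h^1(\calO_{Z'})=\dim V_Z(I)$ by (\ref{eq:HD2}); and since every $\calO_Z(D)$ ($D\in\eca^{nl'}(Z)$) restricts to $\calO_{Z'}$ on $Z'$, all differences $\widetilde{c}^{nl'}(D_1)-\widetilde{c}^{nl'}(D_2)$ --- which span $V_Z(nl')=V_Z(I)$ by Lemma \ref{lem:propAZ} and \emph{(a)} --- lie in $\ker r$, whence $V_Z(I)=\ker r$. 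For \emph{(e)}, I would reduce to base-point freeness of $\calL$ (which, combined with ``no fixed components'', is global generation). No point of $\mathrm{supp}(Z')$ is a base point: the surjection $H^0(Z,\calL)\twoheadrightarrow H^0(Z',\calO_{Z'})$ (its kernel $\calL(-Z')|_{Z''}$ has $h^1=0$) lifts the unit section to a section of $\calL$ nowhere vanishing on $Z'$. A base point $q$ thus lies on some $E_v$, $v\in I$; pick a transversal Cartier divisor $q'\subset Z$ through $q$ and set $\calL':=\calL\otimes\calO_Z(-q')$. Its Chern class again has $E^*$-support $I$ with large coefficients and vanishes against the $E_w$, $w\notin I$, and $q\notin\mathrm{supp}(Z')$, so the previous paragraph applies verbatim to $\calL'$ and gives $h^1(Z,\calL')=h^1(\calO_{Z'})=h^1(Z,\calL)$; but $\chi(\calL')=\chi(\calL)-\mathrm{length}(q')$ while $h^0(\calL')\ge h^0(\calL)-(\mathrm{length}(q')-1)$ because every global section of $\calL$ vanishes at $q$ --- a contradiction. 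Hence $\calL$ is base-point free and generated by global sections.

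The step I expect to be the main obstacle is \emph{(a)} --- upgrading ``$\im(\widetilde{c}^{nl'})$ is Zariski-dense in $A_Z(nl')$'' to equality, i.e.\ making the Minkowski-sum stabilization and the ``sum of dense opens is everything'' argument rigorous uniformly in $n$ --- and, relatedly, in \emph{(e)}, carrying out the transversal-slice computation at base points sitting on intersection points $E_v\cap E_u$ with $u,v\in I$ (where the slice meets two components, so one must track $\mathrm{length}(q')$ and $c_1(\calL')$ slightly more carefully). By contrast, once \emph{(a)} and the cohomological identity are secured, \emph{(b)}, \emph{(c)} and \emph{(d)} are essentially formal.
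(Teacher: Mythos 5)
Your proposal is correct, but it reaches parts \emph{(b)}, \emph{(c)} and \emph{(e)} by a genuinely different route than the paper. For \emph{(a)} the two arguments are close in spirit: both end with the observation that the Minkowski sum of two Zariski-dense open subsets of a translate of $V_Z(l')$ is the whole translate; the paper produces a dense open subset of $V_Z(l')$ directly from the tangent spaces at finitely many smooth points of $\im(\widetilde{c}^{l'})$, whereas you get there by stabilization of $\dim\overline{X}_n$ plus a translation--stabilizer argument --- both work. The real divergence is the order of \emph{(b)} and \emph{(c)}: the paper first proves \emph{(b)} by stratifying $(\eca^{l'}(Z))^m\to\eca^{ml'}(Z)$ according to how many coordinates land in the bad locus $F\subset A(l')$, using the open mapping theorem and a dimension count over the strata $E\calF_k$, and only afterwards deduces (\ref{eq:support}) from (\ref{eq:HD}) together with Grauert--Riemenschneider. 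You instead prove the identity $h^1(Z,\calL)=h^1(\calO_{Z|_{\calv\setminus I}})$ directly for \emph{every} $\calL=\calO_Z(D)$ without fixed components --- using that $D$ misses $E_{\calv\setminus I}$ for degree reasons, the decomposition sequence $0\to\calL(-Z')|_{Z''}\to\calL\to\calO_{Z'}\to 0$ (with $Z'=Z|_{\calv\setminus I}$, $Z''=Z|_I$), and a filtration/degree vanishing that serves as a hands-on substitute for Grauert--Riemenschneider once $n$ exceeds a bound depending only on $Z$ --- and then \emph{(b)} drops out of (\ref{eq:dimfiber}) because $h^1$ is literally constant on the image. This bypasses the paper's most technical step entirely and is arguably the cleaner organization; the only price is tracking the uniform threshold for $n$ through the filtration, which you do. For \emph{(e)} the paper again exploits the affine structure ($U+U=A_Z(2nl')$ with $U$ the image of divisors avoiding a prescribed finite set), while your Riemann--Roch contradiction at a putative base point also works; the node case $q=E_u\cap E_v$ with $u,v\in I$ that you flag is not a real obstacle --- take for $q'$ a smooth cut transversal to both branches, of class $-E_u^*-E_v^*$, so that $\chi$ drops by $m_u+m_v$ while $h^0$ drops by at most $m_u+m_v-1$, and your identity still applies to $\calL(-q')$ since its Chern class retains $E^*$-support $I$ with large coefficients.
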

\begin{remark}
(a) In (\ref{eq:HD}) $h^1(Z,\calL)>-\chi(Z,\calL)$ (since $h^0(Z,\calL)>0$),
which gives a topological lower bound for ${\rm codim} \big(V_Z(l')\subset H^1(\calO_Z)\big)$.

(b) (\ref{eq:additivity}) generalizes the `$p_g$--additivity formula' of Okuma   \cite{Ok},
which was proved for splice quotient singularities, for details  see \ref{ss:APP1}.
Note that the present formula is valid for any singularity.

(c) Part {\it (a)}
 of Theorem \ref{th:mult} is equivalent (by a similar argument as the proof of Lemma
\ref{lem:propAZ}{\it (b)})
by the following statement: (a$'$) If $-l'=\sum_{v\in I}a_vE^*_v$ with $a_v\gg 0$
(but  no other relations between them), then
the image of $\widetilde{c}^{l'}$ is an affine subspace, a translated of $V_Z(I)$.

(d) Parts {\it (b)--(c)} of Theorem \ref{th:mult}  imply that $\im (c^{nl'})$ (for $n\gg 1$) is closed and
consists of only one $h^1$--strata:
 $\im (c^{nl'})=W_{nl', h^1(\calO_{Z})-\dim V_Z(I)}$.
\end{remark}
%\begin{corollary}\label{cor:H1surg}
%Fix $l'\in-\calS'$ and  $Z>0$. Let $I\subset \calv$ be the $\{E_v^*\}$--support of $l'$.
%Decompose $Z$ as $Z|_I+Z|_{\calv\setminus I}$ according to the supports $I$ and $\calv\setminus I$.
%Then
%$$\dim V_Z(l')=h^1(\calO_Z)-h^1(\calO_{Z|_{\calv\setminus I}}).$$
%In particular, for $Z\gg 0$, $V_Z(l')=H^1(\calO_Z)=\bC^{p_g(X,o)}$, if and only if
%$\Gamma\setminus I$ is a rational graph.
%\end{corollary}

\begin{proof}[Proof of Theorem \ref{th:mult}]
{\it (a)} Write $A(l')$ as $a+V(l')$ for some $a\in A(l')$. Then by (\ref{eq:addclose})
$\im(\widetilde{c}^{nl'})\subset  na +V(l')$.
We have to show that for $n\gg 0$ we have equality
$\im(\widetilde{c}^{nl'})= na +V(l')$.
%For simplicity we will assume that $a=0$, the general case can be adopted easily.

%\marginpar{{\bf write explicit bounds??}}
We choose smooth points $x_1,\ldots, x_k$ in $\im (\widetilde{c}^{l'})$ such that the tangent
spaces $T_{x_i} \im (\widetilde{c}^{l'})$, translated to the origin, generate $V(l')$. Then taking Zariski
neighborhoods $U_i$ of $x_i$ in $\im (\widetilde{c}^{l'})$, we notice that
$\sum_i(-x_i+U_i)$ contains a Zariski open set of $V(l')$. But $\sum_i(-x_i+U_i)\subset \sum_i
(-x_i+\im (\widetilde{c}^{l'}))\subset -\sum_ix_i+\im (\widetilde{c}^{kl'})\subset V(l')$, hence
$-\sum_ix_i+\im (\widetilde{c}^{kl'})$ contains a Zariski open subset of $V(l')$. On the other hand,
if $U$ is a Zariski open set of a vector space $V$, then $U+U=V$. This shows that
$\im (\widetilde{c}^{2kl'})$ is an affine space associated with $V(l')$.

{\it (b)} If we replace $l'$ by some multiple if it, by part {\it (a)} we can assume that $
\widetilde{c}^{l'}:\eca^{l'}(Z)\to H^1(\calO_Z) $ has image $A(l')$. Consider the following diagram
(for some $m\in\bZ_{>0}$ which will be determined later):

\begin{picture}(200,60)(-50,-5)
\put(50,40){\makebox(0,0)[l]{$
(\,\eca^{l'}(Z)\,)^m\ \stackrel{s}{\longrightarrow} \ \eca^{ml'}(Z)$}}
\put(50,5){\makebox(0,0)[l]{$
\ \ \ \  (\,A(l')\,)^m\ \, \ \stackrel{\Sigma}{\longrightarrow} \ \  A(ml')$}}
\put(80,22){\makebox(0,0){$\downarrow$}}
\put(155,22){\makebox(0,0){$\downarrow$}}
\put(65,22){\makebox(0,0){$\oplus\, \widetilde{c}^{l'}$}}
\put(170,22){\makebox(0,0){$\widetilde{c}^{ml'}$}}
\end{picture}

\noindent Fix any $x\in A(ml')$. Since $\oplus\, \widetilde{c}^{l'}$ and $\Sigma$ are surjective,
the fiber $(\widetilde{c}^{ml'})^{-1}(x)$ intersects $\im (s)$ at some point $p$.
Since the source and target spaces of $s$ are
smooth of the same dimension, by Open Mapping Theorem
(see e.g. \cite[p. 107]{GR})
there exists an (analytic) open neighbourhood $U$ of $p$
(hence intersecting the fiber) contained in $\im (s)$. Hence, using also the quasi--finiteness of  $s$,
 $\dim(\widetilde{c}^{ml'})^{-1}(x)=\dim (\widetilde{c}^{ml'}\circ s)^{-1}(x)=
 \dim ( \Sigma\circ \oplus \,\widetilde{c}^{l'})^{-1}(x)$.
Thus, if ${\bf x}=(x_1,\ldots, x_m)$ are the coordinates in $(\,A(l')\,)^m$, then we have to analyse
the set $(\oplus \,\widetilde{c}^{l'})^{-1}\{{\bf x}:\sum_i x_i=x\}$ for any fixed $x$.

In $A(l')$ there is a Zariski open subset $U$,
 with the following two properties:

(i) for any $y\in U$, the fiber $(\widetilde{c}^{l'})^{-1}(y)$ has the minimal possible dimension,
namely $\dim \eca^{l'}(Z)-\dim A(l')=(l',Z)-d(l')$;

(ii) if  $F:=A(l')\setminus U$ is its complement, then $\dim (\widetilde{c}^{l'})^{-1}(F)<\dim \eca^{l'}(Z)
=(l',Z)$.

We stratify $H_x:=\{{\bf x}:\sum_ix_i=x\}$ with the sets $\calF_k:=\{{\bf x}\in H_x\,:\,
\# \{i: x_i\in F\}=k\}$, where $0\leq k\leq m$. Set also $E\calF_k:=(\oplus\, \widetilde{c}^{l'})^{-1}(\calF_k)$.

Then $\calF_0$ is a non--empty open set of $H_x$ of dimension $(m-1)d(l')$, hence
$\dim E\calF_0=(m-1)d(l')+m((l',Z)-d(l'))=(ml',Z)-d(l')$.
Next we estimate the dimensions of the other strata as well.

First, we consider the case $1\leq k<m$. Then $\calF$ is covered by several components  according to the
position of $I=\{i_1,\ldots, i_k\}$ indexing those $x_i$ which belong to $F$. Fix suxh a component
$\calF_{k,I}$, and write $(\oplus\, \widetilde{c}^{l'})^{-1}(\calF_{k,I})=E\calF_{k,I}$.
We consider the projection $pr_I:\calF_{k,I}\to \sqcap_I F$, ${\bf x}\mapsto (x_{i_1},\ldots, x_{i_k})$,
and the lifted one $Epr_I:E\calF_{k,I}\to \sqcap_I(\widetilde{c}^{l'})^{-1}(F)$. Note that
$Epr_I$ is an injection and its target has dimension $\leq k((l',Z)-1)$.
Furthermore, the fibers of $Epr_I$ have dimension $(m-k-1)d(l')+(m-k)((l',Z)-d(l'))=(m-k)(l',Z)-d(l')$.
Hence, $\dim E\calF_{k,I}\leq (m-k)(l',Z)-d(l')+ k((l',Z)-1)=(ml',Z)-d(l')-k$.

The case $k=m$ is slightly different. Using the injection
$\calF_{m}\to \sqcap_m(\widetilde{c}^{l'})^{-1}(F)$ we get `only' $\dim E\calF_m\leq m((l',Z)-1)$.
Therefore, if $m\geq d(l')$ then we get $\dim E\calF_m\leq \dim E\calF_0$. Hence, finally,
$\dim (\widetilde{c}^{ml'})^{-1}(x)=\dim E\calF_0=\dim \eca^{ml'}(Z)-\dim A(ml')$.

For (\ref{eq:HD}) use part {\it (b)} and Lemma \ref{lem:dimIm}.

{\it (c)}  For any $n\gg 1 $ and $\calL\in \im(c^{nl'})$  (\ref{eq:HD}) gives
$h^1(Z,\calL)=h^1(\calO_Z)-d_Z(l')$.
By Grauert--Riemenschneider vanishing theorem $h^1(Z|_{I}, \calL(-Z|_{\calv\setminus I}))=0$, hence
$h^1(Z,\calL)=h^1(Z|_{\calv\setminus I},\calL)$. If $\calL$ is associated with certain effective
divisor $D\in\eca^{nl'}(Z)$ (as the image of $c^{nl'}$), then $\calL|_{Z|_{\calv\setminus I}}$ is
associated with the restriction of this divisor to $Z|_{\calv\setminus I}$. But this restriction has an
empty support, hence $\calL|_{Z|_{\calv\setminus I}}$ is the trivial bundle over $Z|_{\calv\setminus I}$.

{\it (d)} Since the restriction of any element of $\eca^{nl'}(Z)$ to $Z|_{\calv\setminus I}$ is the empty divisor, the image of the composition $\eca^{nl'}(Z) \to
\eca^0(Z|_{\calv\setminus I})\to \pic^0(Z|_{\calv\setminus I})$ is the trivial bundle (that is, the zero element of $\pic^0(Z_{\calv\setminus I})$). Therefore, $\im (c^{nl'})\subset
{\rm ker} ( H^1(\calO_Z)\to H^1(\calO_{Z|_{\calv\setminus I}}))$. Since they have the same dimension
(cf. \ref{eq:HD2}) they must agree.

{\it (e)} Let $n$ be so large that $\im (\widetilde{c}^{nl'})=A_Z(nl')$ is an affine subspace. We claim that
any $\calL\in \im (\widetilde{c}^{2nl'})=A_Z(2nl')$ is generated by global sections.
Indeed, fix such a bundle  and one of its sections
$s\in H^0(Z,\calL)$ whose divisor is an
element of $\eca^{2nl'}(Z)$, whose support with reduced structure is   ${\bf p}:=\{p_1,\ldots, p_k\}\subset E$.
Let $\eca^{nl'}_p(Z)$ be the subspace of $\eca^{nl'}(Z)$ consisting of divisors supported in the complement of
${\bf p}$. This is a Zariski open set of
$\eca^{nl'}(Z)$, hence $c(\eca^{nl'}_p(Z))$ contains  a Zariski open set $U$ in $A_Z(nl')$.
Then  $U+U=A_Z(2nl')$, hence $\calL$ admits  a section whose  divisor
has support  off ${\bf p}$.
\end{proof}

\subsection{Cohomological reinterpretations of $ V_Z(l')$.}\label{ss:GlSec}
Fix $\calL\in \im (c^{nl'})$ ($n\gg 1$), $D\in (c^{nl'})^{-1}(\calL)$, and
$s\in H^0(Z, \calL)$ without fixed components. Then, as in the situation of \ref{ss:GlSecA} one has the cohomological long exact sequence
%$$0\to H^0(\calO_Z)\to
$H^0(Z,\calL)\stackrel{R_{\calL}}{\longrightarrow} \calO_D
\stackrel{\delta}{\longrightarrow} H^1(\calO_Z)\to H^1(Z,\calL)\to 0$
from (\ref{eq:ML}).
Then by Theorem \ref{th:mult}, $\im(c^{nl'})=A(nl')$. Therefore,
$\im (T_Dc^{nl'})\subset T_{\calL}A(nl')$.
But, by Lemma \ref{lem:free},
$\dim \im T_Dc^{nl'}=\dim \eca^{nl'}(Z)-\dim\im (c^{nl'})^{-1}(\calL)=
h^1(\calO_Z)-h^1(Z,\calL)=\dim T_{\calL}A(nl')=\dim V_Z(l')$.
Hence,  $\im (T_D\widetilde{c}^{nl'})=V_Z(l')$. As $\im (T_D\widetilde{c}^{nl'})=\im \delta$ (cf.
Prop. \ref{lem:Mumford}) for $V_Z(l')$ we get two other cohomological reinterpretations.
Either  it is
the Artin algebra  $\calO_D/\im(R_{\calL})$, as a vector space,
 identified as the image of $O_D$ into $H^1(\calO_Z)$,
or it is also the kernel of
$H^1(\times s):H^1(\calO_Z)\to H^1(Z,\calL)$.

In other words, for $n\gg 1$, the image of $\calO_D\to H^1(\calO_Z)$ is independent of the choice of $D$, while the kernel of
$H^1(\times s):H^1(\calO_Z)\to H^1(Z,\calL)$
is independent of the choice of $s$. Furthermore,   they  are equal,  and in fact
 this subspace of $H^1(\calO_Z)$ depends only on the
$E^*$--support $I$ of $l'$, and it equals $V_Z(I)$.

There is a parallel analogous discussion for $\widetilde{X}$ (instead of $Z$) as well (in that case
the reduced structure of $D$ is Stein, hence $h^1(\calO_D)=0$ again).

\subsection{Example. Characterization of the cases $\dim \im (c)=0$}\label{ex:dimimzero}
Fix $l'\in-\calS'$ with $E^*$--support $I\subset \calv$ and $Z>0$ as above.
Using (\ref{eq:dimfiber}) and (\ref{eq:HD2}) one proves that the following facts are equivalent
(for an additional equivalent property  see also Example \ref{ex:CONTDIMzero}):

(i) \ $ \im (c^{l'})$ is a point (or, $V_Z(l')=0$);

(ii) \ there exists $\calL\in \pic^{l'}(Z)$  without fixed components such that
$h^1(Z,\calL)=h^1(Z)$;

(iii) \  any  $\calL\in \pic^{l'}(Z)$  without fixed components satisfies
$h^1(Z,\calL)=h^1(Z)$;

(iv) \ all line bundles  $\calL\in \pic^{l'}(Z)$  without fixed components are isomorphic to each other;

(v) \ $h^1(\calO_Z)=h^1(\calO_{Z|_{\calv\setminus I}})$.

\vspace{2mm}

Let us define $\calS'_{pt}$ as $\{-l'\in\calS'\,:\,   \im (c^{l'}) \ \mbox{is a point}\}\subset \calS'$,
this is the set of Chern classes satisfying the above equivalent conditions.
Using (\ref{eq:addclose})  we obtain  that $\calS'_{pt}$ is a semigroup.

Part (v) via  Proposition \ref{prop:cohcyc} reads as follows:
\begin{equation}\label{eq:Spt}
\calS'_{pt}=\bZ_{\geq 0}\langle \,E^*_v \ |\ E_v\not\subset |Z_{coh}(Z,\calO_Z)|\, \rangle.
\end{equation}
Note that (in contrast with $\calS'_{dom}$) $\calS'_{pt}$ is not topological.
Indeed, take e.g. the graph from Example \ref{ex:dimim}, $-l':=Z_{min}=E^*_v$ (where $v$ is the $(-2)$--vertex
adjacent with the $(-7)$ vertex),  and set $Z=Z_K$.
Then, if $p_g(X,o)=2$ (that is, $(X,o)$ is Gorenstein) then $Z_{coh}(Z,\calO_Z)=Z$, and $\calS'_{pt}=\{0\}$.
If $p_g(X,o)=1$, then $Z_{coh}(Z,\calO_Z)$ is the minimally elliptic cycle, and $\calS'_{pt}=\bZ\langle E^*_v\rangle$.

In \cite{OWY14,OWY15a,OWY15b} a cycle $l\in \calS'\cap L$ is called  $p_g$--cycle if $\calO_{\widetilde{X}}(-l)\in\pic(\widetilde{X})$
has no fixed components, and $h^1(\widetilde{X},\calO_{\widetilde{X}}(-l))=p_g$. Note that this in our language means that
$-l \in \calS'_{pt}$ for $Z\gg 0$.
Our results generalizes several statements of [loc.cit.] for arbitrary bundles $\calL $ without fixed components
(replacing $\calO_{\widetilde{X}}(-l)$)
and arbitrary $\dim \im (c^{l'})$.

This particular case and several similar classical results valid for bundles of type $\calO(l')$
motivate to investigate the position of the natural line  bundles with respect to $\im (c^{l'})$ (i.e., whether
$\calO(l')$ has fixed components or no). This is the subject of section \ref{s:Hilb}.

\section{The Abel map  via differential forms}\label{s:ADIFFFORMS}

\subsection{Review of Laufer Duality \cite{Laufer72},
\cite[p. 1281]{Laufer77}}\label{ss:LD} \
Following Laufer, we identify the dual space $H^1(\tX,\cO_{\tX})^*$ with the space of global holomorphic
2-forms on $\tX\setminus E$ up to the subspace of those forms which can be extended
holomorphically over $\tX$.

For this, use first  Serre duality
%If  $D$ is a divisor supported on $E$ and $\cF$ is a locally free sheaf on
%$\tX$ then $H^0(\cX\setminus E,\cF(D))=H^0(\cX\setminus E,\cF)$
%and $H^0(\tX\setminus E,\cF)/H^0(\tX,\cF)$ is finite dimensional since it
%embeds into $H^1_c(\tX,\cF)$. Furthermore,
%$H^i(\tX,\cF)=H^{2-i}(\tX,\cF^*\otimes \Omega^2_{\tX})^*$ by Serre duality.  In particular,
$H^1(\tX,\cO_{\tX})^*\simeq
H^1_c(\tX,\Omega^2_{\tX})$. Then,   in the exact sequence
$$0\to H^0_c(\tX,\Omega^2_{\tX}) \to
 H^0(\tX,\Omega^2_{\tX}) \to
H^0(\tX\setminus E,\Omega^2_{\tX})\to
H^1_c(\tX,\Omega^2_{\tX})\to
H^1(\tX,\Omega^2_{\tX})$$
$H^0_c(\tX,\Omega^2_{\tX})=H^2(\tX,\cO_{\tX})^*=0$ by dimension argument, while $H^1(\tX,\Omega^2_{\tX})=0$ by the Grauert--Riemenschneider
vanishing. Hence,
%\begin{lemma}
\begin{equation}\label{eq:LD}
H^1(\tX,\cO_{\tX})^*\simeq  H^1_c(\tX,\Omega^2_{\tX})\simeq
H^0(\tX\setminus E,\Omega^2_{\tX})/ H^0(\tX,\Omega^2_{\tX}).\end{equation}
%the space of global holomorphic
%2-forms on $\tX\setminus E$ up to those which can be extended over $\tX$.
%\end{lemma}

The second  isomorphism can be realized as follows.
 Fix a small tubular neighbourhood $N\subset \tX$
of $E$ such that its closure is compact in $\tX$. Take any
$\omega\in H^0(\tX\setminus E,\Omega^2_{\tX})$, and extend the restriction $\omega|_{\tX\setminus N}$ to a
$C^\infty(2,0)$--form $\tilde{\omega}$ on $\tX$. Then $\bar{\partial}\tilde{\omega}$ is a compactly supported
$C^\infty(2,1)$--form, $\bar{\partial}\bar{\partial}\tilde{\omega}=0$,
 hence $\bar{\partial}\tilde{\omega}$ determines a class in $H^1_c(\tX,\Omega^2)$.
If $\tilde{\omega} $ is a holomorphic extension then $\bar{\partial}\tilde{\omega}=0$.
Next, let $\lambda$ be a $C^\infty(0,1)$ form in $\tX$.
Then the  duality $ H^1(\tX,\cO_{\tX})\otimes H^1_c(\tX,\Omega^2)\to \bC$ is the perfect pairing
$$\langle [\lambda],[\bar{\partial}\tilde{\omega}]\rangle=\int_{\tX}\
\lambda\wedge \bar{\partial}\tilde{\omega}.$$
Assume that the class $[\lambda]\in H^1(\tX,\cO_{\tX})$ is realized by a $\check{C}ech$ {\it cocyle}
$\lambda_{ij}\in \cO(U_i\cap U_j)$, where $\{U_i\}_i$ is an open cover of $E$,
$U_i\cap U_j\cap U_k=\emptyset$, and each connected component of the
intersections $U_i\cap U_j$ is either a coordinate bidisc $B=\{|u|<2\epsilon,\ |v|<2\epsilon\}$
 with coordinates $(u,v)$, such that
$E\cap B\subset  \{ uv=0\}$,  or a punctured
coordinate bidisc $B=\{\epsilon/2<|v|<2\epsilon,\ |u|<2\epsilon\}$
with coordinates $(u,v)$, such that
$E\cap B=  \{ u=0\}$.
 Then $\lambda $ is obtained as follows:
one finds $C^\infty$ functions $\lambda_i$ on $U_i$ such that $\lambda_i-\lambda_j=\lambda_{ij}$
on $U_i\cap U_j$, and one sets  $\lambda$ as $\bar{\partial} \lambda_i$ on $U_i$.
Then, by Stokes theorem
\begin{equation}\label{eq:Stokes}
\langle [\lambda],[\bar{\partial}\tilde{\omega}]\rangle=
\sum_{B}\ \int_{|u|=\epsilon, \ |v|=\epsilon} \lambda_{ij}\omega.
\end{equation}
By Stokes theorem,
if $\omega$ has no pole along $E$ in $B$, then the $B$--contribution in the above sum is zero.

\bekezdes \label{bek:Z}
Above $H^0(\tX\setminus E,\Omega^2_{\tX})$ can be replaced by
$H^0(\tX,\Omega^2_{\tX}(Z))$ for a large cycle $Z$
(e.g. for $Z\geq \lfloor Z_K \rfloor$). Indeed, for any cycle $Z>0$ from the exacts sequence of sheaves
$0\to\Omega^2_{\tX}\to \Omega^2_{\tX}(Z)\to \calO_{Z}(Z+K_{\tX})\to 0$ and from the vanishing
$h^1(\Omega^2_{\tX})=0$ and Serre duality one has
\begin{equation}\label{eq:duality}
H^0(\Omega^2_{\tX}(Z))/H^0(\Omega^2_{\tX})=H^0(\calO_Z(Z+K))\simeq H^1(\calO_Z)^*.
\end{equation}
Since $H^1(\calO_Z)\simeq H^1(\calO_{\tX})$ for $Z\geq \lfloor Z_K \rfloor$, the natural inclusion
\begin{equation}\label{eq:inclusion}
H^0(\Omega^2_{\tX}(Z))/H^0( \Omega^2_{\tX})\hookrightarrow
H^0(\tX\setminus E, \Omega^2_{\tX})/H^0(\Omega^2_{\tX})
\end{equation}
is an isomorphism.
\bekezdes \label{bek:transport}
The above duality,  via the isomorphism
$\exp: H^1(\tX,\cO_{\tX})\to c^{-1}(0)\subset H^1(\tX,\cO_{\tX}^*)=\pic(\tX)$, can be transported as follows.
Consider the following situation.
We fix a smooth point $p$ on $E$, a local bidisc $B\ni p$ with local coordinates $(u,v)$ such that $B\cap E=\{u=0\}$.
We assume that a certain form   $\omega\in H^0(\tX,\Omega^2_{\tX}(Z))$ has  local equation
$\omega=\sum_{ i\in\Z,j\geq 0}a_{i,j} u^iv^jdu\wedge dv$ in $B$.
%$\alpha/v^o$ in $B$, where
%$o\in Z_{>0}$, $\alpha\in H^0(B,\Omega^2)$ and  $\alpha(p)\not=0$.

In the same time,
we fix a divisor $\widetilde{D}$ on $\tX$, whose local equation in $B$ is $v^n$, $n\geq 1$.
 Let $\widetilde{D}_t$ be another divisor, which
is the same as $\widetilde{D}$ in the complement of $B$ and in $B$ its local equation is $(v+tu^{o-1})^n$,
where $o\geq 1$ and $t\in \bC$ (with $|t|\ll 1$ whenever $o=1$).

Next we will provide three type of   formulae.

The first one is the composition of several maps.
Note that the pairing $\langle \cdot , [\bar{\partial}\tilde{\omega}]\rangle$
(abridged as $\langle \cdot, \omega \rangle$) produces a map
$H^1(\tX, \calO_{\tX})\to \C$. Then we identify
$H^1(\tX, \calO_{\tX})$ with $\pic^0(\tX)$
by the exponential map. Then we consider the composition
$t\mapsto \widetilde{D}_t-\widetilde{D}
\mapsto \calO_{\tX}(\widetilde{D}_t-\widetilde{D})
\mapsto \exp^{-1} \calO_{\tX}(\widetilde{D}_t-\widetilde{D})
\mapsto \langle\exp^{-1} \cO_{\tX}(\widetilde{D}_t-\widetilde{D}),\omega\rangle$.
The first formula makes this composition explicit.
This restricted to any cycle $Z\gg0 $ can be reinterpreted as $\omega$--coordinate
of the Abel map
restricted to the path $t\mapsto D_t:=\widetilde{D}_t|_Z$
(and shifted by the image of $D:=\widetilde{D}|_Z$).

 The second formula  determines the tangent application of the above  composition (in this way it determines the $\omega$--coordinate of the tangent application of the Abel map restricted to $D_t$).

In the third formula we replace the path $D_t$ by a complete neighborhood of $D$ in $\eca(Z)$.

Note that if we consider --- instead of a single form $\omega$ ---
 a complete set of representatives of a basis of
$H^0(\tX,\Omega^2_{\tX}(Z))/ H^0(\tX,\Omega^2_{\tX})$,
then we get by the above three constructions  the restriction of the Abel map to the path $D_t$,
 the tangent map of this restriction, and  in the third case the `complete' Abel map
defined in some neighbourhood of $D$.

\subsection{The Abel map restricted to $D_t$}\label{ss:7.2}
The first two cases
 start with the explicit computation  of  $\langle\exp^{-1} \cO_{\tX}(\widetilde{D}_t-\widetilde{D}),\omega\rangle$, as follows.
$\widetilde{D}_t-\widetilde{D}$  is the divisor $\widetilde{D}'={\rm div}( (v+tu^{o-1})/v)^n$,
supported in $B=\{|u|,\, |v|<\epsilon\}$.
We can fix $\epsilon$ such that the support of $\widetilde{D}'$ is in $\{|v|<\epsilon/2\}$, and set
$B^*:=\{\epsilon/2<|v|<\epsilon,\, |u|<\epsilon\}$. Using the trivialization of $\cO(\widetilde{D}')$ in
$\tX\setminus \{|v|\leq \epsilon/2\}$ and the realization   $\cO(\widetilde{D}')$ on $B$, we get that $\cO(\widetilde{D}')$ can be
represented by the cocycle $g=( (v+tu^{o-1})/v)^n\in \cO^*(B^*)$. Therefore,
 $\log( (v+tu^{o-1})/v)^n=n\log(1+tu^{o-1}/v)$ is a cocycle in
$B^*$ representing its lifting into $H^1(\tX,\cO_{\tX})$. This  paired with $\omega$ gives:
\begin{equation}\label{eq:Tomega1}
\langle\langle \widetilde{D}_t,\omega\rangle\rangle:= \langle\exp^{-1} \cO_{\tX}(\widetilde{D}_t-\widetilde{D}),\omega\rangle=
n\int_{|u|=\epsilon, \ |v|=\epsilon} \log(1+t\frac{u^{o-1}}{v})\cdot
 \sum_{ i\in\Z,j\geq 0}a_{i,j} u^iv^jdu\wedge dv.\end{equation}
If $\omega_1,\ldots, \omega_{p_g}$ are representatives of  a basis for
$H^0(\tX,\Omega^2_{\tX}(Z))/ H^0(\tX,\Omega^2_{\tX})$, and $Z\gg 0$, then
\begin{equation}\label{eq:Tomega2}
\widetilde{D}_t\mapsto (\langle\langle \widetilde{D}_t,
\omega_1\rangle\rangle, \ldots,
\langle\langle \widetilde{D}_t,\omega_{p_g}\rangle\rangle)
\end{equation}
is the restriction of the Abel map to $\widetilde{D}_t$ (associated with $Z$,
and shifter by the image of $\widetilde{D}$).

At the level of tangent application on has the formula
for $(T_{\widetilde{c}(D)}\omega)\circ T_D\widetilde{c})(\frac{d}{dt} D_t|_{t=0})$:
\begin{equation}\label{eq:Tomega}
\frac{d}{dt}\Big|_{t=0}\, \Big[n\int_{|u|=\epsilon, \ |v|=\epsilon} \log(1+t\frac{u^{o-1}}{v})\cdot \sum_{ i\in\Z,j\geq 0}a_{i,j} u^iv^jdu\wedge dv\, \Big]=
\lambda\cdot a_{-o,0}\ \  (\lambda\in \bC^*).\end{equation}
If $\omega$ has no pole along the divisor $\{u=0\}$ then $\langle \exp^{-1}\cO_{\tX}(\widetilde{D}_t-\widetilde{D}),\omega\rangle=0$
for any path $\widetilde{D}_t$.

\begin{definition}\label{not:residue}
Consider the above situation in the bidisc $B$: $B\cap E=\{u=0\}$,
$\widetilde{D}$ has  local equation $v$ (i.e. $n=1$), and
$\omega=\sum_{ i\in\Z,j\geq 0}a_{i,j} u^iv^jdu\wedge dv$. Then we introduce the {\it Leray residue }
of $\omega/du$ along $\{v=0\}$ as the 1--form (with possible poles at $\widetilde{D}\cap E$) defined by
$(\omega/dv)|_{v=0}=\sum_i a_{i,0}u^i du$. We denote it by ${\rm Res}_D(\omega)$.
\end{definition}
Note that
the right hand side of (\ref{eq:Tomega}) tests exactly the pole part  of the Leray residue  ${\rm Res}_D(\omega)$.

\subsection{The Abel map}\label{ss:abelforms}
Assume as above that in the ball $B$ the divisor $\widetilde{D}$ is given by
$v=0$ (i.e. $n=1$), and its `perturbation' $\widetilde{D}(c)$ is given by
$v=c_0+c_1u+c_2u^2+\cdots$ with $|c_0|\ll \epsilon$.
Furthermore, assume that the form
$\omega$ in $B$  has the form $(f(v)/u^{\ell+1})du\wedge dv$, where $f\in\calO(B)$ and  $\ell\geq 0$. (Note that
 the Laurent expansion in variable $u$
of any differential form is a sum of such terms.)

Our aim is the computation of $\langle\langle \widetilde{D}(c),\omega\rangle\rangle$.

If $\{p_i\}_{\geq 1}$ (resp. $\{h_i\}_{i\geq 1}$ ) denote the power sum (resp. complete) symmetric polynomials (functions) then (cf. \cite[p. 23]{MacD})
\begin{equation} \label{eq:SYM1}
p_1u+p_2u^2/2+p_3u^3/3+\cdots =\log( 1+h_1u+h_2u^2+\cdots).
\end{equation}
Furthermore, by \cite[p. 28]{MacD}, for $n\geq 1$,
\begin{equation}\label{eq:SYM2}
(-1)^{n+1}p_n=\begin{pmatrix} h_1 & 1 & 0 & \ldots & 0 \\
2h_2 & h_1 & 1 & \ldots & 0\\
\vdots & \vdots  & \vdots &  & \vdots \\
nh_n & h_{n-1} & h_{n-2} & \ldots & h_1\end{pmatrix}
\end{equation}
We rewrite (\ref{eq:SYM1}) as $\log(A)+p_1u+p_2u^2/2+\cdots =
\log(A+h_1Au +h_2Au^2+\cdots)$ and we make the substitution
$A=(v-c_0)/v$, $h_1A=-c_1/v$, $h_2A=-c_2/v$, etc., and we obtain
\begin{equation}\label{eq:SYM3}
\log\big(1-\frac{c_0+c_1u+c_2u^2+\cdots}{v}\big)=
\log\big(1-\frac{c_0}{v}\big)+\delta _1(c)u+\delta_2(c)u^2+\cdots ,
\end{equation}
where for $n\geq 1$
\begin{equation}\label{eq:SYM4}
\delta_n(c)=
 \sum_{i=1}^n \frac{\delta_{n, i}(c)}{(v-c_0)^i}=
\frac{-1}{n}
\begin{pmatrix} \frac{c_1}{v-c_0} & -1 & 0 & \ldots & 0 \\
\frac{2c_2}{v-c_0} & \frac{c_1}{v-c_0} & -1 & \ldots & 0\\
\vdots & \vdots  & \vdots &  & \vdots \\
\frac{nc_n}{v-c_0} & \frac{c_{n-1}}{v-c_0}
 & \frac{c_{n-2}}{v-c_0} & \ldots & \frac{c_1}{v-c_0}
 \end{pmatrix}.
\end{equation}
Note that $\delta_{n,i}$ are certain universal polynomials in variables $c_1,
\ldots, c_n$.
Then $\langle\langle \widetilde{D}(c),\omega\rangle\rangle$ equals
\begin{equation}\label{eq:SYM5}
\int_{|u|=\epsilon, \ |v|=\epsilon} \log\Big(1-\frac{c_0+c_1u+\cdots }{v}\Big)\cdot
\frac{f(v)}{u^{\ell+1}}\,du\wedge dv=
\sum_{i=1}^{\ell}\
\frac{\delta_{\ell, i}(c)}{(i-1)!} \cdot
\frac{d ^{i-1} f}{d v ^{i-1} }(c_0).
\end{equation}
\subsection{Reduction to an arbitrary $Z>0$.}\label{ss:red7} Consider the above perfect pairing
$H^1(\tX,\calO_{\tX})\otimes H^0(\tX\setminus E, \Omega^2_{\tX})/H^0(\Omega^2_{\tX})\to \C$
given via integration
 of class representatives.  In $H^1(\tX,\calO_{\tX})$ let $A$ be the image of the $H^1(\tX,\calO_{\tX}(-Z))$, hence $H^1(\tX,\calO_{\tX})/A=H^1(\calO_Z)$. On the other hand,
 in $H^0(\tX\setminus E, \Omega^2_{\tX})/H^0(\Omega^2_{\tX})$ consider the subspace $B:=H^0(\Omega^2_{\tX}(Z))/H^0(\Omega^2_{\tX})$ of dimension $h^1(\calO_Z)$
(cf. (\ref{eq:duality}). Since $\langle A,B\rangle=0$, the pairing factorizes to a perfect
pairing $H^1(\calO_Z)\otimes H^0(\Omega^2_{\tX}(Z))/H^0(\Omega^2_{\tX})\to \C$.
It can be described by the very same integral form of the corresponding class representatives.

Moreover, if $\widetilde{D}_t$ is an 1--parameter family of divisors as in \ref{bek:transport},
representing an element in $H^1(\calO_Z)$ (via the surjection $H^1(\calO_{\tX})\to H^1(\calO_Z)$),
and $\omega$ is a representative of a class $[\omega]\in H^0(\Omega^2_{\tX}(Z))/H^0(\Omega^2_{\tX})$, then the expression of the
pairing
$H^1(\calO_Z)\otimes H^0(\Omega^2_{\tX}(Z))/H^0(\Omega^2_{\tX})\to \C$,
 $\langle\exp^{-1} \cO_{Z}(\widetilde{D}_t-\widetilde{D}),[\omega]\rangle$,
can be represented by the very same formula (\ref{eq:Tomega1}) (as in the case $Z\gg 0$). Furthermore, all other formulae of
subsections \ref{ss:7.2} and \ref{ss:abelforms} also have their extended versions.
E.g., (\ref{eq:Tomega}) gives
$T_{\widetilde{c}(D)}(\omega)\circ T_D\widetilde{c}^{l'}(Z))(\frac{d}{dt}D_t|_{t=0})$,
and (\ref{eq:SYM5}) is the $[\omega]$--coordinate of the Abel map $\eca^{l'}(Z)\to H^1(\calO_Z)$.

\section{The `stable' arrangement $\{V_{\widetilde{X}}(I)\}_{I\subset \calv}$ and differential forms}\label{s:DIFFFORMS}

\subsection{The arrangement $\{\Omega_{\tX}(I)\}_I$ of forms and its duality with $\{V_{\tX}(I)\}_I$}\label{ss:DU}
\begin{definition}\label{def:omegaI}
Let $\Omega_{\tX}(I)$ (or, $\Omega(I)$)  be the subspace of $H^0(\tX\setminus E,\Omega^2_{\tX})/ H^0(\tX,\Omega^2_{\tX})$
generated by differential forms $\omega\in H^0(\tX\setminus E,\Omega^2_{\tX})$,
 which have no poles along $E_I\setminus \cup_{v\not\in I}E_v$. %Sometimes we denote it by $\Omega(I)$.
%$H^0(\tX\setminus E,\Omega^2_{\tX})/ H^0(\tX,\Omega^2_{\tX})$
\end{definition}
As in Theorem \ref{th:mult}{\it (c)}, let
$(\widetilde{X}/E_{\calv\setminus I},o_{\calv\setminus I})$ denote the multi--germ  obtained by
contracting the connected components of $E_{\calv\setminus I}$ in $\widetilde{X}$. Let $\tX(\calv\setminus I)$ be a small
neighbourhood of $E_{\calv\setminus I}$ in $\tX$, which is the inverse image by $\phi$ of a small Stein
neighbourhood of $(\widetilde{X}/E_{\calv\setminus I},o_{\calv\setminus I})$.

\begin{proposition}\label{prop:I} (a)
%With the notation of Theorem \ref{th:mult}(c) one has
$\dim \Omega(I)=p_g(\widetilde{X}/E_{\calv\setminus I},o_{\calv\setminus I})$.

(b) Set $\overline{\Omega}(\emptyset):= H^0(\tX(\calv\setminus I)\setminus E_{\calv\setminus I}, \Omega^2_{\tX(\calv\setminus I)})/
H^0(\tX(\calv\setminus I), \Omega^2_{\tX(\calv\setminus I)})$. Then
linear map $\rho:\Omega(I)\to \overline{\Omega}(\emptyset)$, induced by restriction, is an isomorphism.

(c) Fix $I\subset \calv$ as above and set $J\subset \calv$ with $J\cap I=\emptyset$.
Let  $\overline{\Omega}(J)$ be the subspace of $\overline{\Omega}(\emptyset)$ generated by forms from
$H^0(\tX(\calv\setminus I)\setminus E_{\calv\setminus I},\Omega^2_{\tX(\calv\setminus I)})$ without pole along $E_J$.
Then the restriction of $\rho$ to $\Omega(J)\cap \Omega(I)$ induces an isomorphism
$\Omega(J)\cap \Omega(I)\to \overline{\Omega}(J)$.

In particular, for any $I$,
the subspace arrangement $\{\overline {\Omega}(J)\}_{J\cap I=\emptyset}$ of the
multigerm $(\widetilde{X}/E_{\calv\setminus I},o_{\calv\setminus I})$ and resolution $\tX(\calv\setminus I)$
can be recovered from the arrangement $\{\Omega (M)\}_M$ via $\{\Omega(I)\cap \Omega(J)\}_{J\cap I=\emptyset}$.
\end{proposition}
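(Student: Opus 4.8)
\emph{Proof proposal.} The plan is to identify each of $\Omega(I)$, $\overline{\Omega}(\emptyset)$ and $\overline{\Omega}(J)$ with one and the same group of the form $H^0(\cO_W(W+K_{\tX}))$, for a suitable thick cycle $W$ supported on $E_{\calv\setminus I}$ (respectively on $E_{\calv\setminus(I\cup J)}$), exactly along the lines of \ref{bek:Z}, and then to read off that under these identifications the restriction map $\rho$ is the identity, hence an isomorphism. I would begin with two elementary observations about the subspaces $\Omega(M)\subseteq H^0(\tX\setminus E,\Omega^2_{\tX})/H^0(\tX,\Omega^2_{\tX})$, valid verbatim for $\tX(\calv\setminus I)$ as well. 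First, if $\omega\in H^0(\tX\setminus E,\Omega^2_{\tX})$ has no pole along $E_M\setminus\cup_{v\notin M}E_v$, then $\omega$ extends holomorphically over $\tX\setminus E_{\calv\setminus M}$: by Riemann's extension theorem across each smooth divisor $E_v$, $v\in M$, and by Hartogs across the double points $E_v\cap E_w$ with $v,w\in M$. Second, two forms having the same class in the quotient differ by an element of $H^0(\tX,\Omega^2_{\tX})$, hence have the same polar divisor; as a consequence, as subspaces of the quotient, $\Omega(I)\cap\Omega(J)=\Omega(I\cup J)$.

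The heart of the argument is the identity $\Omega(M)=H^0(\tX,\Omega^2_{\tX}(nE_{\calv\setminus M}))/H^0(\tX,\Omega^2_{\tX})$ for $n\gg 0$. The inclusion $\supseteq$ is immediate. For $\subseteq$, given $[\omega]\in\Omega(M)$, take by \ref{bek:Z} a representative $\omega_0$ whose polar divisor is bounded above by a fixed effective cycle, and by the first observation a representative holomorphic off $E_{\calv\setminus M}$; the second observation then forces $\omega_0$ itself to have polar divisor both bounded and supported on $E_{\calv\setminus M}$, hence $\omega_0\in H^0(\tX,\Omega^2_{\tX}(nE_{\calv\setminus M}))$ for $n\gg 0$. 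Now the exact sequence $0\to\Omega^2_{\tX}\to\Omega^2_{\tX}(nE_{\calv\setminus M})\to\cO_{nE_{\calv\setminus M}}(nE_{\calv\setminus M}+K_{\tX})\to 0$ together with the Grauert--Riemenschneider vanishing $H^1(\Omega^2_{\tX})=0$ gives $\Omega(M)\cong H^0(\cO_{nE_{\calv\setminus M}}(nE_{\calv\setminus M}+K_{\tX}))\cong H^1(\cO_{nE_{\calv\setminus M}})^*$ (Serre duality on the cycle), and for $n\gg 0$ the latter stabilizes to $H^1(\cO_{\tX(\calv\setminus M)})^*$ by the formal function theorem (\cite[Th. 11.1]{hartshorne}), of dimension $p_g(\tX/E_{\calv\setminus M},o_{\calv\setminus M})$. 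Taking $M=I$ gives part \emph{(a)} (and this matches the expectation $\Omega(I)=V_{\tX}(I)^{\perp}$ in $H^1(\cO_{\tX})^*$, since $\dim V_{\tX}(I)=p_g-p_g(\tX/E_{\calv\setminus I},o_{\calv\setminus I})$ by (\ref{eq:additivity})).

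For \emph{(b)} and \emph{(c)} I would run exactly the same computation inside the good resolution $\tX(\calv\setminus I)$ of the multigerm $(\tX/E_{\calv\setminus I},o_{\calv\setminus I})$, where \ref{bek:Z}, Grauert--Riemenschneider vanishing, Serre duality on cycles and the formal function theorem all apply verbatim. This yields $\overline{\Omega}(\emptyset)\cong H^0(\cO_{nE_{\calv\setminus I}}(nE_{\calv\setminus I}+K_{\tX(\calv\setminus I)}))$ and, for $J\cap I=\emptyset$ (so that $E_J\subseteq E_{\calv\setminus I}$ and, by the first observation, $\overline{\Omega}(J)$ consists of the forms holomorphic off $E_{\calv\setminus(I\cup J)}$), $\overline{\Omega}(J)\cong H^0(\cO_{nE_{\calv\setminus(I\cup J)}}(nE_{\calv\setminus(I\cup J)}+K_{\tX(\calv\setminus I)}))$ for $n\gg 0$. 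Since $\Omega^2_{\tX(\calv\setminus I)}=\Omega^2_{\tX}|_{\tX(\calv\setminus I)}$ and the cycles $nE_{\calv\setminus I}$, $nE_{\calv\setminus(I\cup J)}$ are supported inside $\tX(\calv\setminus I)$, these target groups are literally the groups that compute $\Omega(I)$ and $\Omega(I\cup J)=\Omega(I)\cap\Omega(J)$ on $\tX$; and since $\Omega^2_{\tX}(nE_{\bullet})\to\cO_{nE_{\bullet}}(nE_{\bullet}+K_{\tX})$ is a morphism of sheaves, restricting forms from $\tX$ to $\tX(\calv\setminus I)$ induces the identity on them. Hence $\rho\colon\Omega(I)\to\overline{\Omega}(\emptyset)$ and $\rho\colon\Omega(I)\cap\Omega(J)\to\overline{\Omega}(J)$ are isomorphisms, which is \emph{(b)} and \emph{(c)}, and the concluding ``in particular'' is then immediate.

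The one genuinely delicate point is controlling the pole order along $E_{\calv\setminus M}$ in the heart of the argument: this really uses the input of \ref{bek:Z} (equivalently, finiteness of $p_g$) and cannot be seen by a naive local argument. Everything else is bookkeeping with standard exact sequences and dualities, together with the remark that these tools are available for the multigerm because each connected component of $\tX(\calv\setminus I)$ is an honest resolution of a normal surface germ.
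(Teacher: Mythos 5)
Your proposal is correct and follows essentially the same route as the paper: part (a) is in both cases the identification of $\Omega(I)$ with $H^0(\Omega^2_{\tX}(Z))/H^0(\Omega^2_{\tX})$ for $Z$ supported on $E_{\calv\setminus I}$ with large multiplicities, followed by (\ref{eq:duality}) and the formal function theorem (your preliminary Riemann/Hartogs observations merely make explicit an identification the paper asserts without comment). The only divergence is in (b)--(c): the paper checks injectivity of $\rho$ directly (a class killed by $\rho$ has a representative with no poles at all, hence is zero) and then concludes by the dimension count of (a), whereas you identify source and target canonically with the same group $H^0(\calO_W(W+K_{\tX}))$ and observe that $\rho$ becomes the identity; both are valid, and your packaging has the minor advantage of delivering (b) and (c) in one stroke, provided one notes (as you do) that Grauert--Riemenschneider vanishing is also available on $\tX(\calv\setminus I)$.
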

\begin{proof} {\it (a)}
Fix  $Z=\sum_{v\in \calv\setminus I} n_vE_v$  with all  $n_v\gg 0$.  By (\ref{eq:duality})
%the exacts sequence of sheaves
%$0\to \Omega_{\tX}^2\to \Omega_{\tX}^2(Z) \to \Omega_{\tX}^2(Z)|_Z\to 0$.
%Since $H^1(\Omega^2_{\tX})=0$,
$\dim \Omega(I)=\dim\,H^0(\Omega_{\tX}^2(Z))/H^0(\Omega_{\tX}^2)=
%h^0(\Omega_{\tX}^2(Z)|_Z)$. But this equals
h^1(\calO_Z)$,
% by Serre duality,
which equals $p_g(\widetilde{X}/E_{\calv\setminus I},o_{\calv\setminus I})$
by formal function theorem.

{\it (b)} If $[\omega]\in \ker(\rho)$, then $\omega$ has no pole along $E_I$ (since $[\omega]\in\Omega(I)$), and has no pole
along $E_{\calv\setminus I}$ either (since $\rho[\omega]=0$). Hence $[\omega]=0$, and $\rho$ is injective.
Since by {\it (a)} the dimension of the source and the target is the same,
$\rho$ is an isomorphism.

{\it (c)} By {\it (b)}, for any  $\bar{\omega}\in \overline{\Omega}(J)$ there exists $\omega\in \Omega(I)$ with
$\rho(\omega)=\bar{\omega}$. Note that $\omega$ is necessarily in $\Omega(I\cap J)$, hence
$\Omega(J)\cap \Omega(I)\to \overline{\Omega}(J)$ is onto.
\end{proof}

The next result shows that the linear subspace arrangement   $\{V_{\tX}(I)\}_I$ of
$H^1(\tX,\calO_{\tX})$ (cf. \ref{def:VI})
is dual to  the linear subspace arrangement $\{\Omega_{\tX}(I)\}_I$ of $\Omega_{\tX}(\emptyset)=
H^0(\tX\setminus E,\Omega^2_{\tX})/ H^0(\tX,\Omega^2_{\tX})$.
\begin{theorem}\label{th:DUALVO}
Via duality (\ref{eq:LD}) one has  $V_{\tX}(I)^*=\Omega_{\tX}(I)$.
\end{theorem}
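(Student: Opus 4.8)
**Proof strategy for $V_{\tX}(I)^* = \Omega_{\tX}(I)$.**

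The plan is to show that under the Laufer duality pairing $H^1(\tX,\calO_{\tX}) \otimes \Omega_{\tX}(\emptyset) \to \C$ of (\ref{eq:LD}), the subspace $V_{\tX}(I) \subset H^1(\tX,\calO_{\tX})$ and the subspace $\Omega_{\tX}(I) \subset \Omega_{\tX}(\emptyset)$ are mutual annihilators; since the pairing is perfect, this gives $V_{\tX}(I)^* = \Omega_{\tX}(I)$ as claimed. Because both sides match the dimension count already available — $\dim V_{\tX}(I) = p_g(X,o) - p_g(\widetilde{X}/E_{\calv\setminus I}, o_{\calv\setminus I})$ from (\ref{eq:additivity}) and $\dim \Omega(I) = p_g(\widetilde{X}/E_{\calv\setminus I}, o_{\calv\setminus I})$ from Proposition \ref{prop:I}(a), whose sum is exactly $p_g = \dim H^1(\tX,\calO_{\tX})$ — it suffices to prove one inclusion of annihilators, say that $\Omega_{\tX}(I)$ annihilates $V_{\tX}(I)$; the reverse follows by dimension.

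First I would fix $l' \in -\calS'$ with $E^*$-support exactly $I$, and a cycle $Z \gg 0$, so that by Theorem \ref{th:mult}(a)--(b) the map $\widetilde{c}^{\,nl'}$ has image the affine subspace $A_Z(nl')$ with translation vector space $V_Z(l') = V_Z(I)$, and $V_{\tX}(I)$ is the stabilized limit. The key point is the tangent-map description from \S\ref{ss:GlSec}: for $\calL \in \im(c^{nl'})$ and $D \in (c^{nl'})^{-1}(\calL)$, the image of $T_D \widetilde{c}^{\,nl'}$ equals $V_Z(I)$, and by Proposition \ref{lem:Mumford} this image is $\im(\delta^1_D : H^0(\calO_D) \to H^1(\calO_Z))$. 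So $V_{\tX}(I)$ is spanned by tangent vectors to the Abel map at divisors $D$ whose reduced support lies in $E_I \setminus \cup_{v \notin I} E_v$ — indeed for such $l'$ the generic divisor in $\eca^{l'}(Z)$ is a union of generic cuts through the curves $E_v$, $v\in I$. Now I would invoke the explicit formula (\ref{eq:Tomega}) from \S\ref{ss:7.2}: the $\omega$-coordinate of $T_D\widetilde{c}$ applied to a deformation direction at a point $p \in E_v$ ($v \in I$) is (up to a nonzero scalar) the coefficient $a_{-o,0}$, i.e. it tests the polar part of the Leray residue $\mathrm{Res}_D(\omega)$ along the divisor, localized at $p$. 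If $\omega \in \Omega_{\tX}(I)$, then by definition $\omega$ has \emph{no pole along} $E_I \setminus \cup_{v\notin I} E_v$, so all these coefficients $a_{-o,0}$ at points of $E_v$, $v \in I$, vanish; hence $\langle v, \omega \rangle = 0$ for every tangent vector $v$ generating $V_Z(I)$, and passing to the limit, $\Omega_{\tX}(I) \perp V_{\tX}(I)$.

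The step I expect to be the main obstacle is making rigorous the claim that $V_{\tX}(I)$ is genuinely spanned by the tangent directions captured by formula (\ref{eq:Tomega}) — that is, by the finitely many "residue-at-a-smooth-point" functionals — rather than needing the more general deformations at nodes $E_u \cap E_v$ or higher-order families from \S\ref{ss:abelforms}. For $l'$ supported on $I$ with all coefficients large, one can take the generic divisor $D$ to consist of simple generic points on the curves $E_v$, $v \in I$, avoiding all nodes, and then $T_D(\eca^{l'}(Z))$ is spanned by the one-parameter families $D_t$ of the type in \S\ref{bek:transport} at these points; this is exactly the situation (\ref{eq:Tomega}) computes. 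One must check that these divisors $D$ do lie in $\im(c^{l'})$ and that as $D$ ranges over such configurations the images $\im(T_D\widetilde{c})$ already span $V_Z(I)$ — but this is immediate from the smoothness of $\eca^{l'}(Z)$ (Theorem \ref{th:smooth}) together with the fact that $V_Z(l')$ is by definition the span of all such tangent images over the image locus, and generic $D$ of the above form are dense in $\eca^{l'}(Z)$. Combining the annihilator inclusion with the dimension equality $\dim V_{\tX}(I) + \dim \Omega_{\tX}(I) = p_g$ closes the argument.
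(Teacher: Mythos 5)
Your proof is correct, and it reaches the key orthogonality statement $\langle V_{\tX}(I),\Omega_{\tX}(I)\rangle=0$ by a genuinely different route than the paper. The paper's argument is global: since $\im(\widetilde{c}^{\,l'})$ is a full affine subspace for $a_v\gg0$ (Theorem \ref{th:mult}(a)), every element of $V_Z(I)$ is a difference $\calO_Z(D_1-D_2)$ with $D_1,D_2\in\eca^{l'}(Z)$; lifting the $D_i$ to $\tX$, the class of $\calO_{\tX}(D_1'-D_2')$ is represented by a \v{C}ech cocycle vanishing near $E_{\calv\setminus I}$, so the Stokes formula (\ref{eq:Stokes}) kills the pairing against any $\omega$ without poles along $E_I$. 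You instead argue infinitesimally: using \S\ref{ss:GlSec} to identify $V_Z(I)$ with $\im(T_D\widetilde{c}^{\,nl'})$ at a single generic $D$, and the residue formula (\ref{eq:Tomega}) to see that the $\omega$-coordinate of each tangent direction is a polar coefficient of the Leray residue, which vanishes for $\omega\in\Omega_{\tX}(I)$. Both arguments then close with the same dimension count ($\dim V_{\tX}(I)+\dim\Omega_{\tX}(I)=p_g$ via (\ref{eq:additivity})/(\ref{eq:HD2}) and Proposition \ref{prop:I}(a)). The paper's version is slightly leaner in its prerequisites — it needs only Theorem \ref{th:mult}(a) and the \v{C}ech description of the pairing, not the fiber-dimension statement \ref{th:mult}(b) nor the tangent-space analysis of \S\ref{ss:GlSec} on which your argument rests — and it avoids the point you correctly flag as the delicate one, namely justifying that the smooth-point, first-order deformations already realize all of $V_Z(I)$. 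On the other hand, your approach has the merit of making the duality visibly compatible with the tangent map of the Abel map, and it is in effect a special case of the later Theorem \ref{th:Formsres}(c), so it previews the non-stable theory of section 10.
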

\begin{proof} We fix a cycle $Z\gg 0$ for which $V_Z(I)=V_{\tX}(I)$.
Choose $l'=-\sum_{v\in I}a_vE^*_v$ such that each $a_v$ is so large that $\im ( c^{l'})$ is an affine space,
cf. Theorem \ref{th:mult}.
Then, any element $\calL$ of $V_Z(I)$ has the form $\calO_{Z}(D_1-D_2)$, with both $D_1, \, D_2\in
\eca^{l'}(Z)$. Lift $\{D_i\}_{i=1,2}$  to  effective divisors $\{D_1'\}_{i=1,2}$ in $\tX$. Since they do not intersect
$E_{\calv\setminus I}$, the class $[\lambda]$ of $\calO_{\tX}(D_1'-D_2')$ in $\pic^0(\tX)$ can be
represented by a ${\rm \check{C}ech}$  cocycles $\{\lambda_{ij}\}$, which in a neighbourhood of
$E_{\calv\setminus I}$ are all zero. Therefore, if $\omega $ is a form which has no pole along $E_I$,
 $\langle [\lambda], [\omega]\rangle=0$ by (\ref{eq:Stokes}). That is, $\langle V_{\tX}(I), \Omega(I)\rangle=0$,
or $V_{\tX}(I)\subset \Omega(I)^*$. Since by (\ref{eq:HD2}) and Proposition \ref{prop:I}{\it (a)} one has
$\dim V_{\tX}(I)=p_g-\dim \Omega(I)$, we get $V_{\tX}(I)=\Omega(I)^*$.
\end{proof}

\begin{example}\label{ex:CONTDIMzero} (Continuation of Example \ref{ex:dimimzero})
Fix $l'\in-\calS'$ with $E^*$--support $I\subset \calv$ as in \ref{ex:dimimzero}, and
choose $Z\gg0$. Then
$$ \mbox{$\im (c^{l'})$ is a point} \ \Leftrightarrow \  V_{\tX}(I)=0 \ \Leftrightarrow \
\Omega_{\tX}(I)=\Omega_{\tX}(\emptyset).$$

\end{example}
\subsection{Convexity property of $\Omega(\{v\})$'s}\label{ss:CONV} \
Clearly, the subspace arrangement has the properties $\Omega(\emptyset)\simeq \bC^{p_g}$, and
$\Omega(I\cup J)=\Omega(I)\cap \Omega(J)$.
In this subsection we establish an
interesting additional structure property of the arrangement.
It is the analytical analogue of topological convexity property
\cite[Prop. 4.4.1]{LNN}.

 For simplicity write $\Omega_v:=\Omega(\{v\})$ for $v\in\calv$, and define
$$\Pi(I):=\left\{\begin{array}{ll}\emptyset & \mbox{if $I=\emptyset$} \\
\sum_{v\in I}\Omega_v & \mbox{if $I\not=\emptyset$.}\end{array}\right.
%\hspace{1cm}
%\frP(I):=\left\{\begin{array}{ll}\emptyset & \mbox{if $I=\emptyset$} \\
%\sum_{J\subset I} \, (-1)^{|J|}\, \dim\, \Omega (J) &
%\mbox{if $I\not=\emptyset$.}\end{array}\right.
$$

\begin{proposition}\label{th:STRUCT}
For any $I\subset \calv$ let $\Gamma_I$ be the smallest connected subtree of $\Gamma$ whose
set of vertices $\overline{I}$ contains $I$. Then $\Pi(J)=\Pi(I)$ for any $I\subset J\subset \overline{I}$.
% and
%(b) $\frP(J)=\frP(I)$ for any $I\subset J\subset \overline{I}$.
\end{proposition}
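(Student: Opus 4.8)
The plan is to prove $\Pi(J) = \Pi(I)$ for $I \subset J \subset \overline{I}$ by reducing to the case where $J$ is obtained from $I$ by adding a single vertex $w \in \overline{I} \setminus I$, and then iterating. Since $\Pi(I) \subseteq \Pi(J)$ is automatic (a larger index set gives a larger sum of subspaces), the content is the reverse inclusion $\Omega_w \subseteq \Pi(I) = \sum_{v \in I} \Omega_v$. The geometric meaning of $w \in \overline{I}$ is that $w$ lies on the unique path in the tree $\Gamma$ between two vertices of $I$ (or is a cut vertex separating $I$); equivalently, $E_w$ is a "bridge" component whose removal from $E_{\calv \setminus I}$'s complement would disconnect things. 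I would first isolate the combinatorial fact: if $w \in \overline{I} \setminus I$, then there exist $v_1, v_2 \in I$ (possibly equal to vertices adjacent along the path) such that $w$ lies between $v_1$ and $v_2$ in $\Gamma$, and in particular every connected component of $E_{\calv \setminus (I \cup \{w\})}$ is either a component of $E_{\calv \setminus I}$ or gets "cut off" by $E_w$ on one side only.

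**The analytic core.** Given $\omega \in H^0(\tX \setminus E, \Omega^2_{\tX})$ representing a class in $\Omega_w$, i.e. $\omega$ has no pole along $E_w \setminus \bigcup_{u \neq w} E_u$, I want to write $[\omega] = [\omega_1] + [\omega_2]$ with $[\omega_i] \in \Omega_{v_i}$. The natural tool is a partition-of-unity / Mittag–Leffler style decomposition of the polar part of $\omega$ along the exceptional divisor, organized by the tree structure. Concretely, I would use Proposition \ref{prop:I}(b): restriction gives an isomorphism $\Omega(I) \xrightarrow{\sim} \overline{\Omega}(\emptyset)$ for the multigerm obtained by contracting $E_{\calv \setminus I}$. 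Under this identification, $\Omega_w$ (for $w \notin I$, but I actually need $w \in \overline I \setminus I$ so I should instead work with $I' = I \cup \{w\}$ on the contracted side) corresponds to a subspace of $\overline\Omega(\emptyset)$, and the claim becomes a statement purely about the contracted multigerm $(\widetilde{X}/E_{\calv \setminus I}, o_{\calv \setminus I})$: there, $E_w$ together with the (images of the) $E_v$, $v \in I$, sit in a resolution of a multigerm where $w$ is separating, so one reduces to a genuinely local statement near a single cut vertex.

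**Localizing at a cut vertex.** So the heart of the matter is: suppose $E_w$ is a component whose removal separates a resolution into pieces $\tX_1, \tX_2$ (meeting along $E_w$), and $v_1$ lies in the $\tX_1$-part of $I$, $v_2$ in the $\tX_2$-part. Given $\omega$ holomorphic near the generic point of $E_w$, decompose its polar divisor (supported on $\bigcup_{u \neq w} E_u$) as a sum of a part supported in $\tX_1$ and a part supported in $\tX_2$; since the two parts only interact in the bidiscs at intersection points $E_w \cap E_u$, and near such a point $\omega$ itself is holomorphic along the $E_w$-direction, one can split $\omega = \omega_1 + \omega_2 + (\text{holomorphic})$ where $\omega_i$ has no pole outside $\tX_i$ — in particular no pole along $E_{v_{3-i}}$, which is the wrong side. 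Wait: I need $\omega_i$ to have no pole along *everything except $E_{v_i}$*, not just no pole in $\tX_{3-i}$. That forces a further induction: within $\tX_i$, push all the poles onto $E_{v_i}$ using that $v_i \in \overline{\{v_i\}}$ trivially and that any component of $\tX_i$ is connected to $v_i$ — i.e. apply the same bridge-decomposition recursively inside $\tX_i$ until all poles are concentrated on $E_{v_i}$. This recursion terminates because the tree is finite.

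**Main obstacle.** The technical heart, and where I expect the real work, is the local splitting lemma at an intersection point $p = E_w \cap E_u$: given a form with a prescribed pole order along $\{u=0\}$ but holomorphic along $\{v=0\} = E_w$ near $p$, write it as (form with pole only on the $\tX_1$ side) + (form with pole only on the $\tX_2$ side) + (globally regular correction), controlling that the corrections glue to honest global forms in $H^0(\tX \setminus E, \Omega^2_{\tX})$ and don't introduce spurious poles along $E_w$. This is a cohomological statement — essentially the vanishing/surjectivity in a Mayer–Vietoris sequence for $\Omega^2_{\tX}(*E)$ with the cover $\{\tX_1, \tX_2\}$ — and the cleanest route is probably to phrase everything via the duality of Theorem \ref{th:DUALVO}, translating $\Pi(I) = \Pi(J)$ into the dual statement $\bigcap_{v \in I} V_{\tX}(v)^{\perp}$-type equality, i.e. $V_{\tX}$ satisfies the dual "convexity": $\sum_{v\in I} V_{\tX}(\{v\})$-orthogonality matches that of $J$. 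Actually the dual statement, via $V_{\tX}(I) = \ker(H^1(\calO_{\tX}) \to H^1(\calO_{Z|_{\calv\setminus I}}))$ from Theorem \ref{th:mult}(d), becomes a statement about kernels of restriction maps between cohomologies of structure sheaves on nested exceptional subcycles — and there the topological convexity of \cite[Prop. 4.4.1]{LNN} plus the fact that $E_w$ being a bridge makes $\calO_{Z|_{\calv \setminus I}} \to \calO_{Z|_{\calv \setminus J}}$ factor through a Mayer–Vietoris splitting gives the result more transparently. I would pursue this dual route as the primary line of attack and keep the differential-forms splitting as the conceptual backup.
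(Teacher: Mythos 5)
Your reduction to adding a single vertex $u$ on the geodesic between two vertices $v,w\in I$, and your instinct to split $\omega$ into two pieces, one regular on each side of the cut vertex, is exactly the right geometric picture and is how the paper proceeds. But the proposal has a genuine gap precisely at the step you yourself flag as ``the technical heart'': the splitting is never established. Worse, your stated worry about it rests on a misreading of Definition \ref{def:omegaI}. You write that you need $\omega_i$ to have no pole along \emph{everything except} $E_{v_i}$; in fact $\Omega_{v_i}$ consists of forms with no pole along $E_{v_i}$ (and arbitrary poles elsewhere). So a summand with no poles on one whole side of the cut automatically lies in $\Omega_{v}$ for \emph{every} $v$ on that side, and the entire recursive ``push all poles onto $E_{v_i}$'' step is unnecessary. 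Your fallback via Theorem \ref{th:DUALVO} and Theorem \ref{th:mult}(d) also does not close the gap: dualizing turns $\Pi(J)=\Pi(I)$ into an equality of intersections of the analytic subspaces $V_{\tX}(\{v\})$, and the convexity of \cite[Prop.\ 4.4.1]{LNN} is a statement about topological invariants, so it cannot be imported to prove this analytic equality; you would be back to needing the same splitting.

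The missing ingredient is that Proposition \ref{prop:I}(b) already \emph{is} the splitting lemma -- you cite it only to pass to the contracted multigerm, but its real use is as a lifting statement. Let $\Gamma_0$ be the connected component of $\Gamma\setminus u$ containing $w$, with vertex set $I_0$, and let $\tX(I_0)$ be a small neighbourhood of $E_{I_0}$. Given $\omega\in\Omega_u$, restrict it to $\tX(I_0)$ to get a class in $\overline{\Omega}(\emptyset)$ for that piece; by Proposition \ref{prop:I}(b) the restriction $\Omega(\calv\setminus I_0)\to\overline{\Omega}(\emptyset)$ is an isomorphism, so there is $\omega_v\in\Omega(\calv\setminus I_0)$ with the same restriction. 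Since $v\notin I_0$, one has $\Omega(\calv\setminus I_0)\subset\Omega_v$, so $\omega_v\in\Omega_v$; and $\omega-\omega_v$ restricts to zero on $\tX(I_0)$, hence has no pole along $E_{I_0}\supset E_w$, so it lies in $\Omega_w$. This gives $\Omega_u\subset\Omega_v+\Omega_w$ with no local analysis at intersection points, no Mayer--Vietoris, and no recursion. Your sketch cannot be completed as written without supplying this (or an equivalent) argument.
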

\begin{proof}  By induction, it is enough to consider the case $J=I\cup \{u\}$, such that
$u$ is on the geodesic path connecting $v,w$ with $v,w\in I$. Moreover, it is enough to show that
$\Omega_u\subset \Omega_v+\Omega_w$. Write the connected components of $\Gamma\setminus u$ as $\cup_{k=0}^s
\Gamma_k$, and set $I_k:=\calv(\Gamma_k)$. Assume that $w\in I_0$.

Choose an arbitrary $\omega\in \Omega_u$ and consider its restriction $\omega|_{\tX(I_0)}$ in
$\overline{\Omega}(\emptyset):= H^0(\Omega^2(\tX(I_0)\setminus E_{I_0}))/ H^0(\Omega^2(\tX(I_0)))$.
By Proposition \ref{prop:I}(b) $\Omega(\calv\setminus I_0)\to \overline{\Omega}(\emptyset)$ is bijective, hence
there exists $\omega_v\in \Omega(\calv\setminus I_0)$ such that $\omega_v|_{\tX(I_0)}=\omega|_{\tX(I_0)}$.
But $\Omega_v\supset \Omega(\calv\setminus I_0)$, hence $\omega_v\in \Omega_v$. On the other hand,
$(\omega-\omega_v)|_{\tX(I_0)}=0$, hence $\omega_w:=\omega-\omega_v\in \Omega_w$. Thus $\omega=\omega_v+\omega_w\in
\Omega_v+\Omega_w$.
%(b) By induction on the supports, there exists a function $g:\{\mbox{connected subgraphs of $\Gamma$}\}\to \bZ$,
%such that it is additive with respect to the disjoint union of subgraphs: $g(\Gamma'\sqcup\Gamma")=
%g(\Gamma')+g(\Gamma")$, and for any $I\subset \calv$ one also has
%$\sum_{\Gamma'\subset \calv\setminus I}g(\Gamma') =h^1(
\end{proof}
\begin{example}\label{ex:445}
Consider the weighted homogeneous isolated hypersurface singularity $(X,o)=\{x^4+y^4+z^5=0\}\subset
(\C^3,0)$. One verifies that $p_g=4$ (use either \cite{pinkham}, or section \ref{s:WH} from here).
We consider the minimal good resolution, whose  graphs is

\begin{picture}(200,50)(-50,0)
\put(70,40){\makebox(0,0){\small{$-5$}}}
\put(90,40){\makebox(0,0){\small{$-1$}}}
\put(70,30){\circle*{4}}
\put(90,30){\circle*{4}}
\put(110,30){\circle*{4}}
\put(100,10){\makebox(0,0){\small{$-5$}}}
\put(110,40){\makebox(0,0){\small{$-5$}}}
\put(80,10){\makebox(0,0){\small{$-5$}}}
\put(90,30){\line(-1,-1){20}}
\put(110,10){\circle*{4}}
\put(70,10){\circle*{4}}
\put(70,30){\line(1,0){40}}\put(90,30){\line(1,-1){20}}
\end{picture}

If $\omega$ is the Gorenstein form, then $\omega$, $z\omega$, $x\omega$ and $y\omega$ generate
$H^0(\tX\setminus E,\Omega^2_{\tX})/H^0(\Omega^2_{\tX})$. The pole orders along the central
curve $E_0$ are 7, 3, 2, 2. Let $v_i$ ($1\leq i\leq 4$) be the end--vertices. Then
 for fixed $i$,
 $\calv\setminus \{v_i\}$ represents a minimally elliptic singularity. Hence
$\Omega_{v_i}\simeq\C$  by (\ref{eq:HD2}) and  Theorem \ref{th:DUALVO}.
If $\xi_i$ are the roots of $\xi^4+1=0$, then $(x+\xi_iy)\omega$ generates $\Omega_{v_i}$, hence
$\sum_{i=1}^4 \Omega_{v_i}\simeq\C^2=\langle x\omega,y\omega\rangle$.

In particular, the linear subspace arrangement $\{\Omega_{v}\}_v$ in $\C^{p_g}=\C^4$ is not generic at all. Furthermore, $\Omega_{v_0}=0$ hence \ref{th:STRUCT}
can also be exemplified on this concrete example.
\end{example}

\subsection{Reduction to an arbitrary $Z>0$.}\label{ss:red8}
The duality from Theorem \ref{th:DUALVO}, valid for $\tX$ (or, for any $Z\gg0$) can be generalized for any $Z\geq E$ as follows. For the definition of $V_Z(I)$ see Definitions \ref{def:VZ} and \ref{def:VI}.
In parallel, define $\Omega_Z(I)$ as the subspace $H^0(\Omega^2_{\tX}(Z|_{\calv\setminus I}))/
H^0(\Omega^2_{\tX})$ in $H^0(\Omega^2_{\tX}(Z))/
H^0(\Omega^2_{\tX})$. By (\ref{eq:duality})  $\dim H^0(\Omega^2_{\tX}(Z))/
H^0(\Omega^2_{\tX})=h^1(\calO_Z)$, while  $\dim \Omega_Z(I)=h^1(\calO_{Z|_{\calv\setminus I}})$.
But,  by pairing (similarly as in the proof of Theorem \ref{th:DUALVO}) $V_Z(I)\subset \Omega_Z(I)^*$.
Furthermore, by (\ref{eq:HD2}),  $\dim V_Z(I)=\dim \Omega_Z(I)^*$. Hence
\begin{equation}\label{eq:HDIZ}
V_Z(I)= \Omega_Z(I)^*.
\end{equation}

\section{The `stable' dimensions $\{\dim (V_{Z}(I))\}_{I}$ and natural line bundles}\label{s:Hilb}

\subsection{}\label{ss:position} Recall that the {\em saturation} in $\calS'$
of a submonoid $\calm\subset \calS'$ is the submonoid $\overline{\calm}:=\{l'\in\calS'\,:\, \exists \ n\geq 1 \ \mbox{with}
\ nl'\in\calm\}$.

Let us fix some cycle $Z\geq E$.
Regarding the mutual position of the natural line bundle $\calO_Z(l')$ with respect to the image of $c^{l'}:\eca^{l'}(Z)\to
\pic^{l'}(Z)$ we can consider
three cases.
%(We discuss here the case of line bundles over $Z$, the reader is invited to add the completely parallel
%list valid for bundles over $\widetilde{X}$.)

(a) {\it  $\calO_Z(l')\in \im (c^{l'})$, or, equivalently, $0\in \im(\widetilde{c}^{l'})$.} The set of cycles $l'$ satisfying this property is denoted by $\calS'_{im}$. %:=\{l'\in \calS'\,:\,  \calO_Z(l')\in \im (c^{l'})\}$.
Clearly $0\in \calS'_{im}$ and
by the first paragraphs of \ref{ss:addDiv} it is a  sub-monoids of $\calS'$. (In the literature, this monoid ---
defined for bundles over $Z\gg 0$, or over $\tX$ ---, is called the
{\em analytic monoid } of $(X,o)$, in contrast with the
{\em topological monoid } $\calS'$, since it  indexes  the restrictions to $E$
of the divisors of different holomorhic sections of
the natural line bundles of $\widetilde{X}$, or
divisors of fuctions of the universal
abelian  covering of $(X,o)$, cf. \cite{Nfive}.)

(b)  $0 \in V_Z(l')$.
This, via Theorem \ref{th:mult},  reads as $\calO_Z(nl')\in \im (c^{nl'})$, or
 $0\in \im (\widetilde{c}^{nl'})$,  for $n\gg 1$.
 The cycles $l'$ satisfying this property are indexed by $\overline{\calS'_{im}}$.

(c)  $0 \not \in V_Z(l')$.
Such cycles $l'$ are indexed by $\calS'\setminus\overline{\calS'_{im}}$.

\begin{example}\label{ex:submonoids} In general, $\calS'_{im} \varsubsetneq \overline{\calS'_{im}}$. E.g. in Example
\ref{ex:notequidim}, $\calO_Z(-Z_{min})\not\in \im (c)$, however
 $\calO_Z(-2Z_{min})\in \im (c)$.
Furthermore, in general,
 $\overline{\calS'_{im}}\varsubsetneq \calS'$ either. Indeed, take e.g. a situation when $\im (c^{l'})$ is a point different than
$\calO_Z(l')$. Then  $\calO_Z(nl')\not \in \im (c^{nl'})$ for $n\geq 1$, hence $nl'\not\in \calS'_{im}$ for $n\geq 1$.
In such cases $\calS'\setminus \overline{\calS'_{im}}$ is even infinite.
For a concrete example see the last case of \ref{ex:dimim}.

\end{example}
\begin{lemma}\label{lem:finite}
Let $Z\geq E$ be an arbitrary cycle as above.

(a) Fix $l'\in -\calS'$ as above, and assume that $n\geq 1$ satisfies the next assumptions:

(i) $\im(\widetilde{c}^{nl'})=A(nl')$
(automatically satisfied if $n$ is sufficiently large, cf. Theorem \ref{th:mult}),

(ii) $0\in \im (\widetilde{c}^{nl'})$.

\noindent Then $0\in A(l')$ and
$\im (\widetilde{c}^{ml'})=A(l') $ for any $m\geq n$.

(b)  $\overline{\calS'_{im}}=\calS'$ if and only if  $\calS'\setminus  \calS'_{im}$ is finite.
\end{lemma}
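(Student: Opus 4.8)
\emph{Part (a).} Write $A_Z(l')=a_0+V_Z(l')$ for a base point $a_0$; by Lemma \ref{lem:propAZ}(a) one has $V_Z(nl')=V_Z(l')$ and $A_Z(nl')=na_0+V_Z(l')$. Then (i)--(ii) give $0\in\im(\widetilde{c}^{\,nl'})=A_Z(nl')$, so $na_0\in V_Z(l')$; dividing by $n$ in the $\bC$-vector space $V_Z(l')$ gives $a_0\in V_Z(l')$, hence $A_Z(l')=V_Z(l')\ni0$ and $A_Z(ml')=A_Z(l')$ for all $m\ge1$. For $m\ge n$: one always has $\im(\widetilde{c}^{\,ml'})\subset A_Z(l')$, while by (\ref{eq:addclose}), picking any $p\in\im(\widetilde{c}^{\,(m-n)l'})$ (nonempty since $(m-n)l'\in-\calS'$) gives $A_Z(l')+p=\im(\widetilde{c}^{\,nl'})+p\subset\im(\widetilde{c}^{\,ml'})\subset A_Z(l')$; as $A_Z(l')$ is a linear subspace this forces $\im(\widetilde{c}^{\,ml'})=A_Z(l')$.

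\emph{Part (b), ``$\Leftarrow$''.} For $l'\in\calS'\setminus\{0\}$ the infinite set $\{ml'\}_{m\ge1}\subset\calS'$ meets the finite set $\calS'\setminus\calS'_{im}$ only finitely often, so $l'\in\overline{\calS'_{im}}$; together with $0\in\calS'_{im}$ this gives $\overline{\calS'_{im}}=\calS'$.

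\emph{Part (b), ``$\Rightarrow$''.} Assume $\overline{\calS'_{im}}=\calS'$. First use part (a) to promote this to: every $l'\in\calS'$ has all sufficiently large multiples in $\calS'_{im}$. Indeed $l'\in\overline{\calS'_{im}}$ gives some $n'$ with $n'l'\in\calS'_{im}$; by (\ref{eq:addclose}), $0\in\im(\widetilde{c}^{\,kn'l'})$ for all $k\ge1$, and for $k\gg0$ hypothesis (i) of part (a) holds by Theorem \ref{th:mult}(a), so part (a) yields $\im(\widetilde{c}^{\,ml'})=A_Z(l')\ni0$, i.e. $ml'\in\calS'_{im}$, for $m\ge kn'$. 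Applying this to $l'=-E_v^*$ ($v\in\calv$), fix $n_0$ with $-mE_v^*\in\calS'_{im}$ for all $v$ and all $m\ge n_0$. Since restricted natural bundles multiply, $\calO_Z(-\sum_vc_vE_v^*)=\bigotimes_v\calO_Z(-c_vE_v^*)$, and since divisor addition (\ref{ss:addDiv}) is available: if every $c_v$ lies in $\{0\}\cup[n_0,\infty)$, multiplying fixed-component-free sections of the factors $\calO_Z(-c_vE_v^*)$ (trivial for $c_v=0$) produces a section of $\calO_Z(-\sum_vc_vE_v^*)$ whose divisor is a sum of effective Cartier divisors, hence supported off $E$; thus $-\sum_vc_vE_v^*\in\calS'_{im}$. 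So $\calS'_{im}$ contains every element of $\calS'$ all of whose $E^*$-coordinates lie in $\{0\}\cup[n_0,\infty)$.

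It remains to dispose of the (infinitely many) elements having some $E^*$-coordinate in $\{1,\dots,n_0-1\}$; this is the crux. The plan is an induction on $|\calv|$: for each proper face $\calS'(I):=\bZ_{\ge0}\langle E_v^*:v\in I\rangle$ with $I\subsetneq\calv$, the submonoid $\calS'_{im}\cap\calS'(I)$ of $\calS'(I)$ has saturation all of $\calS'(I)$ — apply part (a) to elements of $\calS'$ with $E^*$-support in $I$ — hence is cofinite in $\calS'(I)$ by the inductive hypothesis (the case $|I|=1$ being just that a submonoid of $\bZ_{\ge0}$ with full saturation is cofinite); writing $\calS'$ as the disjoint union over $I\subset\calv$ of its strata of fixed $E^*$-support, each inside the face $\calS'(I)$, makes $\calS'\setminus\calS'_{im}$ a finite union of finite sets. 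The real obstacle is the inductive step for $|I|\ge2$: a submonoid of $\bZ_{\ge0}^{\,k}$ with full saturation need \emph{not} have finite complement, so one must genuinely exploit the stronger output of part (a) — that $\im(\widetilde{c}^{\,ml'})$ stabilizes to the \emph{entire} affine subspace $A_Z(l')$, uniformly over $l'$ of a fixed $E^*$-support — to exclude such pathologies; once that is secured, the face-by-face count concludes.
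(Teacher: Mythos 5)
Your part (a) is correct and coincides with the paper's own argument: from $0\in\im(\widetilde c^{\,nl'})=A(nl')=na_0+V_Z(l')$ you extract $a_0\in V_Z(l')$, so every $A(ml')$ equals the linear subspace $V_Z(l')$, and the sandwich $A(l')+p\subset \im(\widetilde c^{\,ml'})\subset A(l')$ furnished by (\ref{eq:addclose}) finishes it. The implication ``$\calS'\setminus\calS'_{im}$ finite $\Rightarrow\ \overline{\calS'_{im}}=\calS'$'' is also fine.

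The converse implication in (b) is where the proposal breaks down, and you say so yourself. The facts you actually establish are: $\calS'_{im}$ is a submonoid containing $0$; every $l'\in\calS'$ has \emph{all} sufficiently large multiples in $\calS'_{im}$; and $\calS'_{im}$ contains every $\sum_v c_vE_v^*$ with each $c_v\in\{0\}\cup[n_0,\infty)$. These properties do \emph{not} imply finiteness of the complement: the submonoid $M=\{(a,b)\in\bZ_{\ge0}^2:\ a\ne 1\}$ satisfies all of them (with $n_0=2$; every element of $\bZ_{\ge0}^2$ has all its multiples of order $\ge 2$ in $M$) and has infinite complement $\{(1,b)\}_{b\ge0}$. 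So the conclusion cannot be reached by monoid combinatorics alone, which also kills the proposed face induction: its base case as you phrase it (``a submonoid of $\bZ_{\ge0}$ with full saturation is cofinite'') is literally false ($2\bZ_{\ge0}$ is a counterexample), and is only rescued by the stronger ``all large multiples'' statement; moreover the inductive hypothesis is applied to $\calS'_{im}\cap\calS'(I)$, which is not the $\calS'_{im}$ of any smaller singularity, so the lemma for fewer vertices does not apply to it.

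What is genuinely needed is a geometric input beyond (a) and the monoid structure. Writing $l'=l'_s+l'_b$, where $l'_b$ collects the coefficients exceeding the stabilization thresholds, (\ref{eq:addclose}) and Theorem \ref{th:mult} give $\im(\widetilde c^{\,l'})\supset \im(\widetilde c^{\,l'_s})+V_Z(I(l'_b))$, and one must show that the constructible set $\im(\widetilde c^{\,l'_s})$ meets the linear subspace $V_Z(I(l'_b))$ for each of the finitely many bounded parts $l'_s$ and each support $I(l'_b)$ --- equivalently, that $0\in\im(\widetilde c^{\,l'})$ and not merely $0\in A_Z(l')$; the latter \emph{does} hold for every $l'\in\calS'$ under the hypothesis, since each $A_Z(E_v^*)$ is then the linear space $V_Z(\{v\})$ and $A_Z$ is additive, but it is strictly weaker. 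You never supply this step. To be fair, the paper disposes of (b) with the single sentence ``Part (b) follows from (a)'' and gives no more detail than you do; but since your own text flags the key step as unresolved, what you have is a proof of (a) and of one direction of (b), not of the lemma.
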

\begin{proof}
(a) Since  $0\in A(nl')$, by Theorem
\ref{th:mult}{\it (a)} necessarily
$A(kl')=A(l')=V(l')$  for any $k\geq 1$.
Fix $\calL\in \im(\widetilde{c}^{kl'})$. Then, $\calL\in A(kl')$ and
by (\ref{eq:addclose}) and Lemma \ref{lem:propAZ},
$A(l')=A(l')+\calL\subset \im (\widetilde{c}^{nl'})
+\im (\widetilde{c}^{kl'})\subset
\im(\widetilde{c}^{(n+k)l'})\subset A((n+k)l')=A(l')$.
Part (b) follows from (a).
% We prove `$\Rightarrow$'. By (a), there exists $n_0$ such that $nE^*_v\in \calS'_A$ for any $n\geq n_0$.
\end{proof}

\subsection{}
In the remaining part of this subsection we will work with line bundles defined over $Z\gg 0$.

\begin{definition}\label{def:ECC}
(a) Following Neumann and Wahl \cite{NWECTh},
we say that $(X,o)$ and its resolution $\phi$ satisfy the {\it End Curve Condition} (ECC)
if $E^*_v\in \calS'_{im}$ for any {\it end vertex} $v\in\calv $ (i.e. for $\delta_v=1$).

(b)  We say that $(X,o)$ and its resolution $\phi$ satisfies the {\it Weak End Curve Condition} (WECC)
if $E^*_v\in \overline{\calS'_{im}}$ for any end vertex $v\in\calv $.
\end{definition}

If  we restrict ourselves to singularities with rational homology sphere links,
by End Curve Theorem \cite{NWECTh} (see also \cite{OECTh}) singularities which satisfy
 ECC  are exactly the splice quotient singularities of Neumann and Wahl \cite{NWsq}.
 The WECC terminology is new in the literature, however
 its necessity and importance appeared in many private discussions
 of the second author with T. Okuma in the last decade.
 The main question regarding
  singularities satisfying WECC is how can one generalize the results valid for splice quotient singularities to this larger family. The present article shows that
  e.g.  the $p_g$--additivity formula of
Okuma extends. Indeed,  the general additivity formula (\ref{eq:HD2}) provides an additivity
with correction term $\dim V_{\tX}(I)$. Furthermore, as we will see
in the next discussions, the correction term $\dim V_{\tX}(I)$ has different
reinterpretations in terms of certain Hilbert polynomials or
Poincar\'e series (similarly as in the splice quotient case)
whenever WECC is satisfied.

\begin{proposition}\label{prop:ConvexTh1}
(a) ({\bf Convexity property of $\overline{\calS'_{im}}$}) \
Fix $u,v\in\calv$, $u\not=v$. If $E_u^*$, $E_v^*\in \overline{\calS'_{im}}$ then for any vertex $w$ on the geodesic path
in the graph connecting $u$ and $v$ one has $E^*_w\in \overline{\calS'_{im}}$ too.

(b) $(X,o)$ satisfies WECC if and only if  $\overline{\calS'_{im}}=\calS'$.
\end{proposition}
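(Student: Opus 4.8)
The plan is to prove Proposition \ref{prop:ConvexTh1} by combining the convexity property of the differential-form arrangement (Proposition \ref{th:STRUCT}), the duality $V_{\tX}(I)^*=\Omega_{\tX}(I)$ (Theorem \ref{th:DUALVO}), and the characterization of $\overline{\calS'_{im}}$ in terms of the vector spaces $V_{\tX}(l')$ from \ref{ss:position}(b).

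\emph{Part (b).} Here I would first recall the semigroup/saturation structure: $\calS'_{im}$ is a submonoid of $\calS'$, generated over $\Z_{\geq 0}$ by various $E^*_v$'s together with other elements, and $\overline{\calS'_{im}}$ is its saturation. The point is that the $E^*_v$, $v\in\calv$, generate $\calS'$ as a monoid. If WECC holds, then $E^*_v\in\overline{\calS'_{im}}$ for every \emph{end} vertex $v$; I would then invoke the convexity statement of part (a) to propagate membership from end vertices along geodesics, and observe that every vertex $w\in\calv$ lies on a geodesic between two end vertices (or is itself an end vertex), hence $E^*_w\in\overline{\calS'_{im}}$ for all $w$; since $\overline{\calS'_{im}}$ is a saturated submonoid containing all the monoid generators $E^*_w$ of $\calS'$, it equals $\calS'$. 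Conversely, if $\overline{\calS'_{im}}=\calS'$ then trivially $E^*_v\in\overline{\calS'_{im}}$ for end vertices, i.e.\ WECC holds. So (b) is essentially a formal consequence of (a) plus the monoid bookkeeping.

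\emph{Part (a).} This is the heart of the matter. By \ref{ss:position}(b), for a class $l'\in-\calS'$ with $E^*$-support $I=I(l')$ one has $l'\in-\overline{\calS'_{im}}$ (equivalently $-l'\in\overline{\calS'_{im}}$) if and only if $0\in V_{\tX}(l')=V_{\tX}(I)$. Taking $l'=-E^*_u$ (support $I=\{u\}$), the hypothesis $E^*_u,E^*_v\in\overline{\calS'_{im}}$ translates into $0\in V_{\tX}(\{u\})$ and $0\in V_{\tX}(\{v\})$. Now by duality (Theorem \ref{th:DUALVO}) $V_{\tX}(\{u\})=\Omega_{\tX}(\{u\})^*=\Omega_u^*$ and similarly for $v$; here the relevant statement is that $0\in V_{\tX}(I)$ is equivalent to $\Omega_{\tX}(I)=\Omega_{\tX}(\emptyset)$ (cf.\ Example \ref{ex:CONTDIMzero}), i.e.\ to $\Omega_u=\Omega(\emptyset)\simeq\bC^{p_g}$, and likewise $\Omega_v=\Omega(\emptyset)$. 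Then for a vertex $w$ on the geodesic between $u$ and $v$, Proposition \ref{th:STRUCT} (applied with, say, $I=\{u,v\}$, whose connected hull $\Gamma_I$ contains $w$, so that $\Pi(\{u,v,w\})=\Pi(\{u,v\})$, giving $\Omega_w\subset\Omega_u+\Omega_v$) yields $\Omega_w\subset\Omega_u+\Omega_v=\Omega(\emptyset)$, hence $\Omega_w=\Omega(\emptyset)$, hence $0\in V_{\tX}(\{w\})$, hence $E^*_w\in\overline{\calS'_{im}}$.

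\emph{Main obstacle.} The routine parts are the monoid bookkeeping in (b) and the translation dictionary between the three descriptions ``$E^*_v\in\overline{\calS'_{im}}$'', ``$0\in V_{\tX}(\{v\})$'', ``$\Omega_v=\Omega(\emptyset)$''; these follow by chaining \ref{ss:position}, Theorem \ref{th:mult}, Theorem \ref{th:DUALVO} and Example \ref{ex:CONTDIMzero}. The delicate point I expect to have to get exactly right is the precise invocation of Proposition \ref{th:STRUCT}: one needs $\Pi(\{u\})+\Pi(\{v\})=\Omega_u+\Omega_v$ to absorb $\Omega_w$ when $w$ lies between $u$ and $v$, and to be careful that the convexity of $\Pi$ (which is an equality $\Pi(J)=\Pi(I)$ for $I\subset J\subset\overline I$) applies with $I=\{u,v\}$ — its connected hull $\Gamma_I$ is exactly the geodesic path $u\!\leadsto\! v$, so any such $w$ is in $\overline I$, and $\Pi(\{u,v,w\})=\Omega_w+\Omega_u+\Omega_v$ equals $\Pi(\{u,v\})=\Omega_u+\Omega_v$, forcing $\Omega_w\subset\Omega_u+\Omega_v$. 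Once this inclusion is in hand, everything else is formal.
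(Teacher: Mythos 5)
Your part (b) is fine and agrees with the paper's (one\--line) argument: propagate along geodesics between end vertices using (a), then use that $\Gamma$ is a tree and that $\overline{\calS'_{im}}$ is a saturated submonoid of $\calS'$. The problem is part (a), where your translation dictionary is wrong. The condition ``$E^*_u\in \overline{\calS'_{im}}$'' means that the \emph{affine} stable image $A_{\tX}(-nE^*_u)=\im(\widetilde{c}^{-nE^*_u})$ contains the origin of $H^1(\calO_{\tX})$ for $n\gg1$ (i.e.\ the natural line bundle lies in the image); it is \emph{not} the condition $V_{\tX}(\{u\})=0$. Example \ref{ex:CONTDIMzero} characterizes when $\im(c^{l'})$ is a \emph{point} (the set $\calS'_{pt}$), and that is what is equivalent to $\Omega_{\tX}(I)=\Omega_{\tX}(\emptyset)$. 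You have conflated ``$0\in V$'' with ``$V=0$'', and consequently ``$E^*_u\in\overline{\calS'_{im}}$'' with ``$\Omega_u=\Omega(\emptyset)$''. This equivalence is false: any splice quotient satisfies ECC, so $E^*_v\in\calS'_{im}$ for every end vertex, yet in Example \ref{ex:445} each $\Omega_{v_i}$ is a line in $\bC^4$, nowhere near $\Omega(\emptyset)$. Moreover, even granting your dictionary, the concluding step fails: Proposition \ref{th:STRUCT} gives the \emph{upper} bound $\Omega_w\subset\Omega_u+\Omega_v$, and from $\Omega_w\subset\Omega(\emptyset)$ (which holds trivially for every $w$) you cannot conclude $\Omega_w=\Omega(\emptyset)$. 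More fundamentally, membership in $\overline{\calS'_{im}}$ records where the origin sits relative to the affine subspaces $A(l')$, and this affine information is simply not encoded in the linear arrangement $\{V_{\tX}(I)\}_I$ or its dual $\{\Omega_{\tX}(I)\}_I$, so no argument built only from Theorem \ref{th:DUALVO} and Proposition \ref{th:STRUCT} can prove (a).

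The paper's proof works directly with sections. Choose $n_u,n_v,n_w$ large so that $n_uE^*_u,\,n_vE^*_v,\,n_wE^*_w\in L$, their $E_w$\--multiplicities coincide, and $n_uE^*_u,\,n_vE^*_v\in\calS'_{im}$ (possible since $E^*_u,E^*_v\in\overline{\calS'_{im}}$). A lattice computation, using that $w$ separates $u$ from $v$ in the tree, shows $\min\{n_uE^*_u,n_vE^*_v\}=n_wE^*_w$. Then, given sections $f_u$ of $\calO(-n_uE^*_u)$ and $f_v$ of $\calO(-n_vE^*_v)$ without fixed components, the generic combination $af_u+bf_v$ is a section of $\calO(-n_wE^*_w)$ without fixed components, whence $n_wE^*_w\in\calS'_{im}$ and $E^*_w\in\overline{\calS'_{im}}$. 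This ``min\--stability via generic linear combinations of sections'' is the idea your proposal is missing.
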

\begin{proof}
Fix integers $n_u, \, n_v, \, n_w$ sufficiently large such that (i) $n_uE_u^*$, $n_vE_v^*$, $n_wE_w^*$ belong to $L$,
(ii) the $E_w$--multiplicities of these three cycles are equal, and (iii) $n_uE_u^*$ and $n_vE_v^*$ belong to $\calS'_{im}$.
Set $l:=n_uE_u^*-n_wE_w^*$, and let the connected components of $\Gamma\setminus w$ be $\cup_i\Gamma_i$. We distinguish
$\Gamma_{i_0}$, which contains $u$. Then
$l$ is supported on $\cup_i\Gamma_i$. Since $(l,E_z)=0$ for any $z\in\calv(\cup_{i\not=i_0}\Gamma_i)$, $l|\Gamma_i=0$ for all
 $i\not=i_0$. Since $(l,E_z)\leq 0$ for any $z\in\calv(\Gamma_{i_0})$, and
$(l,E_u)<0$, all the entries of $l|\Gamma_{i_0}$ are strict positive. We have similar property for
$n_vE_v^*-n_wE_w^*$ too. Hence $\min\{n_uE_u^*,n_vE^*_v\}=n_wE^*_w$.
Since, by assumption there exist  functions
$f_u$ and $f_v$, which can be regarded as
sections of $\calO(-n_uE^*_u)$ and $\calO(-n_vE^*_v)$ without fixed components,
 the generic linear combination
$af_u+bf_v$ is a section of $\calO(-n_wE^*_w)$
without fixed components.
 For {\it (b)} use part {\it (a)} and the fact that $\Gamma$ is a tree.
\end{proof}

\subsection{Different reinterpretations of $\dim(V_{\tX}(l'))$ when  $l'\in \overline{\calS'_{im}}$.}\label{ss:APP1} \
In the sequel we apply the results of the previous section (e.g. Theorem \ref{th:mult})
for natural line bundles. This will also include the `classical'  cases $\calL=\calO_{\widetilde{X}}(-l)$,
where $l$ is an effective integral cycle. In order to do this we will need
additional assumptions of type $\calL\in \im (c^{nl'})$.

We fix the following setup.
We  consider line bundles over
$\widetilde{X}$, or over $Z\gg 0$.
We write $V_{\widetilde{X}}(l')$ for the stabilized $V_Z(l')$ with  $Z\gg 0$.
We fix  $l'\in\calS'$ from $ \overline{\calS'_{im}}$, this means that there exists
$n\gg 1$ such that $\calO(-nl')$ admits sections
 without fixed components. Let $o\in\Z_{>0}$ be the order of $[l']$ in $L'/L$.
We also write $ol'=l\in L$. Note that $V_{\widetilde{X}}(l')=V_{\widetilde{X}}(ol')$, cf. Lemma \ref{lem:propAZ}.

\subsection{\bf  $\dim(V_{\tX}(l'))$ as the last coefficient of a Hilbert polynomial.} \label{ss:HILB}
Consider the situation of subsection \ref{ss:APP1}.
For $n\gg 1$ from the exact sequence of sheaves $0\to \calO_{\widetilde{X}}(-nl)\to
\calO_{\widetilde{X}}\to \calO_{nl}\to 0$, we get
$$\dim H^0(\calO)/H^0(\calO(-nl))=\chi(nl)-h^1(\calO(-nl))+p_g(X,o),$$
which combined with Theorem \ref{th:mult} gives
\begin{equation}\label{eq:APP1}
\dim H^0(\calO)/H^0(\calO(-nl))=\chi(nl)+\dim\, V_{\widetilde{X}}(l).
\end{equation}
This already shows that  $V_{\widetilde{X}}(l)$ is the free term of
the Hilbert polynomial associated with
$n\mapsto \dim H^0(\calO)/H^0(\calO(-nl))$. This fact can be reorganized even more.
 Note that by Theorem \ref{th:mult}{\it (d)} $\calO(-nl)$ is generated by global sections for
 all $n\geq n_0$ for some $n_0$.
 Therefore, if we denote the ideal $H^0(\widetilde{X},\calO(-n_0l))\subset \calO_{X,o}$ by $\calj$, then
 the integral closure of its powers satisfy $\overline{\calj^m}=H^0(\widetilde{X},\calO(-mn_0l))$
 \cite[????]{Lipman}. In particular, $\dim(\calO_{X,o}/\overline {\calj^m})=\chi(mn_0l)+\dim\, V_{\widetilde{X}}(l)$.

 Recall that there exist integral coefficients $\overline {e}_i(\calj)$
 (where $i=1,2,3$) such that $\dim(\calO_{X,o}/\overline {\calj^m})=
 \overline{e}_0(\calj)\binom{m+1}{2}- \overline{e}_1(\calj)\binom{m}{1} +\overline{e}_2(\calj)$ for $m\gg 1$.
 Here, the polynomial from the right hand side is called the
 {\it normal Hilbert polynomial of $\calj$}.
 One  verifies that $\overline{e}_2(\calj)$ is independent of the choice of $n_0$.
 Then, the two identities combined provide
 $\dim\, V_{\widetilde{X}}(l)=\overline{e}_2(\calj)$.
 % an identity which provides an
 %additional interpretation of  $\dim\, V_{\widetilde{X}}(l)$ (valid whenever $\l'\in \overline{\calS'_{im}}$).

If in our general identities from
Theorem \ref{th:mult} we insert $\overline{e}_2(\calj)$ for $\dim\, V_{\widetilde{X}}(l)$, then we  recover
e.g. the results from \cite[\S 3]{OWY15a}; or the additivity statement from \cite[Cor. 4.5]{Ok}.

%In the language of subsection \ref{ss:PERCONST}, the identity (\ref{eq:APP1}), valid for $n\gg 1$, can be interpreted as follows.
%The Hilbert function $n\mapsto  \dim H^0(\calO)/H^0(\calO(-nl))$ admits the  Hilbert quasipolynomial
%$\chi(nl)+\dim  V_{\tX}(l)$ whose periodic constant is exactly $\dim  V_{\tX}(l)$.

\subsection{$\dim(V_{\tX}(l'))$  in terms of the multivariable series  $P_{h=0}(\bt)$.}
\label{ss:PERCONSTANT} \
Assume again  that $l'\in  \overline{\calS'_{im}}$, and let $I$ be the $E^*_v$--support of $l'$, that is,
$l'=\sum_{v\in I} a_vE^*_v$ with $a_v\in\bZ_{>0}$.
Then with the notations of \ref{ss:APP1}, for $n$ sufficiently large $\calO(-nol')$ has no fixed components and
$h^1(\tX,\calO(-nl))=p_g-\dim V_{\tX}(I)$. This combined with
(\ref{eq:HPol2}) gives that for cycles  of type $nl$ ($n\gg 1$)
\begin{equation}\label{eq:PSUM}\sum_{\tilde{l}\in L, \ \tilde{l}\not\geq nl} p_{\calO(-\tilde{l})}\,=\, \chi(nl)+\dim V_{\tX}(I);\end{equation}
that is, the counting function $nl\mapsto \sum_{\tilde{l}\in L, \, \tilde{l}\not\geq nl} p_{\calO(-\tilde{l})}$
of the coefficients of  $P_{h=0}(\bt)$ is (for $n\gg 1$)  the multivariable quadratic polynomial
$\chi(nl)+\dim V_{\tX}(I)$ in $nl$, whose free term is exactly $\dim V_{\tX}(I)$.

The above counting function can be simplified  even more: we will reduce the variables of $P_0$ to the variables
indexed by $I$. For this
 we define  the projection (along the $E$--coordinates)  $\pi_{\cali}:\mathbb{R}\langle E_v\rangle_{v\in\mathcal{V}}\to \mathbb{R}\langle E_v\rangle_{v\in\cali}$, denoted also as
 $x\mapsto x|_{\cali}$, by $\sum_{v\in \calv}l_vE_v\mapsto \sum_{v\in\cali}l_vE_v$.
%Note that if $\cali$ is identified with the set
%of vertices of a subgraph $\Gamma_\cali$, then $\pi_\cali$ does not preserve
%the intersection form in the corresponding lattices $L(\Gamma)$ and $L(\Gamma_\cali)$.
% $\pi_\cali$  provides the `projected (real) Lipman cone' $\pi_{\cali}(S'_{\mathbb{R}})$.

For further motivations and topological analogues of the next statements
see also \cite{LNN} (where $Z(\bt)$ plays the role of $P(\bt)$).

\begin{lemma}\label{lem:REDUCTION}
Assume that $l'= \sum_{v\in I}a_vE^*_v$ with $a_v>0$, and $l''\in \calS'$ too. Then
$l''\geq l'$ if and only if $l''|_I\geq l'|_I$.
\end{lemma}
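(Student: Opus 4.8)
The plan is to prove the two implications separately; the forward direction is immediate and all the content lies in the converse. If $l''\ge l'$, then $l''-l'=\sum_v r_vE_v$ with every $r_v\ge 0$, so in particular the coordinates indexed by $v\in I$ are non-negative, which is exactly $l''|_I\ge l'|_I$; note that no hypothesis on $l''$ is used here.

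For the converse I would assume $l''\in\calS'$ and $l''|_I\ge l'|_I$, put $x:=l''-l'\in L'\otimes\Q$, and aim to show $x\ge 0$; the hypothesis says precisely that $x|_I\ge 0$. Decompose $x=x_+-x_-$ into its positive and negative parts, i.e.\ $x_+,x_-\ge 0$ with disjoint supports. Since $x_v\ge 0$ for every $v\in I$, no such $v$ lies in the support of $x_-$, so that support is contained in $\calv\setminus I$. Moreover, for every $v\notin I$ one has $(l',E_v)=(\sum_{u\in I}a_uE^*_u,E_v)=0$, hence $(x,E_v)=(l'',E_v)-(l',E_v)=(l'',E_v)\le 0$ because $l''\in\calS'$.

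The key step is then to evaluate $(x,x_-)$ in two ways. Writing $x_-=\sum_v(x_-)_vE_v$ with $(x_-)_v\ge 0$ and the sum running over $\calv\setminus I$, bilinearity gives $(x,x_-)=\sum_v(x_-)_v\,(x,E_v)\le 0$, since every $(x,E_v)$ occurring is $\le 0$. On the other hand $(x,x_-)=(x_+,x_-)-(x_-,x_-)$, where $(x_+,x_-)\ge 0$ because $x_+$ and $x_-$ have disjoint supports and $(E_u,E_v)\ge 0$ for $u\ne v$, while $-(x_-,x_-)\ge 0$ by negative definiteness of the intersection form, with equality precisely when $x_-=0$. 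Comparing the two expressions forces $x_-=0$, i.e.\ $x\ge 0$, i.e.\ $l''\ge l'$.

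This is a routine lattice computation, so I do not anticipate a genuine obstacle; the two points that need a little care are that $l'$ need not be an integral cycle (so the whole argument should be run in $L'\otimes\Q$, where the intersection form is still negative definite and the partial order is the one defined in \ref{ss:topol}) and that the inequality $(x_+,x_-)\ge 0$ relies on the non-negativity of the off-diagonal intersection numbers $(E_u,E_v)$ for $u\ne v$, which holds because distinct exceptional components of a good resolution meet non-negatively.
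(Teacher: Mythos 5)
Your proof is correct and follows essentially the same route as the paper: the paper writes $l''-l'=x+y$ with $x$ supported on $E_I$ and $y$ on $E_{\calv\setminus I}$, observes $x\ge 0$ from the hypothesis and $(y,E_u)\le 0$ on the support of $y$ (using $(l',E_u)=0$ for $u\notin I$ and $l''\in\calS'$), and concludes $y\ge 0$ by negative definiteness. You merely organize the decomposition by sign rather than by support and carry out explicitly the standard negative-definiteness step that the paper leaves implicit; the underlying ingredients are identical.
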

\begin{proof} We prove the $\Leftarrow$ part. Write $l''-l'$ as $x+y$, where $x$ (resp $y$)
is supported on $E_I$ (resp. on $E_{\calv\setminus I}$). By assumption, $x\geq 0$.
For any $u\in \calv\setminus I$ one has $0\geq (l'',E_u)=(l',E_u)+(x,E_u)+(y,E_u)$. But
$(l',E_u)=0$ and $(x,E_u)\geq 0$. Hence $(y,E_u)\leq 0$ for any $u $ in the support of $y$.
Since $(\,,\,)$ is negative definite, $y\geq 0$.
\end{proof}
According to the $\pi_I$ projection, we also
define the series $P_{I,h}(\bt_I)$ (for any $h\in H$), in variables $\{t_v\}_{v\in I}$ by
$P_{I,h}(\bt_I):= P_{h}(\bt)|_{t_v=1, v\not\in I}$.

Note that the series $P_{I,0}(\bt_I)$ has the form $\sum _{l_I\in \pi_I(\calS'\cap L)} p_I(l_I)\bt_I^{l_I}$.
By Lemma \ref{lem:REDUCTION} one has
$$\sum_{\tilde{l}\in L, \ \tilde{l}\not\geq nl} p_{\calO(-\tilde{l})}\,=\,
\sum_{l_I\in \pi_I(L), \ l_I\not\geq nl|_I} p_I(l_I).$$
Therefore,  for $n\gg 1$, one also has that the counting function of the coefficients of the reduced series
$P_{I,0}$ provides  the same expression
\begin{equation}\label{eq:PSUM2}
\sum_{l_I\in \pi_I(L), \ l_I\not\geq nl|_I} p_I(l_I)\,=\, \chi(nl)+\dim V_{\tX}(I).
\end{equation}
(Note that if the $E^*$--support of $nl$ is $I$, then $nl|_I$ determines uniquely $nl$.)

E.g., if $I=\{v\}$ (under the assumption $E^*_v\in  \overline{\calS'_{im}}$), $P_{I,0}=\sum_{m\geq 0}p_{v}(m)t_v^m$
has only one variable, and
$\sum_{m\geq nl|_v} p_v(m)\,=\, \chi(nl)+\dim V_{\tX}(I)$ for $n\gg 1$.

\begin{theorem} Assume that $(X,o)$ is a splice quotient singularity
associated with the graph $\Gamma$
(or, equivalently, $\phi:\tX\to X$ satisfies the ECC, cf. Definition \ref{def:ECC}). Then for any
$I$ the dimension $\dim V_{\tX}(I)$ is topological, computable from $\Gamma$.
\end{theorem}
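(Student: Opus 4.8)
The plan is to reduce the computation of $\dim V_{\tX}(I)$ for splice quotient singularities to the combinatorics of $\Gamma$ via the identity $\dim V_{\tX}(I) = h^1(\calO_Z) - h^1(\calO_{Z|_{\calv\setminus I}})$ from (\ref{eq:HD2}) (for $Z\gg 0$), combined with the fact that for splice quotients the relevant equivariant geometric genera are topological. First I would recall that if $(X,o)$ is splice quotient with respect to $\Gamma$, then by \cite{NCL,NPS} one has $P(\bt)=Z(\bt)$, and moreover the full $H$-equivariant statement $P_h(\bt)=Z_h(\bt)$ holds for every $h\in H$; in particular all the $p_{\calO(-l')}$ coefficients are topological. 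Then, by (\ref{eq:HPol2}) applied to the trivial class, $h^1(\tX,\calO_{\tX}) = p_g(X,o)$ is topological, being expressible through the (topological) coefficients $p_{\calO(-a)}$, the (topological, for splice quotients) equivariant genera $p_g(X_{ab},o)_0$, and $\chi$.

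Second, and this is the key point, I would observe that the multigerm $(\tX/E_{\calv\setminus I}, o_{\calv\setminus I})$ obtained by contracting the connected components of $E_{\calv\setminus I}$ is again a disjoint union of splice quotient singularities. This is because each connected component of $\Gamma\setminus I$, taken with its induced edge structure and the \emph{same} self-intersection and edge-determinant decorations, is again a splice-quotient graph --- the splice quotient / End Curve property is inherited by full subgraphs that arise as components after deleting a set of vertices, since the splice equations restrict compatibly and the End Curve Condition \ref{def:ECC} for a node of a component is implied by the ECC for the ambient graph (the relevant end-curve functions restrict). One should check this inheritance carefully: the cleanest route is to use the End Curve Theorem of Neumann--Wahl \cite{NWECTh}, verifying that for an end vertex $w$ of a component $\Gamma_j$ of $\Gamma\setminus I$ the end-curve function $f_w$ on $(X,o)$ restricts to an end-curve function on the sub-singularity corresponding to $\Gamma_j$; equivalently use that $E_w^*\in\calS'_{im}$ on $\tX$ forces the analogous membership on the smaller resolution, via Lemma \ref{lem:finite} and the convexity Proposition \ref{prop:ConvexTh1}.

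Third, granting the inheritance, $p_g(\tX/E_{\calv\setminus I},o_{\calv\setminus I})$ is a sum of geometric genera of splice quotient singularities, each of which is topological by the argument of the first step applied to the corresponding subgraph. Hence $h^1(\calO_{Z|_{\calv\setminus I}})= p_g(\tX/E_{\calv\setminus I},o_{\calv\setminus I})$ (formal function theorem, for $Z\gg 0$) is topological, and therefore so is
\[
\dim V_{\tX}(I) = p_g(X,o) - p_g(\tX/E_{\calv\setminus I},o_{\calv\setminus I}),
\]
by (\ref{eq:additivity}). Concretely, one can even write $\dim V_{\tX}(I)$ purely in terms of $Z(\bt)$: using $P_h=Z_h$ and (\ref{eq:HPol2}) for both $\Gamma$ and the subgraphs $\Gamma_j$ of $\Gamma\setminus I$, $\dim V_{\tX}(I)$ becomes an explicit alternating sum of $z(l')$-type coefficients and $\chi$-values, plus the (topological) equivariant correction terms $p_g((\cdot)_{ab},o)_h$. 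I expect the main obstacle to be the inheritance claim in the second step --- that deleting vertices from a splice quotient graph yields splice quotient subgerms --- since one must make sure the analytic structure on the contracted multigerm is genuinely splice quotient and not merely a resolution with the right graph; this is where the End Curve Theorem and the multiplicativity of the splice/End Curve data along subtrees must be invoked with care, and it is conceivable the authors instead argue directly at the level of series via the already-established $P_{I,0}$ reduction in (\ref{eq:PSUM2}), bypassing the geometric contraction entirely.
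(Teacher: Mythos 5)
Your argument is correct in outline, but it takes a genuinely different and considerably longer route than the paper. The paper's own proof is two lines: since $l'\in\overline{\calS'_{im}}$ for every $E^*$--support $I$ (ECC $\Rightarrow$ WECC $\Rightarrow$ $\overline{\calS'_{im}}=\calS'$ by Proposition \ref{prop:ConvexTh1}), the counting identity (\ref{eq:PSUM}) applies, and replacing $P(\bt)$ by $Z(\bt)$ makes its left hand side topological, hence so is the free term $\dim V_{\tX}(I)$. This is exactly the "series-level" shortcut you anticipated in your last sentence; it bypasses the contracted multigerm entirely and needs no information about the analytic structure of $(\widetilde{X}/E_{\calv\setminus I},o_{\calv\setminus I})$.

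Your route, via (\ref{eq:additivity}) and the topological computability of both $p_g(X,o)$ and $p_g(\widetilde{X}/E_{\calv\setminus I},o_{\calv\setminus I})$, does work, but its load-bearing step --- that the contracted subgerms are again splice quotient with respect to the component graphs of $\Gamma\setminus I$ --- is exactly the nontrivial input of Okuma's additivity machinery, and your justification of it is too quick. In particular, the end curve function $f_w$ of $(X,o)$ for a vertex $w$ that becomes an end vertex only after deleting $I$ does \emph{not} simply restrict to an end curve function of the subgerm: its divisor on $\tX(\calv\setminus I)$ acquires extra non-exceptional components at the contact points $E_u\cap E_w$, $u\in I$, so one must genuinely re-prove ECC (or re-derive the splice equations) for the subgerm. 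This is known (it is what \cite{Ok} establishes), but the paper's remark after Theorem \ref{th:mult} makes clear that the authors view Okuma's additivity as a \emph{consequence} of their framework rather than an input, so importing it here would invert the logical order of the paper. Net comparison: your approach yields the geometrically transparent formula $\dim V_{\tX}(I)=p_g(X,o)-p_g(\widetilde{X}/E_{\calv\setminus I},o_{\calv\setminus I})$ with both terms separately topological, at the cost of the inheritance lemma; the paper's approach gives topological computability directly from the coefficients of $Z(\bt)$ with essentially no extra work.
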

\begin{proof}
For splice quotient singularities $P(\bt)$ equals the topological series $Z(\bt)$, cf. \cite{NCL}. Hence,
in (\ref{eq:PSUM}) the left hand side can be replaced by the corresponding
sum of the coefficients of $Z(\bt)$.
\end{proof}
\begin{remark}
Let us denote the Seiberg--Witten invariant of the link $M(\Gamma)$,
associate with the canonical $spin^c$--structure of
$M(\Gamma)$ with $\frsw_{can}(M(\Gamma))$, and the corresponding normalized
 Seiberg--Witten invariant
by $\overline{\frsw}_{can}(M(\Gamma)):=\frsw_{can}(M(\Gamma))+(Z_K^2+|\calv(\Gamma)|)/8$, see e.g. \cite{LNN}.
Recall also that in the splice quotient case $P(\bt)=Z(\bt)$
(cf. \cite{NCL}). Therefore, if we replace in (\ref{eq:PSUM2}) $P(\bt)$ by $Z(\bt)$,
in the terminology of \cite{LNN} (\ref{eq:PSUM2}) reads as follows: $\dim V_{\tX}(I)$ is the
periodic constant of the $I$--reduction $Z_{I,0}(\bt_I)$ of $Z_0(\bt)$, and
by Theorem 3.1.1 of \cite{LNN} it equals $ -\overline{\frsw}_{can}(M(\Gamma))+
\overline{\frsw}_{can}(M(\Gamma\setminus I))$.
\end{remark}
\subsection{The equivariant version of \ref{ss:PERCONSTANT}.}\label{ss:EQUIVAR} \
Note that the identity $(\dagger) \ h^1(\tX, \calO(-nl'))=p_g-\dim V_{\tX}(I)$ holds
uniformly for any $n\gg 1$, though $[nl']\in H$ might have different $H$--classes.
Such stability usually cannot be proved via cohomology exact sequence of type
$0\to \calL(-l)\to \calL\to \calL|_l\to 0$, $l\in L_{>0}$ (since in this situation
$c_1(\calL(-l))-c_1(\calL)\in L$), or by eigenspace decomposition of some sheaf
associated with the universal abelian cover $(X_{ab},o)$. Maybe one should emphasize that
in the  above identity $(\dagger)$ the contribution $p_g$ comes from the dimension of
$\pic^{l'}$, which is independent of the class $[l']\in H$, and not from the
$p_g(X_{ab},o)_h$ for $h=0$.

Now, if we apply (\ref{eq:HPol2}) for $(\dagger)$ for different
classes we obtain the following fact.
Let us fix, as above $l'\in  \overline{\calS'_{im}}$ with  $E^*$--support $I$,
and let us fix also some $k\in \bZ_{\geq 0}$,  $h:=[kl']\in H$,
and write $kl'=r_h+l_k$ for some $l_k\in L$.
Let $o$ be the order of $[l']$ in $H$ as above. Then from (\ref{eq:HPol2}) one has
$$h^1(\calO(-r_h-l_k-nol')=-
\sum_{a\in L, \, a\not\geq 0} p_{\calO(-r_h-l_k-nol')}
+p_g(X_{ab},o)_h+\chi(l_k+nol')-(l_k+nol', r_h).$$
or, for any $k$ and any  $n\gg 1$,
$$\sum_{a\in L, \, a\not\geq 0} p_{\calO(-r_h-l_k-nol')}
=\chi(l_k+nol')-(l_k+nol', r_h)+p_g(X_{ab},o)_h-p_g+ \dim V_{\tX}(I).$$
Hence $\dim V_{\tX}(I)$ connects  the asymptotic behaviour of {\it different} $h$--components of $P(\bt)$
of the form $h=[kl'], \ k\in\Z$.

\section{The `non--stable' $\dim \im (c^{l'})$ and differential forms.}\label{s:ALG}

\subsection{} The first  theorem of this section  is a generalization of that statement
of section \ref{s:DIFFFORMS}, which says that for $Z\geq E$ the dual of the
vector  subspace $V_Z(nl')\subset H^1(\calO_Z)$,  the `stable image affine subspace'
$\im (\widetilde{c}^{nl'})=A_Z(nl')$ ($n\gg 1$) shifted to the origin,  agrees
 with the subspace of forms
 $\Omega_Z(I)$, where $I$ is the $E^*$--support of $l'$ (see
 Theorem \ref{th:DUALVO} and subsection \ref{ss:red8}).
 $V_Z(nl')$ can also be interpreted (up to a shift) as the tangent space
 at any $\calL\in A_Z(nl')$ of $A_Z(nl')$.
Hence, $\calL+V_Z(nl')$ is the intersection of all the kernels of
linear maps $T_{\calL}\omega$, where $\omega \in \Omega_Z(I)$
(that is, for all $\omega$ without  pole along those $E_v$'s   which support the divisors from $\eca^{nl'}(Z)$). For the explicit description of the duality see
\ref{ss:LD}.

The new setup is the following.
 Consider a divisor
  $D \in \eca^{l'}(Z) $, which is a union of $(l', E)$ disjoint divisors  $\{D_i\}_i $,
 each of them $\cO_Z$--reduction of divisors $\{\widetilde{D}_i\}_i $ from  $\eca^{l'}(\tX)$
 intersecting  $E$  transversally.
Set  $\widetilde{D}=\cup_i\widetilde{D}_i $ and  $\calL:=\widetilde{c}^{l'}(D)\in H^1(Z,\calO_Z)$.
Set also $Z=\sum_vm_vE_v$.

We introduce a subsheaf $\Omega_{\tX}^2(Z)^{{\rm regRes}_{\widetilde{D}}}$
of  $\Omega_{\tX}^2(Z)$ consisting of those forms $\omega$
which have the property that the residue ${\rm Res}_{\widetilde{D}_i}(\omega)$
has no poles along $\widetilde{D}_i$ for all $i$. This means that the restrictions of  $\Omega_{\tX}^2(Z)^{{\rm regRes}_{\widetilde{D}}}$ and    $\Omega_{\tX}^2(Z)$
on the complement of the support of $\widetilde{D}$ coincide, however along $\widetilde{D}$
is satisfies the following requirement.
If $p=E\cap \widetilde{D}_i=E_{v_i}\cap \widetilde{D}_i$ has  local coordinates
$(u,v)$ with $\{u=0\}=E$ and  $\widetilde{D}_i$ with  local equation $v$, then a local section of $\Omega_{\tX}^2(Z)$      near $p$
has the form $\omega=\sum_{i\geq -m_{v_i}, j\geq 0} a_{i,j}u^i v^j du\wedge dv$. Then
 the residue ${\rm Res}_{\widetilde{D}_i}(\omega)$ is
$(\omega/dv)|_{v=0}=\sum_i a_{i,0}u^idu$, hence the pole--vanishing reads as $a_{i,0}=0$ for all $i<0$.
Note that $\Omega_{\tX}^2(Z-\widetilde{D}) $ and the sheaf of regular forms  $\Omega_{\tX}^2$ are
subsheaves of $\Omega_{\tX}^2(Z)^{{\rm regRes}_{\widetilde{D}}}$.

\begin{theorem}\label{th:Formsres} In the above situation one has the following facts.

(a) The sheaves $\Omega_{\tX}^2(Z)^{{\rm regRes}_{\widetilde{D}}}/\Omega_{\tX}^2$ and $\calO_{Z}(K_{\tX}+Z-D)$ are isomorphic.

(b) $H^0(\tX,\Omega_{\tX}^2(Z)^{{\rm regRes}_{\widetilde{D}}})/
 H^0(\tX,\Omega_{\tX}^2)\simeq H^1(Z,\calL)^*$.
(The left hand side can be regarded as a subspace of $H^0(\tX,\Omega^2_{\tX}(Z))/H^0(\tX,\Omega_{\tX}^2)\simeq H^1(Z,\calO_Z)^*$.)

(c) The image $T_D\widetilde{c}^{l'} (T_{D} \eca^{l'}(Z))$ of the tangent map $T_D\widetilde{c}^{l'}$
at $D$ of
$\widetilde{c}^{l'}:\eca^{l'}(Z)\to H^1(Z,\calO_Z)=H^1(\tX,\calO_{\tX})$
is the intersection of kernels of
linear maps $T_{\calL}\omega:T_{\calL}H^1(\tX,\calO_{\tX})\to\bC$, where $\omega\in
H^0(\tX,\Omega_{\tX}^2(Z)^{{\rm regRes}_{\widetilde{D}}})$.
\end{theorem}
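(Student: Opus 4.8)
The plan is to prove the three parts in order, since (a) feeds (b) which feeds (c), and to reuse the duality machinery already set up in Sections~\ref{s:ADIFFFORMS}--\ref{s:DIFFFORMS}.

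\textbf{Part (a).} I would argue locally. Away from the support of $\widetilde{D}$ the sheaf $\Omega^2_{\tX}(Z)^{{\rm regRes}_{\widetilde{D}}}$ agrees with $\Omega^2_{\tX}(Z)$, so there $\Omega^2_{\tX}(Z)^{{\rm regRes}_{\widetilde{D}}}/\Omega^2_{\tX}\cong \calO_Z(K_{\tX}+Z)$, which matches $\calO_Z(K_{\tX}+Z-D)$ since $D$ is supported on the finitely many points $E\cap\widetilde{D}$. Near such a point $p=E_{v_i}\cap\widetilde{D}_i$ with coordinates $(u,v)$, a section of $\Omega^2_{\tX}(Z)$ is $\sum_{i\ge -m_{v_i},\,j\ge 0}a_{ij}u^iv^j\,du\wedge dv$, and modulo $\Omega^2_{\tX}$ (i.e. modulo terms with $i\ge 0$) the class is recorded by the coefficients $a_{ij}$ with $-m_{v_i}\le i\le -1$; the regularity condition ${\rm Res}_{\widetilde{D}_i}(\omega)$ pole-free kills exactly the $a_{i0}$ with $i<0$. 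So the quotient is the subsheaf of $\calO_Z(K_{\tX}+Z)=\calO_Z\langle u^i v^j\,du\wedge dv\rangle$ where the coefficient of $u^iv^0$ ($i<0$) vanishes, i.e. the subsheaf of sections divisible by $v$, which is precisely $\calO_Z(K_{\tX}+Z)(-D)=\calO_Z(K_{\tX}+Z-D)$ since $D$ has local equation $v$ there. Gluing the local descriptions gives the sheaf isomorphism.

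\textbf{Part (b).} Take the long exact cohomology sequence of $0\to\Omega^2_{\tX}\to\Omega^2_{\tX}(Z)^{{\rm regRes}_{\widetilde{D}}}\to\calO_Z(K_{\tX}+Z-D)\to 0$. Since $h^1(\Omega^2_{\tX})=0$ by Grauert--Riemenschneider and $h^2(\calO_{\tX})=0$, we get $H^0(\tX,\Omega^2_{\tX}(Z)^{{\rm regRes}_{\widetilde{D}}})/H^0(\tX,\Omega^2_{\tX})\cong H^0(Z,\calO_Z(K_{\tX}+Z-D))$. Now Serre/Laufer duality on the curve $Z$ (as in~(\ref{eq:duality})) identifies $H^0(Z,\calO_Z(K_{\tX}+Z-D))$ with $H^1(Z,\calO_Z(D))^*=H^1(Z,\calL)^*$, using the identification of $\calO_Z(D)$ with $\calL$ via the section $s$ cutting out $D$ (cf.~(\ref{eq:ML})). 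The inclusion into $H^0(\tX,\Omega^2_{\tX}(Z))/H^0(\tX,\Omega^2_{\tX})\cong H^1(Z,\calO_Z)^*$ is dual to the surjection $H^1(\calO_Z)\to H^1(Z,\calL)$ induced by $\times s$.

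\textbf{Part (c).} By Proposition~\ref{lem:Mumford} (and its $\tX$-version, cf.~\ref{ss:GlSec}) the image $T_D\widetilde{c}^{l'}(T_D\eca^{l'}(Z))$ equals $\im(\delta^1_D\colon H^0(\calO_D)\to H^1(\calO_Z))$, whose dimension is $h^1(\calO_Z)-h^1(Z,\calL)$; dually this image is the annihilator in $H^1(\calO_Z)$ of $\ker\bigl(H^1(\calO_Z)^*\twoheadleftarrow\text{?}\bigr)$ --- more precisely, identifying $H^1(\calO_Z)^*$ with forms, it is the common kernel of all linear functionals $T_{\calL}\omega$ with $\omega$ in the subspace cutting down from $H^1(\calO_Z)^*$ to $H^1(Z,\calL)^*$. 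By~(b) that subspace is exactly $H^0(\tX,\Omega^2_{\tX}(Z)^{{\rm regRes}_{\widetilde{D}}})/H^0(\tX,\Omega^2_{\tX})$. So the clean way to close this is: (i) show $\omega\in\Omega^2_{\tX}(Z)^{{\rm regRes}_{\widetilde{D}}}$ implies $T_{\calL}\omega$ vanishes on $\im T_D\widetilde{c}^{l'}$, using the explicit residue formula~(\ref{eq:Tomega}) which shows the pairing of the Abel tangent map with $\omega$ tests precisely the polar part of ${\rm Res}_{\widetilde{D}}(\omega)$ (which vanishes by the regRes condition); and (ii) conclude equality by the dimension count, since $\dim\im T_D\widetilde{c}^{l'}=h^1(\calO_Z)-h^1(Z,\calL)=h^1(\calO_Z)-\dim\bigl(H^0(\Omega^2_{\tX}(Z)^{{\rm regRes}_{\widetilde{D}}})/H^0(\Omega^2_{\tX})\bigr)$, so the annihilated subspace and $\im T_D\widetilde{c}^{l'}$ have matching dimensions.

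\textbf{Main obstacle.} The delicate point is matching the residue condition at the sheaf level in~(a)--(b) with the analytic tangent-map computation in~(c): one must be sure the local model of $T_D\widetilde{c}^{l'}$ (a product of the one-point formulas from~\ref{ss:7.2}, since $D$ is a disjoint union of transversal reduced pieces) pairs with $\omega$ exactly through ${\rm Res}_{\widetilde{D}_i}(\omega)$, so that ``$\omega$ kills the image'' is literally ``$\omega$ has pole-free residue along each $\widetilde{D}_i$''. The transversality and reducedness of $D$ is what makes the formula~(\ref{eq:Tomega}) applicable verbatim; without it (the general non-reduced case) the residue bookkeeping is genuinely harder, which is presumably why the theorem is stated under this hypothesis.
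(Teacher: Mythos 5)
Your proof is correct and follows essentially the same route as the paper's: the local monomial/residue bookkeeping for (a), the long exact sequence with $h^1(\Omega^2_{\tX})=0$ plus Serre duality for (b), and the pairing formula (\ref{eq:Tomega}) combined with the dimension count from (b) and (\ref{eq:dimfiber}) for (c). The only cosmetic difference is in (a), where the paper packages the local identification globally via a two-row commutative diagram comparing $\Omega^2_{\tX}(-\widetilde{D})\hookrightarrow\Omega^2_{\tX}$ and $\Omega^2_{\tX}(Z-\widetilde{D})\hookrightarrow\Omega^2_{\tX}(Z)^{{\rm regRes}_{\widetilde{D}}}$, whereas you identify the quotient directly as the subsheaf of $\calO_Z(K_{\tX}+Z)$ divisible by the equation of $\widetilde{D}$ — both rest on the same computation.
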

\begin{proof} {\it (a)} Consider the following diagram:
$$\begin{array}{ccccccccc}
0 & \to & \Omega^2_{\tX}(-\widetilde{D}) & \longrightarrow &
 \Omega^2_{\tX}(Z-\widetilde{D}) & \longrightarrow
& \calO_Z(K_{\tX}+Z-D) & \to & 0 \\
 & & \Big\downarrow \vcenter{%
 \rlap{$\alpha$}} & &  \Big\downarrow \vcenter{%
 \rlap{$\beta$}} &  & \Big\downarrow \vcenter{%
 \rlap{$\gamma$}}& & \\
0 & \to & \Omega^2_{\tX} & \longrightarrow &
\Omega^2_{\tX}(Z)^{{\rm regRes}_{\widetilde{D}}} & \longrightarrow
& \Omega^2_{\tX}(Z)^{{\rm regRes}_{\widetilde{D}}}/\Omega^2_{\tX} &
\to & 0 \end{array} $$

\vspace{1mm}

Above $\alpha$ and $\beta$ are the natural inclusions. We claim that
their cokernels are isomorphic.
Indeed, with the notation $M_{i,j}=u^iv^jdu\wedge dv$ one has
${\rm coker}(\alpha)=\{\sum_{j\geq 0,i\geq 0}a_{i,j}M_{i,j}\}/
\{\sum_{j\geq 1,i\geq 0}a_{i,j}M_{i,j}\}$ and
${\rm coker}(\beta)=
\{\sum_{j\geq 0,i\geq -m_{v_i}}a_{i,j}M_{i,j}\,|\, a_{i<0,0}=0\}/\{\sum_{j\geq 1,i\in\Z}a_{i,j}M_{i,j}\}$.
Hence $\gamma$ is an isomorphism.

{\it (b)}  Since $H^1(\tX,\Omega^2_{\tX})=0$, by part {\it (a)} we have
$H^0(\tX,\Omega_{\tX}^2(Z)^{{\rm regRes}_{\widetilde{D}}})/ H^0(\tX,\Omega_{\tX}^2)=H^0(\calO_Z(K_{\tX}+Z-D))$.
But, this last one equals $H^1(Z,\calO_Z(D))^*$ by Serre duality.

{\it (c)}
We prove the statement in the case  $ (l', E) = 1$, the general case follows
similarly. Hence,
set $l' = -E_v^*$ for some vertex $v \in \calv$, that is, $\widetilde{D}$
 is a transversal cut at the point $p$ of the exceptional divisor $E_v$.
 Consider local coordinates $(u,v)$
 around  $p$ as above. Recall that the local equation of $D$ is $v$.
Let $\{\widetilde{D}_t\}_{t\in\bC, \, |t|\ll 1}$ be a path
 in $\eca^{l'}$ at $D$  whose local equation is $v+tu^{o-1}$ for some
$o\geq 1$.

Consider also an arbitrary form
$\omega\in H^0(\tX, \Omega^2_{\tX}(Z)) $ (with local equation as above).
Then (the class of) $\omega$ is in the dual
space of the image $T_D\widetilde{c}^{l'} (T_{D} \eca^{l'}(Z))$ if and only if
$(T_{\calL}\omega)(T_D\widetilde{c}^{l'}(\delta))=0$  for all tangent vectors
$\delta$, the tangent vectors of paths of type $D_t$ at $D$. But
$T_{\calL}\omega(T_D\widetilde{c}^{l'}(\delta))=\lambda\cdot a_{-o,0}$ ($\lambda\not=0$) by  \ref{eq:Tomega}.
Therefore, the dual space of forms  is exactly the class of forms from
 $ H^0(\tX, \Omega^2_{\tX}(Z)^{{\rm regRes}_{\widetilde{D}}}) $.

 In fact, one also sees that the dimensions of these two spaces
 $\im( T_D\widetilde{c})$ and $ \cap_{\omega}\, T_{\calL}\omega$ agree.
Indeed, $\dim \im (T_D\widetilde{c})=h^1(\calO_Z)-h^1(Z,\calL)$ by (\ref{eq:dimfiber}). But,
$\dim \cap_{\omega}\, T_{\calL}\omega$ is the same by {\it (b)}.
\end{proof}
\begin{corollary}\label{cor:Res2} Assume that $\{\omega_1,\ldots, \omega_{h}\}$ is a basis of $H^0(\tX,\Omega^2_{\tX}(Z))/H^0(\tX,\Omega^2 _{\tX})$. Then
\begin{equation*}\begin{split}
H^0(\tX,\Omega_{\tX}^2(Z)^{{\rm regRes}_{\widetilde{D}}})/ H^0(\tX,\Omega_{\tX}^2)=\hspace{7cm}\\
\{(a_1,\ldots,a_{h})\in \bC^{h}: {\rm Res}_{\widetilde{D}_i}(\textstyle{\sum}_\alpha a_{\alpha}\omega_{\alpha}) \ \mbox{has no pole along $\widetilde{D}_i$ for all  $i$}\}.
\end{split}\end{equation*}
Hence, by Theorem \ref{th:Formsres}, the dimension of the right hand side is
$h^1(Z,\calL)$, and the number of independent relations between $(a_1,\ldots, a_{h})$, $h^1(\calO_Z)-h^1(Z,\calL)$,
is the dimension of $\im T_Dc^{l'}(T_D\eca^{l'}(Z))$.

In particular, $\dim (\im (c^{l'}(Z)))$ is the number of independent
relations for $\{\widetilde{D}_i\}_i$ generic.
\end{corollary}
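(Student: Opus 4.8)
The plan is to reduce Corollary \ref{cor:Res2} to Theorem \ref{th:Formsres}(b)--(c) by unwinding the duality dictionary of \ref{ss:LD}--\ref{ss:abelforms}. First I would fix, once and for all, a basis $\{\omega_1,\dots,\omega_h\}$ of the quotient $H^0(\tX,\Omega^2_{\tX}(Z))/H^0(\tX,\Omega^2_{\tX})$, so that $(a_1,\dots,a_h)\mapsto [\sum_\alpha a_\alpha\omega_\alpha]$ identifies $\bC^h$ with this quotient, and dually $\bC^h\cong H^1(\calO_Z)^*$ via \eqref{eq:duality}. With this identification, the subspace $H^0(\tX,\Omega_{\tX}^2(Z)^{{\rm regRes}_{\widetilde D}})/H^0(\tX,\Omega^2_{\tX})$ of Theorem \ref{th:Formsres}(b) is, by definition of the sheaf $\Omega^2_{\tX}(Z)^{{\rm regRes}_{\widetilde D}}$, exactly the set of those tuples $(a_1,\dots,a_h)$ for which the form $\sum_\alpha a_\alpha\omega_\alpha$, though a priori allowed a pole of order $\le m_{v_i}$ along $E$ near $p_i:=E\cap\widetilde D_i$, has the property that its Leray residue ${\rm Res}_{\widetilde D_i}(\sum_\alpha a_\alpha\omega_\alpha)$ (Definition \ref{not:residue}) has no pole along $\widetilde D_i$, for every $i$. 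This is a purely local, $\bC$--linear condition at each $p_i$: in the local coordinates $(u,v)$ with $\{u=0\}=E$ and $\widetilde D_i=\{v=0\}$, writing each $\omega_\alpha=\sum_{s\ge -m_{v_i},\,j\ge 0}a^{(\alpha)}_{s,j}u^sv^j\,du\wedge dv$, the requirement is $\sum_\alpha a_\alpha a^{(\alpha)}_{s,0}=0$ for all $s<0$. Hence the displayed description of the right-hand side in Corollary \ref{cor:Res2} is literally a restatement of Theorem \ref{th:Formsres}(b), and the first sentence of the corollary is proved.

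Next I would read off the two numerical consequences. By Theorem \ref{th:Formsres}(b) the subspace just described has dimension $h^1(Z,\calL)$, where $\calL=\widetilde c^{\,l'}(D)$; and since it sits inside $\bC^h\cong H^1(\calO_Z)^*$ with $h=h^1(\calO_Z)$ (by \eqref{eq:duality}, using $Z\ge E$), the number of independent linear relations cutting it out is $h^1(\calO_Z)-h^1(Z,\calL)$. By \eqref{eq:dimfiber} applied to the fiber of $c^{l'}$ through $D$, or equivalently by Proposition \ref{lem:Mumford} together with Theorem \ref{th:Formsres}(c), this number equals $\dim \im\big(T_D\widetilde c^{\,l'}\big(T_D\eca^{l'}(Z)\big)\big)$; Theorem \ref{th:Formsres}(c) identifies this image as precisely the common kernel $\cap_\omega\ker(T_{\calL}\omega)$ over $\omega$ in the regular-residue subspace, so the count of relations is indeed the codimension of that subspace, namely $h^1(\calO_Z)-h^1(Z,\calL)$. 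This gives the second and third assertions.

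For the final clause I would invoke the smoothness and dimension count of Theorem \ref{th:smooth}(1), which gives $\dim\eca^{l'}(Z)=(l',Z)$, together with the generic-fiber description: $\dim\im(c^{l'}(Z))=\dim\eca^{l'}(Z)-\dim c^{-1}(\calL_{gen})$ for $\calL_{gen}$ a generic element of the image, which by \eqref{eq:imC} (equivalently Lemma \ref{lem:dimIm}) equals $h^1(\calO_Z)-h^1(Z,\calL_{gen})$. A divisor $D$ built from $(l',E)$ disjoint transversal cuts $\{\widetilde D_i\}_i$ in generic position maps to a generic point of $\im(c^{l'})$ — this is where one uses that such configurations form a Zariski-dense (indeed Zariski-open) subset of $\eca^{l'}(Z)$, visible from the charts in the proof of Theorem \ref{th:smooth} — so for such $D$ the number of independent residue relations, which we showed equals $h^1(\calO_Z)-h^1(Z,\calL)$, is exactly $\dim\im(c^{l'}(Z))$. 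The main obstacle I anticipate is not in any single step but in making the "generic $\{\widetilde D_i\}_i$" argument airtight: one must check that transversal disjoint cuts in generic position realize a generic fiber of $c^{l'}$ (so that $h^1(Z,\calL)$ attains its minimum there), rather than merely a generic point of $\eca^{l'}(Z)$; this follows because the image of a Zariski-open set is constructible and dense in $\overline{\im(c^{l'})}$, hence meets the open stratum where $h^1(Z,\calL)$ is minimal, but it deserves to be spelled out carefully using the semicontinuity of Lemma \ref{lem:semicont}.
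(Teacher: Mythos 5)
Your proposal is correct and follows the route the paper intends: the displayed equality is the definition of $\Omega^2_{\tX}(Z)^{{\rm regRes}_{\widetilde{D}}}$ unwound in the chosen basis, the dimension and relation counts are Theorem \ref{th:Formsres}(b)--(c) combined with (\ref{eq:dimfiber}), and the final clause is Lemma \ref{lem:dimIm} applied to a generic divisor of transversal cuts. The paper treats the corollary as immediate from Theorem \ref{th:Formsres} and gives no separate argument, so your write-up (including the careful remark on genericity via semicontinuity) is if anything more detailed than what is in the text.
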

\subsection{} The above theorem can be applied rather directly in several situations,
when we can provide a bases for
$H^1(Z,\calO_Z)^*=H^0(\tX,\Omega^2_{\tX}(Z))/H^0(\tX,\Omega_{\tX}^2)$,
and verify directly for certain (or for all) divisors $D$
the above pole--vanishing property.
In the next subsections we provide such applications.

\subsection{The Gorenstein case.}\label{ss:Gor}
Assume that $(X,o)$ is Gorenstein, fix a resolution $\tX\to X$ as above, and let
$\omega_0\in H^0(\tX,\Omega^2_{\tX}(Z_K))$ be the pullback of the Gorenstein form,
well defined up to a non--zero constant.
Its pole is $Z_K$, the (anti)canonical cycle.
Since $\Omega^2_{\tX}=\calO_{\tX}(-Z_K)$,
$H^0(\tX,\Omega^2_{\tX}(Z_K))/H^0(\tX,\Omega_{\tX}^2)$ is isomorphic
 with $H^0(\tX,\calO_{\tX})/H^0(\tX,\calO_{\tX}(-Z_K))$,
 hence if we fix a basis of
 $H^0(\tX,\calO_{\tX})/H^0(\tX,\calO_{\tX}(-Z_K))$
consisting of classes of functions $\{f_1,\ldots, f_{p_g}\}\subset H^0(\tX,\calO_{\tX})$
with divisors ${\rm div}_E{f_\alpha}\not\geq Z_K$
 then in $H^0(\tX,\Omega^2_{\tX}(Z))/H^0(\tX,\Omega_{\tX}^2)$ the classes of forms
 $\{f_1 \omega_0, \ldots, f_{p_g}\omega_0\}$ form a basis.

 Therefore, for any fixed $I\subset \calv$,
 \begin{equation}\label{eq:OmegaI}
 \Omega(I)=\{(a_1,\ldots,a_{p_g})\in \bC^{p_g}: m_{E_v}(\textstyle{\sum}_\alpha a_{\alpha}
 f_\alpha)\geq m_{E_v}(Z_K) \ \, \mbox{for any $v\in I$},
 \end{equation}
 where  $m_{E_v}(\cdot)$
denotes the coefficient of a cycle along $E_v$.

By Theorem \ref{th:mult}  $\dim \Omega(I)=h^1(\tX, \calL)$ for any $\calL$
 with $c_1(\calL)=nl'$ with $n\gg 1$ and where $I:=\{\mbox{$E^*$--support of $l'$}\}$.
Furthermore, the number of independent relations between $(a_1,\ldots, a_{p_g})$, $p_g-\dim\Omega(I)$, is
the dimension of the {\em stable} $\im (c^{nl'})$  ($n\gg 1$).

According to Theorem \ref{th:Formsres}, these facts have the
following generalizations.
Set $\widetilde{D}=\cup_i\widetilde{D}_i$ be a divisor as in \ref{s:ALG}:
 each $\widetilde{D}_i$ is a transversal cut intersecting $E_{v(i)}$. Let $\gamma_i:(\bC,0)\to
  (\widetilde{D}_i,\widetilde{D}_i\cap E_{v(i)})$, $t\mapsto \gamma_i(t)$,
 be a parametrization (local diffeomorphism). Set $\calL=\calO_{\tX}(D)$ and $c_1(\calL)=l'$.

\begin{theorem}\label{th:Res2} With the above notations one has
$$H^0(\tX,\Omega_{\tX}^2(Z)^{{\rm regRes}_{\widetilde{D}}})/ H^0(\tX,\Omega_{\tX}^2)=
\{(a_1,\ldots,a_{p_g})\in \bC^{p_g}: {\rm ord}_t(\textstyle{\sum}_\alpha a_{\alpha}
f_\alpha\circ \gamma_i)\geq m_{E_{v(i)}}(Z_K) \ \, \mbox{for all  $i$}\}.$$
Similarly as in Corollary \ref{cor:Res2},
 the dimension of the right hand side is
$h^1(\tX,\calL)$, and the number of independent relations between $(a_1,\ldots, a_{p_g})$, $p_g-h^1(\tX,\calL)$,
is the dimension of $\im T_Dc^{l'}(T_D\eca^{l'}(Z))$ ($Z\gg 0$), and
$\dim (\im (c^{l'}))$ is the number of independent relations for
$\{\widetilde{D}_i\}_i$ generic.
\end{theorem}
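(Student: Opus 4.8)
The plan is to deduce the formula from the sheaf-theoretic description already in hand, namely Theorem~\ref{th:Formsres} (and Corollary~\ref{cor:Res2}), by feeding in the explicit basis of $H^0(\tX,\Omega^2_{\tX}(Z))/H^0(\tX,\Omega^2_{\tX})$ constructed in \ref{ss:Gor}. First I would fix $Z\geq Z_K$ (in particular any $Z\gg 0$; recall $Z_K\in L$ here), so that $\{f_1\omega_0,\dots,f_{p_g}\omega_0\}$ is a basis of $H^0(\tX,\Omega^2_{\tX}(Z))/H^0(\tX,\Omega^2_{\tX})$ and $H^1(\calO_Z)=H^1(\calO_{\tX})$. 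Every class in this quotient is then represented by a unique $\omega_a:=\sum_\alpha a_\alpha f_\alpha\omega_0$ with $a=(a_1,\dots,a_{p_g})\in\bC^{p_g}$, and since $H^0(\tX,\Omega^2_{\tX})\subset H^0(\tX,\Omega^2_{\tX}(Z)^{{\rm regRes}_{\widetilde D}})$ the subspace $H^0(\tX,\Omega^2_{\tX}(Z)^{{\rm regRes}_{\widetilde D}})/H^0(\tX,\Omega^2_{\tX})$ is cut out inside $\bC^{p_g}$ exactly by the conditions ``${\rm Res}_{\widetilde D_i}(\omega_a)$ has no pole along $\widetilde D_i$'', which are local at the points $p_i:=\widetilde D_i\cap E\in E_{v(i)}$. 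So everything reduces to checking, point by point, that this pole vanishing is precisely the inequality ${\rm ord}_t\big(\sum_\alpha a_\alpha f_\alpha\circ\gamma_i\big)\geq m_{E_{v(i)}}(Z_K)$.

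The key computation uses the Gorenstein hypothesis. Since $\Omega^2_{\tX}=\calO_{\tX}(-Z_K)$, the generator $\omega_0$ of $\Omega^2_{\tX}(Z_K)\cong\calO_{\tX}$ has divisor exactly $-Z_K$; choosing near $p_i$ local coordinates $(u,v)$ with $\{u=0\}=E_{v(i)}$ and $\{v=0\}=\widetilde D_i$ (possible because $\widetilde D_i$ is a transversal cut), this gives
\[
\omega_0=u^{-m_i}\,h_i(u,v)\,du\wedge dv,\qquad m_i:=m_{E_{v(i)}}(Z_K),\quad h_i(0,0)\neq 0 .
\]
Hence $\omega_a=\big(\sum_\alpha a_\alpha f_\alpha\big)\,u^{-m_i}h_i\,du\wedge dv$, and by Definition~\ref{not:residue}
\[
{\rm Res}_{\widetilde D_i}(\omega_a)=(\omega_a/dv)\big|_{v=0}=\Big(\textstyle\sum_\alpha a_\alpha f_\alpha\Big)(u,0)\,u^{-m_i}\,h_i(u,0)\,du .
\]
Because $h_i(0,0)\neq 0$, this $1$-form has no pole at $p_i$ iff ${\rm ord}_u\big((\sum_\alpha a_\alpha f_\alpha)(u,0)\big)\geq m_i$. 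Finally, any parametrization $\gamma_i\colon(\bC,0)\to(\widetilde D_i,p_i)$ satisfies $\gamma_i^*u=t\cdot(\text{unit})$, so ${\rm ord}_t(g\circ\gamma_i)={\rm ord}_u\big(g(u,0)\big)$ for holomorphic $g$; applied to $g=\sum_\alpha a_\alpha f_\alpha$ this turns the condition into ${\rm ord}_t\big(\sum_\alpha a_\alpha f_\alpha\circ\gamma_i\big)\geq m_{E_{v(i)}}(Z_K)$, which is exactly the description in the statement. (Taking $\widetilde D_i$ generic and $Z\gg 0$ here recovers \ref{eq:OmegaI} as a special case.)

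The remaining assertions are then formal. By Theorem~\ref{th:Formsres}(b) the dimension of the left-hand side, hence of the right-hand side, equals $h^1(Z,\calL)$, which for $Z\gg 0$ is $h^1(\tX,\calL)$; consequently the right-hand side is cut out by exactly $p_g-h^1(\tX,\calL)$ independent linear relations on $(a_1,\dots,a_{p_g})$. By Theorem~\ref{th:Formsres}(c), together with Proposition~\ref{lem:Mumford} and \ref{eq:dimfiber}, this number equals $\dim\big(\im T_D\widetilde c^{l'}(T_D\eca^{l'}(Z))\big)=h^1(\calO_{\tX})-h^1(\tX,\calL)$. For the last clause, note that the divisors obtained from pairwise disjoint transversal cuts form a Zariski-dense open subset of the smooth irreducible variety $\eca^{l'}(Z)$ (Theorem~\ref{th:smooth}), so for generic $\{\widetilde D_i\}_i$ the differential $T_Dc^{l'}$ attains its maximal rank, which equals $\dim\overline{\im(c^{l'})}=\dim\im(c^{l'})$; thus $\dim\im(c^{l'})$ is the number of independent relations for generic cuts.

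I expect the only genuinely delicate point to be the middle step: checking that the abstract subsheaf $\Omega^2_{\tX}(Z)^{{\rm regRes}_{\widetilde D}}$, defined via the (coordinate-dependent looking) Leray residue of Definition~\ref{not:residue}, really is governed locally by the clean order inequality above — i.e.\ that the residue is intrinsically attached to $p_i$ and that, in the Gorenstein normal form, its order drops by exactly $m_{E_{v(i)}}(Z_K)$ relative to the vanishing order of $\sum_\alpha a_\alpha f_\alpha$ along the cut. Once that identification is secured, the rest is bookkeeping on top of Theorem~\ref{th:Formsres} and \ref{ss:Gor}.
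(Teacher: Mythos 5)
Your proof is correct and takes exactly the route the paper intends: Theorem~\ref{th:Res2} is stated there without a written proof, as an immediate consequence of Theorem~\ref{th:Formsres} (and Corollary~\ref{cor:Res2}) once the Gorenstein basis $\{f_\alpha\omega_0\}$ of \ref{ss:Gor} is substituted. Your local computation of the Leray residue --- using ${\rm div}(\omega_0)=-Z_K$ near $p_i$ and the transversality $\gamma_i^*u=t\cdot(\mbox{unit})$ to convert the pole--vanishing into ${\rm ord}_t(\sum_\alpha a_\alpha f_\alpha\circ\gamma_i)\geq m_{E_{v(i)}}(Z_K)$ --- supplies precisely the bookkeeping the paper leaves implicit, and the dimension and genericity assertions follow from Theorem~\ref{th:Formsres} and Theorem~\ref{th:smooth} as you say.
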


We will apply this theorem
in section \ref{s:SI} for superisolated (hypersurface, hence Gorenstein)
germs. %See also section \ref{s:tiling} for other concrete situations.

The general non--Gorenstein case (that is, Corollary \ref{cor:Res2})
will be exemplified  in section
\ref{s:WH} on the case of  weighted homogeneous germs, in which case we construct
a concrete basis
$\{\omega_1,\ldots, \omega_{p_g}\}$.

%\subsection{Reduction to an arbitrary cycle $Z>0$}\label{ss:red10}

\section{Superisolated singularities}\label{s:SI}

\subsection{The setup.}
We  will exemplify the Gorenstein case on a special family of isolated
hypersurface singularities.
The family of superisolated singularities creates a bridge between the
theory of projective plane curves and the theory of surface singularities.
 This bridge will be present  in the next discussions as well.
For details and results regarding such  germs see e.g. \cite{Ignacio,LMNsi}.

Assume that $(X,o)$ is a hypersurface superisolated singularity.
This means that $(X,o)$
is a hypersurface singularity $\{F(x_1,x_2,x_3)=0\}$, where the
homogeneous terms $F_d+F_{d+1}+\cdots$ of $F$ satisfy the following properties:
$\{F_d=0\}$ is reduced and it defines in $\bC\bP^2$ an irreducible
rational cuspidal curve $C$; furthermore, the intersection
$\{F_{d+1}=0\}\cap {\rm Sing}\{F_d=0\}$ in $\bC\bP^2$ is empty.
The restrictions regarding $F_d$ implies that the link of $(X,o)$
is a rational homology sphere
(this fact motivates partly the presence of these restrictions).
With $F_d$ fixed,  all the possible choices for $\{F_i\}_{i>d}$ define an
equisingular family of singularities with fixed topology and fixed
$p_g=d(d-1)(d-2)/6$.
For simplicity, here we will take for $F_{d+1}$ the $(d+1)^{th}$--power
of some linear function and $F_{i}=0$ for $i>d+1$.
Moreover, by linear change of variables, we can assume $F_{d+1}=-x_3^{d+1}$.
(Note that in our treatment the analytic type of the singularity plays a crucial role,
hence, by the choice $F_{d+1}=-x_3^{d+1}$ we restrict ourselves to
a special analytic family. We do this since in this case  the presentation
of the next subsections are
more transparent.
However, it would be  interesting to analyse the
stability/non-stability of the  Abel map
in the whole equisingular family  when we vary $F_{i}, \
i\geq  d+1$.)

If we blow up the origin of $\bC^3$ then the strict transform
$X'$ of $X$ is already smooth (this property is responsible for the name `superisolated'),
the exceptional curve
$C'\subset X'$ is irreducible and  it can be identified with $C$ \cite{Ignacio}.
Hence, resolving the plane curve singularities of $C'$ we get a
minimal resolution of $X$;  for the precise resolution graph see
e.g.  \cite{Ignacio,LMNsi}. In the minimal (or, in the partial)
resolution the exceptional curve corresponding to $C'$ will be denoted by $E_0$.
 In the chart $x_1=uw, \ x_2=vw, \ x_3=w$ the total transform
 has equation $ w^d(w-F_d(u,v,1))=0$, $X'=\{w=F_d(u,v,1)\}$,
$C'=\{w=F_d(u,v,1)=0\}$.

We wish to discuss the Abel map associated with several choices of $l'$ and $Z$.

\subsection{The case $l'=-kE_0^*$ $(k\geq 1)$, $Z=Z_K$ (and generic divisor on $\eca^{l'}(Z)$).} \label{ss:firstchoice} \

In this case a generic point $D$ of $\eca^{l'}(Z)$ consists of $k$ transversal cuts
of $E_0$ at generic points. In order to determine $\dim \im (c^{l'})$, which equals
$\dim \im T_D\widetilde{c}^{l'}(T_D\eca^{l'}(Z))$, we will apply Theorem \ref{th:Res2}.
Hence, we need to
analyse the restriction of forms on the components of the divisor $D$.
Note that Theorem \ref{th:Res2} automatically provides
$h^1(Z_K,\calO(D))$ too.
Furthermore, by Grauert--Riemenschneider vanishing
$h^1(\tX,\calO(\widetilde{D}-Z_K))=0$, one also has $h^1(Z_K,\calO(D))=
h^1(\tX,\calO(\widetilde{D}))$.

Since the first blow up already creates the exceptional divisor
$C'=E_0$, all the computation can be done in this partial resolution
$\phi:X'\to X$, and we can even
assume that $D$ is in the chart considered above.
First, we find  $\{f_\alpha\}_{\alpha=1}^{p_g}$ such that
 $\{f_1\omega_0,\cdots,f_{p_g}\omega_0\}$ induces
 a basis in $H^0(\tX,\Omega^2_{\tX}(Z))/H^0(\tX,\Omega_{\tX}^2)$.
 Notice that the pullback of any monomial
 ${\bf x}^{\bf m}=x_1^{m_1}x_2^{m_2}x_3^{m_3}$ has vanishing order
 ${\rm deg}({\bf x}^{\bf m})=\sum_im_i=|{\bf m}|$
along $E_0$. Moreover, the multiplicity of $Z_K$ along $C'$ is $d-2$.
Since the number of monomials of degree strict less than $d-2$
is $p_g=d(d-1)d-2)/6$, the set $\{{\bf x}^{\bf m}\,:\, {\rm deg}({\bf x}^{\bf m})
\leq  d-3\}$ serve as a bases for
$H^0(\tX,\calO_{\tX})/H^0(\tX,\calO_{\tX}(-Z_K))$.

Next, we consider parametrizations of each component $\{\widetilde{D}_i\}_{i=1}^k$
(the liftings of the divisors $\{D_i\}_i$),
$t\mapsto \gamma_i(t)=(u_i(t),v_i(t),w_i(t))\subset X'$.
In fact, we can start with a parametrization
$t\mapsto (u_i(t),v_i(t))$  of a transversal cut of $\{F_d(u,v,1)=0\}\subset \bC^2$
at some smooth point. Then we lift it to $X'$ by  setting $w_i(t):= f(u_i(t),v_i(t),1)$.
The tranversality implies that $w_i(t)$ has the form
$c_1t+c_2t^2+\cdots$ with $c_1\not=0$, hence after a
reparametrization with $t':=w_i(t)$, we can assume that $w_i(t)=t$.

We denote the point $(u_i(0),v_i(0))\in \{F_d(u,v,1)=0\}\subset \bC^2$ by $p_i$.
%Since $p_i$ is chosen generically, we can assume that both $u_i(0)$ and
%$v_i(0)$ are nonzero.
We abridge $(u,v)^{\bf m}(p_i):=u_i(0)^{m_1}v_i(0)^{m_2}$.
 Then, the restriction of a monomial ${\bf x}^{\bf m}$ to $\widetilde{D}_i$ is
$$u(t)^{m_1}v(t)^{m_2}t^{|{\bf m}|}=t^{|{\bf m}|}\big( (u,v)^{\bf m}(p_i)+ H_{{\bf m}}(t)\big), $$
where $H_{{\bf m}}(t)$ denotes the `higher order terms' with $H_{{\bf m}}(0)=0$.
%The Gorenstein form of the  hypersurface $(X,o)$ is the lifting of
%$dx\wedge dy/(\partial F/\partial z)|_{X\setminus 0}=
%dy\wedge dz/(\partial F/\partial x|_{X\setminus 0}=
%dz\wedge dx/(\partial F/\partial y)|_{X\setminus 0}$. This, in the above chart of
%$X'$ is $\omega _0=dv\wedge dw/(w^{d-2}F_{d,u}(u,v))$, where $F_{d,u}(u,v):=\partial
%F_d(u,v,1)/\partial u$. Since the points $p_i$ are generically chosen we can assume
%that $F_{d,u}(p_i)\not=0$. Hence, the restriction of
%$(\omega_0{\bf x}^{\bf m})/du$  to $\widetilde{D}_i$ is
%$$\frac{u_i(t)^{m_1}v_i(t)^{m_2}}{t^{d-2-|{\bf m}|}\cdot F_{d,u}(u_i(t),v_i(t))}\ dv=
%\Big(\frac{%{\bf x}^{\bf m}(p_i)
%u_i(0)^{m_1}v_i(0)^{m_2}
%(u,v)^{\bf m}(p_i)
%}{t^{d-2-|{\bf m}|}\cdot F_{d,u}(p_i)}+\mbox{higher order terms } \Big)\ dv.$$
Hence, by  Theorem \ref{th:Res2},
\begin{equation*}%\label{eq:tpowers}
h^1(Z_K,\calO(D))=\dim \,
\Big\{(a_{\bf m})_{{\bf m}}  \in \bC^{p_g}:
\sum_{{\bf m}} a_{\bf m}\cdot
\frac{(u,v)^{\bf m}(p_i)+H_{{\bf m}}}{t^{d-2-|{\bf m}|}}
\ \mbox{has no pole for all $i$}\Big\}.
\end{equation*}
Expanding the sum into its Laurent series in $t$, and separating
the coefficients of $\{t^{-d+2+j}\}_{0\leq j\leq d-3}$, we get for each $D_i$
a linear system with $d-2$ equations for the variable
$(a_{\bf m})_{{\bf m}}$. We need to determine the rank of the corresponding matrix.
This matrix has a natural block decomposition, a block is indexed by
$j$ and the set ${\bf m}$ with fixed $|{\bf m}|$.
We prefer to order the rows by $t^{-d+2},\ t^{-d+3},\ldots, t^{-1}$.

%In fact, it is convenient to take the coefficients of
%$\{t^{-d+2+j}\cdot F_{d,u}(p_i)^{-1}\}_{0\leq j\leq d-3}$
%(instead of the coefficients of $\{t^{-d+2+j}\}_{0\leq j\leq d-3}$).
E.g., for fixed $D_i$,
the first row has its first entry 1 (corresponding to the block $t^{-d+2}$ and
$|{\bf m}|=0$) and all other entries zero.
The second raw has some entry in the first place, the second block
corresponding to  $t^{-d+3}$ and
$|{\bf m}|=1$) has three entries, namely $u(p_i), v(p_i), 1$ (which are the
evaluations of the degree $\leq 1$ $(u,v)$--monomials at $p_i$), and the blocks
corresponding to $|{\bf m}|>1$ are zero.
More generally,
%the matrix has a sub--diagonal block structure,
above the diagonal all the  blocks are zero, the
diagonal block indexed by  $t^{-d+2+j}$ and
$|{\bf m}|=j$ contains the evaluation of the $(u,v)$--monomials of degree
$\leq j$ at $p_i$.

E.g., if $k=1$, then the matrix has $d-2$ rows and $p_g$ columns, and each
diagonal block contains one entry 1, hence its rank of the linear
system  is $d-2$. In particular,
 $\dim\im  (c^{-E_0^*})=d-2$.

For $k\geq 2$, we have to put together all the linear equation corresponding to
all $D_i$. A block indexed by
$t^{-d+2+j}$ and  $|{\bf m}|=j'$ will have $k$ rows. Again, all
the blocks above the diagonal are zero. On the other hand, the rank of the diagonal
block indexed by  $t^{-d+2+j}$ and
$|{\bf m}|=j$ is as large as possible, it is $\min\{k,\binom{j+2}{2}\}$.
Indeed, its rows consists of the evaluation of $(u,v)$--monomials of
degree $\leq j$ at points $p_i$: since the points $p_i$ are generic they impose
independent conditions on the corresponding (homogeneous) linear system
(in variable $(x_1,x_2,x_3)$) of degree $j$. Hence, the rank of the matrix is
$\sum_{j=0}^{d-3}\min\{k,\binom{j+2}{2}\}$.
\begin{theorem}\label{th:sumSI}
For any  $k\geq 1$ the dimension of $\im (c^{-kE^*_0})$ is
$\sum_{j=0}^{d-3}\min\{k,\binom{j+2}{2}\}$.
The first $k$ when $c^{-kE^*_0}$ is dominant is $k=\binom{d-1}{2}$.
$\im (c^{-kE^*_0})$ has codimension 1 for $k=\binom{d-1}{2}-1$.

Accordingly, for a generic $\calL\in \im(c^{-kE^*_0})$, $h^1(Z_K, \calL)=p_g-
\dim(\im (c^{-kE^*_0}))$.
\end{theorem}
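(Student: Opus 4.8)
The plan is to deduce the theorem from Theorem~\ref{th:Res2} applied at the cycle $Z=Z_K$, using the basis of $H^0(\tX,\Omega^2_{\tX}(Z_K))/H^0(\tX,\Omega^2_{\tX})$ already exhibited in \S\ref{ss:firstchoice}. First I would record the cohomological facts that let $Z_K$ serve as a ``sufficiently large'' cycle here: by Grauert--Riemenschneider vanishing $h^1(\tX,\calO_{\tX}(-Z_K))=0$ (as $0\in\calS'$), so $H^1(\calO_{\tX})\xrightarrow{\sim} H^1(\calO_{Z_K})$, whence $h^1(\calO_{Z_K})=p_g$ and $\pic^{l'}(Z_K)=\pic^{l'}(\tX)$; and for any $\widetilde D\in\eca^{-kE_0^*}(\tX)$ the same vanishing gives $h^1(Z_K,\calO(D))=h^1(\tX,\calO(\widetilde D))$ with $D=\widetilde D|_{Z_K}$. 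Then, as in \S\ref{ss:firstchoice}, $\{\mathbf x^{\mathbf m}\omega_0\}_{|\mathbf m|\le d-3}$ is a basis of $H^0(\tX,\Omega^2_{\tX}(Z_K))/H^0(\tX,\Omega^2_{\tX})$, and by Theorem~\ref{th:smooth} a generic $D\in\eca^{-kE_0^*}(Z_K)$ is the $\calO_{Z_K}$-restriction of $k$ transversal cuts $\widetilde D_1,\dots,\widetilde D_k$ of $E_0=C$ at generic smooth points $p_1,\dots,p_k$, parametrized (after the reparametrization $w_i(t)=t$ of \S\ref{ss:firstchoice}) by $\gamma_i(t)=(u_i(t),v_i(t),t)$.

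Next I would make the relevant linear system explicit and compute its rank. By Theorem~\ref{th:Res2}, $\dim(\im(c^{-kE_0^*}))$ equals the number of independent linear relations among $(a_{\mathbf m})\in\bC^{p_g}$ imposed by the conditions ${\rm ord}_t\big(\sum_{\mathbf m}a_{\mathbf m}\,\mathbf x^{\mathbf m}\!\circ\gamma_i\big)\ge m_{E_0}(Z_K)=d-2$ for $i=1,\dots,k$, and $p_g$ minus that number equals $h^1(Z_K,\calO(D))$. Writing $\mathbf x^{\mathbf m}\!\circ\gamma_i=t^{|\mathbf m|}\big((u,v)^{\mathbf m}(p_i)+H_{\mathbf m}(t)\big)$ with $H_{\mathbf m}(0)=0$, these are the vanishing of the coefficients of $t^0,\dots,t^{d-3}$. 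Ordering the $k(d-2)$ equations by the exponent $\ell\in\{0,\dots,d-3\}$ and the columns by $|\mathbf m|$, the matrix is block lower triangular: the column block $|\mathbf m|=j'$ contributes only to rows $\ell\ge j'$, and to the diagonal row block $\ell=j'$ only via the leading term $t^{|\mathbf m|}(u,v)^{\mathbf m}(p_i)$. Hence the $(j,j)$ diagonal block is the $k\times\binom{j+2}{2}$ matrix of evaluations at $p_1,\dots,p_k$ of the monomials $u^{m_1}v^{m_2}$ with $m_1+m_2\le j$ (the $(u,v)$-parts of the $\mathbf x^{\mathbf m}$ with $|\mathbf m|=j$). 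The key genericity input is that this evaluation block has rank $\min\{k,\binom{j+2}{2}\}$: for $j\le d-3<d$ the restriction map from degree-$\le j$ plane curves to functions on $C$ is injective by Bézout, and $k$ general points of $C^{\rm sm}$ then impose independent conditions, up to $\binom{j+2}{2}$, by the usual induction (a nonzero subspace of functions cannot vanish on a Zariski-dense set of smooth points, and ``maximal rank'' is a Zariski-open condition). Summing over the triangular blocks gives $\dim(\im(c^{-kE_0^*}))=\sum_{j=0}^{d-3}\min\{k,\binom{j+2}{2}\}$, so $h^1(Z_K,\calO(D))=p_g-\dim(\im(c^{-kE_0^*}))$ for generic $D$; invoking Proposition~\ref{lem:dimIm} with $Z=Z_K$, the same value is $h^1(Z_K,\calL)$ for generic $\calL\in\im(c^{-kE_0^*})$.

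Finally I would extract the three numerical consequences. By the hockey-stick identity $\sum_{j=0}^{d-3}\binom{j+2}{2}=\sum_{i=2}^{d-1}\binom{i}{2}=\binom{d}{3}=p_g$, so $c^{-kE_0^*}$ is dominant exactly when every summand is saturated, i.e. when $k\ge\max_{0\le j\le d-3}\binom{j+2}{2}=\binom{d-1}{2}$; hence $k=\binom{d-1}{2}$ is the first such value. Since $\dim(\im(c^{-kE_0^*}))$ is visibly non-decreasing in $k$ and, for $\binom{d-2}{2}\le k\le\binom{d-1}{2}$, equals $\binom{d}{3}-\binom{d-1}{2}+k$ (all blocks with $j\le d-4$ already saturated), the codimension of $\im(c^{-kE_0^*})$ is $1$ precisely for $k=\binom{d-1}{2}-1$ and is $\ge 2$ for $k\le\binom{d-1}{2}-2$. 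I expect the only genuinely delicate point to be the genericity statement in the middle paragraph --- that $k$ general points \emph{lying on the fixed rational cuspidal curve $C$} still impose independent conditions on the linear system of plane curves of degree $\le d-3$; everything else is the bookkeeping of the triangular matrix already displayed in \S\ref{ss:firstchoice}, combined with Theorems~\ref{th:smooth} and \ref{th:Res2} and Proposition~\ref{lem:dimIm}.
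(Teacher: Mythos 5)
Your proposal is correct and follows essentially the same route as the paper's own argument in \S\ref{ss:firstchoice}: apply Theorem~\ref{th:Res2} with the monomial basis $\{\mathbf{x}^{\mathbf m}\omega_0\}_{|\mathbf m|\le d-3}$, observe the block lower-triangular structure of the residue conditions, and compute each diagonal block's rank as $\min\{k,\binom{j+2}{2}\}$ from the genericity of the points $p_i$ on $C$. The extra details you supply (the Bézout justification for independence of point conditions and the hockey-stick identity $\sum_{j=0}^{d-3}\binom{j+2}{2}=\binom{d}{3}=p_g$ yielding the dominance threshold and the codimension-one case) are exactly the bookkeeping the paper leaves implicit.
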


\subsection{The case $l'=-kE_0^*$ $(k\geq 1)$, $Z=Z_K$ (and special  divisor on $\eca^{l'}(Z)$).} \label{ss:secondchoice} \

In the previous subsection we considered {\it generic}
points ${\mathcal P}:=\{p_1,\ldots, p_k\}$ on $C$, in particular, for all
$j$ ($0\leq j\leq d-3$) they imposed independent conditions on the linear system
$\calO_{\bP^2}(j)$ (or, on the $(u,v)$--monomials of degree $\leq j$).
However, taking special points they might
 fail to impose independent conditions on some $\calO_{\bP^2}(j)$.
The discussion will show that $\im (c^{l'})$ has several (rather complicated)
$h^1$--stratification, (some of them) imposed by special divisors.

Here we will indicate such possibilities; nevertheless, for simplicity we will
restrict ourselves only to certain cases when only one block
degenerates and the rang of the total linear system is determined again
by the diagonal blocks.
Even under this restriction we find the situation extremely rich,
since it accumulates the classical plane curve geometry.
However, the reader is invited to work out  cases when
%(several diagonal blocks degenerate simultaneously and)
the global rank depends
on certain entries from the sub--diagonal blocks as well, covering even more sophisticated
$h^1$--strata.

Recall that in the diagonal block of $(t^{-d+2+j}, |{\bf m}|=j)$ we test if
$\calP$ impose independent conditions on $\calO_{\bP^2}(j)$ or not.
In the sequel we will assume that there exits exactly on $j$, say $j_0$,
when $\calP$ fails to impose
independent conditions. Clearly $j_0>0$. Furthermore, we will also assume that
$\binom{j_0+1}{2}\leq k\leq \binom{j_0+3}{2} $.
This means that in all the diagonal blocks with $j<j_0$ the number
$k$ of rows is $\geq $ than the number $\binom{j+2}{2}$ of columns,
hence the $j$--blocks has rank
$\binom{j+2}{2}$. Symmetrically, in all the $j$--diagonal blocks
with $j>j_0$ the number $k$ or rows is $\leq $ than the number
$\binom{j+2}{2}$ of columns, hence the
rank is $k$. Therefore, if the $j_0$--block is degenerated with rank
$\min\{k, \binom{j_0+2}{2}\}-\Delta $ for some $\Delta >0$,
then independently of the sub--diagonal entries, the rank of the matrix of the system is
$\sum_{j=0}^{d-3}\min\{k,\binom{j+2}{2}\}-\Delta$. In particular,
$h^1(Z_K,\calO(D))$ increases by $\Delta$ compared with the generic situation
of \ref{ss:firstchoice}.

Let us list some cases when such a degeneration can occur.
Take e.g. $j_0=1$ and $k=3$ and $\{p_1,p_2,p_3\}$ are collinear.
For $j_0=2$ we give two possibilities: either $k=4$ and the
four points are collinear, or $k=6$ and the six points are contained in a conic.

We recall here two classical theorems of plane curve geometry, which can be used to
produce similar examples; for more see the article \cite{EH} and the citations therein.

(a) \cite[Prop. 1]{EH} For $j_0\geq 1$ and $k\leq 2j_0+2$ the points $\calP$ fail to
impose independent conditions on $\calO_{\bP^2}(j_0)$ if and only if either
$j_0+2$ points of $\calP$ are collinear or $k=2j_0+2$ and $\calP$ is contained in a conic.

(b) \cite[Th. Cayley-Bacharach4]{EH}
Assume that $\calP$ consists of $k=e\cdot f$ poinst which are the
intersection points of two curves of degree $e$ and $f$.
Then if a plane curve of degree $j_0=e+f-3$
contains all but one point of $\calP$ then it contains all of $\calP$.

%\subsection{A concrete computation of some  Abel maps}\label{ss:Abelforcusp}

\section{Weighted homogeneous singularities.} \label{s:WH}

\subsection{Preliminaries}\label{ss:WHprel} Assume that $(X,o)$
is a weighted homogeneous normal surface singularity, that is, there exists a
a normal affine variety $X^{a}$, which admits  a good $\C^*$--action
and singular point $o\in X^{a}$ such that
$(X,o)$ is analytically isomorphic with $(X^{a},o)$.
This implies that the minimal good resolution graph $\Gamma$
is star shaped. As above, we assume that the link is  a rational
homology sphere, hence all  the vertex--genera  are zero.
We write $v_0$ for the central vertex, hence
 $\Gamma\setminus v_0$ consists of $\nu$ strings.
We assume that $\nu\geq 3$ (otherwise $p_g=0$, an uninteresting situation for
the Abel map).
  Let $-b_0$ be the  Euler number of $v_0$.
The Euler numbers of the vertices $v_{ji}$ of the $j^{th}$  string $(1\leq j\leq \nu)$
are $-b_{j1},\ldots, -b_{js_j}$, with $b_{ji}\geq 2$,
determined by the continued fraction
$\alpha_j/\omega_j=[b_{j1}, \dots, b_{js_j}]$,  where
$\gcd(\alpha_j,\omega_j)=1, \ 0<\omega_j<\alpha_j$. For each $j$,
$v_0$  is connected  with  $v_{j1}$ by one edge.
The link is a Seifert fibered 3--manifold with Seifert invariants
 $(b_0,g=0;\{(\alpha_j,\omega_j)\}_j)$. In particular, the Seifert invariants
 characterize the topological type uniquely, see e.g. \cite{neumann}.
%\cite{Seifert,Jaco,JN,neumann,NeR}.

We denote by $E_{ji}$ the irreducible exceptional curves indexed by vertices
$v_{ji}$. Let $P_j$ ($1\leq j\leq \nu$) be
$E_{v_0}\cap  E_{j1}$. One has the following result:
\begin{theorem}\label{th:ACTh} {\bf  (Analytic Classification
Theorem)} \cite{CR,Dol2,Dol3,OW3,pinkham,NeuG} \
The analytic isomorphism type of a normal surface weighted
homogeneous singularity (with rational homology sphere link)
with fixed Seifert invariants
is determined by the analytic type of
$(E_{v_0},\{P_j\}_j)$
 modulo an action of ${\rm Aut}(E_{v_0},\{P_j\}_j)$.
 (This is the same as the analytic classification of  Seifert
 line bundles over the projective line.)
 \end{theorem}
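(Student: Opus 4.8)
The plan is to reduce the analytic classification of $(X,o)$ to the classification, up to graded isomorphism, of a $\bZ_{\geq 0}$--graded ring, and then to invoke the Demazure--Dolgachev--Pinkham description of such rings. Concretely, I would first use that a good $\C^*$--action on a normal surface singularity germ is unique up to conjugation by an analytic automorphism \cite{OW3,pinkham}; consequently an analytic isomorphism $(X,o)\simeq(X^a,o)$ forces the associated graded coordinate rings $R=\bigoplus_{n\geq0}R_n$ (with $R_0=\C$) to be isomorphic as graded $\C$--algebras, so it suffices to classify $R$ up to graded isomorphism.

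Next I would apply the Demazure presentation. Set $C:=\mathrm{Proj}\,R$; since the link is a rational homology sphere, $C\cong\bP^1$. One recovers
\[
R_n\;=\;H^0\bigl(C,\mathcal{O}_C(\lfloor nD\rfloor)\bigr),\qquad n\geq0,
\]
for a $\bQ$--divisor $D$ on $C$ whose fractional part at $P_j$ equals $\omega_j/\alpha_j$ and whose integral part is governed by $b_0$; here $b_0$, $\alpha_j$, $\omega_j$ are exactly the integers read off from the Euler number $-b_0$ of $v_0$ and from the continued fractions $\alpha_j/\omega_j=[b_{j1},\dots,b_{js_j}]$. I would then quote Pinkham's computation that the minimal good resolution of $\mathrm{Spec}\,R(C,D)$ is star shaped with precisely the prescribed Seifert invariants; in particular the denominators $\alpha_j$ and residues $\omega_j\bmod\alpha_j$ depend only on the fixed topological type, so the only remaining moduli in $(C,D)$ are the positions $P_1,\dots,P_\nu\in\bP^1$ (the integral part of $D$ contributes no moduli, being a divisor class of fixed degree on $\bP^1$). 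Hence the graded isomorphism type of $R$ is encoded exactly by $(E_{v_0},\{P_j\}_j)=(\bP^1,\{P_j\}_j)$.

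It then remains to match the ambiguity. A graded isomorphism $R(C,D)\xrightarrow{\sim}R(C',D')$ with the same Seifert data induces an isomorphism $C\to C'$ carrying marked points to marked points; conversely any such isomorphism transports $D$ to $D'$ and hence induces a graded ring isomorphism (after the finite relabeling of strings with identical $(\alpha_j,\omega_j)$, and absorbing the harmless rescaling of the grading). This yields the asserted bijection between analytic types of $(X,o)$ and orbits of $(E_{v_0},\{P_j\}_j)$ under $\mathrm{Aut}(E_{v_0},\{P_j\}_j)$. The identification with the analytic classification of Seifert $\C^*$--line bundles over $\bP^1$ is then formal: over a fixed orbifold base $(\bP^1,\{P_j\})$ with prescribed Seifert invariants such a bundle is unique ($\mathrm{Pic}(\bP^1)=\bZ$ and the branching data is fixed), so its moduli space is again $(\bP^1,\{P_j\})/\mathrm{Aut}$.

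The main obstacle is the pair of classical but technical inputs underpinning the reduction: (i) uniqueness of the good $\C^*$--action, which is what makes analytic isomorphism of the germs equivalent to graded isomorphism of the rings; and (ii) the precise Demazure--Pinkham bookkeeping, i.e. that the assignment $b_0,\{[b_{j1},\dots,b_{js_j}]\}\mapsto D$ is the correct one and that the minimal good resolution of $\mathrm{Spec}\,R(C,D)$ is exactly the given star--shaped graph $\Gamma$. Once these are secured, the rest of the argument is formal manipulation of the Demazure formula $R_n=H^0(C,\mathcal{O}_C(\lfloor nD\rfloor))$ and of automorphisms of $(\bP^1,\{P_j\})$.
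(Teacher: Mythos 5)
The paper offers no proof of this statement---it is quoted as a classical result from \cite{CR,Dol2,Dol3,OW3,pinkham,NeuG}---so there is no in-paper argument to compare against; your sketch is precisely the standard Dolgachev--Pinkham--Demazure/Seifert--line--bundle argument of those references (uniqueness of the good $\C^*$--action reducing the problem to graded isomorphism of $R=\bigoplus_n H^0(\bP^1,\mathcal{O}(\lfloor nD\rfloor))$, with the only moduli being the configuration $\{P_j\}\subset\bP^1$ since $\pic^0(\bP^1)=0$), which is also the viewpoint the paper itself adopts immediately afterwards in its analytic plumbing construction and Proposition \ref{lem:SpPl}. Your outline is correct, with the two inputs you flag (uniqueness of the $\C^*$--action and the exact $\Q$--divisor bookkeeping, where one must take care whether the fractional part at $P_j$ is $\omega_j/\alpha_j$ or involves $\omega_j'$) being exactly the content supplied by \cite{OW3,pinkham}.
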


Next we show that the minimal resolution of any weighted homogeneous singularity can be
constructed by a special   `analytic plumbing'.

First we construct an analytic space $\widetilde{X^a}$ (the candidate for the resolution of $X^a$).
 Basically we mimic
the analytic plumbing construction of the resolution of
cyclic quotient singularities from e.g. \cite{BPV,Lauferbook}.
 Corresponding to the legs
we fix distinct complex numbers $p_j\in \C$;  the
 affine coordinates of the
points $P_j$. Each leg,  with divisors $\{E_{ji}\}_{i=1}^{s_j}$,
$1\leq j\leq \nu$,
will be covered by open sets $\{U_{j,i}\}_{i=0}^{s_j}$, copies of $\C^2$
with coordinates $(u_{j,i},v_{j,i})$.
For each $1\leq i\leq s_j$ we glue $U_{j,i-1}\setminus \{u_{j,i-1}=0\}$ with
$U_{j,i}\setminus \{v_{j,i}=0\}$.
The gluing maps are
$v_{j,i}=u^{-1}_{j,i-1}$ $(1\leq i\leq s_j)$ and
$u_{j,i}$ equals $u^{b_{ji}}_{j,i-1}v_{j,i-1}$ for $2\leq i\leq s_j$ and
$u^{b_{j1}}_{j,0}(v_{j,0}-p_j)$ for $i=1$.

Furthermore, all $U_{j,0}$ charts will be identified to each other:
$u_{j,0}=u_{k,0},\ v_{j,0}=v_{k,0}$; denoted simply by $U_0$, with coordinates
$(u_0,v_0)$. Till now, the curve $E_{v_0}$ appears only in $U_0$, it has equation $u_0=0$.
To cover $E_{v_0}$ completely we need another copy $U_{-1}$ of $\C^2$ with coordinates
$(u_{-1},v_{-1})$ as well; the gluing of $U_0\setminus \{v_0=0\}$ with $U_{-1}
\setminus \{u_{-1}=0\}$ is  $v_0=u_{-1}^{-1}$, $u_0=u_{-1}^{b_0}v_{-1}$.

\begin{picture}(300,135)(-70,-65)
\put(144,35){\makebox(0,0)[r]{$E_{j,3}$}}\put(260,35){\makebox(0,0)[r]{$E_{j,s_j}$}}
\put(263,45){\makebox(0,0){$v_{j,s_j}$}}\put(292,39){\makebox(0,0){$u_{j,s_j}$}}
\put(280,50){\vector(-1,-1){10}}\put(270,50){\vector(1,-1){10}}
\put(50,50){\line(1,-1){40}}
\put(80,10){\line(1,1){40}}\put(110,50){\line(1,-1){40}}
\put(140,10){\line(1,1){20}}\put(200,30){\line(1,1){20}}
\put(180,30){\makebox(0,0){$\ldots$}}\put(210,50){\line(1,-1){40}} \put(240,10){\line(1,1){40}}
\put(60,50){\vector(-1,-1){10}}
\put(60,50){\line(-1,-1){100}}
\put(50,50){\vector(1,-1){10}}
\put(29,41){\makebox(0,0){$v_{j,0}-p_j$}}\put(69,43){\makebox(0,0){$u_{j,0}$}}
\put(90,10){\vector(-1,1){10}}\put(80,10){\vector(1,1){10}}
\put(73,17){\makebox(0,0){$v_{j,1}$}}\put(99,15){\makebox(0,0){$u_{j,1}$}}
\put(120,50){\vector(-1,-1){10}}
\put(102,43){\makebox(0,0){$v_{2,j}$}}

\put(20,20){\makebox(0,0){$\ldots$}}

\put(95,-15){\makebox(0,0)[r]{$E_{k,3}$}}\put(210,-15){\makebox(0,0)[r]{$E_{k,s_k}$}}
\put(212,-5){\makebox(0,0){$v_{k,s_k}$}}\put(242,-11){\makebox(0,0){$u_{k,s_k}$}}
\put(230,0){\vector(-1,-1){10}}\put(220,0){\vector(1,-1){10}}
\put(0,0){\line(1,-1){40}}
\put(30,-40){\line(1,1){40}}\put(60,0){\line(1,-1){40}}
\put(130,-20){\makebox(0,0){$\ldots$}}\put(160,0){\line(1,-1){40}} \put(190,-40){\line(1,1){40}}
\put(10,0){\vector(-1,-1){10}}
%\put(60,50){\line(-1,-1){70}}
\put(90,-40){\line(1,1){20}}\put(150,-20){\line(1,1){20}}
\put(0,0){\vector(1,-1){10}}
\put(-21,-9){\makebox(0,0){$v_{k,0}-p_k$}}\put(19,-7){\makebox(0,0){$u_{k,0}$}}
\put(40,-40){\vector(-1,1){10}}\put(30,-40){\vector(1,1){10}}
\put(23,-33){\makebox(0,0){$v_{k,1}$}}\put(49,-35){\makebox(0,0){$u_{k,1}$}}
\put(70,0){\vector(-1,-1){10}}
\put(52,-7){\makebox(0,0){$v_{k,j}$}}

\put(-40,-50){\vector(1,1){10}}
\put(-30,-50){\vector(-1,1){10}}
\put(-50,-45){\makebox(0,0){$v_{-1}$}}\put(-20,-45){\makebox(0,0){$u_{-1}$}}
\end{picture}

We call the output space $\widetilde{X^a}$. If we contract (analytically)
$E=E_{v_0}\cup (\cup_{j,i}E_{ji})$ we get a space $X^a$ whose  germ at its singular point
is a  normal surface singularity $(X_{pl},o)$. In this context, a resolution $\widetilde{X_{pl}}$
of  $(X_{pl},o)$
(as a subset of $\widetilde{X^a}$) is   the pullback of a small Stein neighbourhood of $o$.
The following statement is proved in \cite{NBOOK}; basically it
follows from the Analytic Classification Theorem \ref{th:ACTh} and
from the fact that if we blow down the legs the obtained space carries naturally a Seifert
line bundle structure over the projective line.

\begin{proposition}\label{lem:SpPl}
 The analytic structure on $(X_{pl},o)$ carries  a weighted homogeneous
 structure.
 Moreover, the minimal good resolution of any
weighted homogeneous singularity with
Seifert invariants  $(b_0,g=0;\{(\alpha_j,\omega_j)\}_j)$
admits such an analytic plumbing representation for
certain constants $\{p_j\}_j$ (that is, it can be embedded in some
$\widetilde{X^a}$ constructed above via plumbing).
By Theorem \ref{th:ACTh} we can even assume that each $p_j$ is non--zero
(what we will assume below).
\end{proposition}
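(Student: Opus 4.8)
The plan is to prove the two assertions in turn: first that the germ $(X_{pl},o)$ obtained by contracting $E$ in $\widetilde{X^a}$ carries a good $\C^*$--action, and then that every weighted homogeneous germ with the prescribed Seifert invariants is of this form for a suitable choice of the constants $\{p_j\}$.

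For the first assertion I would begin by contracting the legs. For each $j$ the chain $E_{j1}-\cdots-E_{js_j}$ with self--intersections $-b_{j1},\ldots,-b_{js_j}$, $b_{ji}\geq 2$, has negative definite intersection form --- it is the resolution graph of the cyclic quotient singularity attached to the continued fraction $[b_{j1},\ldots,b_{js_j}]=\alpha_j/\omega_j$ --- so by Grauert's contractibility criterion the union of all legs contracts in $\widetilde{X^a}$ to $\nu$ cyclic quotient points $Q_1,\ldots,Q_\nu$ sitting on $E_{v_0}\cong\bP^1$ at the points $P_j$ (the point $\{u_{-1}=v_{-1}=0\}$ of $E_{v_0}$ stays smooth). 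On the complement of $\{Q_j\}$ and of the zero section $E_{v_0}$ the resulting surface is, by inspection of the (monomial) transition maps of the construction, the total space of a $\C^*$--bundle over $\bP^1\setminus\{P_j\}$, and near each $Q_j$ it acquires the standard Seifert structure; assembling these pieces identifies it with a negative Seifert $\C^*$--bundle (negative orbifold line bundle) over the orbifold $\bP^1$ with data $\{(\alpha_j,\omega_j)\}_j$ and orbifold Euler number $e=-b_0+\sum_j\omega_j/\alpha_j<0$. Equivalently, the fibrewise scaling lifts to $\widetilde{X^a}$ as the monomial $\C^*$--action given on $U_0$ by $t\cdot(u_0,v_0)=(t^{m_0}u_0,v_0)$ for a suitable positive integer $m_0$, and extended by the gluing formulae of the construction to $U_{-1}$ and to each $U_{j,i}$ (the shifts $v_{j,0}\mapsto v_{j,0}-p_j$ cause no trouble since $v_0$ is $\C^*$--invariant); one checks routinely that this is consistent on all overlaps and that the resulting weights have the sign pattern of a good action. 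Finally, $E_{v_0}$ has negative orbifold self--intersection $e<0$, so its contraction is a normal surface singularity carrying a good $\C^*$--action --- hence weighted homogeneous --- and the classical description of the blow--down of a negative Seifert bundle (Pinkham, Dolgachev, Orlik--Wagreich, see \cite{pinkham,OW3,Dol2}) identifies it as a weighted homogeneous singularity with Seifert invariants $(b_0,0;\{(\alpha_j,\omega_j)\}_j)$, with $\widetilde{X_{pl}}$ its plumbing neighbourhood.

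For the second assertion, let $(X,o)$ be weighted homogeneous with Seifert invariants $(b_0,0;\{(\alpha_j,\omega_j)\}_j)$ and let $\widetilde{X}\to X$ be its minimal good resolution; its dual graph is the star--shaped $\Gamma$ with these decorations (cf. \cite{neumann,OW3}), so $E_{v_0}\cong\bP^1$ carries the $\nu$ marked intersection points $P_1,\ldots,P_\nu$ with the legs. Choose an isomorphism $E_{v_0}\cong\bP^1$ carrying the $P_j$ to pairwise distinct finite points $p_1,\ldots,p_\nu$ --- possible since ${\rm Aut}(\bP^1)$ is $3$--transitive and $\nu$ points cannot exhaust $\bP^1$ --- and build $\widetilde{X^a}$ from these constants. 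Then $(X_{pl},o)$ has the same Seifert invariants and the same analytic datum $(E_{v_0},\{P_j\}_j)$ as $(X,o)$, so by the Analytic Classification Theorem \ref{th:ACTh} the germs are analytically isomorphic; since $b_{ji}\geq 2$ and $\nu\geq 3$ the graph $\Gamma$ is already minimal good, so by uniqueness of the minimal good resolution $\widetilde{X}\cong\widetilde{X_{pl}}$, embedded in $\widetilde{X^a}$ as the plumbing neighbourhood. To arrange $p_j\neq 0$ for all $j$, replace $\{p_j\}$ by $\{p_j+c\}$ for any $c\notin\{-p_1,\ldots,-p_\nu\}$: the translation $v_0\mapsto v_0+c$ is an automorphism of $E_{v_0}$ fixing the point at infinity used in the construction, so by \ref{th:ACTh} the plumbing built from $\{p_j+c\}$ represents the same singularity.

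The main obstacle is the first assertion, specifically the monomial bookkeeping showing that the charts $U_{-1},U_0,\{U_{j,i}\}$ of the construction do glue, after contracting the legs, into a Seifert $\C^*$--bundle with exactly the invariants $(b_0,0;\{(\alpha_j,\omega_j)\}_j)$ --- including the computation of the orbifold Euler number --- together with the verification that the lifted $\C^*$--action is a genuine good action compatible with the contraction of $E_{v_0}$; these computations are carried out in \cite{NBOOK}. Granting this, the second assertion is a short formal consequence of Theorem \ref{th:ACTh} and the uniqueness of the minimal good resolution.
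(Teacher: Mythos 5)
Your proposal is correct and follows essentially the same route as the paper, which itself only sketches the argument (blow down the legs to obtain a Seifert line bundle over $\bP^1$, then invoke the Analytic Classification Theorem \ref{th:ACTh}) and defers the computational details to \cite{NBOOK}. Your write-up merely fleshes out that sketch, deferring the same monomial bookkeeping to the same reference.
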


The $\C^*$ orbits lifted to $\widetilde{X^a}$ and closed are as follows:
the generic ones, which intersect $E_{v_0}$ sit in $U_0\cup U_{-1}$ and
are given by $\{v_0=c\}$,
$c\in(\C\setminus \{\cup_j\{p_j\}\})\cup \infty$. The special
Seifert orbit for each $j$ in $U_{j,s_j}$ is
given by $\{v_{j,s_j}=0\}$.

In the sequel we will identify our weighted homogeneosu germ $(X,o)$ with such $(X_{pl},0)$.

For each $j$ we also introduce $0<\omega_j'<\alpha_j$ such that
$\omega_j\omega_j'-1= \alpha_j\tau_j$ for some $\tau_j$.

\subsection{A basis for $H^0(\tX\setminus E,\Omega^2_{\tX})/
H^0(\tX,\Omega^2_{\tX})$} \label{ss:whbasis}
For $\ell,\, \{m_j\}_j\in \Z$, $n\in\Z_{\geq 0}$,
let  $\omega_{\ell,n}^{0}:= u_0^{-\ell-1}\prod_j
(v_0-p_j)^{-m_j}v_0^ndv_0\wedge du_0$ be a section of
$\Omega^2_{\tX}$ over $U_0$, with possible poles over $E\cap U_0$.
This under the
transformation $v_0=u_{-1}^{-1}$, $u_0=u_{-1}^{b_0}v_{-1}$ transforms
into the following form on $U_{-1}$:
$$\pm\,u_{-1}^{-b_0\ell+\sum m_j-n-2}
v_{-1}^{-\ell-1}\textstyle{\prod_j} (1-u_{-1}p_j)^{-m_j}du_{-1}
\wedge dv_{-1}.$$
The regularity over $\tX\setminus E$
requires that the exponent of $u_{-1}$ should be non-negative:
\begin{equation}\label{eq:whforms1}
n\leq - b_0\ell-2 +\textstyle{\sum_j}m_j.
\end{equation}
Let is fix one of the legs, say $j$.
By induction using substrings of the legs and the corresponding continued
fraction identities (facts used intensively in cyclic quotient
invariants computations) one gets that the transformation between chart $U_0$ and
$U_{j,s_j}$ is $u_0=u_{j,s_j}^{-\tau_j} v_{j,s_j}^{-\omega_j}$,
\marginpar{{\bf CITE}}
$v_0=u_{j,s_j}^{\omega_j'}v_{j,s_j}^{\alpha_j}$.
Then, $\omega_{\ell,n}^{0}$ in the chart $U_{j,s_j}$ under this  transformation
 becomes
$$
u_{j,s_j}^{\tau_j\ell -\omega_j'm_j+\omega_j'-1}
v_{j,s_j}^{\omega_j\ell -\alpha_jm_j+\alpha_j-1}
(u_{j,s_j}^{\omega_j'} v_{j,s_j}^{\alpha_j}+p_j)^n \cdot
\textstyle{\prod _{j'\not=j}}
(u_{j,s_j}^{\omega_j'} v_{j,\s_j}^{\alpha_j}+p_{j'}-p_j)^{-m_j}\, dv_{j,s_j}
\wedge du_{j,s_j}.
$$
Again, by the regularity along $\tX\setminus E$, the exponent of
$v_{j,s_j}$ should be non-negative, hence
$\omega_j\ell -\alpha_jm_j+\alpha_j-1\geq 0$.
The largest solution for $m_j$ is
\begin{equation}\label{eq:whforms2}
m_j=\ce { \omega_j\ell/\alpha_j}.
\end{equation}
Hence, the form $\omega_{\ell,n}^{0}$ extends to a form $\omega_{\ell,n}$
on $\tX$,
regular on $\tX\setminus E$, if for $m_j:=\ce{ \omega_j\ell/\alpha_j}$
as in (\ref{eq:whforms2}) (for all $j$)
the inequality (\ref{eq:whforms1}) holds.
If $\ell< 0$ then $m_j=\ce{ \omega_j\ell/\alpha_j}\leq 0$, hence
the form $\omega_{\ell,n}$ is regular on  $\tX$, and in
$H^0(\tX\setminus E,\Omega^2_{\tX})/
H^0(\tX,\Omega^2_{\tX})$ it is zero. Hence, we can consider only the values
$\ell \geq 0$. For them  we set  as a combination of the right hand
side of (\ref{eq:whforms1}) and (\ref{eq:whforms2})
$$n_{\ell}:= - b_0\ell-2 +\textstyle{\sum_j}\ce{ \omega_j\ell/\alpha_j}.$$
If $n_{\ell}<0$ then there is no such form with pole $\ell+1$
along $E_{v_0}$, cf. (\ref{eq:whforms1}).
Set $\calw:=\{\ell\geq 0\,:\, n_{\ell}\geq 0\}$.

\begin{lemma}\label{lem:whforms} \ \cite{NBOOK}
The forms $\omega_{\ell,n}$ ($\ell\in\calw$, $0\leq n\leq n_{\ell}$)
form a basis of $H^0(\tX\setminus E,\Omega^2_{\tX})/
H^0(\tX,\Omega^2_{\tX})$.
\end{lemma}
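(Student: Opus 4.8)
The plan is to prove the lemma in two stages: first linear independence of the forms $\omega_{\ell,n}$, which is elementary; then spanning, for which the dimension count $\dim\big(H^0(\tX\setminus E,\Omega^2_{\tX})/H^0(\tX,\Omega^2_{\tX})\big)=p_g$ coming from Laufer duality (\ref{eq:LD}), together with the $\bC^*$--equivariance of the whole situation, does the work.

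For the first stage I would begin by checking that each $\omega_{\ell,n}$ with $\ell\in\calw$ and $0\le n\le n_\ell$ really is a nonzero element of the quotient. The computation preceding the statement already shows that, with $m_j=\ce{\omega_j\ell/\alpha_j}$, the local form $\omega^0_{\ell,n}$ extends holomorphically over $\tX\setminus E$ on the chart $U_{-1}$ (this is precisely the inequality $n\le n_\ell$) and on each end chart $U_{j,s_j}$ of the $j$--th leg (this is precisely $m_j=\ce{\omega_j\ell/\alpha_j}$); regularity on the interior leg charts $U_{j,i}$, $1\le i<s_j$, then follows by a routine monomial computation along the strings, using the continued--fraction identities $\alpha_j/\omega_j=[b_{j1},\dots,b_{js_j}]$. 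Since $\ell\ge 0$, the form has a genuine pole of order $\ell+1$ along $E_{v_0}$, hence is not the restriction of a holomorphic $2$--form and is nonzero in the quotient. For linear independence I would assume a relation $\sum_{\ell,n}c_{\ell,n}\omega_{\ell,n}$ is holomorphic on $\tX$, restrict to $U_0$, and read off the principal part in the variable $u_0$: every summand contributes, since its $u_0$--order equals $-\ell-1\le -1$, so in $\bC(v_0)[u_0,u_0^{-1}]$ one obtains
\[
\sum_{\ell,n} c_{\ell,n}\, u_0^{-\ell-1} v_0^{\,n}\prod_j (v_0-p_j)^{-\ce{\omega_j\ell/\alpha_j}}=0 .
\]
Collecting the coefficient of each power $u_0^{-\ell-1}$ and cancelling the nonzero rational function $\prod_j(v_0-p_j)^{-\ce{\omega_j\ell/\alpha_j}}$ forces $\sum_n c_{\ell,n}v_0^{\,n}=0$ for each $\ell$, hence all $c_{\ell,n}=0$; the same computation shows no nonzero combination of the $\omega_{\ell,n}$ is holomorphic.

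For the second stage I would exploit the good $\bC^*$--action: $v_0$ is $\bC^*$--invariant and $u_0$ a semi--invariant, so the (finite--dimensional, by Laufer) space $H^0(\tX\setminus E,\Omega^2_{\tX})/H^0(\tX,\Omega^2_{\tX})$ splits into weight spaces, and the weight decomposition of a representing form is exactly its expansion $\sum_\ell g_{-\ell-1}(v_0)\,u_0^{-\ell-1}\,dv_0\wedge du_0$ in $U_0$ modulo holomorphic terms — in particular each homogeneous piece with $\ell\ge 0$ is again a global regular form on $\tX\setminus E$. Imposing regularity of such a piece on $U_{-1}$ forces $g_{-\ell-1}$ to be a polynomial, and imposing regularity on the charts $U_{j,s_j}$ forces it to be of the form $\prod_j(v_0-p_j)^{-\ce{\omega_j\ell/\alpha_j}}$ times a polynomial in $v_0$, with the $U_{-1}$ bound then leaving a space of dimension exactly $n_\ell+1$ (and empty when $n_\ell<0$, i.e. $\ell\notin\calw$); this identifies every weight piece as a linear combination of $\omega_{\ell,0},\dots,\omega_{\ell,n_\ell}$ and holomorphic forms, proving spanning. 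Equivalently, one may finish by a pure dimension count: the same bookkeeping carried out on the central $\bP^1$ reproduces Pinkham's formula $p_g=\sum_{\ell\in\calw}(n_\ell+1)$, which is the number of forms $\omega_{\ell,n}$; combined with the linear independence already established and $\dim(\cdot)=p_g$, this forces them to be a basis.

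The step I expect to be the real obstacle is the bookkeeping in the second stage: showing that the two "boundary" regularity conditions (on $U_{-1}$ and on the $U_{j,s_j}$) are the \emph{only} constraints — i.e. that regularity along all the interior leg charts $U_{j,i}$ is automatic — and that for each admissible $\ell$ the remaining freedom in $g_{-\ell-1}$ is exactly $(n_\ell+1)$--dimensional. This is precisely where the continued--fraction data of the legs must be used carefully; once the $\bC^*$--weight decomposition is in place, everything else is formal.
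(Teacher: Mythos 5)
Your proposal is correct and follows essentially the same route as the paper: both arguments rest on the construction of the forms $\omega_{\ell,n}$ via the regularity conditions (\ref{eq:whforms1})--(\ref{eq:whforms2}), Pinkham's formula $p_g=\sum_{\ell\in\calw}(n_\ell+1)$, and the identification of the dimension of the quotient with $p_g$ via Laufer duality (\ref{eq:LD}). The only cosmetic difference is that you verify linear independence directly from the principal parts in $u_0$ and deduce spanning from the count, while the paper asserts generation from the construction and deduces independence from the count.
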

\begin{proof}
By their construction, the forms generate $H^0(\tX\setminus E,\Omega^2_{\tX})/
H^0(\tX,\Omega^2_{\tX})$. But,
by the $p_g$--formula of Pinkham \cite{pinkham},
namely  $p_g=\sum_{\ell\in\calw} (n_{\ell}+1)$,
their number is exactly $p_g$, the dimension of this quotient space.
\end{proof}
\begin{remark}\label{rem:POLE}
$n_0=-2$, hence the $E_{v_0}$--pole of any $\omega_{\ell,n}$ is $\geq 2$.
\end{remark}

\subsection{Natural line bundles}\label{ss:whNLB}
Let $\widetilde{X^a}$ be  as above, let $O^a $ be the
closure of a  lifted $\C^*$ orbit into $\widetilde{X^a}$, and set
 $O:=O^a \cap \tX\subset \tX$.
\begin{theorem}\label{th:whNLB}
$\calO_{\tX}(O)$ is a natural line bundle in $\pic(\tX)$.
\end{theorem}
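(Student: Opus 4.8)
The plan is to use the characterization of natural line bundles recalled in \S\ref{ss:analinv}: $\calL\in\pic(\tX)$ is natural if and only if $\calL^{\otimes n}\cong\calO_{\tX}(l)$ for some $n\geq 1$ and some $l\in L$; equivalently, $\calL$ lies in the image of the group-homomorphism section $s$ of $c_1$. I would first record the (purely formal) ``closedness under roots'' of this property: if $\calL^{\otimes n}$ is natural and $l':=c_1(\calL)$, then $\calL^{\otimes n}=s(nl')=s(l')^{\otimes n}$ (uniqueness of a natural bundle with prescribed Chern class, plus additivity of $s$), so $\calL\otimes s(l')^{-1}$ is an $n$-torsion element of $\pic^0(\tX)=H^1(\calO_{\tX})\cong\bC^{p_g}$, which is torsion free; hence $\calL=s(l')$ is natural. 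So it suffices to find \emph{one} integer $n\geq 1$ with $\calO_{\tX}(nO)\cong\calO_{\tX}(l)$, $l\in L$; equivalently, to exhibit a meromorphic function on $\tX$ (a rational function on the germ $(X,o)$) whose divisor on $\tX$ equals $nO+l$ with $l\in L$; equivalently again, that some multiple $n\overline{O}$ of the image curve $\overline{O}=\phi_{\mathrm{res}}(O)\subset X$ is a principal Weil divisor on $(X,o)$.

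Next I would extract the local geometry of $O$ from the plumbing model of \S\ref{ss:WHprel}. A closed lifted $\bC^{*}$-orbit meets $E$ transversally at a single \emph{smooth} point: either a generic orbit $O_{c}=\{v_{0}=c\}$ (with $c\in(\bC\setminus\{p_{j}\}_{j})\cup\{\infty\}$), meeting the central curve $E_{v_{0}}$, or the special Seifert orbit of the $j$-th leg, which is the closure of $\{v_{0}=p_{j}\}$ and meets the end curve $E_{j,s_{j}}$ at the origin of $U_{j,s_{j}}$. Hence $(O\cdot E_{w})=\delta_{v^{*}w}$, where $v^{*}$ is the vertex whose curve is hit by $O$, so $c_{1}(\calO_{\tX}(O))=-E^{*}_{v^{*}}$, and the target of the naturality statement is the single natural bundle $s(-E^{*}_{v^{*}})=\calO_{\tX}(-E^{*}_{v^{*}})$.

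Then I would invoke the weighted-homogeneous structure to build the required function. By Pinkham's description of the graded coordinate ring \cite{pinkham}, compatible with the plumbing model and with the Analytic Classification Theorem \ref{th:ACTh}, one has $\calO_{X^{a}}=\bigoplus_{k\geq 0}H^{0}(\bC\bP^{1},\calO(\lfloor kD\rfloor))$ for a $\bQ$-divisor $D$ on $\bC\bP^{1}$ of positive degree, the Seifert base $\bC\bP^{1}$ being the central curve $E_{v_{0}}$ with orbifold points at the $P_{j}$; moreover every orbit closure $\overline{O}$ is the orbit closure over a point $q\in\bC\bP^{1}$. Since $\deg D>0$, for $N\gg 0$ the line bundle $\calO(\lfloor ND\rfloor)$ has positive degree and thus admits a section vanishing only at $q$; the corresponding semi-invariant $F\in\calO_{X^{a}}$ satisfies $\mathrm{div}_{X^{a}}(F)=m\,\overline{O}$ for some integer $m\geq 1$ (a multiple of the local orbifold index when $q$ is some $P_{j}$). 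Pulling back by the resolution, $\mathrm{div}_{\tX}(F)=mO+l_{F}$ with $l_{F}\in L$ (in fact $l_{F}\in L_{>0}$, since $F\in\calO_{X,o}$ vanishes at $o$). Therefore $\calO_{\tX}(mO)\cong\calO_{\tX}(-l_{F})$ is natural, and by the root argument of the first paragraph $\calO_{\tX}(O)$ is natural. (Conceptually this is the statement that for a weighted-homogeneous germ with rational homology sphere link the class group $\mathrm{Cl}(X^{a})\cong H_{1}(M)$ is finite, because the Seifert base has genus $0$, so $[\overline{O}]$ is automatically torsion.)

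The main obstacle is precisely this middle step: producing a semi-invariant function whose $X^{a}$-divisor is an honest multiple of the \emph{single} orbit closure $\overline{O}$, rather than a combination also involving other orbit closures. This is where the rational homology sphere hypothesis enters essentially, via genus $0$ of the Seifert base: it is exactly what forces $[\overline{O}]$ to be torsion, whereas over a higher-genus base an orbit closure would carry a non-torsion Jacobian class and $\calO_{\tX}(O)$ need not be natural. A secondary, routine point is to carry the divisor computation through the chain of plumbing charts (one may equally well work directly in the charts, tracking $\mathrm{div}_{\tX}$ of $v_{0}-c$, $v_{0}-p_{j}$, $u_{-1}$ via the continued-fraction gluing of \S\ref{ss:WHprel}); no precise description of the exceptional cycle $l_{F}$ is needed, only that $l_{F}\in L$.
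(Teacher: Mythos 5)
Your proposal is correct, and while it reduces to the same target as the paper's proof --- exhibiting a function $f_O$ on $(X,o)$ with $\mathrm{div}_{\tX}(f_O\circ\phi)=n_O O+\sum_v n_vE_v$, so that $\calO_{\tX}(n_OO)\cong\calO_{\tX}(-\sum_vn_vE_v)$ and naturality follows from the paper's own criterion ``some power has the form $\calO_{\tX}(l)$, $l\in L$'' (your torsion-freeness argument for $\pic^0\cong\bC^{p_g}$ just re-derives that criterion) --- the way you produce $f_O$ is genuinely different. The paper goes through the splice quotient formalism: the universal abelian cover is a Brieskorn complete intersection (Neumann), powers of its coordinate functions serve as end curve functions for the orbits at end vertices, and for a generic orbit one augments the Brieskorn system by one more generic equation, reinterprets it as a splice quotient for the graph blown up at the central vertex, and uses the new end curve function. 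You instead use the Dolgachev--Pinkham grading $\calO_{X^a}=\bigoplus_k H^0(\bC\bP^1,\calO(\lfloor kD\rfloor))$ with $\deg D>0$ and the fact that on $\bP^1$ a line bundle of positive degree has a section vanishing at a single prescribed point; this treats end orbits and generic orbits uniformly, is more elementary (only Riemann--Roch on $\bP^1$ and the orbit-by-orbit divisor bookkeeping), and makes transparent where the rational homology sphere hypothesis enters (genus $0$ of the Seifert base forces $[\overline{O}]$ to be torsion in the class group). What the paper's route buys is coherence with the rest of the article --- the splice quotient/end-curve machinery (ECC, UAC eigensheaves) is used elsewhere, and the argument exhibits the end curve functions explicitly as powers of coordinates of the Brieskorn complete intersection, which is information the later sections exploit. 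Your only slightly glossed point is the multiplicity bookkeeping at an orbifold point $P_j$ (the relation between the vanishing order of the section of $\calO(\lfloor ND\rfloor)$ at $P_j$ and the coefficient $m$ of $\overline{O}$ in $\mathrm{div}_{X^a}(F)$ involves $\alpha_j$), but since you only need $m\geq 1$ and $l_F\in L$, this does not affect the conclusion.
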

\begin{proof}
For each $O=O^a\cap \tX$ we find a local analytic function $f_O:(X,o)\to (\C,0)$
such that the divisor in $\tX$ of $f_O\circ \phi$ has the form
$n_OO+\sum_vn_vE_v$ for some $n_O,n_v\in \Z_{>0}$. This implies that
$\calO_{\tX}(n_OO)\simeq \calO_{\tX}(-\sum_vn_vE_v)$.
In order to find $f_O$  we use the fact that
a weighted homogeneous germ is splice quotient \cite{NWsqI,NWsq}.
In fact, by \cite{Neu}, the universal abelian cover (UAC) of $(X,o)$
is a Brieskorn complete intersection and
certain powers of the  coordinate functions of this complete intersection
are the end curve functions of $(X,o)$ which have  the wished
properties for the orbits supported by the
end vertices.

A similar argument clarifies the case of the other (generic) orbits as well.
 Let the UAC complete intersection equations be
 $\sum_{i,j}a_{i,j}z^{\alpha_j}=0$, $1\leq j\leq \nu$, and
$1\leq i\leq \nu-2$, and where $\{a_{i,j}\}_{i,j} $ has full rank,
cf. \cite{Neu}. Then we add one more equation
of type $\sum_ib_iz^{\alpha_i}+w=0$, such that the new larger matrix
has again full rank. The new system corresponds
to a splice quotient equations of the graph $\Gamma'$ obtained from
$\Gamma$ by blowing up the central vertex.
The point is that the resolution with dual graph $\Gamma'$ of this
splice quotient singularity associated with
$\Gamma'$ can be obtained from $\tX$ by blowing up a  certain point
$P\in E_{v_0}\setminus \cup_jP_j$.
$P$ is determined by the choice of the coefficients $\{b_i\}_i$, and
modification of the  $\{b_i\}_i$'s  provides different points $P$.
By the theory of splice quotient singularities, the end curve function
$w=0$ cuts out an end curve
in the UAC, which projected on $(X,o)$ is irreducible. Hence
$\sum_ib_iz^{\alpha_i}$ is a weighted homogeneous
function on the UAC of $(X,o)$, one of its powers is a homogeneous
function on $(X,o)$ whose reduced
zero set is irreducible. Its strict transform is some $O$,
where $O\cap E_{v_0}$ depends on the choice of
$\{b_i\}_i$. In particular all such orbits define the same line bundle,
the natural line bundle.
\end{proof}

\subsection{The Abel maps, $h^1(Z,\calL)$ and $\dim \im (c^{l'})$
for different line bundles and $l'$ }\label{ss:whAbel1} \

In the sequel we fix  a cycle $Z$: for simplicity we assume that
$Z\gg 0$, e.g. in the numerical Gorenstein case we can take $Z=Z_K$, or,
in general, $Z\in Z_K+\calS'$ (in which cases for any $\calL\in \pic(\tX)$ with
$c_1(\calL)\in -\calS'$ one has $h^1(\tX,\calL)=h^1(Z,\calL|_Z)$).
In this case we will use all the differential forms $\omega_{\ell,n}$
constructed above.
The interested reader might rewrite the statements  and proofs below
for smaller cycles (using forms a similar system of forms  with poles
$\leq Z$).

\subsection{The value $h^1(Z,\calO_Z(-kE^*_{v_0}))$, $k\geq 1$.}\label{ss:h1egy}\
Consider the natural line bundle $\calO_Z(-E^*_{v_0})$. If $O_q$ denotes the
intersection of the generic $\C^*$--orbit with $\tX$, $O_q\cap E_{v_0}=\{q\}$
(where the intersection point $q$ can be identified with the $v_0$--
affine coordinate in $U_0$), then by Theorem \ref{th:whNLB}
$\calO_Z(-E^*_{v_0})=\calO_Z(O_q)$ for any $q\in E_{v_0}\setminus \cup_j E_{j,1}$.
Recall that $O_q$ in
the chart $U_0$ is given by $\{v_0=q\}$.
For $k$ distinct orbits $O_{q_1},\ldots, O_{q_k}$  we apply
Corollary \ref{cor:Res2}. The restrictions are of type
$$\sum_{n=0}^{n_\ell}a_{\ell,n}q_i^n=0, \ \ \mbox{for all $\ell\in\calw$ and
$1\leq i\leq k$}.$$
\begin{proposition}\label{prop:whh1a} With the above notations,
the number of independent relations
(or,  $p_g-h^1(Z, \calO(-kE^*_{v_0}))$, cf. Corollary \ref{cor:Res2})
is $\sum_{\ell\in\calw} \min\{n_{\ell}+1,  k\}$. Hence
$$h^1(Z, \calO(-kE^*_{v_0}))=\sum_{\ell\in \calw} \max\{ 0, n_{\ell}+1-k\}.$$
\end{proposition}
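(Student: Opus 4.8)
The plan is to apply Corollary \ref{cor:Res2} directly to the concrete situation of $k$ generic $\C^*$--orbits and the explicit basis of differential forms provided by Lemma \ref{lem:whforms}. First I would fix $k$ distinct generic points $q_1,\ldots,q_k\in E_{v_0}\setminus\bigcup_j E_{j,1}$ and the corresponding orbits $O_{q_1},\ldots,O_{q_k}$, whose union is (the $\tX$--trace of) a generic divisor $\widetilde D\in\eca^{-kE^*_{v_0}}(\tX)$ by Theorem \ref{th:whNLB}. Then I would compute, for each basis form $\omega_{\ell,n}$ (with $\ell\in\calw$, $0\le n\le n_\ell$), its Leray residue along each $O_{q_i}$. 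Since $O_{q_i}$ has local equation $v_0-q_i$ in the chart $U_0$, and $\omega_{\ell,n}=u_0^{-\ell-1}\prod_j(v_0-p_j)^{-m_j}v_0^n\,dv_0\wedge du_0$ with $\ell+1\ge 1$ the pole order in $u_0$, the residue ${\rm Res}_{O_{q_i}}(\omega_{\ell,n})$ is obtained by restricting the coefficient of $dv_0$ to $v_0=q_i$; the pole--vanishing condition along $O_{q_i}$ (cf. Definition \ref{not:residue} and Corollary \ref{cor:Res2}) becomes exactly the vanishing of the relevant Laurent coefficient, which for a linear combination $\sum_{\ell,n}a_{\ell,n}\omega_{\ell,n}$ amounts to the system $\sum_{n=0}^{n_\ell}a_{\ell,n}q_i^n=0$ for all $\ell\in\calw$ and all $i=1,\ldots,k$.

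The key observation is that this system decouples into $|\calw|$ independent blocks, one for each $\ell\in\calw$, each block involving only the variables $(a_{\ell,0},\ldots,a_{\ell,n_\ell})$ and having matrix the $k\times(n_\ell+1)$ Vandermonde--type matrix $(q_i^n)_{1\le i\le k,\,0\le n\le n_\ell}$. Because the $q_i$ are distinct (and generic), the rank of each block is $\min\{k,n_\ell+1\}$. Hence the total number of independent relations among the $(a_{\ell,n})$ is $\sum_{\ell\in\calw}\min\{n_\ell+1,k\}$, which by Corollary \ref{cor:Res2} equals $p_g-h^1(Z,\calO_Z(-kE^*_{v_0}))$. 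Using Pinkham's formula $p_g=\sum_{\ell\in\calw}(n_\ell+1)$ (already invoked in the proof of Lemma \ref{lem:whforms}), I would then get
\[
h^1(Z,\calO_Z(-kE^*_{v_0}))=\sum_{\ell\in\calw}(n_\ell+1)-\sum_{\ell\in\calw}\min\{n_\ell+1,k\}=\sum_{\ell\in\calw}\max\{0,\,n_\ell+1-k\},
\]
as claimed.

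Two points need care, and they are the only real obstacles. First, I must check that Corollary \ref{cor:Res2} applies with $\widetilde D$ a disjoint union of transversal cuts at smooth points of $E$: a generic orbit $O_q$ does meet $E_{v_0}$ transversally at a single smooth point of $E$ (it avoids the nodes $P_j$ by the choice $q\notin\bigcup_j E_{j,1}$), so this hypothesis is satisfied, and distinctness of the $q_i$ gives the disjointness. Second, and slightly more delicate, I should verify that the residue computation indeed only involves the single coordinate $q_i$ in the way indicated — i.e. that in the local chart $U_0$ the factor $\prod_j(v_0-p_j)^{-m_j}v_0^n$ restricted to $v_0=q_i$ is a nonzero constant (true for generic $q_i$, since $q_i\ne p_j$ and $q_i\ne 0$), so it does not alter the rank of the Vandermonde block. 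The genericity assumption on $Z$ (e.g. $Z\gg 0$) guarantees, via Remark \ref{rem:POLE} and the discussion in \ref{ss:whAbel1}, that all the forms $\omega_{\ell,n}$ have poles bounded by $Z$, so the basis of Lemma \ref{lem:whforms} is legitimately a basis of $H^0(\tX,\Omega^2_{\tX}(Z))/H^0(\tX,\Omega^2_{\tX})\simeq H^1(Z,\calO_Z)^*$. With these checks in place the proof is essentially the linear--algebra computation above; the main conceptual content is recognizing the Vandermonde block structure.
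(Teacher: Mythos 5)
Your argument is correct and is essentially the paper's own proof: the paper derives the same system $\sum_{n}a_{\ell,n}q_i^n=0$ from Corollary \ref{cor:Res2} in the preceding discussion and then concludes via Pinkham's formula $p_g=\sum_{\ell\in\calw}(n_\ell+1)$, exactly as you do. You merely spell out the Vandermonde block-rank computation (and the harmless nonzero constant $\prod_j(q_i-p_j)^{-m_j}$) that the paper leaves implicit; note only that distinctness of the $q_i$ already gives the rank $\min\{k,n_\ell+1\}$, so the extra condition $q_i\ne 0$ is not needed.
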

\begin{proof} Use the previous discussion and
 $p_g=\sum_{\ell\in \calw} n_{\ell}+1$. \end{proof}

E.g., for $k=1$  the number of independent relations is
$\#\calw$ and $h^1(Z, \calO(-E^*_{v_0}))=p_g-\#\calw$.

\subsection{The Zariski tangent space of $\im (c^{l'})$ at
$\calO_Z(l')$, for $l'=-kE^*_{v_0}$, $k\geq 1$.}
\label{ss:Zariski} \

Take first $k=1$, $\calL =\calO_Z(l')=\calO_Z(-E^*_{v_0})$, and let
$T_{\calL}\im (c^{l'})$ be the Zariski tangent space of $\im (c^{l'})$ at $\calL$.
By Theorem \ref{th:whNLB} $O_q\in (c^{l'})^{-1}(\calL)$ for
any $q\in E_{v_0}\setminus \cup_jP_j$,
and (cf. Corollary \ref{cor:Res2} and \ref{ss:h1egy})
$\im T_{\calO_q}(c^{l'})$ is the kernel of forms $\sum a_{\ell,n} \omega_{\ell, n}$
with $\sum_n a_{\ell, n}q^n=0$ for all $\ell\in \calw$.
 We wish to describe the space
generated by all subspaces
$\im T_{\calO_q}(c^{l'})\subset T_{\calL}\im (c^{l'})$ when we move $q$. By taking
$(n_{\ell}+1)$ different values $q_r$ we get that the vectors
$(q^0_r, q^1_r, \ldots, q^{n_{\ell}})$
(dual to the hyperplane  $\sum_n a_{\ell,n}q_r^n=0$) are linearly independent
(since their Vandermonde determinant is non--zero), hence
$\sum_q\im T_{\calO_q}(c^{l'})=T_{\calL} \pic^{l'}(Z)$, the whole
tangent space of $\pic^{l'}(Z)$
at $\calL$. Hence we proved the following statement for $k=1$.
\begin{theorem}\label{th:Zariski}
$T_{\calL}\im (c^{l'})=T_{\calL} \pic^{l'}(Z)$ for any $l'=-kE^*_{v_0}$, $k\geq 1$.
\end{theorem}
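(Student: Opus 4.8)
The plan is to deduce the general case $k\ge 1$ from the case $k=1$ just established, exploiting that on a weighted homogeneous singularity the natural line bundles are cut out by closures of generic $\C^{\ast}$--orbits (Theorem \ref{th:whNLB}). Write $l'=-kE^{*}_{v_0}$ and $\calL=\calO_Z(l')$. First I would exhibit a large family of divisors in the fibre $(c^{l'})^{-1}(\calL)$: for each $q\in E_{v_0}\setminus\cup_jP_j$ the bundle $\calO_{\tX}(O_q)$ is natural of Chern class $-E^{*}_{v_0}$ by Theorem \ref{th:whNLB}, hence $\calO_{\tX}(O_q)=\calO_{\tX}(-E^{*}_{v_0})$; since $l'\mapsto\calO_{\tX}(l')$ is a group--homomorphism section of (\ref{eq:PIC}), $\calO_{\tX}(O_{q_1}+\cdots+O_{q_k})=\calO_{\tX}(-kE^{*}_{v_0})$ for any distinct $q_1,\dots,q_k\in E_{v_0}\setminus\cup_jP_j$, and restricting to $Z$ gives $c^{l'}(O_{q_1}+\cdots+O_{q_k})=\calL$. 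In particular $\calL\in\im(c^{l'})$ and all these divisors lie in its fibre, so $T_{\calL}\im(c^{l'})$ contains the sum of the subspaces $\im T_D\widetilde c^{l'}$ taken over all $D=O_{q_1}+\cdots+O_{q_k}$.

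Next I would compute $\im T_D\widetilde c^{l'}$ for such a $D$ by Corollary \ref{cor:Res2}. Using the basis $\{\omega_{\ell,n}\}_{\ell\in\calw,\,0\le n\le n_\ell}$ of Lemma \ref{lem:whforms} of $H^{0}(\tX,\Omega^{2}_{\tX}(Z))/H^{0}(\tX,\Omega^{2}_{\tX})\simeq H^{1}(\calO_Z)^{\ast}$ (valid since $Z\gg 0$, by (\ref{eq:inclusion})), the annihilator of $\im T_D\widetilde c^{l'}$ consists of the classes $\sum_{\ell,n}a_{\ell,n}\omega_{\ell,n}$ whose residue along each component $\widetilde O_{q_i}$ has no pole. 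The chart--$U_0$ computation already carried out for $k=1$ in \ref{ss:h1egy} shows that ${\rm Res}_{\widetilde O_{q_i}}(\omega_{\ell,n})$ is a nonzero scalar times $q_i^{\,n}\prod_j(q_i-p_j)^{-\ce{\omega_j\ell/\alpha_j}}u_0^{-\ell-1}\,du_0$, where the product is nonzero because $q_i\ne p_j$; since the pole orders $\ell+1$ are pairwise distinct for distinct $\ell\in\calw$, there is no cancellation between the contributions of different $\ell$, and by Corollary \ref{cor:Res2} the residue condition at a disjoint union of transversal cuts is the intersection of the one--component conditions. Hence the annihilator of $\im T_D\widetilde c^{l'}$ is cut out by the linear equations $\sum_{n=0}^{n_\ell}a_{\ell,n}q_i^{\,n}=0$ for every $\ell\in\calw$ and every $i$ — exactly the $k=1$ system, now with $k$ evaluation points.

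Finally I would intersect over the whole fibre. Since $T_{\calL}\im(c^{l'})$ is the span of the $\im T_D\widetilde c^{l'}$, its annihilator in $H^{1}(\calO_Z)^{\ast}$ is the intersection, over all $D$ above, of the annihilators just described. Fixing $q_2,\dots,q_k$ once and letting $q_1=q$ run over the infinite set $E_{v_0}\setminus(\cup_jP_j\cup\{q_2,\dots,q_k\})$ forces $\sum_{n=0}^{n_\ell}a_{\ell,n}q^{\,n}=0$ for infinitely many $q$ and each $\ell\in\calw$; as a polynomial of degree $\le n_\ell$ with more than $n_\ell$ roots vanishes, all $a_{\ell,n}=0$. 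Thus the common annihilator is $0$, i.e. $T_{\calL}\im(c^{l'})=H^{1}(\calO_Z)=T_{\calL}\pic^{l'}(Z)$. Equivalently, by the additivity of the Abel map under sums of divisors (see \ref{ss:addDiv}), the restriction of $\widetilde c^{l'}$ to the slice obtained by moving a single component of $D$ agrees up to translation with $\widetilde c^{-E^{*}_{v_0}}$, so $\im T_D\widetilde c^{l'}\supseteq\im T_{O_q}\widetilde c^{-E^{*}_{v_0}}$ whenever $O_q$ is a component of $D$; hence $T_{\calL}\im(c^{l'})\supseteq\sum_q\im T_{O_q}\widetilde c^{-E^{*}_{v_0}}$, which is the full tangent space by the $k=1$ Vandermonde argument of \ref{ss:Zariski}.

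The new input over $k=1$ is modest: once one checks that the divisors $O_{q_1}+\cdots+O_{q_k}$ all map to the natural line bundle $\calO_Z(l')$ and that Corollary \ref{cor:Res2} makes the residue condition at a union of transversal cuts the intersection of the single--component conditions, the remainder is the $k=1$ computation with more evaluation points. The step I would treat most carefully is therefore the residue bookkeeping in the chart $U_0$ — in particular the non--cancellation across distinct pole orders $\ell+1$ — since the whole argument rests on reducing residue--regularity to the Vandermonde system.
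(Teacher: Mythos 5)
Your proof is correct and follows essentially the same route as the paper: the $k=1$ case via the residue conditions $\sum_n a_{\ell,n}q^n=0$ and the Vandermonde argument, with the general case reduced to $k=1$ through the additivity of the Abel map (\ref{eq:addclose}). Your direct treatment of general $k$ (writing the $k$-point linear system and letting one point vary) is just a spelled-out version of the paper's one-line reduction, so nothing genuinely new is needed beyond what the paper does.
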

The general case $k\geq 1$ follows from the case $k=1$ and (\ref{eq:addclose}).

\subsection{The value $h^1(Z,\calO_Z(-E^*_{j,s_j}))$}\label{ss:h1ketto}\
Fix some leg $j$ and
consider the corresponding end--vertex $E_{j,s_j}$ and
the natural line bundle $\calO_Z(-E^*_{j,s_j})$. If $O_j$ denotes the
intersection of the special  $\C^*$--orbit with $\tX$,
then in $U_{j,s_j}$ it is given by $\{v_{j,s_j}=0\}$ and
 by Theorem \ref{th:whNLB} $\calO_Z(-E^*_{j,s_j})=\calO_Z(O_j)$.
We apply again
Corollary \ref{cor:Res2} for the forms $\omega_{\ell,n}$ in
 $U_{j,s_j}$ (cf. \ref{ss:whbasis})
$$
u_{j,s_j}^{\tau_j\ell -\omega_j'm_j+\omega_j'-1}
v_{j,s_j}^{\omega_j\ell -\alpha_jm_j+\alpha_j-1}
(u_{j,s_j}^{\omega_j'} v_{j,s_j}^{\alpha_j}+p_j)^n \cdot
\textstyle{\prod _{j'\not=j}}
(u_{j,s_j}^{\omega_j'} v_{j,\s_j}^{\alpha_j}+p_{j'}-p_j)^{-m_j}\, dv_{j,s_j}
\wedge du_{j,s_j}.
$$
Some of the forms have no poles along $\{u_{j,s_j}=0\}$, hence they
 determine no restrictions. That is, any restriction appears only if
$\tau_j\ell -\omega_j'm_j+\omega_j'-1<0$.

We recall that the $v_{j,s_j}$ exponent
$\omega_j\ell -\alpha_jm_j+\alpha_j-1$ is non--negative. However, if this
exponent is strict positive, then the restriction to
$\{v_{j,s_j}=0\}$ is zero. Hence, restriction appears only if this exponent
 is exactly zero. Note that $\omega_j\ell -\alpha_jm_j+\alpha_j-1=0$
if and only if $\alpha_j| \omega_j\ell-1$.

\begin{proposition}\label{prop:whh1b}
The number of independent relations, $p_g-h^1(Z,\calO_Z(-E^*_{j,s_j}))$, is
% the number of possible values of
%the expression
%$\tau_j\ell -\omega_j'\ce{\ell\omega_j/\alpha_j}+\omega_j'-1<0$,
%where $\ell\in\calw$, $1\leq n\leq n_{\ell}$, under the
%additional restriction $\alpha_j|\omega_j\ell-1$.
$$\textstyle{\sum_{\ell\in\calw}} n_{\ell}\cdot \#\{\ell\,:\,
\tau_j\ell -\omega_j'\ce{\ell\omega_j/\alpha_j}+\omega_j'-1<0, \ \
 \alpha_j|\omega_j\ell-1\}.$$
\end{proposition}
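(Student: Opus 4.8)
The plan is to apply Corollary \ref{cor:Res2} to the natural line bundle $\calL=\calO_Z(-E^*_{j,s_j})=\calO_Z(O_j)$, where $O_j$ is the special Seifert orbit meeting $\tX$, given in the chart $U_{j,s_j}$ by $\{v_{j,s_j}=0\}$. By Corollary \ref{cor:Res2}, the dimension of $\im T_{O_j}c^{l'}$ equals $p_g-h^1(Z,\calL)$, which is the number of independent linear relations imposed on the coefficient vector $(a_{\ell,n})_{\ell\in\calw,\,0\le n\le n_\ell}$ by requiring that $\sum_{\ell,n}a_{\ell,n}\omega_{\ell,n}$ have regular Leray residue along $O_j$ (no pole along $\{u_{j,s_j}=0\}$ in the chart $U_{j,s_j}$). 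So the whole proof reduces to a bookkeeping of which basis forms $\omega_{\ell,n}$ (from Lemma \ref{lem:whforms}) actually contribute a relation, and to checking those relations are independent.

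First I would write $\omega_{\ell,n}$ in the chart $U_{j,s_j}$ using the transition law from \ref{ss:whbasis}, namely $u_0=u_{j,s_j}^{-\tau_j}v_{j,s_j}^{-\omega_j}$, $v_0=u_{j,s_j}^{\omega_j'}v_{j,s_j}^{\alpha_j}$, obtaining the displayed monomial expression with $v_{j,s_j}$-exponent $\omega_j\ell-\alpha_j m_j+\alpha_j-1$ (where $m_j=\ce{\omega_j\ell/\alpha_j}$) and $u_{j,s_j}$-exponent $\tau_j\ell-\omega_j'm_j+\omega_j'-1$. Then I would argue in two steps. (i) If the $u_{j,s_j}$-exponent is $\ge 0$, the form is holomorphic near the generic point of $\{u_{j,s_j}=0\}$ after multiplying by $du\wedge dv$, so its residue along $O_j$ has no pole and it imposes no condition; hence a relation can arise only when $\tau_j\ell-\omega_j'm_j+\omega_j'-1<0$. (ii) By the choice $m_j=\ce{\omega_j\ell/\alpha_j}$ the $v_{j,s_j}$-exponent is $\ge 0$; if it is strictly positive the form vanishes identically on $\{v_{j,s_j}=0\}$ (after the coordinate change it is divisible by $v_{j,s_j}$), so again no pole in the residue and no relation; thus a relation requires $\omega_j\ell-\alpha_j m_j+\alpha_j-1=0$, i.e. $\alpha_j\mid \omega_j\ell-1$. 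So only the forms $\omega_{\ell,n}$ with $\ell$ satisfying both $\tau_j\ell-\omega_j'\ce{\ell\omega_j/\alpha_j}+\omega_j'-1<0$ and $\alpha_j\mid\omega_j\ell-1$ contribute, and for each such $\ell$ all $n_\ell$ values $n=1,\dots,n_\ell$ (recall $n_0=-2$, and for these $\ell$ one has $\ell\ge1$, so the relevant $n$ range has size $n_\ell$ — I must double-check the $n=0$ versus $n\ge 1$ boundary against the factor $(u^{\omega_j'}v^{\alpha_j}+p_j)^n$ expansion, since the $n=0$ term and the $n\ge1$ terms behave differently under evaluation at $v_{j,s_j}=0$) give independent residues. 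Summing $n_\ell$ over these $\ell$ yields exactly $\sum_{\ell\in\calw} n_\ell\cdot\#\{\ell : \tau_j\ell-\omega_j'\ce{\ell\omega_j/\alpha_j}+\omega_j'-1<0,\ \alpha_j\mid\omega_j\ell-1\}$, which is the claimed count of independent relations, hence $p_g-h^1(Z,\calO_Z(-E^*_{j,s_j}))$.

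The independence of the relations is the one genuinely non-formal step: I would show that, among the contributing forms with fixed $\ell$, the residues are linearly independent because they involve distinct powers of the local coordinate $u_{j,s_j}$ coming from the expansion of $(u_{j,s_j}^{\omega_j'}v_{j,s_j}^{\alpha_j}+p_j)^n\prod_{j'\ne j}(u_{j,s_j}^{\omega_j'}v_{j,s_j}^{\alpha_j}+p_{j'}-p_j)^{-m_j}$ restricted to $v_{j,s_j}=0$, which is a unit times a specific integer power of $u_{j,s_j}$; and across different $\ell$ the forms already have different $v_0$-pole orders $\ell+1$ (Lemma \ref{lem:whforms} gives that the $\omega_{\ell,n}$ form a basis), so there can be no cancellation between different $\ell$'s. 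The main obstacle I expect is precisely pinning down this combinatorial/linear-algebra claim cleanly — in particular making sure the exponents $\tau_j\ell-\omega_j'm_j$ produced by distinct contributing $(\ell,n)$ are genuinely distinct modulo the $\alpha_j$-periodicity built into the continued-fraction data, and handling the $n=0$ term correctly — rather than any conceptual difficulty, since Corollary \ref{cor:Res2} already does all the heavy lifting of translating $h^1$ into a residue-vanishing count.
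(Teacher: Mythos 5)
Your setup is the same as the paper's: apply Corollary \ref{cor:Res2} to the single transversal cut $O_j=\{v_{j,s_j}=0\}$, rewrite each $\omega_{\ell,n}$ in the chart $U_{j,s_j}$, and observe that a relation can only come from forms whose $u_{j,s_j}$--exponent $\tau_j\ell-\omega_j'm_j+\omega_j'-1$ is negative and whose $v_{j,s_j}$--exponent $\omega_j\ell-\alpha_jm_j+\alpha_j-1$ is exactly zero (equivalently $\alpha_j\mid\omega_j\ell-1$). Those two filters are correct and are precisely the paper's argument. The error is in your counting of how many independent relations a single qualifying $\ell$ contributes. You claim the residues of $\omega_{\ell,n}$ for the various $n$ are independent because $(u_{j,s_j}^{\omega_j'}v_{j,s_j}^{\alpha_j}+p_j)^n$ restricted to $\{v_{j,s_j}=0\}$ is ``a unit times a specific integer power of $u$''. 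It is not: on $\{v_{j,s_j}=0\}$ this factor is the \emph{constant} $p_j^n$, and likewise $\prod_{j'\neq j}(u^{\omega_j'}v^{\alpha_j}+p_j-p_{j'})^{-m_{j'}}$ restricts to a nonzero constant. Hence for a fixed qualifying $\ell$ every residue ${\rm Res}_{O_j}(\omega_{\ell,n})$ is a scalar multiple of the \emph{same} monomial $u^{A_\ell}du$, $A_\ell=\tau_j\ell-\omega_j'm_j+\omega_j'-1$, and the whole packet $n=0,\dots,n_\ell$ imposes exactly one linear condition, namely $\sum_{n=0}^{n_\ell}a_{\ell,n}\,p_j^{\,n}=0$ (nontrivial since $p_j\neq0$), not $n_\ell$ independent ones. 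Your worry about the $n=0$ versus $n\ge1$ boundary is moot for the same reason: all $n$, including $n=0$, feed into that single condition.

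With the count done correctly, relations coming from different qualifying $\ell$ are independent for the right reason — when $\alpha_j\mid\omega_j\ell-1$ one computes $A_\ell=(\omega_j'-\ell)/\alpha_j-1$, so the exponents $A_\ell$ are pairwise distinct (and automatically negative), and the variable blocks $\{a_{\ell,n}\}_n$ are disjoint — so the total number of independent relations is the \emph{cardinality} of the set of qualifying $\ell$, one per such $\ell$. This does not match the expression you (and the statement) display: note that in $\sum_{\ell\in\calw}n_\ell\cdot\#\{\ell:\dots\}$ the index $\ell$ is bound twice, and under any reading the prefactor $n_\ell$ is not produced by the residue computation. You have reverse--engineered a justification for the printed formula rather than letting the computation dictate the answer, and the justification fails at exactly the point where the multiplicity $n_\ell$ would have to come from.
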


\subsection{The dimension of $\im(c^{l'})$ for $l'=-E^*_{v_0}$} \label{ss:whDim} \
Let $d:=(Z,l')$, this is the dimension of
$\eca^{l'}(Z)$ (cf. Theorem \ref{th:smooth}). In fact,
$\eca^{l'}(Z)$ projects onto $E_{v_0}\setminus \cup_j P_j$ with fibers $\simeq  \C^{d-1}$.
We are again in chart $U_0$, and  we
 simplify the coordinate notations $(u_0,v_0)$ into $(u,v)$.
 We have to restrict the forms $\omega_{\ell,n}$ to the generic
 transversal cut $\widetilde{D}_{gen}$ given by
$\{v=c_0+c_1u+\cdots + c_{d-1}u^{d-1}\}$. % Transversality means $c_1\not=0$.
In this generic
case the linear system is more complicated, the rank is much harder to compute.

Recall $p_g=\sum_{\ell\in\calw} (n_{\ell}+1)$.
%We set $\max\calw:=\max\{\ell: \ell \in\calw\}$, and w
We define the function $s:\Z_{\geq 0}\to \Z_{\geq 0}$ by decreasing induction.
First, set $s(\ell)=0$ for any $\ell\gg0$ (e.g. for any $\ell $
larger than any element of $\calw$).
%$s(\max\calw):=n_{\max\calw}$.
Then define
 \begin{equation}\label{eq:FORMULAdef}
 s(\ell) :=\left \{\begin{array}{ll} \max\{0, s(\ell+1)-1\} \ \ &
 \mbox{if $\ell\not\in\calw$}, \\
 s(\ell+1)+n_{\ell}   & \mbox{if $\ell\in\calw$}.
 \end{array}\right.\end{equation}
 \begin{lemma}\label{lem:ineqs0} (a) $s(0)\leq p_g-\#\calw$.

(b)  The following facts are equivalent:
 (i) $s(0)=p_g-\#\calw$, (ii) $s(\ell)=0$ for all $\ell\geq 0$,
 (iii) $n_{\ell}=0$ for all $\ell\in\calw$, (iv) $p_g=\#\calw$ and $s(0)=0$.
 \end{lemma}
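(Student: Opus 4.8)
The plan is to treat Lemma~\ref{lem:ineqs0} as a purely combinatorial statement about the function $s$ defined by the backward recursion \eqref{eq:FORMULAdef}, and to organize everything around the \emph{tail sums} $S(\ell):=\sum_{k\ge \ell,\ k\in\calw} n_k$. These are well defined because $\calw$ is finite, and $S(\ell)=0$ for $\ell$ large, just as $s(\ell)=0$ for $\ell$ large. Two facts will be used throughout: Pinkham's formula $p_g=\sum_{\ell\in\calw}(n_\ell+1)$ (already invoked in the proof of Lemma~\ref{lem:whforms}), which together with $n_\ell\ge 0$ for $\ell\in\calw$ gives the key identity $S(0)=\sum_{\ell\in\calw}n_\ell=p_g-\#\calw\ge 0$; and Remark~\ref{rem:POLE}, namely $n_0=-2$, so that $0\notin\calw$ and hence the recursion at the bottom reads $s(0)=\max\{0,s(1)-1\}$.

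For part (a) I would not prove the inequality only at $\ell=0$ but establish the stronger statement $s(\ell)\le S(\ell)$ for every $\ell\ge 0$, by decreasing induction on $\ell$. The base case is $\ell$ large, where both sides are $0$. For the inductive step, if $\ell\notin\calw$ then $S(\ell)=S(\ell+1)$ and $s(\ell)=\max\{0,s(\ell+1)-1\}\le s(\ell+1)\le S(\ell+1)=S(\ell)$; if $\ell\in\calw$ then $S(\ell)=S(\ell+1)+n_\ell$ and $s(\ell)=s(\ell+1)+n_\ell\le S(\ell+1)+n_\ell=S(\ell)$. Specializing to $\ell=0$ gives $s(0)\le S(0)=p_g-\#\calw$, which is (a).

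For part (b) I would prove the cycle of implications $(i)\Rightarrow(iii)\Rightarrow(ii)\Rightarrow(iv)\Rightarrow(i)$. Three of these are short: $(iii)\Rightarrow(ii)$ is a one-line decreasing induction (if $s(\ell+1)=0$ then $s(\ell)=0$ in both branches, using $\max\{0,-1\}=0$); $(ii)\Rightarrow(iv)$ follows by reading $n_\ell=s(\ell)-s(\ell+1)=0$ off \eqref{eq:FORMULAdef} for $\ell\in\calw$, whence $p_g=\sum_{\ell\in\calw}(n_\ell+1)=\#\calw$, and $s(0)=0$ is part of $(ii)$; and $(iv)\Rightarrow(i)$ is immediate since $p_g=\#\calw$ gives $p_g-\#\calw=0=s(0)$. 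The substantive implication is $(i)\Rightarrow(iii)$: assuming $s(0)=p_g-\#\calw=S(0)$, I would split on whether $s(1)=0$. If $s(1)\ge 1$, then since $0\notin\calw$ we have $s(0)=s(1)-1$, so $s(1)=S(0)+1=S(1)+1$ (the last step because $S(0)=S(1)$ when $0\notin\calw$), contradicting $s(1)\le S(1)$ from the induction in part (a). Hence $s(1)=0$, so $s(0)=0$ and therefore $S(0)=0$; as $S(0)=\sum_{\ell\in\calw}n_\ell$ with each $n_\ell\ge0$, this forces $n_\ell=0$ for all $\ell\in\calw$, which is $(iii)$.

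I do not anticipate a genuine obstacle here, since the whole argument is elementary bookkeeping; the one point that must not be overlooked is that $0\notin\calw$, which is precisely what makes the $\ell=0$ recursion the $\max\{0,\cdot-1\}$ branch and drives the contradiction in $(i)\Rightarrow(iii)$. The only mild care needed is the equality analysis in that borderline case — i.e. using $s(1)\le S(1)$ together with $s(0)=S(0)$ to pin down $s(1)=0$ — and keeping the identity $S(0)=p_g-\#\calw$ visible at each step.
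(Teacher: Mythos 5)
Your proof is correct and follows essentially the same route as the paper: both arguments compare $s(\ell)$ to the tail sum $\sum_{\ell'\ge\ell,\,\ell'\in\calw}n_{\ell'}$ by decreasing induction (the paper proves this as an exact identity with a correction count, you as an inequality, which suffices) and both exploit $0\notin\calw$ from Remark~\ref{rem:POLE} to handle the borderline case in part (b). The only difference is the cosmetic reordering of the implication cycle, with the substantive step being $(i)\Rightarrow(iii)$ for you versus $(i)\Rightarrow(ii)$ in the paper.
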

 \begin{proof} {\it (a)}
 By a decreasing induction one gets
$$s(\ell)=\sum _{\ell'\geq \ell,\, \ell\in \calw}n_{\ell'}  -\#\{
 \ell'\,:\, \ell'\geq \ell,\ \ell'\not\in\calw,\ s(\ell'+1)>0\}.$$
 In particular,
 \begin{equation}\label{eq:sell}
 s(0)=p_g-\#\calw-\#\{
 \ell\,:\, \ell\geq 0,\ \ell\not\in\calw,\ s(\ell+1)>0\}\leq p_g-\#\calw.
 \end{equation}
 {\it (b)} \
 {\it (ii)$\Rightarrow$(iii)$\Rightarrow$(iv)$\Rightarrow$(i)} are easy, we prove  {\it (i)$\Rightarrow$(ii)}. We use from (\ref{eq:sell}) that
 ($\dag$) $\{\ell\geq 0,\ \ell\not\in\calw,\ s(\ell+1)>0\}=\emptyset$.
 Recall that $0\not\in\calw$ (cf. \ref{rem:POLE}). Hence $s(1)=0$
 (and necessarily $s(0)=0$ too). If $1\in \calw$ then $s(2)=0$
 from the definition of $s(1)$, if $1\not\in \calw$ then $s(2)=0$ from ($\dag$).
Repeating the argument, we get {\it (ii)}.
 \end{proof}

 \begin{theorem}\label{th:whh1c}
 $h^1(Z,\calO_Z(\widetilde{D}_{gen}))=s(0)$, hence
  $$\mbox{number of independent relations}=
\dim\im (c^{l'})=p_g-s(0).$$
\end{theorem}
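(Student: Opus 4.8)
\medskip\noindent\emph{Proof plan.}
The plan is to compute $h^1(Z,\calO_Z(\widetilde{D}_{gen}))$ through Corollary \ref{cor:Res2}, using the explicit basis $\{\omega_{\ell,n}\}_{\ell\in\calw,\,0\le n\le n_\ell}$ of $H^0(\tX\setminus E,\Omega^2_{\tX})/H^0(\tX,\Omega^2_{\tX})$ from Lemma \ref{lem:whforms}. The first step is to compute, in the chart $U_0$, the Leray residue of $\omega_{\ell,n}$ along the generic transversal cut $\widetilde{D}_{gen}=\{v_0=\phi(u_0)\}$, $\phi(u_0)=c_0+c_1u_0+\cdots+c_{d-1}u_0^{d-1}$. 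Parametrizing $\widetilde{D}_{gen}$ by $u_0$ (so that $u_0=0$ is the intersection point with $E_{v_0}$ and $v_0=\phi(u_0)$) and recalling $\omega^0_{\ell,n}=u_0^{-\ell-1}\prod_j(v_0-p_j)^{-\lceil\omega_j\ell/\alpha_j\rceil}v_0^n\,dv_0\wedge du_0$, one gets, up to a nonzero constant,
\[
\mathrm{Res}_{\widetilde{D}_{gen}}(\omega_{\ell,n})=u_0^{-\ell-1}G_{\ell,n}(u_0)\,du_0,\qquad G_{\ell,n}(u_0):=\phi(u_0)^n\prod_j\bigl(\phi(u_0)-p_j\bigr)^{-\lceil\omega_j\ell/\alpha_j\rceil}.
\]
For generic $(c_0,\dots,c_{d-1})$ we have $c_0\notin\{0,p_1,\dots,p_\nu\}$ (the $p_j$ are the $v_0$--coordinates of the points $P_j$, taken nonzero as in Proposition \ref{lem:SpPl}), so each $G_{\ell,n}$ is holomorphic at $u_0=0$ with $G_{\ell,n}(0)\neq0$, and $\mathrm{Res}_{\widetilde{D}_{gen}}(\omega_{\ell,n})$ has a pole of order exactly $\ell+1$. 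Hence, by Corollary \ref{cor:Res2}, $h^1(Z,\calO_Z(\widetilde{D}_{gen}))$ is the dimension of the kernel, and $\dim\im(c^{l'})$ the rank, of the linear system in the $p_g$ unknowns $(a_{\ell,n})$ consisting, for each pole order $k\ge1$, of
\[
(\ast_k)\colon\qquad \sum_{\ell\ge k-1}\ \sum_{0\le n\le n_\ell}\big([u_0^{\ell+1-k}]\,G_{\ell,n}\big)\,a_{\ell,n}=0 .
\]

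The second step is the rank count, which I would organize exactly along the recursion (\ref{eq:FORMULAdef}). The key structural remark is that $(\ast_k)$ involves only the variables $a_{\ell,n}$ with $\ell\ge k-1$, and among all $(\ast_{k'})$ with $k'\ge k$ it is the only one touching the variables with $\ell=k-1$, doing so with the nonzero coefficients $G_{k-1,n}(0)$. Now process pole orders in decreasing order, equivalently let $\ell=\max\calw,\max\calw-1,\dots,0$, and track
\[
\sigma(\ell):=\#\{(\ell',n):\ell'\ge\ell,\ \ell'\in\calw,\ 0\le n\le n_{\ell'}\}\ -\ \mathrm{rank}\,\{(\ast_k):k\ge\ell+1\},
\]
with $\sigma(\ell)=0$ for $\ell>\max\calw$. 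When $\ell\in\calw$, passing to $\ell$ adds the $n_\ell+1$ variables $a_{\ell,0},\dots,a_{\ell,n_\ell}$ and the single equation $(\ast_{\ell+1})$, which by the structural remark is automatically independent of $\{(\ast_k):k\ge\ell+2\}$, so $\sigma(\ell)=\sigma(\ell+1)+n_\ell$. When $\ell\notin\calw$, no variable is added and $(\ast_{\ell+1})$ lives among the already introduced variables ($\ell'\ge\ell+1$); one shows that for generic $(c_i)$ it decreases the kernel dimension by one when $\sigma(\ell+1)>0$ and is redundant otherwise, so $\sigma(\ell)=\max\{0,\sigma(\ell+1)-1\}$. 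This matches (\ref{eq:FORMULAdef}) verbatim, so $\sigma(\ell)=s(\ell)$ for all $\ell$; taking $\ell=0$ (where $0\notin\calw$ by Remark \ref{rem:POLE}) gives $\sigma(0)=p_g-\mathrm{rank}=s(0)$, hence $\dim\im(c^{l'})=\mathrm{rank}=p_g-s(0)$ and $h^1(Z,\calO_Z(\widetilde{D}_{gen}))=p_g-\mathrm{rank}=s(0)$.

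The genuinely technical point — and the only place genericity of $\widetilde{D}_{gen}$ is used — is the ``$\ell\notin\calw$'' step: one must check that the functional $(\ast_{\ell+1})$, restricted to $\{a_{\ell',n}:\ell'\ge\ell+1\}$, is not in the span of $\{(\ast_k):k\ge\ell+2\}$ unless that span is already the full dual space. I would prove this by a Vandermonde-type computation on the Taylor coefficients of the $G_{\ell',n}$: writing $\phi(u_0)=c_0+u_0\psi(u_0)$ one finds $[u_0^r]G_{\ell',n}=G_{\ell',n}(0)\,\mu_{\ell',n}\,c_r+(\text{terms in }c_0,\dots,c_{r-1})$ with $\mu_{\ell',n}=\tfrac{n}{c_0}-\sum_j\tfrac{\lceil\omega_j\ell'/\alpha_j\rceil}{c_0-p_j}$, and since $Z\gg0$ forces $d\gg0$ there are enough free parameters $c_r$; using that for generic $c_0$ the scalars $\mu_{\ell',n}$ separate the columns not already separated by distinct pole order, one concludes that the coefficient matrix of the whole system attains its combinatorially maximal rank, which is precisely what the recursion for $\sigma$ records. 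I expect this genericity lemma to be the main obstacle; the rest of the argument is the structured bookkeeping above.
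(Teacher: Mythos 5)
Your setup and bookkeeping coincide with the paper's own proof: both reduce via Corollary \ref{cor:Res2} to the pole conditions on ${\rm Res}_{\widetilde{D}_{gen}}(\sum a_{\ell,n}\omega_{\ell,n})$, both organize the resulting staircase linear system by pole order, and your recursion for $\sigma(\ell)$ is exactly the paper's decreasing induction identifying the kernel dimension with $s(\ell)$. The case $\ell\in\calw$ (a new block of $n_\ell+1$ variables and one new equation, with nonzero entries $G_{\ell,n}(0)$ on the new block and zero entries in all earlier equations) is handled identically.

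The gap is in the case $\ell\notin\calw$, and the mechanism you propose for it would actually fail. You want to certify that the new row is independent of the previous ones (when these are not yet of full rank) by a first-order analysis in the higher coefficients $c_r$, using the scalars $\mu_{\ell',n}=\tfrac{n}{c_0}-\sum_j\lceil\omega_j\ell'/\alpha_j\rceil/(c_0-p_j)$. But for every $r\ge 1$ the $c_r$-linear part of $[u_0^r]G_{\ell',n}$ is $G_{\ell',n}(0)\,\mu_{\ell',n}\,c_r$, i.e.\ the \emph{same} vector $(G_{\ell',n}(0)\mu_{\ell',n})_n$ rescaled by $c_r$: within a fixed block all rows of index $\ge 2$ have proportional leading parts, so leading-order data can certify the independence of at most two rows per block, whereas the recursion needs up to $n_{\ell'}+1$ of them. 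The true nondegeneracy sits in the higher-order terms in $c_1$: the paper's Lemma \ref{lem:PROOFlemma2} shows that the top $(n_{\ell_i}+1)\times(n_{\ell_i}+1)$ minor of each block $B'(\ell_i)$ is, up to the invertible lower-triangular Toeplitz factor coming from $P(u)=f_{\ell_i}(\phi(u))$, the power matrix $M(c)$ with determinant $c_1^{m(m-1)/2}$; hence the only genericity needed is $c_1\neq 0$ (together with $c_0\notin\{0\}\cup\{p_j\}_j$), and no appeal to $d\gg 0$ or to the higher $c_r$'s is required. To close your argument you would have to replace the ``$\mu$-separation'' lemma by this Vandermonde-type computation, and then still justify why the block-wise nonsingularity forces ${\rm rank}(M(\ell))={\rm rank}(M(\ell+1))+1$ in the non-saturated subcase.
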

Recall that if $O_{gen}$ is the intersection of the generic $\C^*$--orbit
with $\tX$, then the natural line bundle $\calO_Z(-E^*_{v_0})$ is
$\calO_Z(O_{gen})$ and $h^1(Z, \calO_Z(O_{gen}))=p_g-\#\calw$.
This identity and the one from Theorem \ref{th:whh1c} show that  the
inequality from (\ref{eq:sell}) is compatible with the semicontinuity
of $h^1$, cf.   \ref{lem:semicont}.
\begin{corollary}
$h^1(Z,\calO_Z(\widetilde{D}_{gen}))=h^1(Z, \calO_Z(O_{gen}))$ if and only if
$n_{\ell}=0$ for all $\ell\in \calw$, and in this case in fact
$h^1(Z, \calO_Z(O_{gen}))=0$ and
$c^{-E^*_{v_0}}$
is dominant.
\end{corollary}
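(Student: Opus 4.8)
The plan is to read this corollary purely as a bookkeeping consequence of the two explicit formulas already in hand: Theorem~\ref{th:whh1c}, which gives $h^1(Z,\calO_Z(\widetilde{D}_{gen}))=s(0)$, and the computation of \S\ref{ss:h1egy} (Proposition~\ref{prop:whh1a} with $k=1$, together with the identification $\calO_Z(O_{gen})=\calO_Z(-E^*_{v_0})$ from Theorem~\ref{th:whNLB}), which gives $h^1(Z,\calO_Z(O_{gen}))=p_g-\#\calw$; the combinatorial input relating the two is Lemma~\ref{lem:ineqs0}.

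First I would record that $h^1(Z,\calO_Z(\widetilde{D}_{gen}))\le h^1(Z,\calO_Z(O_{gen}))$ always holds: this is exactly the inequality $s(0)\le p_g-\#\calw$ of Lemma~\ref{lem:ineqs0}(a), and it is of course also forced a priori by the semicontinuity of $h^1$ (Lemma~\ref{lem:semicont}), since $O_{gen}$ is a degeneration of the generic transversal cut $\widetilde{D}_{gen}$. Consequently, equality of the two $h^1$'s is equivalent to $s(0)=p_g-\#\calw$, which by Lemma~\ref{lem:ineqs0}(b) is equivalent to $s(\ell)=0$ for every $\ell\ge 0$, and in turn to $n_\ell=0$ for every $\ell\in\calw$. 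This already yields the stated `if and only if'.

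For the `in this case in fact' part I would argue as follows. If $n_\ell=0$ for all $\ell\in\calw$, then Pinkham's formula $p_g=\sum_{\ell\in\calw}(n_\ell+1)$ gives $p_g=\#\calw$, hence $h^1(Z,\calO_Z(O_{gen}))=p_g-\#\calw=0$. Moreover $s(0)=0$, so Theorem~\ref{th:whh1c} gives $\dim\im(c^{-E^*_{v_0}})=p_g-s(0)=p_g$. Since $Z\gg 0$, $\pic^{-E^*_{v_0}}(Z)\simeq H^1(\calO_Z)$ is an affine space of dimension $h^1(\calO_Z)=p_g$, and $\im(c^{-E^*_{v_0}})$ is an irreducible constructible subset (image of the irreducible $\eca^{-E^*_{v_0}}(Z)$, cf.\ Theorem~\ref{th:smooth}) of full dimension $p_g$; therefore its closure is all of $\pic^{-E^*_{v_0}}(Z)$, i.e.\ $c^{-E^*_{v_0}}$ is dominant. (Alternatively one can invoke Proposition~\ref{lem:dimIm} applied to the generic element of the image, or Theorem~\ref{th:dominant}.)

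There is no genuine obstacle here: all the substance lives in Theorem~\ref{th:whh1c} and Lemma~\ref{lem:ineqs0}. The only points to be careful about are (i) to use the precomputed value $h^1(Z,\calO_Z(O_{gen}))=p_g-\#\calw$ from \S\ref{ss:h1egy} rather than redoing the residue computation, and (ii) to phrase dominance correctly, namely that for $Z\gg 0$ the target $\pic^{-E^*_{v_0}}(Z)$ is a $p_g$-dimensional affine space, so that a full-dimensional image is automatically dense.
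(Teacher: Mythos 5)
Your proposal is correct and follows exactly the route the paper intends: the corollary is a direct consequence of Theorem~\ref{th:whh1c}, the identity $h^1(Z,\calO_Z(O_{gen}))=p_g-\#\calw$ from \S\ref{ss:h1egy}, and the equivalences in Lemma~\ref{lem:ineqs0}(b), with the dominance of $c^{-E^*_{v_0}}$ following from $\dim\im(c^{-E^*_{v_0}})=p_g-s(0)=p_g=\dim\pic^{-E^*_{v_0}}(Z)$. The paper leaves the proof implicit precisely because it is this bookkeeping argument, so there is nothing to add or correct.
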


In the next proof we
write  $\calw=\{\ell_1, \ldots, \ell_{max}\}$ with $\ell_1< \cdots < \ell_{max}$.
\bekezdes\label{bek:proof} {\it Proof of Theorem \ref{th:whh1c}.}
According to Corollary \ref{cor:Res2} we have to find the dimension
\begin{equation*}
\{\{a_{\ell, n}\}_{\ell\in \calw, \, 0\leq n \leq n_{\ell}}
\,:\, {\rm Res}_{\widetilde{D}_{gen}}(\textstyle{\sum}_{\ell,n}
a_{\ell, n}\omega_{\ell, n}) \
 \mbox{has no pole along $\widetilde{D}_{gen}$} \}.
\end{equation*}
Recall that $\widetilde{D}_{gen}=\{v=c_0+c_1u+\cdots + c_{d-1}u^{d-1}\}$ with
$c_i$ generic. In fact, what we will need is $c_1\not=0$.
If we set $v':=v-(c_0+c_1u+\cdots + c_{d-1}u^{d-1})$ then
$dv\wedge du=dv'\wedge du$, hence in the form $dv\wedge du$ can be
replaced by $dv'\wedge du$,
and ${\rm Res}_{\widetilde{D}_{gen}}(\textstyle{\sum}_{\ell,n}
a_{\ell, n}\omega_{\ell, n})=(\textstyle{\sum}_{\ell,n}
a_{\ell, n}\omega_{\ell, n}/dv)|_{\widetilde{D}_{gen}}$.

The vanishing of poles provides a linear system whose matrix will be labelled
as follows. The columns are indexed by $\{\ell, n\}_{\ell\in\calw, n}$.
We subdivide  this in vertical blocks. The first one,  $B(\ell_1)$,
contains $n_{\ell_1}+1$ columns, the second one,  $B(\ell_2)$,
contains $n_{\ell_2}+1$ columns, etc. Their columns are indexed by the
corresponding $n$'s.
The rows are indexed by the pole orders: the first row corresponds to
$u^{-\ell_{max}-1}$, the last one to $u^{-1}$.

We define also the sub--block
$B'(\ell_i)$ ($\ell_i\in\calw$) of  $B(\ell_i)$, it is obtained from
$B(\ell_i)$ by deleting the rows corresponding to pole orders
strict higher than $\ell_i+1$ (rows $u^{-\ell-1}$, $l > l_i$).
In fact, all the entries of the deleted part are zero, and the
highest  row kept in $B'(\ell_i)$ contains non--zero entries.

We proceed by induction: we fix some $\ell$, $0\leq \ell\leq \ell_{max}$,
and assume that $\ell_{i-1}<\ell\leq \ell_i$ for some $\ell_i\in\calw$.
Then we consider the submatrix $M(\ell)$ (in the up--right corner)
containing all the entries from the columns contained in the
vertical blocks $B(\ell_j)$ with $\ell_j\geq \ell_i$, and
from the rows $u^{-l'-1}$ with $l\leq l'\leq l_{max}$.
It is the matrix of a linear system
with $l_{max}-\ell+1$ equations and with variables
$\{\{a_{\ell_j, n}\}_{\ell_j\in \calw, \, \ell_j\geq \ell_i, \,
0\leq n \leq n_{\ell_j}}$, what we formulate next.
By decreasing induction we prove that $s(\ell)$ is exactly the dimension
$\dim(\ell)$ of
\begin{equation*}\{\{a_{\ell_j, n}\}_{\ell_j\geq \ell_i,\,
0\leq n \leq n_{\ell_j}}\,:\,
(\textstyle{\sum}_{\ell_j\geq \ell_i,n}
a_{\ell_j, n}\omega_{\ell_j, n}/dv)|_{\widetilde{D}_{gen}} \
 \mbox{has no pole of order $\geq \ell+1$}\}.
\end{equation*}
If $\ell=\ell_{max}$,
then the system contains $n_{\ell_{max}}+1$ variables and
a nontrivial equation (one checks that  at least one entry
of the system is non--zero), hence $\dim(\ell_{max})=n_{\ell_{max}}=s(\ell_{max})$.

When we step from $\ell+1$ to $\ell$ ($0\leq \ell<\ell_{max}$),
we have to consider two cases.

First assume that $\ell\in\calw$ (say $\ell=\ell_i$).
Then   we add $n_{\ell}+1$ new variables and one new equation.
In the columns corresponding to the new variables only the last row
contains non--zero entries, but this part indeed contains at least one
non--zero entry. Hence the new equation is linearly independent
from   the old ones,
and $\dim(l)=\dim(l+1)+n_{\ell}+1-1$; this is the inductive step for $s(\ell)$ too.

If $\ell\not \in \calw$, say $\ell_{i-1}<\ell < \ell_i$, then in the
new system
one has the same number of variables, but there is one more equation
corresponding to the new row $\ell$. We divide this case into two subcases.
First, assume that the rank of $M(\ell+1)$ equals the number of
columns $\sum_{\ell_j\geq \ell_i}(n_{\ell_j}+1)$. Hence, adding a
new row we cannot increase the rank, hence $\dim (\ell+1)=\dim(\ell)$. In fact,
$\dim(\ell+1)=0$, and the new equation (even if it is  `generic')
cannot decrease the dimension of the system.

In the second case we assume that the rank of $M(\ell+1)$ is strict smaller
 than the number of
columns $\sum_{\ell_j\geq \ell_i}(n_{\ell_j}+1)$.
In this case we claim ${\rm rank}(M(\ell))={\rm rank}(M(\ell+1)+1)$,
 hence $\dim (\ell)=\dim (\ell+1)-1$. This again agrees with
 (\ref{eq:FORMULAdef}). The claim follows  from the next lemma
via standard  linear algebra.
 \begin{lemma}\label{lem:PROOFlemma1} Fix $\ell_i\in\calw$.
 Assume that the hight $\ell_i+1$ of the sub--block $B'(\ell_i)$
 is not smaller then its width $n_{\ell_i}+1$. Then the top
 $(n_{\ell_i}+1)\times (n_{\ell_i}+1)$ minor $M'$ of $B'(\ell_i)$
 has non--zero determinant. Furthermore, if
 the hight $\ell_i+1$ of the sub--block $B'(\ell_i)$ is smaller then
 its width $n_{\ell_i}+1$ then the  $\ell_i+1$ rows of
 $B'(\ell_i)$ are linearly independent.
 \end{lemma}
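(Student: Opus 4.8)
The plan is to make the entries of the sub-block $B'(\ell_i)$ completely explicit, and then to recognise the relevant square submatrix as a product of an invertible lower-triangular matrix and a ``substitution'' matrix whose invertibility is elementary.

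First I would record the residue computation in chart $U_0$. With $v(u):=c_0+c_1u+\cdots+c_{d-1}u^{d-1}$ (generic, so in particular $c_1\ne 0$ and $c_0\ne p_j$ for all $j$) and $m_j:=\ce{\omega_j\ell_i/\alpha_j}$, the restriction of $\omega_{\ell_i,n}/dv$ to $\widetilde D_{gen}$ equals, up to sign,
$$u^{-\ell_i-1}\,g(u)\,v(u)^n\,du,\qquad g(u):=\prod_j\bigl(v(u)-p_j\bigr)^{-m_j},$$
and $g$ is holomorphic near $u=0$ with $g(0)=\prod_j(c_0-p_j)^{-m_j}\ne 0$. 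Hence the residue has pole of order \emph{exactly} $\ell_i+1$ (its top coefficient is $c_0^n g(0)\ne0$), which confirms that all entries of $B(\ell_i)$ in the rows $u^{-\ell-1}$ with $\ell>\ell_i$ vanish, so $B'(\ell_i)$ retains all the information; and the entry of $B'(\ell_i)$ in the row $u^{-(\ell_i+1-k)}$ ($0\le k\le \ell_i$) and the column $n$ ($0\le n\le n_{\ell_i}$) is $[u^k]\bigl(g(u)v(u)^n\bigr)$.

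Then I would set $N:=\min\{\ell_i,n_{\ell_i}\}$ and look at the square $(N+1)\times(N+1)$ submatrix $M$ of $B'(\ell_i)$ with rows $k=0,\dots,N$ and columns $n=0,\dots,N$; proving $\det M\ne0$ yields both assertions at once — it is the minor $M'$ when $\ell_i\ge n_{\ell_i}$, and it exhibits $\ell_i+1$ independent rows, using only the first $\ell_i+1$ columns, when $\ell_i<n_{\ell_i}$. Writing $g(u)=\sum_{i\ge0}g_iu^i$ with $g_0\ne0$ and expanding $[u^k](gv^n)=\sum_{j=0}^k g_{k-j}\,[u^j]v(u)^n$, one sees $M=\widetilde G\cdot V$, where $\widetilde G$ is the lower-triangular $(N+1)\times(N+1)$ matrix with $(k,j)$-entry $g_{k-j}$ for $j\le k$ and $0$ otherwise (so $\det\widetilde G=g_0^{N+1}\ne0$), and $V$ is the substitution matrix with $(k,n)$-entry $[u^k]v(u)^n$, $0\le k,n\le N$. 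Thus $\det M=g_0^{N+1}\det V$, and it remains to show $\det V\ne0$.

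Finally, $\det V\ne0$ is equivalent to the linear independence of $1,v(u),v(u)^2,\dots,v(u)^N$ modulo $u^{N+1}$. Since $c_1\ne0$, $u\mapsto v(u)$ is a biholomorphic germ at $0$, with inverse germ $\phi$ satisfying $\phi(c_0)=0$ and $\phi'(c_0)=1/c_1\ne0$; if $q$ is a polynomial of degree $\le N$ with $q(v(u))\equiv 0\bmod u^{N+1}$, then substituting $u=\phi(w)$ (which vanishes to order exactly $1$ at $w=c_0$) gives $q(w)=O\bigl((w-c_0)^{N+1}\bigr)$, forcing $q\equiv0$, so the truncations are independent and $\det V\ne0$. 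Assembling these steps closes the proof. There is no serious obstacle here: the lemma is essentially routine linear algebra, and the only care needed is the bookkeeping of Laurent coefficients in the first step together with the genericity hypotheses $c_1\ne0$ and $c_0\notin\{p_j\}_j$, which are exactly what make $g$ holomorphic and nonzero at $0$ and $v$ a local isomorphism.
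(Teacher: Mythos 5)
Your proposal is correct and follows essentially the same route as the paper: both factor the relevant square submatrix as a lower-triangular Toeplitz matrix (multiplication by the unit $g(u)=f_{\ell_i}(v(u))$, nonzero at $0$ by genericity of $c_0$) times the substitution matrix whose columns are the coefficients of $v(u)^n$. The only difference is how invertibility of that substitution matrix is established — the paper computes its determinant explicitly as $c_1^{m(m-1)/2}$ via column operations, while you argue qualitatively from the fact that $u\mapsto v(u)$ is a biholomorphic germ since $c_1\neq 0$; both are fine, and your uniform treatment of the two cases via $N=\min\{\ell_i,n_{\ell_i}\}$ is if anything a bit cleaner than the paper's remark about completing to a square matrix.
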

We prove this lemma in two steps. The first step is the next statement.
 \begin{lemma}\label{lem:PROOFlemma2} For any $m\in \Z_{>0}$
 we construct the $m\times m$--matrix $M(c)$ as follows.
 Its $ n^{th}$--column consists of the
 first $m$ coefficients of the series $(\sum_{k\geq 0} c_ku^k)^{n-1}$.
 E.g., the first column has entries $(1, 0,  \dots )$, the second one
 $(c_0,c_1,\ldots)$, the third one $(c_0^2, 2c_0c_1, \dots)$.
 Then $\det\, M(c)=c_1^{m(m-1)/2}$.
\end{lemma}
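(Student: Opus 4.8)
The plan is to prove $\det M(c)=c_1^{m(m-1)/2}$ by first normalizing to the case $c_0=0$ through unipotent column operations, and then observing that in that normalized case the matrix is triangular and its determinant can be read off immediately.

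First I would set $g(u):=\sum_{k\geq 0}c_ku^k$, so that by construction the $(i,n)$--entry of $M(c)$ is the coefficient of $u^{\,i-1}$ in $g(u)^{\,n-1}$, for $1\leq i,n\leq m$. Introduce $\tilde g(u):=g(u)-c_0=c_1u+c_2u^2+\cdots$, and let $M(\tilde c)$ be the matrix built in exactly the same way from $\tilde g$. The binomial expansion $g^{\,n-1}=(\tilde g+c_0)^{\,n-1}=\sum_{j=0}^{n-1}\binom{n-1}{j}c_0^{\,n-1-j}\,\tilde g^{\,j}$ then shows that the $n$--th column of $M(c)$ is a linear combination of the first $n$ columns of $M(\tilde c)$, with the coefficient of the $n$--th column of $M(\tilde c)$ equal to $\binom{n-1}{n-1}=1$. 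Hence $M(c)=M(\tilde c)\,T$ for an upper triangular matrix $T$ all of whose diagonal entries are $1$, so $\det T=1$ and $\det M(c)=\det M(\tilde c)$; from here on we may assume $c_0=0$.

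Next, with $c_0=0$ one has $g(u)^{\,n-1}=c_1^{\,n-1}u^{\,n-1}+(\text{terms of order}\ \geq n)$, so the coefficient of $u^{\,i-1}$ in $g^{\,n-1}$ vanishes whenever $i<n$ and equals $c_1^{\,n-1}$ when $i=n$. Thus $M(\tilde c)$ is lower triangular with diagonal entries $c_1^{0},c_1^{1},\dots,c_1^{\,m-1}$, giving $\det M(\tilde c)=c_1^{\,0+1+\cdots+(m-1)}=c_1^{\,m(m-1)/2}$, which is the assertion. The only step that needs genuine care is the bookkeeping in the column--operation reduction — correctly matching the $n$--th power of $g$ to the $n$--th column and confirming that the transition matrix $T$ is honestly unipotent and independent of which rows are retained; everything after that is the one-line determinant of a triangular matrix. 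As a safeguard I would also check the cases $m=1,2,3$ by hand against $c_1^{m(m-1)/2}$.
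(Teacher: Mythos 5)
Your proof is correct and is essentially the paper's own argument: the paper performs the same unipotent column operations (via the binomial expansion of $(g-c_0)^{n-1}$ in powers of $g$) to replace the columns by the coefficient vectors of $(\,\sum_k c_ku^k - c_0)^{n-1}$, and then reads off the determinant from the resulting lower triangular matrix with diagonal $1,c_1,\dots,c_1^{m-1}$. The only cosmetic difference is that you package the column operations as a factorization $M(c)=M(\tilde c)T$ with $T$ unipotent upper triangular, while the paper writes the new columns $C_n'$ directly.
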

\begin{proof}
Assume that
 $\{C_n\}_{1\leq n\leq m}$ are the columns of $M(c)$. Consider the matrix
$M(c)'$ consisting of columns $\{C'_n\}_{1\leq n\leq m}$, $C_1'=C_1$,
$C_2'=C_2-c_0C_1$, $C_3'=C_3-2c_0C_2+c_0^2C_1$, etc.
Then $det\, M(c)=\det\, M(c)'$. But $C_n'$ consists of the coefficients of
$(-c_0+\sum_{k\geq 0} c_ku^k)^{n-1}$, hence the entries of $M(c)'$ above the diagonal
are zero, and on the diagonal one has the entries  $1, c_1, c_1^2, \ldots$.
\end{proof}

Finally we prove Lemma \ref{lem:PROOFlemma2}. We apply Lemma \ref{lem:PROOFlemma1}
for $m=\ell_i+1$. Note that we can consider only the first situation
(when the height is sufficiently large, since in the other case
the matrix can be completed to a square matrix of size $n_{\ell_i}+1$ with
non--zero determinant.

For fixed $\ell_i$ the forms have the form $\omega_{\ell_i,n}=u^{-\ell_i-1}f_{\ell_i}(v_0)v_0^n dv_0\wedge du_0$, $0\leq n
\leq n_{\ell_i}$, where  $f_{\ell_i}(v_0):=\prod_j (v_0-p_j)^{-m_j}$.
Let us we substitute $v_0\mapsto \sum_{k\geq 0} c_ku^k$ in this
function, and consider its Taylor expansion $P(u)=b_0+b_1u+\cdots$.
Then the columns of the top minor $M'$ of $B'(\ell_i)$ are the coefficients of the product
$(\sum_{k\geq 0} c_ku^k)^n\cdot P(u)$. Since $b_0\not=0$, this means that
$M'$ is obtained from $M(c)$ by multiplication with an invertible matrix.
Hence $\det\, M'\not=0$.

This ends the proof of Theorem \ref{th:whh1c}.

\subsection{The Abel map  $c^{l'}$ in a neighborhood of some $0_q$ supported by $E_{v_0}$}\label{ss:whAbel} \
Since we have a basis of differential forms,  using the results and the
notations of subsection \ref{ss:abelforms} we are able to give
the `complete Abel map'. Indeed, assume that $O$ is the intersection of a  generic $\C^*$--orbit
with $\tX$, and in some local chart  it is given by $v=0$. Consider the
parametrization of its neighbourhood  in $\eca^{l'}(Z)$ in the form
$D(c)=\{v=c_0+c_1u+\cdots+ c_{d-1}u^{d-1}\}$ ($|c_0|\ll 1$),
where $d=(Z,l')$ is the dimension of
 $\eca^{l'}(Z)$. Above we constructed $p_g$  differential forms
 having in this chart the  expressions
 $\omega^0_{\ell,n}=u^{-\ell-1}f_{\ell,n}v^ndv\wedge du$, where $f_{\ell,n}:=
 \prod_j (v-p_j)^{-m_j}$, $\ell\in \calw$   and $0\leq n\leq n_{\ell}$.
  Then the Abel map restricted to this chart is
  $c\mapsto
(\langle\langle D(c),
\omega_{\ell,n}\rangle\rangle )_{\ell,n}$, where each coordinate
$\langle\langle D(c),
\omega_{\ell,n}\rangle\rangle$ is determined explicitly in (\ref{eq:SYM5}).

The reader is invited to take his/her favorite star--shaped graph,
determine explicitly the forms and the corresponding Abel map.
Here we will exemplify the general description by an example when
the image of the Abel map is a singular hypersurface.

\begin{example}\label{ex:whSING} Consider the star--shaped graph with $b_0=4$,
$\nu=8$, $(\alpha_j,\omega_j)=(8,1)$ for all $j$. Then $p_g=3$.
By a computation  $\calw =\{1\}$, and the three forms are
$u^{-2}f(v)v^n$, where $n=0,1,2$, and $f(v)=\prod_{j}(v-p_j)^{-1}$.
By a computation (using Laufer's algorithm) $Z_{min}=E+E_{v_0}$,
and $h^1(\calO_{Z_{min}})=3$. In fact, $\lfloor Z_K \rfloor=Z_{min}$.
We determine the Abel map $c^{l'}$ for $Z=Z_{min}$ and $l'=- E^*_{v_0}$.
Hence $d=(Z,l')=2$, and $D(c)=\{v=c_0+c_1u\}$. By (\ref{eq:SYM5}) the Abel map is
$$(c_0,c_1)\mapsto (-c_1f(c_0), -c_1c_0f(c_0), -c_1c_0^2f(c_0)).$$
If $(A,B,C)$ are the coordinates in the target, then $\im(c)=\{AC=B^2\}$. It is surprising
that $\im(c)$ is independent of the choice of the points $\{p_j\}_j$ (that is, of the analytic structure of $(X,o)$).
\end{example}

\end{document}